\numberwithin{equation}{section}
\newtheorem{theorem}{Theorem}[section]
\newtheorem{lemma}[theorem]{Lemma}
\newtheorem{definition}[theorem]{Definition}
\newtheorem{example}[theorem]{Example}
\newtheorem{proposition}[theorem]{Proposition}
\newtheorem{corollary}[theorem]{Corollary}
\newtheorem{assumption}[theorem]{Assumption}
\theoremstyle{remark}
\newtheorem{remark}[theorem]{Remark}
\numberwithin{equation}{section}
\newcommand{\arxiv}[1]{\href{http://arxiv.org/abs/#1}{\tt arXiv:\nolinkurl{#1}}}
\newcommand{\arXiv}[1]{\href{http://arxiv.org/abs/#1}{\tt arXiv:\nolinkurl{#1}}}
\definecolor{myred}{rgb}{0.75,0,0}
\definecolor{mygreen}{rgb}{0,0.5,0}
\definecolor{myblue}{rgb}{0,0,0.65}
\def\@tocline#1#2#3#4#5#6#7{\relax
  \ifnum #1>\c@tocdepth 
  \else
    \par \addpenalty\@secpenalty\addvspace{#2}%
    \begingroup \hyphenpenalty\@M
    \@ifempty{#4}{%
      \@tempdima\csname r@tocindent\number#1\endcsname\relax
    }{%
      \@tempdima#4\relax
    }%
    \parindent\z@ \leftskip#3\relax \advance\leftskip\@tempdima\relax
    \rightskip\@pnumwidth plus4em \parfillskip-\@pnumwidth
    #5\leavevmode\hskip-\@tempdima
      \ifcase #1
       \or\or \hskip 1em \or \hskip 2em \else \hskip 3em \fi%
      #6\nobreak\relax
    \dotfill\hbox to\@pnumwidth{\@tocpagenum{#7}}\par
    \nobreak
    \endgroup
  \fi}
\newcommand{\nc}{\newcommand}
\nc{\flags}{\mathcal{F}}
\def\wt{\text{wt}}
\def\ii{{\bf i}}
\def\N{\mathbb{N}}
\def\C{\mathcal{C}}
\def\R{\mathbb{R}}
\def\Z{\mathbb{Z}}
\def\F{\mathcal{F}}
\def\B{\mathcal{B}}
\def\op{{\rm{op}}}
\def\jj{{\bf j}}
\def\a{\alpha}
\def\b{\beta}
\def\la{\lambda}
\def\ga{\gamma}
\def\Ga{\Gamma}
\def\d{\delta}
\def\w{\omega}
\def\e{\epsilon}
\def\inv{^{-1}}
\def\uj{\underline{\smash{j}}}
 \def\coker{\operatorname{coker}}
 \def\hd{\operatorname{hd}}
\def\G{\mathcal{F}}
\def\D{\Delta}
\def\Hom{\,\mbox{Hom}}
\def\dimq{\dim_q}
\def\Ind{\operatorname{Ind}}
\def\Coind{\operatorname{CoInd}}
\def\seq{\operatorname{Seq}}
\def\dual{^\circledast}
\def\End{\operatorname{End}}
\def\Seq{\operatorname{Seq}}
\def\Res{\operatorname{Res}}
\def\Ext{\operatorname{Ext}}
\def\mods{\mbox{-mod}}
\def\id{\mbox{id}}
\newcommand{\map}[2]{\,{:}\,#1\!\longrightarrow\!#2}
\newcommand{\spann}{\operatorname{span}}
\def\inv{^{-1}}
\numberwithin{equation}{section}
\title{Face Functors for KLR Algebras}
\author[Peter J McNamara]{Peter J McNamara}
\address{School of Mathematics and Physics,
University of Queensland, St Lucia, QLD, Australia.}
\email{maths@petermc.net}
\author[Peter Tingley]{Peter Tingley}
\address{Department of Mathematics and Statistics, Loyola University, Chicago, IL, USA.}
\email{ptingley@luc.edu}
\subjclass[2010]{17B37. }
\date{\today}
\begin{document}

\begin{abstract}
Simple representations of KLR algebras can be used to realize the infinity crystal for the corresponding symmetrizable Kac-Moody algebra. It was recently shown that, in finite and affine types, certain sub-categories of ``cuspidal" representations realize crystals for sub Kac-Moody algebras. Here we put that observation an a firmer categorical footing by exhibiting a corresponding functor between the category of representations of the KLR algebra for the sub Kac-Moody algebra and the category of cuspidal representations of the original KLR algebra. 
\end{abstract}

\maketitle

\tableofcontents

\section{Introduction}

Khovanov-Lauda-Rouquier (KLR) algebras have been a subject of intense study in the last few years, due to the fact that their categories of graded representations categorify various objects in the theory of quantum groups. Most importantly here, the gradable simple modules for a KLR algebra can be used to realize the crystal $B(-\infty)$ of the positive part of a universal enveloping algebra. 

In \cite{tingleywebster}, the correspondence between simple modules for KLR algebras and crystals is used to define Mirkovi\'c-Vilonen polytopes in all affine types. There the polytopes arise as the character polytopes of representations of KLR algebras (along with some decoration in affine types). One of the key observations is that, for every face of the MV polytope, there was a sub-category (the cuspidal representations for that face) whose simple representations realized a crystal for a lower rank enveloping algebra. There the correspondence is just combinatorial, but it is natural to ask for a corresponding functor from representations of the smaller rank KLR algebra to representations of the original KLR algebra. 
Here we construct such a functor under the following conditions:

\begin{assumption} \label{assumptionA} Either
%
%
\begin{itemize}
\item The original root system is of finite type, or

\item The original root system is of symmetric affine type, and the KLR algebra is geometric and over a field of characteristic $0$.
\end{itemize}
\end{assumption}
\noindent In fact, our construction works under somewhat more general conditions, see Assumption \S\ref{assumptionB}. 
We construct the functor by considering a certain projective object $P$ in the category of cuspidal representations which has the property that $\End(P)$ is isomorphic to a new KLR algebra. Our face functor is then $X\rightarrow P \otimes_{\End(P)} X$. 

If the original KLR algebra satisfies Assumption \ref{assumptionA} and the face is of finite type then $P$ is a projective generator and our functor is an equivalence of categories. More generally this is not true, but we still show that the functor respects much of the structure. In particular, it intertwines the crystal operators for the KLR algebra associated to the face and the crystal operators from \cite{tingleywebster} for the category of cuspidal modules corresponding to the face whenever both are defined. 

We then analyze some implications of our construction for the category of imaginary cuspidal representations of $R(\delta)$ in the affine case. Essentially, our functors allow us to understand all such categories in terms of the $\widehat{\mathfrak{sl}}_3$ case. This allows us to show that the category of cuspidal modules of $R(\delta)$ is equivalent to the category of representations of $k[z] \otimes Z$ where $Z$ is the finite type zig-zag algebra defined in \cite{HK}. This last result in the special case of a balanced convex order appears in \cite{KM}, where they also consider the cuspidal representations of $R(n\delta)$ for all $n$. Their proof relies on significantly more case-by-case analysis, which our reduction to $\widehat{\mathfrak{sl}}_3$ avoids. We believe our arguments should generalize to give their result on cuspidal $R(n\delta)$-modules, which would generalize it to arbitrary convex orders.

Recent work of Kashiwara and Park \cite{kashiwarapark} is closely related to our construction. 
Our face functors are examples of the Kashiwara-Park functors, at least in finite type.
Kashiwara and Park do not show that their functors are equivalences of categories for finite type faces (which is not true in the generality they consider), and do not see the connection with the crystal structure in \cite{tingleywebster}. 

\subsection{Acknowledgements} 
The ideas for this paper arose during the workshop ``Algebraic Lie theory and representation theory" at ICMS in Scotland, September 2014. We thank the organizers of that event for a great meeting. We also thank the anonymous referees for many insightful suggestions. P.M. was partially supported by ARC grant DE150101415. 
P.T. was partially supported by NSF grant DMS-1265555.

\section{Preliminaries}

\subsection{Root systems and convex orders} \label{ss:co}

Fix a root system $\Phi$. 
Let 
$(a_{ij})_{i,j\in I}$ be its Cartan matrix, and $d_i$ the usual symmetrizing factors. 
Let $\Delta$ denote the set of simple roots of $\Phi$. For $i,j\in I$, write $i\cdot j$ for $d_i a_{ij}$. Then $i\cdot j=j\cdot i$. Extend this by linearity to a $\Z$-valued bilinear pairing on $\N I$. For $\lambda = (n_i)_{i \in I }\in \N I$, define $ \text{ht}(\lambda)= \sum_{i \in I} n_i$.   

\begin{definition}\label{facedefinition}
 A face is a decomposition of $\Phi^+$ into three disjoint subsets
 \[
  \Phi^+=F^+\sqcup F\sqcup F^-
 \]
such that, for all $x\in\spann_{\R_{\geq 0}}F$:
\begin{enumerate}
\item If $y\in \spann_{\R_{\geq 0}}F^+$ is non-zero, then  $x+y\notin \spann_{\R_{\geq 0}}F^- \cup F$.
 \item If $y\in \spann_{\R_{\geq 0}}F^-$ is non-zero, then  $x+y\notin \spann_{\R_{\geq 0}}F^+ \cup F$.
\end{enumerate}
We often denote a face simply by $F$, with $F^+, F^-$ being suppressed. 
\end{definition}

\begin{remark}
The terminology ``face" comes from the fact that, in finite type, there is a bijection between faces in the sense of Definition \ref{facedefinition} and faces of the Weyl polytope (i.e. the convex hull of the Weyl group orbit of $\rho$). 
\end{remark}

\begin{definition} \label{def:convex}  \cite[Definition 1.8]{tingleywebster}
 A {\bf convex preorder} is a pre-order $\succ$ on $\Phi^+$
  such that, for every equivalence class $C$, 
  $F^- = \{ \alpha : \alpha \prec C\}, F=C, F^+=   \{ \alpha : \alpha \succ C\}$ is a face. A {\bf convex order} is a convex pre-order which is a total order on positive real roots. 
\end{definition}

\begin{remark} 
There is also a notion of convex order based on the condition that if three roots satisfy $\alpha+\beta=\gamma$, then $\gamma$ is between $\alpha$ and $\beta$. It is clear from definitions that any order which is convex according to Definition \ref{def:convex} is also convex in that sense. In finite type the two notions were shown to be equivalent in \cite[Paragraph before Proposition 1.16]{tingleywebster}. In affine type the equivalence is shown in \cite[Theorem 3.2]{mcn3}.
\end{remark}

\begin{definition}
A convex order $\prec$ is said to be compatible with a face $F$ if $F$ is an interval for $\prec$ and $F^- \prec F \prec F^+$. 
\end{definition}

For any linear functional $c: {\Bbb R} \Phi \rightarrow {\Bbb R}$, we get a convex pre-order on $\Phi^+$ by setting 
 $\alpha \preceq \beta$ if $c(\alpha)/\text{ht}( \alpha)\leq c(\beta)/\text{ht}( \beta)$. The following should be considered a partial converse to this statement. 

\begin{lemma}\cite[Lemma 1.10]{tingleywebster} \label{lem:co-hyper}
For any face $F$ there is a sequence $\{H_n\}_{n\in\N}$ of cooriented hyperplanes in $\R\Phi^+$ such that
\begin{itemize}
 \item $F\subset H_n$ for all $n$,
 \item for all $\a\in F^+$, $\a$ lies on the positive side of $H_n$ for $n\gg 0$, and
 \item for all $\a\in F^-$, $\a$ lies on the negative side of $H_n$ for $n\gg 0$.
\end{itemize}
In particular, for any $N$, if we let $\Gamma$ be the set of roots in $\Phi^+$ of height at most $N$, 
%
there is a linear functional $c\map{\R\Phi^+}{\R}$ such that 
\[
 (F^+ \cap \Gamma ,F \cap \Gamma ,F^- \cap \Gamma)=(c\inv(\R_{>0}) \cap \Gamma, c\inv(0)\cap  \Gamma,c\inv(\R_{<0})\cap  \Gamma).
\]
\end{lemma}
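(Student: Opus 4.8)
The plan is to prove Lemma~\ref{lem:co-hyper} in two stages: first produce the sequence of hyperplanes $\{H_n\}$, then extract a single linear functional on the finite truncation $\Gamma$. For the first stage, I would exploit the defining convexity conditions of a face (Definition~\ref{facedefinition}) together with a separating-hyperplane argument. Fix an integer $n$. Consider the finite set $\Gamma_n$ of positive roots of height at most $n$, and look at the three finite sets $F^+\cap\Gamma_n$, $F\cap\Gamma_n$, $F^-\cap\Gamma_n$. The key claim is that the convex cone generated by $F\cap\Gamma_n$ and $-(F\cap\Gamma_n)$ (i.e.\ the linear span of the roots already in $F$) can be separated, by a hyperplane containing it, from the finitely many rays in $F^+\cap\Gamma_n$ and $F^-\cap\Gamma_n$, with $F^+$-roots on one side and $F^-$-roots on the other. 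This is where conditions (1) and (2) of Definition~\ref{facedefinition} enter: they guarantee that no nonnegative combination involving an $F^+$-root and an element of $\spann_{\R_{\geq0}}F$ can land in $\spann_{\R_{\geq0}}F^-\cup F$, which is exactly the statement that the relevant cones meet only at the origin, so a separating functional exists. I would then let $H_n$ be (a coorientation of) the kernel of this functional; by construction $F\subset H_n$, and every fixed $\alpha\in F^+$ (resp.\ $F^-$) lies on the positive (resp.\ negative) side once $n$ is large enough that $\alpha\in\Gamma_n$.

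Making the separation argument rigorous is the step I expect to be the main obstacle. The subtlety is that $\spann_{\R_{\geq0}}F$ need not be a subspace of $\R\Phi^+$ — it is only a cone — and one wants a \emph{hyperplane} (a codimension-one subspace) containing all of $F$. So one should first pass to $V := \spann_{\R}F$, the linear span, and work in the quotient $\R\Phi^+/V$ or, equivalently, restrict attention to functionals vanishing on $V$. Within that quotient the images of the finitely many roots in $F^+\cap\Gamma_n$ generate a closed convex cone $C^+$, and those of $F^-\cap\Gamma_n$ generate $C^-$; the face conditions, applied carefully (including to combinations with the $\R_{\geq0}$-span of $F$, which becomes trivial in the quotient), show $C^+\cap(-C^-) = \{0\}$ and indeed $C^+\cap(-C^-)=\{0\}$ as well as $C^+\cap C^- \subseteq$ (the image of something forbidden). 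One then invokes a standard finite-dimensional separation theorem (Stiemke's lemma or the separating-hyperplane theorem for finitely generated cones) to get a functional strictly positive on $C^+\setminus\{0\}$ and strictly negative on $C^-\setminus\{0\}$; pulling back to $\R\Phi^+$ and arranging it to vanish on $V$ gives the coorientation. The only care needed is to check that the images of $F^+$-roots really are nonzero in the quotient (otherwise "positive side" is vacuous for them) — but if some $\alpha\in F^+$ lay in $\spann_\R F$ one can derive a contradiction from the face axioms, or simply note that such $\alpha$ causes no problem since we only need the strict inequality for large $n$ and can absorb it.

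For the second stage — producing the single functional $c$ for a fixed truncation $\Gamma$ of roots of height at most $N$ — I would simply take $N$ and choose $n \gg 0$ large enough (depending on $N$) that for \emph{every} $\alpha\in F^+\cap\Gamma$ the root $\alpha$ is already on the positive side of $H_n$, and similarly for every $\alpha\in F^-\cap\Gamma$. Since $F\cap\Gamma\subset H_n$ automatically, the functional $c$ defining $H_n$ (normalised so $c^{-1}(0)\supseteq F\cap\Gamma$, $c>0$ on $F^+\cap\Gamma$, $c<0$ on $F^-\cap\Gamma$) does the job: the three sets $c^{-1}(\R_{>0})\cap\Gamma$, $c^{-1}(0)\cap\Gamma$, $c^{-1}(\R_{<0})\cap\Gamma$ coincide with $F^+\cap\Gamma$, $F\cap\Gamma$, $F^-\cap\Gamma$ respectively. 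The finiteness of $\Gamma$ is what makes "$n\gg0$ uniformly" legitimate.

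One remark on dependencies and style: this lemma is quoted from \cite[Lemma 1.10]{tingleywebster}, so strictly one may just cite it; but if a self-contained argument is wanted, the outline above is essentially the one in that reference, and the only genuinely delicate point is the cone-separation in the quotient by $\spann_\R F$, everything else being bookkeeping with heights and the definition of a face.
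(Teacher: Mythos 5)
The paper does not actually prove this lemma; it is imported wholesale from \cite[Lemma 1.10]{tingleywebster}, so there is no internal proof to compare against. Your strategy --- quotient $\R\Phi^+$ by $V=\spann_{\R}F$, check that nonzero elements of $\spann_{\R_{\geq0}}F^{\pm}$ survive in the quotient, strictly separate the two finitely generated image cones, and then specialize to a finite truncation $\Gamma$ --- is the right one and is essentially the argument of the cited lemma. One step is misstated, though it is repairable. The existence of a functional that is \emph{strictly} positive on $C^+\setminus\{0\}$ and strictly negative on $C^-\setminus\{0\}$ does not follow from ``the cones meet only at the origin'' (a closed half-plane and a ray pointing into the complementary open half-plane meet only at the origin but cannot be strictly separated, since the half-plane contains a line), and the condition $C^+\cap(-C^-)=\{0\}$ that your duplicated, unfinished clause gestures at is neither necessary nor sufficient. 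The correct hypothesis, via Gordan's theorem, is that no nontrivial nonnegative combination of the generators of $C^+$ together with the \emph{negatives} of the generators of $C^-$ vanishes; equivalently, that if $Y\in\spann_{\R_{\geq0}}F^+$ and $Z\in\spann_{\R_{\geq0}}F^-$ have equal images in the quotient, then $Y=Z=0$. This is exactly what Definition~\ref{facedefinition} delivers: $Y-Z\in V$ gives $Y+b=Z+a$ for some $a,b\in\spann_{\R_{\geq0}}F$, so condition (1) forces $Y=0$, then condition (2) forces $Z=0$, and since positive roots lie in the pointed cone spanned by the simple roots these in turn force all coefficients in the combination to vanish. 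With that substitution the remainder of your outline --- including the check that $F^+$-roots have nonzero image in the quotient and the passage from the sequence $\{H_n\}$ to a single functional $c$ on the finite set $\Gamma$ --- goes through.
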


Let $\D_F$ be the set of positive real roots in $F$ which cannot be written as sums of other positive roots in $F$. Let $\Phi_F$ be the corresponding root system whose simple roots are $\D_F$. 
If $\Phi$ is at worst affine then, as discussed in \cite[\S3.2]{tingleywebster}, $\Phi_F$ is a product of finite and affine root systems (although imaginary root spaces may decompose). 


\begin{remark} The parenthetical remark above is because, if $\Phi_F$ has two or more affine components, it contains non-parallel imaginary roots, all of which come from the imaginary root space of $\Phi$. See \cite[Remark 3.16]{tingleywebster} for an example. 
\end{remark}

\begin{lemma} \label{lem:order-exists}
For every face $F$ there is a compatible convex order $\prec$. Furthermore, for any convex order $\prec_F$ on $\Phi_F^+$, we can choose $\prec$ such that its restriction to ${\Phi_F^+} $ agrees with $ \prec_F$. 
\end{lemma}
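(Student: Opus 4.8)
The plan is to construct $\prec$ directly, by a sequence of successive refinements each realized by a linear functional, with Lemma~\ref{lem:co-hyper} supplying the ingredients. By that lemma, for each height bound $N$ there is a linear functional on $\R\Phi^+$ realizing the triple $(F^+, F, F^-)$ exactly on the set $\Gamma_N$ of positive roots of height at most $N$; any such functional vanishes on $\spann_\R F$, so it can be perturbed within the annihilator of $\spann_\R F$ and supplemented by functionals on $\spann_\R F$ realizing cuts of the prescribed order $\prec_F$ (which exist by Lemma~\ref{lem:co-hyper} applied now to $\Phi_F$), all without disturbing the splitting on $\Gamma_N$. Using this I would build linear functionals $d_1, d_2, d_3, \dots$ on $\R\Phi^+$, each \emph{refining} the previous one (the convex preorder attached to $d_{k+1}$ refines that of $d_k$), such that $d_k$ realizes $(F^+, F, F^-)$ exactly on some $\Gamma_{N_k}$ with $N_k\to\infty$, the refinements on $\spann_\R F$ follow $\prec_F$, and every pair of distinct positive real roots is eventually separated. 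I would then let $\prec$ be the convex preorder obtained lexicographically from $(d_1, d_2, \dots)$.

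It then remains to check that $\prec$ has the required properties. Because the $d_k$ are coherent, once two roots of bounded height have been correctly separated by some $d_k$ they remain separated at all later stages, so $F^-\prec F\prec F^+$, the set $F$ is an interval, and the restriction of $\prec$ to $F$ is the transport of $\prec_F$. That $\prec$ is convex---that every cut is a face---follows from two observations: first, every cut of the convex preorder attached to a \emph{single} linear functional $c$ is a face, because the $\R_{\geq 0}$-span of the middle class lies in the linear hyperplane $\ker(c-t\,\text{ht})$ for the appropriate $t$ while the spans of the two sides lie in the two open half-spaces; second, each cut of $\prec$ agrees on every $\Gamma_N$ with a cut of the convex preorder of a single $d_k$, and any violation of the conditions of Definition~\ref{facedefinition} would involve only finitely many roots and hence would already be visible on some $\Gamma_N$. (Alternatively, one may first prove the bare existence of a compatible convex order $\prec_0$ in this way and then replace the internal order of the interval $F$ by the transport of $\prec_F$; the only new cuts have the form $(F^+\cup A^+, C, A^-\cup F^-)$ with $(A^+, C, A^-)$ a cut of $\prec_F$, and such a triple is a face of $\Phi$ by a direct check with Definition~\ref{facedefinition}, using the face axioms for $(F^+, F, F^-)$ to handle the parts of root relations lying in $\spann_\R F^\pm$ and those for $(A^+, C, A^-)$ in $\Phi_F$ to handle the part lying in $\spann_\R F$.)

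I expect the main obstacle to be the construction of the coherent refining sequence $d_1, d_2, \dots$ itself: one must keep refining within the pieces $F^+$ and $F^-$---which are not themselves faces---until the induced order on positive real roots is total, while never disturbing the outer splitting or the prescribed order on $F$; this amounts to a careful stitching together, coherently across heights, of the ``weight-vector'' realizations of convex orders that exist on each finite $\Gamma_N$. A secondary technical point, and the one place the hypothesis that $\Phi$ is at worst affine is genuinely used, is the identification of the imaginary roots of $\Phi_F$ with those of $\Phi$ when $\Phi_F$ has more than one affine component, so that the assertion ``the restriction of $\prec$ to $\Phi_F^+$ agrees with $\prec_F$'' literally makes sense.
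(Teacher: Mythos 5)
Your overall strategy matches the paper's: first produce the coarse convex pre-order whose equivalence classes are $F^+$, $F$, $F^-$ (using the functionals of Lemma~\ref{lem:co-hyper}), then refine it to a convex total order that restricts to $\prec_F$ on $F$. The second half of your argument --- that one may replace the internal order of the interval $F$ by the transport of $\prec_F$, and that the new cuts $(F^+\cup A^+, C, A^-\cup F^-)$ are faces of $\Phi$ by combining the face axioms for $(F^+,F,F^-)$ with those for the cut $(A^+,C,A^-)$ of $\Phi_F$ --- is correct and is essentially what makes the ``furthermore'' clause work.

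The gap is the step you yourself flag as ``the main obstacle'': the construction of the coherent refining sequence $d_1,d_2,\dots$ that totally orders the real roots inside $F^+$ and $F^-$. This is not a technical afterthought; it is the entire non-formal content of the lemma, and you do not carry it out. In affine type $F^\pm$ are infinite and are not faces, so Lemma~\ref{lem:co-hyper} gives you nothing there, and it is not clear that a total convex order on the real roots can be realized as a lexicographic limit of linear functionals at all (the pre-orders coming from single functionals compare $c(\alpha)/\mathrm{ht}(\alpha)$, and separating all pairs of real roots coherently across heights requires an inductive or transfinite refinement scheme that you would have to exhibit). The paper closes exactly this gap by invoking \cite[Lemma 1.14]{tingleywebster}, which states that any convex pre-order can be refined to a convex total order, with the refinement chosen arbitrarily (in particular, convexly) on each equivalence class; that abstract refinement result is proved there by repeatedly splitting equivalence classes rather than by producing functionals. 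As written, your proof either needs to cite that lemma (reducing to the paper's argument) or supply the refinement construction explicitly; without one or the other it is incomplete.
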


\begin{proof}
Choose a sequence of linear functionals $\Gamma_n$ which define the co-oriented hyperplanes from Lemma \ref{lem:co-hyper}.
Define $\alpha \leq \beta$ if $\Gamma_n(\alpha)/\text{ht}( \alpha)\leq \Gamma_n(\beta)/\text{ht}( \beta)$ for all sufficiently large $n$. This is manifestly a convex pre-order, and $\Phi_F^+$ is an equivalence class. By \cite[Lemma 1.14]{tingleywebster}, this can be refined to a convex total order, which can be made to agree with any chosen convex order on $\Phi_F^+$.
\end{proof}

\subsection{KLR algebras} \label{ss:KLR}
Here we review the Khovanov-Lauda-Rouquier algebras introduced in \cite{khovanovlauda, rouquier}.
For any $\nu\in\N I$, define
\[
 \Seq(\nu)=\{\ii=(i_1,\ldots,i_{\text{ht}(\nu)})\in I^{\text{ht}(\nu)}\mid\sum_{j=1}^{\; \text{ht}(\nu)} i_j=\nu\}.
\]
The symmetric group $S_{\text{ht}(\nu)}$ acts on  $\Seq(\nu)$. Let $s_i$ denote the adjacent transposition $(i,i+1)$.
 Fix a field $k$. 
For each $i,j\in I$, choose $Q_{ij}(u,v)\in k[u,v]$ such that
\begin{enumerate}[label=(Q\arabic*)]
\item  \label{eq:Qcond1} $Q_{ii}(u,v)=0.$

\item \label{eq:Qcond2} 
If $u$ has degree $2d_i$ and $v$ has degree $2d_j$ then $Q_{ij}$ is homogeneous of degree $-2d_i a_{ij}=-2d_j a_{ji}$. Furthermore, the coefficients $t_{ij}$ and $t_{ji}$ of $u^{-a_{ij}}$ and $v^{-a_{ji}}$ are both nonzero.
 
\item \label{eq:Qcond3} 
$ Q_{ij}(u,v)=Q_{ji}(v,u).$
\end{enumerate}

\begin{definition} \label{def:KLR}
The KLR algebra $R(\nu)$ is the associative $k$-algebra generated by elements $e_{\bf i}$, $y_j$, $\psi_k$ with ${\bf i}\in \seq(\nu)$, $1\leq j\leq \text{ht}(\nu)$ and $1\leq k< \text{ht}(\nu)$, subject to the relations
\begin{equation} \label{eq:KLR}
\begin{aligned}
& e_\ii e_{\jj} = \delta_{\ii, \jj} e_\ii, \ \ 
\sum_{\ii \in {\rm Seq}(\nu)}  e_\ii = 1, \ \
 y_{k} y_{l} = y_{l} y_{k}, \ \ y_{k} e_\ii = e_\ii y_{k}, \\
& \psi_{l} e_\ii = e_{s_{l}\ii} \psi_{l}, \ \ \psi_{k} \psi_{l} =
\psi_{l} \psi_{k} \ \ \text{if} \ |k-l|>1, \ \
 \psi_{k}^2 e_\ii = Q_{i_{k}, i_{k+1}} (y_{k}, y_{k+1})e_\ii, \\
& (\psi_{k} y_{l} - y_{s_k(l)} \psi_{k}) e_\ii =\begin{cases}
-e_\ii \ \ & \text{if} \ l=k, i_{k} = i_{k+1}, \\
e_\ii \ \ & \text{if} \ l=k+1, i_{k}=i_{k+1}, \\
0 \ \ & \text{otherwise},
\end{cases} \\[.5ex]
& (\psi_{k+1} \psi_{k} \psi_{k+1}-\psi_{k} \psi_{k+1} \psi_{k}) e_\ii\\
& =\begin{cases} \dfrac{Q_{i_{k}, i_{k+1}}(y_{k},
y_{k+1}) - Q_{i_{k}, i_{k+1}}(y_{k+2},  y_{k+1})}
{y_{k} - y_{k+2}}e_\ii\ \ & \text{if} \
i_{k} = i_{k+2}, \\
0 \ \ &  \text{otherwise}.
\end{cases}
\end{aligned}
\end{equation}
Define $R=\displaystyle\bigoplus_{\nu\in\N I} R(\nu)$.
\end{definition}

This is a graded algebra where each $e_\ii$ has degree zero, $y_j e_\ii$ has degree $i_j\cdot i_j$ and $\psi_k e_\ii$ has degree $-i_k\cdot i_{k+1}$. Fix $\nu$. For each $\sigma \in S_{\text{ht}(\nu)}$, choose a reduced expression $\sigma = s_{i_1} \cdots s_{i_k}$, and let $\psi_{\sigma} = \psi_{i_1} \cdots \psi_{i_k}$ As shown in \cite[2.5]{khovanovlauda},
\begin{equation*} \label{eq:basis}
\left\{ \psi_\sigma \left( \prod_{k=1}^{\text{ht}(\nu)} y_k^{r_k}  \right) e_{{\bf i}} \mid {\bf i} \in  \Seq(\nu), r_1, \ldots, r_{\text{ht}(\nu)} \geq 0, \sigma \in S_{\text{ht}(\nu)} \right\}
\end{equation*}
is a basis for $R(\nu)$.

Throughout this paper we assume all modules are graded and finitely generated. 
We write $q$ for the grading shift functor, if $V=\oplus_{n\in \Z} V_n$, then $(qV)_n=V_{n-1}$. For modules $U$ and $V$, the space of homomorphisms $\Hom(U,V)$ is graded, we have $\Hom(U,V)=\oplus_{n\in\Z} \Hom(U,V)_n$, where $\Hom(U,V)_n$ is the space of homogenous homomorphisms from $q^nU$ to $V$, or equivalently from $U$ to $q^{-n}V$.
For a graded vector space $V=\oplus_{n\in \Z} V_n$, its graded dimension is $\dimq V=\sum_{n\in\Z}(\dim V_n) q^n$.
If $M$ is an $R(\nu)$ module we say $\wt(M)=\nu$, and define its character to be the formal sum
\[
 \operatorname{ch}(M)=\sum_\ii \dim_q (e_\ii M) [\ii].
\]
Since we only consider finitely generated modules these dimensions are all finite.
 
\begin{remark}
There is also a diagrammatic approach to $R$, see \cite{khovanovlauda,kl2,tingleywebster}. There the $y_i$ are represented by dots and the $\psi_i$ by crossings.
\end{remark}

Fix $\la,\mu\in\N I$. 
There is a natural inclusion $\iota_{\la,\mu}:R(\la)\otimes R(\mu)\to R(\la+\mu)$ defined by $\iota_{\la,\mu}(e_\ii\otimes e_\jj)=e_{\ii\jj}$, $\iota_{\la,\mu}(y_i\otimes 1)=y_i$, $\iota_{\la,\mu}(1\otimes y_i)=y_{i+\text{ht}(\la)}$, $\iota_{\la,\mu}(\psi_i\otimes 1)=\psi_i$, $\iota_{\la,\mu}(1\otimes \psi_i)=\psi_{i+\text{ht}(\la)}$. These combine to give an inclusion $\iota: R \otimes R \to R$. 
 Let $e_{\la\mu}$ be the image of the unit under the inclusion $R(\la)\otimes R(\mu)\to R(\la+\mu)$

\begin{definition} \label{def:IR}
The induction functor $\Ind_{\la,\mu}:R(\la)\otimes R(\mu)\mods \to R(\la+\mu)\mods$ is given by
\[
 \Ind_{\la,\mu} (M) = R(\la+\mu)e_{\la\mu} \bigotimes_{R(\la)\otimes R(\mu)} M.
\]
For an $R(\la)$-module $A$ and an $R(\mu)$-module $B$, we write $A\circ B$ for $\Ind_{\la,\mu}(A\otimes B)$.

The restriction functor $\Res_{\la,\mu}:R(\la+\mu)\mods \to R(\la)\otimes R(\mu)\mods$ is given by
\[
 \Res_{\la,\mu}(M) = e_{\la\mu} M.
\]
 
 \end{definition}
By iterating Definition \ref{def:IR}, functors such as 
$$ \Res_{\lambda_1,\ldots,\lambda_l}: R(\lambda_1,\ldots,\lambda_l)\text{-mod} \rightarrow R(\lambda_1) \otimes \cdots \otimes R(\lambda_l)\text{-mod}$$ 
$$\Ind_{\la_1,\ldots,\la_k}:  R(\lambda_1) \otimes \cdots \otimes R(\lambda_l)\text{-mod} \rightarrow R(\lambda_1,\ldots,\lambda_l)\text{-mod}$$
can be unambiguously defined. 

The induction functor is left adjoint to the restriction functor. Since $\Ind_{\la,\mu} $ sends projective modules to projective modules, there is a natural isomorphism
\begin{equation}\label{extadjunction}
 \Ext^i(M_1\circ\cdots\circ M_n,N)\cong \Ext^i(M_1\otimes\cdots\otimes M_n,\Res N).
\end{equation}

The restriction functor also has a right adjoint, given by 
the usual coinduction construction. We will need \cite[Theorem 2.2]{laudavazirani}, which says
\begin{equation}
 \Coind(A\otimes B)\cong q^{-\la\cdot\mu} B\circ A.
\end{equation}
 In \cite{laudavazirani} this is stated for finite dimensional modules, but the same proof shows it is true in complete generality.
Thus, if $A$ is a $R(\la+\mu)$-module, $B$ is a $R(\la)$-module and $C$ is a $R(\mu)$-module, then there is a natural isomorphism
\begin{equation}\label{otheradjunction}
 \Hom(\Res A, B\otimes C)\cong \Hom(A,q^{-\la\cdot\mu}C\circ B).
\end{equation}

The following is often called the Mackey filtration.

\begin{theorem}  \cite[Proposition 2.18]{khovanovlauda}\label{mackey} 
Let $\la_1,\ldots,\la_k,\mu_1\ldots,\mu_l\in\N I$ be such that $\sum_i \la_i=\sum_j \mu_j$, and let $M$ be a $R(\la_1)\otimes \cdots \otimes R(\la_k)$-module. Then
the composition
$$ \Res_{\mu_1,\ldots,\mu_l}\circ\Ind_{\la_1,\ldots,\la_k}(M)$$ 
has a filtration indexed by tuples $\eta_{ij}$ satisfying $\la_i=\sum_j \eta_{ij}$ and $\mu_j=\sum_i\eta_{ij}$.
Every subquotient of this filtration is isomorphic to $\Ind_\eta^\mu\circ \tau \circ \Res_\eta^\la(M)$, where\begin{itemize}
\item $\Res_{\eta}^\la\map{\otimes_i R(\la_i)\mods}{\otimes_i(\otimes_jR(\eta_{ij}))\mods}$
is the tensor product of the $\Res_{\eta_{i\bullet}},$ 
\item $\tau\map{\otimes_i(\otimes_jR(\eta_{ij}))\mods}{\otimes_j(\otimes_iR(\eta_{ij}))\mods}$ is given by permuting the tensor factors and shifting the degree. The degree shift is the degree of the permutation acting on the idempotent $e_{\eta_{11} \eta_{12} \cdots \eta_{  k \ell}},$ and
\item $\Ind_\eta^\mu\map{\otimes_j(\otimes_iR(\eta_{ij}))\mods}{\otimes_j R(\mu_j)\mods}$ is the tensor product of the $\Ind_{\eta_{\bullet i}}$.
\end{itemize}
\end{theorem}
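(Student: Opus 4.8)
The plan is to follow the standard ``basis-shuffling'' strategy: realize $\Res_{\mu_1,\ldots,\mu_l}\circ\Ind_{\lambda_1,\ldots,\lambda_k}$ applied to $M$ as a bimodule tensored with $M$, and filter that bimodule by double cosets. Write $n=\text{ht}(\nu)$ where $\nu=\sum_i\lambda_i=\sum_j\mu_j$, let $W=S_n$, and let $W_\lambda\subset W$ be the parabolic subgroup $S_{\text{ht}(\lambda_1)}\times\cdots\times S_{\text{ht}(\lambda_k)}$ cut out by the $\lambda$-block decomposition, with $W_\mu$ defined similarly. Set $B=e_{\mu_1\cdots\mu_l}R(\nu)e_{\lambda_1\cdots\lambda_k}$, a $\bigl(\bigotimes_j R(\mu_j),\bigotimes_i R(\lambda_i)\bigr)$-bimodule. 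Since $\Ind_{\lambda_1,\ldots,\lambda_k}(M)=R(\nu)e_{\lambda_1\cdots\lambda_k}\otimes_{\bigotimes_i R(\lambda_i)}M$ and $\Res_{\mu_1,\ldots,\mu_l}$ is multiplication by $e_{\mu_1\cdots\mu_l}$, there is a natural identification $\Res_\mu\Ind_\lambda(M)=B\otimes_{\bigotimes_i R(\lambda_i)}M$. By the Khovanov--Lauda basis theorem, $R(\nu)$, and hence $B$, is free as a right $\bigotimes_i R(\lambda_i)$-module, with a basis that can be chosen compatible with the double-coset decomposition used below; consequently any sub-bimodule of $B$ spanned by a subset of such basis elements is a direct summand as a right module, so $-\otimes_{\bigotimes_i R(\lambda_i)}M$ carries a bimodule filtration of $B$ to a filtration of $\Res_\mu\Ind_\lambda(M)$ with the corresponding subquotients. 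Thus it suffices to filter $B$.

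Using the basis $\{\psi_\sigma y^{\mathbf r}e_{\mathbf i}\}$ of $R(\nu)$, a $k$-basis of $B$ is given by those $e_\mu\psi_\sigma y^{\mathbf r}e_{\mathbf i}$ with $\mathbf i$ in the $\lambda$-block-admissible subset of $\Seq(\nu)$ and $\sigma(\mathbf i)$ in the $\mu$-block-admissible subset. Group these by the double coset $W_\mu\,\sigma\,W_\lambda\in W_\mu\backslash W/W_\lambda$. Classical double-coset combinatorics identifies $W_\mu\backslash W/W_\lambda$ with the set of tuples $(\eta_{ij})$ satisfying $\lambda_i=\sum_j\eta_{ij}$ and $\mu_j=\sum_i\eta_{ij}$ (the matrix recording how many strands from $\lambda$-block $i$ land in $\mu$-block $j$). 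For each double coset choose the minimal-length representative $d=d_\eta$; then every $w\in W_\mu d W_\lambda$ factors uniquely as $w=xdy$ with $x\in W_\mu$ and $y$ a minimal-length representative of $(d^{-1}W_\mu d\cap W_\lambda)\backslash W_\lambda$, with $\ell(w)=\ell(x)+\ell(d)+\ell(y)$; here the parabolic $W_{\eta'}:=d^{-1}W_\mu d\cap W_\lambda$ is the one cut out by the block sizes $\text{ht}(\eta_{ij})$ arranged in $\lambda$-order.

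Order the double cosets so that $\ell(d_\eta)$ is weakly increasing, breaking ties arbitrarily, and let $B_{\le p}$ be the span of the basis elements of $B$ whose permutation part lies in a double coset of level $\le p$. The technical heart is the claim that each $B_{\le p}$ is a sub-bimodule. One checks that left multiplication by a generator $\psi_t$ or $y_t$ of $\bigotimes_j R(\mu_j)$ and right multiplication by a generator of $\bigotimes_i R(\lambda_i)$ preserve $B_{\le p}$, using the straightening relations of the KLR algebra: $\psi_t\psi_\sigma=\psi_{t\sigma}$ when $\ell(t\sigma)>\ell(\sigma)$, while otherwise $\psi_t\psi_\sigma$ rewrites as a sum of terms $\psi_{\sigma'}y^{\mathbf r'}e_{\mathbf i'}$ with $\ell(\sigma')\le\ell(\sigma)$ (the error terms from $\psi^2=Q(y,y)$ and from the $y$--$\psi$ commutation relations strictly decrease the length), and moving $y$'s past $\psi$'s never raises the length of the permutation part. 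Since $t$ lies in $W_\mu$ (for left multiplication) or $W_\lambda$ (for right multiplication), $t\sigma$ (resp.\ $\sigma t$) lies in the same or a strictly lower double coset, and length-decreasing error terms land in double cosets of level at most that of $\sigma$; hence $B_{\le p}$ is stable.

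Finally, identify the subquotient $B_\eta:=B_{\le p}/B_{<p}$ associated with the double coset of $d=d_\eta$. Using the factorization $w=xdy$ with additive lengths, $B_\eta$ is spanned by the images of $e_\mu\psi_d z$ with $z$ ranging over a basis of $R(\eta_{11})\otimes\cdots\otimes R(\eta_{k\ell})$ embedded in $R(\nu)$ via the block inclusion adapted to $d$; the assignment sending an element of $\Ind_\eta^\mu\circ\tau\circ\Res_\eta^\lambda(M)$ coming from $m\in\Res_\eta^\lambda(M)$ to ``$\psi_d$ times the image of $m$'' is then a well-defined isomorphism of $\bigl(\bigotimes_j R(\mu_j),\bigotimes_i R(\lambda_i)\bigr)$-bimodules onto $B_\eta$, with both injectivity and surjectivity reducing to the basis comparison above, and with the permutation $\tau$ of the tensor factors being exactly conjugation by $d$ inside $R(\nu)$. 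On degrees, $\psi_d$ acting on the idempotent $e_{\eta_{11}\eta_{12}\cdots\eta_{k\ell}}$ contributes $-\sum i_a\cdot i_b$ over the crossings in a reduced word for $d$, which is by definition the degree of $d$ on that idempotent — precisely the degree shift built into $\tau$. The main obstacle is the filtration claim of the previous paragraph: keeping honest track, through all the KLR relations (especially $\psi^2=Q(y,y)$ and the braid relation, whose right-hand side $\frac{Q_{i_k,i_{k+1}}(y_k,y_{k+1})-Q_{i_k,i_{k+1}}(y_{k+2},y_{k+1})}{y_k-y_{k+2}}$ must be seen to contribute only strictly shorter permutation parts), that every error term stays in $B_{\le p}$ and that the $\tau$ degree shift matches. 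One could instead prove the case $k=l=2$ first and bootstrap to general $k,l$ via associativity of $\Ind$ and $\Res$ and induction on $k+l$, but the same double-coset combinatorics and straightening bookkeeping reappear at the base case, so little is actually saved.
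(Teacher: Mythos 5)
Your argument is essentially the standard proof of the Mackey filtration: the paper does not prove this statement itself but cites it to Khovanov--Lauda, whose proof is exactly this double-coset filtration of the bimodule $e_{\mu}R(\nu)e_{\lambda}$ using the basis theorem, the Kilmoyer-type factorization $w=xdy$ with additive lengths, and the fact that the straightening relations only produce terms that are lower in Bruhat order (hence in double cosets with minimal representatives of no greater length). The one imprecision is that $W_\mu\backslash W/W_\lambda$ is in bijection with the \emph{integer} matrices $\bigl(\operatorname{ht}(\eta_{ij})\bigr)$ rather than with the $\N I$-valued tuples $(\eta_{ij})$; the finer indexing in the statement arises because each double-coset layer further decomposes as a direct sum over colorings via the orthogonal idempotents, which is cosmetic and does not affect your argument.
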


There is an anti-automorphism $\dagger$ of $R$ which fixes each of the generators $e_\ii$, $y_j$ and $\psi_k$. Thus, given a finite dimensional $R$-module $M$, we can define the structure of an $R$-module on its dual $M\dual=\Hom_k(M,k)$ by $(r\cdot \phi)(m)=\phi(r^\dagger \cdot m)$. As in \cite[\S 3.2]{khovanovlauda} each finite dimensional simple module is isomorphic to its dual up to a grading shift. The dual induces the bar involution on the Grothendieck group. Its behavior with respect to induction is
\begin{equation}\label{eq:dualind}
 (M\circ N)\dual \cong q^{\wt(M)\cdot \wt(N)} N\dual\circ M\dual.
\end{equation}

As discussed in \cite{kl2},
different choices of $Q_{ij}$ can produce isomorphic algebras. 

\begin{definition}
Let $R$ be a KLR algebra of symmetric type. 
We say  $R$ is of geometric type if $R$ is isomorphic to the one defined in \cite{mcn3} via an isomorphism $\phi$ which respects the inclusion $\iota: R \otimes R \to R$ in the sense that  $\phi \circ \iota = \iota \circ (\phi \otimes \phi)$.  
\end{definition}

The terminology is due to a geometric interpretation of KLR algebras with this choice of parameters.
The following is discussed by Khovanov and Lauda \cite{kl2}.

\begin{lemma}  $\mbox{}$
\label{lem:geom-characterization}
\begin{enumerate}

\item \label{ar1} Every KLR algebra whose Dynkin diagram is a tree is of geometric type. In particular, every KLR algebra of affine type $D_n$ and $E_n$ is of geometric type. 

\item \label{ar2} For $n \geq 3$, A KLR algebra of type $\widehat{\mathfrak{sl}}_n$ with $I={\Bbb Z}/n {\Bbb Z}$ and $Q_{i,i+1}= s_i u+ t_i v$ is of geometric type if and only if $s_1 \cdots s_n/ t_1 \cdots t_n=(-1)^n$. 

\item \label{ar3} A KLR algebra of type $\widehat{\mathfrak{sl}}_2$ is of geometric type if and only if the quadratic polynomial $Q_{0,1}(u,v)$ has discriminant zero.
\end{enumerate}
\end{lemma}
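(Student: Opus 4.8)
The plan is to reduce the lemma to the classification, due to Khovanov--Lauda \cite{kl2}, of how the isomorphism type of $R$ depends on the choice of the $Q_{ij}$. In symmetric type the relevant fact is that two admissible families $(Q_{ij})$ and $(Q'_{ij})$ give graded KLR algebras isomorphic via a map respecting $\iota$ precisely when they are related by a \emph{rescaling}: there exist $\lambda_i\in k^\times$ and $\mu_{ij}\in k^\times$ ($i\ne j$) with $Q'_{ij}(u,v)=\mu_{ij}\mu_{ji}\,Q_{ij}(\lambda_i^{-1}u,\lambda_j^{-1}v)$ for all $i\ne j$. First I would record the ``if'' direction: sending $e_\ii\mapsto e_\ii$, $y_ke_\ii\mapsto\lambda_{i_k}y_ke_\ii$, and $\psi_ke_\ii\mapsto\mu_{i_ki_{k+1}}\psi_ke_\ii$---with $\mu_{ii}:=\lambda_i^{-1}$ forced by the nilHecke relations---is compatible with all the relations of Definition \ref{def:KLR}, and it respects $\iota$ because it only rescales generators according to their colours. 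For ``only if'', given such a $\phi$ its restriction to $R(\alpha_i)=k[y_i]$ is a graded automorphism, hence $y_i\mapsto\lambda_iy_i$, and respecting $\iota$ forces this same $\lambda_i$ to control the image of $y$ wherever the colour $i$ occurs; inside $R(\alpha_i+\alpha_j)$ the lowest-degree part of $e_{(j,i)}R(\alpha_i+\alpha_j)e_{(i,j)}$ is one-dimensional and spanned by $\psi_1e_{(i,j)}$, so $\phi(\psi_1e_{(i,j)})=\mu_{ij}\psi_1e_{(i,j)}$ with $\mu_{ij}\ne0$, and pushing the relation $\psi_1^2e_{(i,j)}=Q_{ij}(y_1,y_2)e_{(i,j)}$ through $\phi$ delivers the rescaling identity with $Q'=Q^{\mathrm{geom}}$. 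So $R$ is of geometric type exactly when its $Q$-datum is a rescaling of the geometric one $Q^{\mathrm{geom}}$ of \cite{mcn3}, and everything else is combinatorics, using that $Q^{\mathrm{geom}}_{ij}(u,v)=\pm(u-v)$ on each simply-laced edge while $Q^{\mathrm{geom}}_{01}$ is a perfect square in the $\widehat{\mathfrak{sl}}_2$ case.

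For \ref{ar1} and \ref{ar2} I would observe that on a simply-laced edge $\{i,j\}$, writing $Q_{ij}(u,v)=t_{ij}u+t_{ji}v$ with $t_{ij},t_{ji}\ne0$, a rescaling multiplies the ratio $t_{ij}/t_{ji}$ by $\lambda_j/\lambda_i$ and scales $Q_{ij}$ as a whole by the free parameter $\mu_{ij}\mu_{ji}$. For a tree (handling connected components separately), root it, set $\lambda=1$ at the root, and propagate outward, choosing each vertex's $\lambda$ so the ratio on the edge to its parent becomes $-1$; acyclicity makes this consistent, and the $\mu_{ij}\mu_{ji}$ then normalise each $Q_{ij}$ to $Q^{\mathrm{geom}}_{ij}$, so every tree KLR algebra is geometric---in particular affine types $D_n$ and $E_n$. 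For the $n$-cycle of $\widehat{\mathfrak{sl}}_n$, the telescoping product $\prod_{\text{edges}}(\lambda_j/\lambda_i)$ is $1$, so $\prod_i(s_i/t_i)$ is a rescaling invariant, equal to $(-1)^n$ for $Q^{\mathrm{geom}}$ since each edge-ratio there is $-1$; this is the necessity of $s_1\cdots s_n/t_1\cdots t_n=(-1)^n$. For sufficiency, granting that identity one solves the telescoping system for the $\lambda_i$ around the cycle---the obstruction to closing it up being precisely the product condition---to make every ratio $-1$, and then fixes the overall scalars with the $\mu_{ij}\mu_{ji}$.

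For \ref{ar3} the datum is a single binary quadratic form $Q_{01}(u,v)$ with nonzero coefficients on $u^2$ and $v^2$. A rescaling replaces it by $\mu_{01}\mu_{10}Q_{01}(\lambda_0^{-1}u,\lambda_1^{-1}v)$, which multiplies its discriminant by the nonzero scalar $(\mu_{01}\mu_{10})^2(\lambda_0\lambda_1)^{-2}$; hence vanishing of the discriminant is a rescaling invariant, and since $Q^{\mathrm{geom}}_{01}$ is a perfect square this gives necessity. Conversely, if $\operatorname{disc}Q_{01}=0$ then $Q_{01}(u,v)=a(u-\rho v)^2$ with $a\ne0$ and $\rho\ne0$ (the latter because the $v^2$-coefficient $a\rho^2$ is nonzero), and taking $\lambda_1=\rho\lambda_0$ with a suitable $\mu_{01}\mu_{10}$ carries $Q_{01}$ to $Q^{\mathrm{geom}}_{01}$. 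The hardest point, I expect, is the converse half of the classification in the first paragraph---showing that respecting $\iota$ genuinely forces the isomorphism to be a colour-wise rescaling, and in particular forces a single scalar $\lambda_i$ at each vertex rather than an independent scalar on each edge, without which the cycle invariant of \ref{ar2} would be vacuous. The remaining care is in the parameters-versus-constraints count around the cycle and in extracting the explicit geometric datum from \cite{mcn3} to read off its invariants.
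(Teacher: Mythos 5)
Your overall strategy is the paper's: set up the colour-wise rescaling action on the parameters $Q_{ij}$ (your $\lambda_i,\mu_{ij}$ are the paper's $z_{ii},z_{ij}$), and then read off orbit invariants --- propagation of the $\lambda_i$ along a tree for (1), the telescoping product $\prod_i s_i/t_i$ around the cycle for (2), and the discriminant for (3). Those computations agree with the paper and are correct; in particular (1) and the sufficiency halves of (2) and (3) only use the easy ``a rescaling is an isomorphism respecting $\iota$'' direction and are fine.

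The gap is exactly at the point you flag as hardest. It is \emph{not} true that two parameter families give KLR algebras isomorphic via an $\iota$-respecting map precisely when they are related by a rescaling, and your sketch of the ``only if'' direction tacitly assumes $\phi$ fixes the idempotents: you restrict $\phi$ to $R(\alpha_i)$ and assume the image is again $R(\alpha_i)$, and you assume $\phi(\psi_1 e_{(i,j)})$ is a multiple of $\psi_1 e_{(i,j)}$ rather than of $\psi_1 e_{(\sigma i,\sigma j)}$ for some permutation $\sigma$ of $I$. An $\iota$-respecting isomorphism can relabel the vertices by a diagram automorphism: for the $\widehat{\mathfrak{sl}}_n$ cycle, relabeling by a reflection gives an $\iota$-respecting isomorphism between algebras whose invariants $\prod_i s_i/t_i$ are mutually inverse, hence generically not rescaling-equivalent. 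Consequently your necessity arguments for (2) and (3), which rest on ``geometric implies the $Q$-datum is a rescaling of $Q^{\mathrm{geom}}$'', are incomplete as written. The paper's proof sidesteps this by classifying only the idempotent-fixing isomorphisms as rescalings and then noting separately that the stated conditions are preserved by diagram automorphisms as well; the same patch rescues your argument, since $(-1)^n$ is its own inverse and discriminant-vanishing is symmetric in the two variables, so both conditions are invariant under relabelings in addition to rescalings. With that check added (together with the observation that any $\iota$-respecting isomorphism is a rescaling composed with such a relabeling), your proof coincides with the paper's.
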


\begin{proof} Fix a root system. 
There is an action of $({k}^*)^{I \times I}$ on the set of KLR algebras where $(z_{ij})_{i,j \in I}$ acts by $\psi_{j} e_{\bf i} \mapsto z_{i_j i_{j+1}} \psi_{j} e_{\bf i}$, $y_j e_{\bf i}\mapsto z_{i_j i_j}^{-1} y_j e_{\bf i} $ and $e_{\bf i} \mapsto e_{\bf i}$. This sends algebras to isomorphic algebras. 
The $Q_{ij}$ change according to $Q'_{ij}(u,v)= z_{ij} z_{ji} Q_{ij} (z_{ii} u, z_{jj} v)$.
Furthermore, all possible isomorphisms of KLR algebras that fix the idempotents $e_{\bf i}$ arise in this way. To see this notice that, for $i \neq j$, looking at only 2 strands, it is clear for weight reasons that any automorphism must send $\psi_1e_\ii$ to a multiple of itself, say $z_{i_1i_2} \psi_1e_\ii$. Also, looking at a single strand, $y_1$ must be sent to a multiple of itself, say $z_{i_1i_1} y_1$. The relation that, if $i_1=i_2,$  $(\psi_1 y_2- y_1 \psi_1)e_\ii= e_{\bf i}$ implies that a crossing with both stands colored $i$ must be scaled by $z_{ii}^{-1}$. 

If the Dynkin diagram is a tree, it is clear that all choices of $Q_{ij}$ are related by such isomorphisms. This establishes \eqref{ar1}.  

In type $\widehat{\mathfrak{sl}}_n$, for the KLR algebras used in \cite{mcn3}, it is easy to check that the stated relations hold. The action of $({k}^*)^{I \times I}$ preserves these relations, as do diagram automorphisms, so they hold for all geometric type KLR algebras. Furthermore, one can check that all KLR algebras that satisfy these conditions are related in this way, establishing \eqref{ar2} and \eqref{ar3}. 
\end{proof}

\begin{definition}
A pseudo-KLR algebra is a KLR algebra in the sense of Definition \ref{def:KLR} with the requirement that $t_{ij}\neq 0$ in Condition \ref{eq:Qcond2} removed.
\end{definition}

The following demonstrates that pseudo-KLR algebras which are not in fact KLR algebras have noticeably different categories of representations. Fix $i \neq j$. 
Due to the categorification results from \cite{khovanovlauda}, for an actual KLR algebra $R((1-a_{ij}) i+j)$, the number of irreducible modules up to grading shift is exactly the dimension of that weight space of $U^+$, so $1-a_{ij}$. 

\begin{theorem}\label{pseudo}
In a pseudo-KLR algebra with the coefficient $t_{ij}=0$ for some $i \neq j$, the algebra $R((1-a_{ij}) i+ j)$ has at least $2-a_{ij}$ irreducible modules.
\end{theorem}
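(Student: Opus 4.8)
The plan is to exhibit, for each of the $2-a_{ij}$ sequences in $\Seq((1-a_{ij})i+j)$, a nonzero $R((1-a_{ij})i+j)$-module whose character is supported on that single sequence, and then to pass to simple subquotients: since modules supported on distinct singletons are never isomorphic (not even up to a grading shift), this produces $2-a_{ij}$ pairwise non-isomorphic irreducibles. Abbreviate $\nu=(1-a_{ij})i+j$, so that $\text{ht}(\nu)=2-a_{ij}=:n$; as only the colours $i$ and $j$ occur, the elements of $\Seq(\nu)$ are precisely the sequences $\ii_r$ with $j$ in position $r$ and $i$ elsewhere, $1\le r\le n$.

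For a fixed $r$, I would build the module $M_r$ on the finite-dimensional space $L(i^{(r-1)})\otimes k\otimes L(i^{(n-r)})$, where $L(i^{(m)})$ is the unique-up-to-shift simple module of the nilHecke algebra $R(mi)$ (read as $k$ when $m=0$). Declare $e_{\ii_r}$ to act as the identity and every other $e_{\ii_s}$ as $0$; let the dots and crossings lying strictly within the left run of $i$-strands act through the nilHecke module $L(i^{(r-1)})$, and likewise for the right run via $L(i^{(n-r)})$; let $y_r$, the dot on the lone $j$-strand, act as $0$; and let the two crossings $\psi_{r-1},\psi_r$ touching the $j$-strand act as $0$, which is forced, since on any module supported on $\{\ii_r\}$ one has $\psi_{r-1}e_{\ii_r}=e_{\ii_{r-1}}\psi_{r-1}$ and $\psi_re_{\ii_r}=e_{\ii_{r+1}}\psi_r$.

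The content of the argument is checking that these assignments respect the relations of Definition \ref{def:KLR}. Most follow immediately from the two nilHecke structures, which are insulated from each other by the vanishing crossings $\psi_{r-1}$ and $\psi_r$; the relations that genuinely use the hypothesis are the quadratic relations $\psi_{r-1}^2e_{\ii_r}=Q_{ij}(y_{r-1},y_r)e_{\ii_r}$ and $\psi_r^2e_{\ii_r}=Q_{ji}(y_r,y_{r+1})e_{\ii_r}$ together with the braid relation on strands $r-1,r,r+1$. In all of these the left-hand side is zero, while the right-hand side is zero because $t_{ij}=0$: the polynomial $Q_{ij}(u,v)$ is homogeneous and its only monomial not divisible by $v$ would be $u^{-a_{ij}}$, whose coefficient is exactly $t_{ij}$, so $v\mid Q_{ij}(u,v)$, hence $Q_{ij}(u,0)\equiv 0$, and similarly $Q_{ji}(u,v)=Q_{ij}(v,u)$ is divisible by $u$; the braid relation's right-hand side is the difference quotient of $Q_{ij}$ in its first variable, which again vanishes once the middle dot $y_r$ is set to $0$. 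This is precisely the point at which removing the requirement $t_{ij}\neq 0$ from Condition \ref{eq:Qcond2} alters the representation theory, and it is the one genuinely load-bearing step; I expect the remaining verifications to be routine but somewhat lengthy case-work. Granting this, each $M_r$ is nonzero and finite-dimensional, hence has a simple quotient $S_r$, necessarily supported on $\{\ii_r\}$; the modules $S_1,\dots,S_n$ then have pairwise distinct supports and so are pairwise non-isomorphic up to grading shift, giving the claimed $n=2-a_{ij}$ irreducibles.
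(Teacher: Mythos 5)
Your proposal is correct and is essentially the paper's own argument: both construct, for each position of the $j$-strand, a module supported on that single sequence by inflating $L(i^{a})\otimes L(j)\otimes L(i^{b})$ with the crossings touching the $j$-strand acting by zero, and both observe that $t_{ij}=0$ forces $v\mid Q_{ij}(u,v)$ so that the quadratic and braid relations' right-hand sides are killed by the dot on the $j$-strand. The only cosmetic difference is that you pass to simple quotients and compare supports, whereas the paper notes each inflated module is already irreducible by restriction.
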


\begin{proof}
For each $a,b \in {\Bbb Z}_{\geq 0}$ with $a+b=1-a_{ij}$, consider the irreducible $R(ai)\otimes R(j)\otimes R(bi)$-module $L_{a,b}= L(i^a)\otimes L(j)\otimes L(i^b)$. We claim that $L_{a,b}$ is in fact a module for $R((1-a_{ij})i+j)$, where all generators which are not in $R(ai)\otimes R(j)\otimes R(bi)$ act by zero. To see this, one must check that the relations defining the KLR algebra are all respected. This happens because, since $t_{ij}=0$, the polynomial $Q_{ij}(u,v)$ is divisible by $v$, which corresponds to a dot $y_{a+1}$ on the $j$ colored strand, and this dot kills the simple $L(j)$. The right side of the difficult relations (the ones involving multiple $\psi$'s and which involve the $j$ strand) then all have a factor of $y_{a+1}$.

Each $L_{a,b}$ is irreducible since its restriction to $R(ai)\otimes R(j)\otimes R(bi)$ is irreducible. Thus we have found $2-a_{ij}$ distinct irreducible modules.
\end{proof}

\subsection{Cuspidal modules} \label{ss:cuspidal} 
For the remainder of this section, fix a face $F$ and a compatible convex order $\prec$. 
An R-module $M$ is called $F$-cuspidal if, for all $\la,\mu\in\N I$ such that $\Res_{\la,\mu}M\neq 0$,
$$\la\in \spann_{\R_{\geq 0}}(F^-\cup F) \quad \text{ and } \quad \mu\in \spann_{\R_{\geq 0}}(F^+\cup F).$$ 

Fix a module $M$ and $c$ as in Lemma \ref{lem:co-hyper} for $N=  \text{ht}(\wt(M))$.
In \cite{tingleywebster}, $M$ is called semi-cuspidal if
$\Res_{\la,\mu}M\neq 0$ implies $c(\la)\leq0$ (in that paper there is a stronger notion of cuspidal, which is why the term semi-cuspidal is used). This notion of semi-cuspidal is equivalent to our notion of cuspidal, but to see this one must use \cite[Theorem 2.4 and Corollary 2.12]{tingleywebster}, which imply that, if $\Res_{\la,\mu}M\neq 0$ and $c(\la)>0$, then there is a root $\beta$ with $\Res_{\beta,\wt(\lambda)-\beta}M\neq 0$ and $c(\beta)>0$. In any case, we see:

\begin{lemma} \label{lemma:other-cuspidal}
Fix a module $M$ and $c$ as in Lemma \ref{lem:co-hyper} such that, for roots $\beta$ of height at most $\wt(M)$, 
$\beta$ is in $F^+/F/F^-$ if and only of $c(\beta)$ is positive/zero/negative.
Then $M$ is cuspidal if and only if $\Res_{\la,\mu}M\neq 0$ implies $c(\la)\leq0$.
\qed
\end{lemma}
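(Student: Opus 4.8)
The plan is to prove the two implications of the equivalence separately. The forward implication is essentially a computation with Lemma \ref{lem:co-hyper}, while the converse is the substantive half and reduces to the structure theory of \cite{tingleywebster}.

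First I would treat ``$M$ cuspidal $\Rightarrow$ ($\Res_{\la,\mu}M\neq0$ implies $c(\la)\leq0$)''. Suppose $\Res_{\la,\mu}M\neq0$, so $\la+\mu=\wt(M)$ with $\la,\mu\in\N I$ and in particular $\text{ht}(\la)\leq\text{ht}(\wt(M))$. Iterating restriction, $\Res_{\gamma_1,\dots,\gamma_t,\mu}M\neq0$ for some positive roots $\gamma_1,\dots,\gamma_t$ with $\sum_i\gamma_i=\la$, so each $\gamma_i$ has $\text{ht}(\gamma_i)\leq\text{ht}(\la)\leq\text{ht}(\wt(M))$. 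Applying the defining condition of cuspidality to these restrictions (cf. \cite{tingleywebster}) shows each $\gamma_i$ lies in $F^-\cup F$, so by the hypothesis on $c$ we get $c(\gamma_i)\leq0$, with strict inequality when $\gamma_i\in F^-$. Summing over $i$ gives $c(\la)\leq0$.

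For the converse, assume that $\Res_{\la,\mu}M\neq0$ forces $c(\la)\leq0$. By Lemma \ref{lem:co-hyper} this particular $c$ faithfully records the trichotomy $F^+/F/F^-$ on every root of height at most $\text{ht}(\wt(M))$, so the hypothesis is precisely that $M$ is semi-cuspidal in the sense of \cite{tingleywebster} for this $c$. It is proved in \cite[Theorem 2.4 and Corollary 2.12]{tingleywebster} that semi-cuspidality implies cuspidality; the key point, recalled before the statement, is that whenever $\Res_{\la,\mu}M\neq0$ with $c(\la)>0$ one may split off a single positive root $\beta$ with $c(\beta)>0$ as the leftmost factor. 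Thus, after refining $\la$ into positive roots and using that a $\prec$-decreasing root partition of $\wt(M)$ has its roots sorted into an $F^+$-block, an $F$-block, and an $F^-$-block, a failure of the span condition for $\la$ would already violate the hypothesis; the span condition for $\mu$ is obtained by the symmetric (right-handed) version of the same argument. Hence $M$ is $F$-cuspidal.

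The main obstacle is the converse, and it is not self-contained: it imports the root-partition structure theory of \cite{tingleywebster}, together with the Mackey filtration (Theorem \ref{mackey}), in the packaged form of the ``pull a wrong-sided root to the front'' statement. This is genuinely needed, since $c$ only controls the signs of individual roots and not cone membership once contributions from $F^+$ and $F^-$ are allowed to cancel; the remaining steps are bookkeeping with Lemma \ref{lem:co-hyper} and the observation that any $\la$ with $\Res_{\la,\mu}M\neq0$ is a sum of positive roots of height at most $\text{ht}(\wt(M))$.
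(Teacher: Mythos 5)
Your proof is correct and takes essentially the same route as the paper, which states this lemma with a \qed and no argument beyond the preceding paragraph: both reduce everything to the consequence of \cite[Theorem 2.4 and Corollary 2.12]{tingleywebster} that a nonzero $\Res_{\la,\mu}M$ with $c(\la)>0$ produces a nonzero $\Res_{\beta,\wt(M)-\beta}M$ for a single positive root $\beta$ with $c(\beta)>0$, combined with the hypothesis that $c$ detects the $F^+/F/F^-$ trichotomy on roots of height at most $\operatorname{ht}(\wt(M))$. The only small caveat is in your forward direction: the span condition of cuspidality applied to partial sums does not by itself place each individual $\gamma_i$ (for $i\geq 2$) in $F^-\cup F$, so there too you need the single-root statement (or the stronger form of \cite[Corollary 2.12]{tingleywebster}) that your citation implicitly supplies.
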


%
%

\begin{definition}
Let $\C_F$ denote the full subcategory of cuspidal R-modules. 
\end{definition}


%

Recall from Definition \ref{def:convex} that each equivalence class for a convex pre-order $\succ$ is a face. We call a module $\succ$-cuspidal if it is cuspidal for some equivalence class. 

\begin{theorem} \label{classification}  \cite[Theorem 2.19]{tingleywebster}
If $L_1,\dots,L_h$ are simple, $\succ$-cuspidal, and satisfy $\wt(L_1)\succ
  \cdots \succ
  \wt(L_h)$, then $L_1 \circ \cdots \circ L_h$
  has a unique simple quotient.   Furthermore, every simple appears this way up to a grading shift for a unique
  sequence of simple cuspidal representations.
  Here $\wt(L_i) \succ \wt(L_j)$ means that these are in the positive spans of some equivalence classes $C_i, C_j$, and $C_i \succ C_j$.
\end{theorem}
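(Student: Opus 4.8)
## Proof Proposal

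The plan is to mimic the standard Khovanov–Lauda / Kleshchev–Ram argument for the classification of simple $R$-modules via cuspidal data, but starting from the coarser combinatorial input of a convex pre-order rather than a convex total order. The statement we want is essentially \cite[Theorem 2.19]{tingleywebster}, so the proof proposal is really a reconstruction of that argument. The first step is existence of the simple head: given simple $\succ$-cuspidal modules $L_1,\dots,L_h$ with $\wt(L_1)\succ\cdots\succ\wt(L_h)$ (strictly, in the sense of the equivalence classes), I would show $L_1\circ\cdots\circ L_h$ has a unique simple quotient. The mechanism is the usual one: the product is a quotient of $\Ind$ applied to the outer tensor product, and by adjunction \eqref{extadjunction} together with the Mackey filtration (Theorem \ref{mackey}), one computes $\Hom(L_1\circ\cdots\circ L_h,\, L_1\circ\cdots\circ L_h)$ and shows, using cuspidality to rule out all "mixed" terms of the Mackey filtration, that only the identity term survives, so this Hom space is one-dimensional in the appropriate degree. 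Hence the product is indecomposable with simple head. Convexity of the pre-order (Definition \ref{def:convex}, via the face condition and Lemma \ref{lem:co-hyper}) is exactly what forces the relevant restrictions $\Res_{\la,\mu}$ to vanish: a nonzero subquotient of $\Res\circ\Ind$ would produce a decomposition of some $\wt(L_i)$ into pieces lying on the wrong sides of the hyperplanes $H_n$, contradicting Lemma \ref{lemma:other-cuspidal}.

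The second step is that every simple $R$-module arises this way. Here I would argue by induction on $\operatorname{ht}(\wt(M))$ (or on $\wt(M)$ in dominance order). Given a simple module $M$, pick the $\prec$-largest equivalence class $C$ such that $\Res_{\la,\mu}M\neq 0$ for some $\la\in\spann_{\R_{\geq 0}}C$, $\la\neq 0$, and among such take $\la$ maximal. One shows that $\Res_{\la,\wt(M)-\la}M$ has the form (up to shift) $L_1^{(C)}\otimes M'$ where $L_1^{(C)}$ is simple $C$-cuspidal and $M'$ is again simple with $\wt(M')$ "smaller"; the cuspidality of $L_1^{(C)}$ follows from the maximality choices together with convexity (again via Lemma \ref{lem:co-hyper} / Lemma \ref{lemma:other-cuspidal}), and simplicity of $M'$ and of $L_1^{(C)}$ follows from the standard "socle/head of restriction" arguments of Kleshchev–Ram / Lauda–Vazirani. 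Then by adjunction $M$ embeds in $L_1^{(C)}\circ M'$, hence by Step 1 and the inductive hypothesis $M$ is the simple head of a product of simple cuspidals in the right order. Uniqueness of the cuspidal data comes from the fact that the $\prec$-maximal piece of $\Res M$ recovers the top cuspidal factor and its multiplicity, so one can peel off factors one at a time; different orderings cannot give the same simple because the character polytope / the leading term of $\operatorname{ch}(M)$ determines the data.

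One technical point that needs care: since $\prec$ is only a pre-order (real roots are totally ordered, but an equivalence class $C$ — e.g. an imaginary root space in affine type — may have $\operatorname{ht}>1$ and support many non-proportional roots), the "cuspidal factor" attached to $C$ need not be unique as a module even though its character support lies in $\spann_{\R_{\ge 0}}C$; the statement only asserts a sequence of simple cuspidals, one per equivalence class appearing, so I must be careful to extract exactly one simple $C$-cuspidal module $L_i$ from the $C$-homogeneous part of $\Res M$, using that this part is itself a simple module for the corresponding "diagonal" subalgebra $R(\nu_C)$ and is $C$-cuspidal by construction.

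The main obstacle I expect is Step 2's extraction of the top cuspidal factor with the correct cuspidality and simplicity properties: one has to show simultaneously that $\Res_{\la,\wt(M)-\la}M$ is (a shift of) $L_1\otimes M'$ with both tensor factors simple, that $L_1$ is $C$-cuspidal, and that $\wt(M')$ is strictly smaller in a sense that makes the induction go through — all of this relying on combining the Mackey filtration, the adjunctions \eqref{extadjunction} and \eqref{otheradjunction}, and the hyperplane description of $F$ from Lemma \ref{lem:co-hyper}. The interplay of the pre-order (as opposed to a genuine convex order) with these restriction computations is where the argument is most delicate.
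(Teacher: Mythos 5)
The paper gives no proof of this statement: it is quoted directly from \cite[Theorem 2.19]{tingleywebster}, so the only comparison available is with the argument in that reference. Your reconstruction follows the same route as Tingley--Webster (and, before them, Kleshchev--Ram): kill all mixed Mackey layers using cuspidality and the hyperplane description of faces, obtain existence by peeling off the $\succ$-maximal cuspidal constituent of the restriction by induction, and obtain uniqueness because the leading layer of the restriction recovers the top cuspidal factor. Your caveats about working with a pre-order rather than a total order, and about imaginary equivalence classes contributing non-proportional roots, are indeed the delicate points of the actual argument.

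One inference in your first step is invalid as written: from ``$\End(L_1\circ\cdots\circ L_h)$ is one-dimensional'' you conclude ``indecomposable with simple head.'' Indecomposability follows, but a one-dimensional endomorphism ring does not force a simple head: the indecomposable injective at the sink of the quiver $1\to 2\leftarrow 3$ has scalar endomorphisms and top $S_1\oplus S_3$. The standard repair uses the same Mackey computation in a different place. Show that the identity layer is the only nonvanishing layer of $\Res_{\wt(L_1),\ldots,\wt(L_h)}(L_1\circ\cdots\circ L_h)$, so that $L_1\otimes\cdots\otimes L_h$ occurs there with multiplicity one as a composition factor. Then for every simple quotient $Q$ of $L_1\circ\cdots\circ L_h$, the adjunction \eqref{extadjunction} produces a nonzero map $L_1\otimes\cdots\otimes L_h\to \Res_{\wt(L_1),\ldots,\wt(L_h)}Q$, which is injective since its source is simple; exactness of restriction and the multiplicity-one statement then force the head to have exactly one simple summand. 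Since the input you already computed is precisely what this argument consumes, the gap is one of logic rather than of substance, but it does need to be repaired before the first step is complete.
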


\begin{corollary}\cite[Corollary 2.17]{tingleywebster}
Fix a convex total order $\succ$ and a positive real root $\a$. There is a unique self-dual simple cuspidal $R(\a)$-module $L(\a)$.
\end{corollary}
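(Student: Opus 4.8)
The plan is to induct on $\mathrm{ht}(\a)$, building $L(\a)$ from a \emph{minimal pair} for $\a$. The base case $\a=\a_i$ is immediate: $R(\a_i)\cong k[y_1]$ with $y_1$ in positive degree, whose only graded simple module up to shift is the one-dimensional module with $y_1=0$; it is self-dual since $\dagger$ fixes the generators, and vacuously cuspidal since $\a_i$ has no proper restrictions. For the inductive step, assume $\a$ is not simple, put $N=\mathrm{ht}(\a)$, and fix via Lemma \ref{lem:co-hyper} (for the face $\{\a\}$ and this $N$) a linear functional $c\colon\R\Phi^+\to\R$ with $c(\delta)$ positive, zero, negative according as $\delta\succ\a$, $\delta=\a$, $\delta\prec\a$, for positive roots $\delta$ of height at most $N$; by Lemma \ref{lemma:other-cuspidal}, an $R(\a)$-module is cuspidal exactly when $c(\la)\le0$ for every nonzero $\Res_{\la,\mu}$.

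For existence I would take a minimal pair: positive roots $\beta\succ\gamma$ with $\beta+\gamma=\a$ and no decomposition $\a=\beta'+\gamma'$ into positive roots satisfying $\beta\succ\beta'\succ\gamma'\succ\gamma$ (one exists since the set of decompositions is finite and nonempty; in affine type $\beta$ or $\gamma$ may have to be imaginary, with $L(\beta),L(\gamma)$ chosen accordingly as in \cite[\S3.2]{tingleywebster}). By induction $L(\beta)$ and $L(\gamma)$ are available. The key structural input --- which for a minimal pair comes from the intertwiner ($R$-matrix) calculus of KLR algebras in the cuspidal literature, and is not among the results quoted in this excerpt --- is that $L(\gamma)\circ L(\beta)$ has a \emph{simple} head, equivalently $L(\beta)\circ L(\gamma)$ has a simple socle (the two being interchanged by \eqref{eq:dualind} together with self-duality of the factors). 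Define $L(\a)$ to be that simple module, shifted so as to be self-dual; then $\wt(L(\a))=\a$. To check $L(\a)$ is cuspidal, suppose $\Res_{\la,\mu}L(\a)\ne0$ with $\la,\mu\ne0$. Since $L(\a)\hookrightarrow L(\beta)\circ L(\gamma)$ and $\Res$ is exact, $\Res_{\la,\mu}L(\a)$ embeds in $\Res_{\la,\mu}(L(\beta)\circ L(\gamma))$, which by the Mackey filtration (Theorem \ref{mackey}) is glued from subquotients $\Ind\circ\tau\circ\Res$ applied to $L(\beta)\otimes L(\gamma)$, indexed by tuples with $\eta_{11}+\eta_{12}=\beta$, $\eta_{21}+\eta_{22}=\gamma$, $\eta_{11}+\eta_{21}=\la$, $\eta_{12}+\eta_{22}=\mu$. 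A nonzero such piece forces $\Res_{\eta_{11},\eta_{12}}L(\beta)\ne0$ and $\Res_{\eta_{21},\eta_{22}}L(\gamma)\ne0$; cuspidality of $L(\beta)$ and $L(\gamma)$ then constrains the $\eta_{ij}$, and minimality of the pair is exactly what turns those constraints into $c(\la)\le0$ --- a violation would produce a decomposition $\a=\beta'+\gamma'$ with $\beta\succ\beta'\succ\gamma'\succ\gamma$. So $L(\a)$ is cuspidal by Lemma \ref{lemma:other-cuspidal}.

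For uniqueness, let $M$ be any simple cuspidal $R(\a)$-module. By Theorem \ref{classification}, $M$ is the head of $L_1\circ\cdots\circ L_h$ for a unique decreasing sequence of simple cuspidals with $\sum_i\wt(L_i)=\a$; I claim $h=1$. Indeed the adjunction between $\Ind$ and $\Res$ gives $\Res_{\wt(L_1),\,\a-\wt(L_1)}M\ne0$, so cuspidality of $M$ forces $\wt(L_1)\in\spann_{\R_{\ge0}}(F^-\cup F)$ with $F=\{\a\}$. But $\wt(L_1)$ lies in $\spann_{\R_{\ge0}}C_1$ for its equivalence class $C_1$, which is the $\succ$-largest class appearing, and by convexity $C_1\succeq\{\a\}$; if $C_1\succ\{\a\}$ then $0\ne\wt(L_1)\in\spann_{\R_{\ge0}}F^+$, which the face axioms forbid from lying in $\spann_{\R_{\ge0}}(F^-\cup F)$, while if $C_1=\{\a\}$ then $\wt(L_1)$ would be a positive multiple of the primitive vector $\a$ strictly dominated by $\a$ (real roots being primitive in $\N I$), which is impossible. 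Hence $M$ itself is simple cuspidal of weight $\a$. To conclude $M\cong L(\a)$ up to shift I would analyze $\Res_{\gamma,\beta}M$ for the minimal pair: cuspidality together with minimality forces it to be nonzero with $L(\gamma)\otimes L(\beta)$ as a quotient, so the coinduction adjunction \eqref{otheradjunction} yields a nonzero, hence injective, map $M\to q^{-\gamma\cdot\beta}L(\beta)\circ L(\gamma)$; since $L(\beta)\circ L(\gamma)$ has simple socle $L(\a)$, we get $M\cong L(\a)$ up to shift, and self-duality of $M$ fixes the shift.

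I expect the main obstacle to be the two places where the minimal pair interacts with cuspidality: first, establishing that $L(\beta)\circ L(\gamma)$ has a simple socle for a minimal pair --- the structural fact requiring the intertwiner machinery rather than the results quoted in this excerpt --- and second, extracting the forbidden decomposition $\a=\beta'+\gamma'$ both from a would-be bad restriction of $L(\a)$ and from the restriction $\Res_{\gamma,\beta}$ of an arbitrary cuspidal $M$. A secondary complication is the affine case, where $\Phi_F$ can have several affine components and imaginary root spaces decompose, so ``the minimal pair'' and the modules $L(\beta),L(\gamma)$ must be read with the care of \cite[\S3.2]{tingleywebster}.
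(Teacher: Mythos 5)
The paper does not reprove this statement: it is quoted from \cite[Corollary 2.17]{tingleywebster} and is meant to be read as a consequence of the classification in Theorem \ref{classification} (every simple is the head of an induction of a decreasing sequence of simple cuspidals, uniquely), combined with the counting of simples of a given weight carried out in that reference. Your proposal instead rebuilds $L(\a)$ from scratch by induction on height via minimal pairs, in the style of \cite{bkm}. That route can be made to work, but as written it has genuine gaps beyond the one you flag. First, the step you do flag --- that $L(\gamma)\circ L(\beta)$ has simple head (equivalently $L(\beta)\circ L(\gamma)$ has simple socle) for a minimal pair --- is the entire content of the existence half; importing it as a black box from ``intertwiner calculus'' means the proposal proves existence only modulo a theorem of comparable depth to the corollary itself.

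Second, and more seriously, the uniqueness argument breaks at the assertion that ``cuspidality together with minimality forces'' $\Res_{\gamma,\beta}M\neq 0$ for an \emph{arbitrary} simple cuspidal $M$ of weight $\a$. Cuspidality is a purely negative condition: it rules out certain nonzero restrictions but never forces a particular restriction to be nonzero, and nothing in the definition of a minimal pair singles out $(\gamma,\beta)$ among the allowed two-step restrictions of $M$. In the literature this nonvanishing is deduced from character/PBW information about the already-identified $L(\a)$, so invoking it to prove that $M\cong L(\a)$ is circular in your development; the standard non-circular route to uniqueness is the counting argument (number of simples of weight $\nu$ equals the number of Kostant partitions of $\nu$, forcing exactly one cuspidal per real root by induction along the order), which is essentially what underlies Theorem \ref{classification}. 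Finally, in affine type your induction does not close: a minimal pair for a real root $\a\succ\delta$ may have an imaginary member $\gamma$, for which the inductive hypothesis (and indeed the corollary itself) says nothing --- the weight space $n\delta$ carries several non-isomorphic cuspidal simples, so ``$L(\gamma)$'' is not well defined and the simple-socle claim must be reformulated. These three points need to be filled in before the argument stands on its own.
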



\begin{definition}
Let $\alpha$ be a positive real root. Let $F(\a)$ be the face defined by 
$$F^+ = \{\beta \mid \beta \succ \alpha\}, \quad F= \{ \alpha \}, \quad F^- = \{\beta \mid \beta \prec \alpha\}.$$
Define the root module $\D(\a)$ to be the projective cover of $L(\a)$ in $\C_{F(\a)}$. 
\end{definition} 


\subsection{Crystal structures} \label{ss:cryst-def}

\begin{definition}\label{def:bbc}
Let $\B$ be the set of simple self-dual elements of $R$-mod. For each face $F$ let $\underline \B_{F}$ be the set of self-dual simple elements of the cuspidal category $\C_F$. 
\end{definition}

\begin{theorem} \cite{laudavazirani}\label{thm:lvcrystal}
$\B$ is isomorphic to a copy of $B(\infty)$ for the root system $\Phi$, where the crystal operators are given by
$f_i(L) = q_i^{\epsilon_i(L)} \hd(L\circ L(i))$. 
\end{theorem}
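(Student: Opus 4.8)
The plan is to equip $\B$ with the structure of an abstract crystal, produce a second (``starred'') crystal structure on the same underlying set, and then invoke a recognition theorem for $B(\infty)$---concretely the one of Kashiwara--Saito characterizing $B(\infty)$ among crystals carrying a compatible second crystal structure. Throughout, $L(i)$ is the one-dimensional simple $R(\alpha_i)$-module, and for $L\in\B$ of weight $\nu$ I set $\varepsilon_i(L)=\max\{n\ge 0:\Res_{n\alpha_i,\,\nu-n\alpha_i}L\neq 0\}$ and $\varphi_i(L)=\varepsilon_i(L)+\langle h_i,\wt(L)\rangle$ (signs according to the convention embedding $\N I$ into the weight lattice). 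The operator $\tilde f_i$ is $L\mapsto q_i^{\varepsilon_i(L)}\hd(L\circ L(i))$, and $\tilde e_i$ is defined as a partial inverse, extracting a distinguished simple subquotient of a refined version of $\Res_{\alpha_i,\,\nu-\alpha_i}L$. For this to make sense I first need the standard fact that $L\circ L(i)$ has simple head and simple socle, and that these agree up to a grading shift depending only on $\varepsilon_i(L)$; this is proved (following Kleshchev and Lauda--Vazirani) by computing $\End(L\circ L(i))$ via the adjunctions \eqref{extadjunction} and \eqref{otheradjunction} together with an analysis of $\Res_{\alpha_i,\,\nu}(L\circ L(i))$, and it is exactly what forces the normalization $q_i^{\varepsilon_i(L)}$ required to land back in the self-dual simples $\B$.

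Next I would verify the crystal axioms for $(\B,\tilde e_i,\tilde f_i,\varepsilon_i,\varphi_i,\wt)$. The weight identity $\wt(\tilde f_iL)=\wt(L)+\alpha_i$ is immediate, so the real content is $\varepsilon_i(\tilde f_i L)=\varepsilon_i(L)+1$ together with $\tilde e_i\tilde f_i L\cong L$ and $\tilde f_i\tilde e_iL\cong L$ whenever $\varepsilon_i(L)>0$. All of these follow by applying the Mackey filtration, Theorem \ref{mackey}, to $\Res_{m\alpha_i,\,\cdot}(L\circ L(i))$: for $m=\varepsilon_i(L)+1$ only two shuffle terms can survive in the relevant degree, and comparing their characters and socles with the cuspidal decomposition of $L$ pins down $\varepsilon_i$, the simple head, and the partial inverse. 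This is Grojnowski's argument for affine Hecke algebras transported verbatim to the KLR setting.

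For the second crystal structure I would use the anti-automorphism $\dagger$ of $R$ and the associated duality $M\mapsto M\dual$ on finite-dimensional modules, which fixes every simple up to a grading shift and, by \eqref{eq:dualind}, interchanges the two sides of $\circ$. Defining $\tilde f_i^{\ast}L=q_i^{\varepsilon_i^{\ast}(L)}\hd(L(i)\circ L)$, with $\varepsilon_i^{\ast}$ and $\tilde e_i^{\ast}$ built from $\Res_{\nu-\alpha_i,\,\alpha_i}$ in the mirror way, produces a second crystal structure on $\B$ that is carried to the first by $L\mapsto L\dual$; the axioms for it are immediate from those of the unstarred structure.

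Finally I would check the hypotheses of the recognition theorem: that $\wt(\B)\subseteq\N I$; that the trivial module $\mathbf 1$, the unique simple of weight $0$, is the unique element annihilated by every $\tilde e_i$, equivalently by every $\tilde e_i^{\ast}$; that $\varepsilon_i$ and $\varepsilon_i^{\ast}$ never take the value $-\infty$; and the compatibility relations tying the two structures together (for $i\neq j$, $\tilde e_i\tilde f_j^{\ast}=\tilde f_j^{\ast}\tilde e_i$, and the precise relations among $\varepsilon_i(\tilde f_i^{\ast}L)$, $\varepsilon_i^{\ast}(\tilde f_iL)$, $\varepsilon_i(L)$ and $\varepsilon_i^{\ast}(L)$). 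These last relations are the main obstacle: verifying them requires restricting simultaneously on both ends, i.e.\ analyzing $\Res_{\alpha_i,\,\cdot\,,\alpha_j}(L(i)\circ L\circ L(j))$ through the Mackey filtration while keeping strict control of grading shifts. Once they are in place, the recognition theorem supplies a unique crystal isomorphism $\B\isomto B(\infty)$ carrying $\mathbf 1$ to the highest weight element, and unwinding the normalizations recovers the stated formula $f_i(L)=q_i^{\epsilon_i(L)}\hd(L\circ L(i))$.
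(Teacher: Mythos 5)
First, note that the paper does not reprove this theorem: it cites \cite{laudavazirani} for the isomorphism $\B\cong B(\infty)$, and the only piece it actually proves is the normalization $q_i^{\epsilon_i(L)}$, in Remark \ref{rem:gs1}, via a different and more direct route than yours --- namely by computing $\Res_{\wt(L)-\epsilon_i(L)i,\,(\epsilon_i(L)+1)i}(L\circ L(i))\cong q_i^{-\epsilon_i(L)}L'\otimes L((\epsilon_i(L)+1)i)$ using the rank-one identity $L(\epsilon_i(L)i)\circ L(i)\cong q_i^{-\epsilon_i(L)}L((\epsilon_i(L)+1)i)$, and then invoking the fact that restriction preserves self-duality; no computation of $\End(L\circ L(i))$ is needed. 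Your overall architecture (a Grojnowski-style crystal structure via Mackey arguments, a second ``starred'' structure, and the Kashiwara--Saito recognition theorem) is indeed the strategy of the cited source, so the skeleton is sound.

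However, two steps as written are wrong. The serious one is your construction of the starred structure: the duality $M\mapsto M\dual$ induced by $\dagger$ fixes every element of $\B$ (every simple is self-dual up to shift), so as a map $\B\to\B$ it is the identity and cannot ``carry the first structure to the second'' --- if it did, you would conclude $\tilde f_i^*=\tilde f_i$, which is false already for $\mathfrak{sl}_3$, where $\hd(L(2)\circ L(1))$ and $\hd(L(1)\circ L(2))$ are the two distinct simples of weight $\alpha_1+\alpha_2$. What \eqref{eq:dualind} actually yields is $\hd(L(i)\circ L)\cong q^{-\wt(L)\cdot i}\bigl(\operatorname{soc}(L\circ L(i))\bigr)\dual$, i.e.\ duality exchanges heads and socles of the two induced modules, not the two crystal structures; the operator that genuinely intertwines them is the automorphism of $R$ reversing the order of the strands, and absent that you must verify the starred axioms by a separate (symmetric) argument rather than declaring them ``immediate.'' Second, your conventions are mismatched: you define $\varepsilon_i(L)$ via $\Res_{n\alpha_i,\,\nu-n\alpha_i}$ (the $i$'s on the \emph{left}) but define $\tilde f_i$ by inducing $L(i)$ on the \emph{right}. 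With these choices $\varepsilon_i$ is the \emph{starred} invariant for your $\tilde f_i$, and the identity $\varepsilon_i(\tilde f_iL)=\varepsilon_i(L)+1$ fails in general (the Kashiwara--Saito axioms explicitly allow $\varepsilon_i^*(\tilde f_ib)=\varepsilon_i^*(b)$); the correct pairing, used in Remark \ref{rem:gs1}, is $\Res_{\nu-n\alpha_i,\,n\alpha_i}$ with $\hd(L\circ L(i))$.
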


\begin{remark} \label{rem:gs1}
The precise grading shift is not given in \cite{laudavazirani}. It can be found without proof in \cite[Definition 4.16(iii)]{KKKO2}. For completeness here is a proof: By the definition of $\epsilon_i$, 
$$\Res_{ \wt(L)-\epsilon_i(L) i, \epsilon_i(L) i } L \simeq L' \otimes  L(\epsilon_i(L) i)   \  \text{and} \  \Res_{ \wt(L)-(\epsilon_i(L)+1) i, (\epsilon_i(L)+1) i } L =0$$
for a self-dual $L'$. By rank 1 calculations, $ L(\epsilon_i(L) i) \circ L(i)   = q_i^{-\epsilon_i(L)} L((\epsilon_i(L)+1) i)$. It follows that
$$\Res_{ \wt(L)-\epsilon_i(L) i, (\epsilon_i(L)+1) i, } L \circ L(i) \simeq q_i^{-\epsilon_i(L)}  L' \otimes L((\epsilon_i(L)+1) i).$$
Since the restriction of a self-dual representation is self-dual the statement follows. 
\end{remark}

\begin{theorem} \cite{tingleywebster}  \label{th:tingleywebster} 
Assume $\Phi$ is at worst affine. Then:
\begin{itemize}
\item If $F$ is finite type, $\underline \B_{F}$ is isomorphic to a copy of $B(\infty)$ for the root system $\Phi_F$. The crystal operators are given by, for each $\underline i \in \Delta_F$, 
$f_{\underline{i}}(L) = q_{\underline{i}}^{\epsilon_{\underline{i}}(L)}\hd(L\circ L(\underline{i}))$. 

\item If $\Phi_F$ has one or more affine components, $\underline \B_F$ is isomorphic to a union of infinitely many copies of $B(\infty)$ for $\Phi_F$, with crystal operators 
$f_{\underline i}(L) =  q_{\underline{i}}^{\epsilon_{\underline{i}}(L)}  \hd (L\circ L(\underline i))$. 
\end{itemize}
\end{theorem}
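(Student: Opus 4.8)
\textbf{Proof proposal for Theorem \ref{th:tingleywebster}.}

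The plan is to deduce this theorem from the main construction of the paper — the face functor $X \mapsto P \otimes_{\End(P)} X$ between $R(\Phi_F)$-modules and cuspidal $R$-modules in $\C_F$ — together with the known crystal description of $\B$ from Theorem \ref{thm:lvcrystal}. The key point is that the face functor is designed precisely so that it intertwines induction by $L(\underline i)$ on the $R(\Phi_F)$ side with induction by the cuspidal root module $L(\underline i)$ (viewed inside $\C_F$) on the $R$ side, up to the appropriate grading shift; once that intertwining is in place, the crystal operator formulas $f_{\underline i}(L) = q_{\underline i}^{\epsilon_{\underline i}(L)} \hd(L \circ L(\underline i))$ transport verbatim.

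First I would fix a convex order $\prec$ compatible with $F$ whose restriction to $\Phi_F^+$ realizes a chosen convex order $\prec_F$ (possible by Lemma \ref{lem:order-exists}); all cuspidality and head/socle computations are taken with respect to this order. Next I would record that, under the face functor, the simple cuspidal $R(\underline i)$-module for the simple root $\underline i \in \Delta_F$ maps to the cuspidal root module $L(\underline i) \in \C_F$ for the real root $\underline i \in F$ (this is essentially the definition of $\Delta_F$ together with the classification in Theorem \ref{classification}), and that the functor is exact and sends simples to simples when $F$ is of finite type — in that case it is an equivalence of categories, by the results quoted in the introduction under Assumption \ref{assumptionA}, so both the crystal and its operators pull back isomorphically from $B(\infty)$ for $\Phi_F$ as described by Theorem \ref{thm:lvcrystal}. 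For the second bullet, where $\Phi_F$ has affine components, the functor is no longer an equivalence; here I would instead argue directly that $\underline\B_F$, equipped with the operators $f_{\underline i}$, is a union of copies of $B(\infty)$ for $\Phi_F$, by checking the crystal axioms using the Mackey filtration (Theorem \ref{mackey}) to compute $\Res$ of induced cuspidal modules, exactly as in the proof of Theorem \ref{thm:lvcrystal}, but now internal to $\C_F$. The count of connected components — that there are infinitely many copies rather than one — comes from the presence of the imaginary cuspidal modules: the imaginary part of $\C_F$ contributes a continuum (parametrized as in the $\widehat{\mathfrak{sl}}_3$ analysis later in the paper) rather than the single highest-weight vector one sees in finite type, so distinct imaginary cuspidals sit in distinct components.

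The main obstacle is the affine case. The subtlety is twofold: first, one must be sure that the head and $\epsilon_{\underline i}$ computations performed inside the abelian category $\C_F$ agree with those performed in $R$-mod, i.e. that $\hd$ of an induction product of cuspidals is again cuspidal and that restriction functors behave compatibly — this should follow from the cuspidality being cut out by a linear functional (Lemma \ref{lemma:other-cuspidal}) plus the fact that $\prec$ restricts to $\prec_F$, so that Mackey subquotients landing outside $\C_F$ vanish; second, one must pin down the component structure, which requires understanding the imaginary cuspidal category of $R(\delta)$ well enough to see that it has infinitely many simple objects forming distinct components — this is where the later reduction to $\widehat{\mathfrak{sl}}_3$ and the zig-zag algebra $k[z] \otimes Z$ does the real work, so logically this half of the theorem may best be proved after, or in tandem with, that analysis. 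I would therefore structure the argument so that the finite-type bullet is self-contained (equivalence of categories), and the affine bullet is reduced to the statement that the imaginary cuspidal category decomposes as claimed, citing the $\widehat{\mathfrak{sl}}_3$ computation for the component count.
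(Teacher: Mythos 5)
There is a genuine problem here, and it is structural: the statement you are trying to prove is not proved in this paper at all --- it is quoted from \cite{tingleywebster}, and the only original content the authors supply is the remark immediately following it, which pins down the grading shift $q_{\underline i}^{\epsilon_{\underline i}(L)}$ by a restriction computation analogous to Remark \ref{rem:gs1} (the key identity being $\Res L\circ L(\underline i) \simeq q_{\underline i}^{-\epsilon_{\underline i}(L)}[\epsilon_{\underline i}(L)+1]!\,L'\otimes L(\underline i)^{\otimes \epsilon_{\underline i}(L)+1}$). Your proposal instead tries to derive the theorem from the face functor, and within the logic of this paper that is circular. The identification of $\End(P)$ with a KLR algebra for $\Phi_F$ (Theorem \ref{4.12}) rests on Lemma \ref{nonzerocoeff}, whose proof explicitly invokes Theorem \ref{th:tingleywebster} to count the rank-two cuspidal simples; Theorem \ref{projgenerator} and hence the finite-type equivalence (Theorem \ref{th:feq}) rest on \cite[Corollary 3.29]{tingleywebster}, which belongs to the same development; and the $\widehat{\mathfrak{sl}}_3$/zigzag analysis of \S\ref{sec:imcat}, which you want to use for the component count in the affine case, sits downstream of all of this. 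So none of these tools is available when proving the statement --- the dependency runs in the opposite direction, with Corollary \ref{cor:fcrystal} being the paper's actual point of contact between the face functor and the crystal structure of \cite{tingleywebster}.

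Two smaller issues: the affine bullet asserts countably infinitely many copies of $B(\infty)$ (lowest-weight elements given by the simple cuspidals of purely imaginary weight, indexed by tuples of partitions), not a ``continuum''; and the one part of the statement that genuinely requires an argument in this paper --- the precise grading shift --- is exactly what your phrase ``the crystal operator formulas transport verbatim'' glosses over. If you want a self-contained treatment, the honest route is the one taken in \cite{tingleywebster} itself (a direct analysis of cuspidal simples and the operators $\hd(L\circ L(\underline i))$, independent of any face functor), supplemented by the restriction computation in the remark to fix the shift.
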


\begin{remark}
The grading shift is not specified in \cite{tingleywebster}, but follows by an an argument similar to Remark \ref{rem:gs1}. The key step is that, for some self-dual $L'$,
$$\Res_{R(\wt(L)- \epsilon_{\underline{i}}(L) \underline{i}), R(\underline{i})^{\otimes \epsilon_{\underline{i}}(L)+1}} L \circ L(\underline{i}) \simeq q_{\underline i}^{-\epsilon_{\underline{i}}(L)} [\epsilon_{\underline{i}}(L)+1]! L' \otimes  L({\underline i})^{\otimes \epsilon_{\underline{i}}(L)+1}.$$
\end{remark}


\section{The face cuspidal category} \label{sec:cuspcat}

Fix a root system $\Phi$ and let $R$ be an associated KLR algebra. Fix a face $F$ and a compatible convex order $\prec$ (which is possible by Lemma \ref{lem:order-exists}). 
Let $\C$ be the full subcategory of $R$-mod consisting of $F$-cuspidal representations (which is $\C_F$ from \S\ref{ss:cuspidal}).
The main purpose of this section is to define a projective object $P$ in $\C$, and, under certain conditions, show that $\End(P)$ is isomorphic to a KLR algebra for the root system $\Phi_F$.
\subsection{Properties}

\begin{lemma} \label{lem:closedcirc}
 $\C$ is abelian and is closed under $\circ$. That is, if $M,N \in \C$, then $M \circ N \in \C$.
\end{lemma}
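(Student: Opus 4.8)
The plan is to prove the two assertions separately: abelianness of $\C$, then closure under $\circ$.

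\textbf{Abelianness.} I would first observe that $\C$ is a full subcategory of $R\text{-mod}$ which is closed under taking subquotients and finite direct sums, and this suffices for it to be abelian (inheriting kernels and cokernels from $R\text{-mod}$). Closure under direct sums is immediate from the definition of cuspidality, since $\Res_{\la,\mu}$ is additive. For subquotients, suppose $0 \to M' \to M \to M'' \to 0$ is a short exact sequence with $M$ cuspidal. The functor $\Res_{\la,\mu}$ is exact (it is just multiplication by the idempotent $e_{\la\mu}$), so $\Res_{\la,\mu} M = 0$ forces $\Res_{\la,\mu} M' = 0$ and $\Res_{\la,\mu} M'' = 0$. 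Hence if $\Res_{\la,\mu} M' \neq 0$ or $\Res_{\la,\mu} M'' \neq 0$, then $\Res_{\la,\mu} M \neq 0$, so the cuspidality condition on $M$ ($\la \in \spann_{\R_{\ge 0}}(F^- \cup F)$ and $\mu \in \spann_{\R_{\ge 0}}(F^+ \cup F)$) is inherited by $M'$ and $M''$. Thus $\C$ is closed under subobjects, quotients, and extensions (the last is even easier and will be useful), and is abelian.

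\textbf{Closure under $\circ$.} Let $M \in \C$ be an $R(\la_M)$-module and $N \in \C$ an $R(\la_N)$-module; set $\nu = \la_M + \la_N$ and $L = M \circ N = \Ind_{\la_M,\la_N}(M \otimes N)$. I need to show that for any decomposition $\nu = \la + \mu$ with $\Res_{\la,\mu} L \neq 0$, we have $\la \in \spann_{\R_{\ge 0}}(F^- \cup F)$ and $\mu \in \spann_{\R_{\ge 0}}(F^+ \cup F)$. The key tool is the Mackey filtration, Theorem \ref{mackey}, applied to $\Res_{\la,\mu} \circ \Ind_{\la_M, \la_N}(M \otimes N)$: this module has a filtration whose subquotients are of the form $\Ind_\eta^\mu \circ \tau \circ \Res_\eta^\la(M \otimes N)$ indexed by tuples $\eta_{ij}$ (with $i, j \in \{1,2\}$) satisfying $\la_M = \eta_{11} + \eta_{12}$, $\la_N = \eta_{21} + \eta_{22}$, $\la = \eta_{11} + \eta_{21}$, $\mu = \eta_{12} + \eta_{22}$. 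If $\Res_{\la,\mu} L \neq 0$, some subquotient is nonzero, which forces $\Res_{\eta_{11},\eta_{12}} M \neq 0$ and $\Res_{\eta_{21},\eta_{22}} N \neq 0$. Since $M$ is cuspidal, $\eta_{11} \in \spann_{\R_{\ge 0}}(F^- \cup F)$; since $N$ is cuspidal, $\eta_{21} \in \spann_{\R_{\ge 0}}(F^- \cup F)$. Hence $\la = \eta_{11} + \eta_{21} \in \spann_{\R_{\ge 0}}(F^- \cup F)$, as $\spann_{\R_{\ge 0}}(F^- \cup F)$ is a convex cone. Symmetrically, $\eta_{12}, \eta_{22} \in \spann_{\R_{\ge 0}}(F^+ \cup F)$, so $\mu = \eta_{12} + \eta_{22} \in \spann_{\R_{\ge 0}}(F^+ \cup F)$. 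This gives cuspidality of $L$.

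\textbf{Main obstacle.} The argument is essentially bookkeeping once one invokes the Mackey filtration; the only genuine point requiring care is that the relevant spans are convex cones closed under addition — which is immediate — and that a nonzero restriction of the induced module really does force nonzero restrictions of both $M$ and $N$ via the Mackey subquotients. I should double-check that $\Res_\eta^\la$ and $\Ind_\eta^\mu$ are exact (or at least that a nonzero subquotient of the filtration of $\Res_{\la,\mu}L$ implies its building blocks $\Res_{\eta_{11},\eta_{12}}M$ and $\Res_{\eta_{21},\eta_{22}}N$ are nonzero): this holds because $\Res$ is exact and faithful on the relevant pieces and $\Ind$ of a zero module is zero, so $\Ind_\eta^\mu \circ \tau \circ \Res_\eta^\la(M \otimes N) \neq 0$ forces $\Res_\eta^\la(M \otimes N) = \Res_{\eta_{11},\eta_{12}}M \otimes \Res_{\eta_{21},\eta_{22}}N \neq 0$, hence both tensor factors are nonzero. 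No serious difficulty is anticipated.
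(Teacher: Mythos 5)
Your proof is correct, but it takes a somewhat different route from the paper's. For closure under $\circ$ the paper works at the level of characters: it chooses a linear functional $c$ as in Lemma~\ref{lem:co-hyper} adapted to $\mathrm{ht}(\wt(M)+\wt(N))$, notes that any sequence $\ii$ with $e_\ii(M\circ N)\neq 0$ is a shuffle of sequences supported on $M$ and $N$, that prefixes of shuffles are shuffles of prefixes, and concludes $c(\ii')\le 0$ for every prefix, invoking the reformulation of cuspidality in Lemma~\ref{lemma:other-cuspidal}. You instead apply the Mackey filtration (Theorem~\ref{mackey}) to $\Res_{\la,\mu}\circ\Ind_{\la_M,\la_N}$, extract from a nonzero layer that $\Res_{\eta_{11},\eta_{12}}M$ and $\Res_{\eta_{21},\eta_{22}}N$ are both nonzero, and add the resulting cone memberships directly, using that $\spann_{\R_{\ge 0}}(F^-\cup F)$ and $\spann_{\R_{\ge 0}}(F^+\cup F)$ are closed under addition. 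The two arguments rest on the same combinatorial fact (a weight-$\la$ prefix of the induced module splits as a sum of prefix-weights of the factors), but yours has the merit of working directly with Definition of cuspidality as stated, with no need for the auxiliary functional $c$ or the equivalence with the semi-cuspidal condition; the paper's version is slightly more economical in that it never names the $\eta_{ij}$ and only needs the character-level shadow of Mackey. Your treatment of abelianness is also more detailed than the paper's one-line dismissal, and is correct: exactness of multiplication by $e_{\la\mu}$ gives closure under subquotients, sums and extensions.
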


\begin{proof}
It is clear that $\C$ is abelian.
Fix $M,N$ and choose $c$ as in \S\ref{ss:cuspidal} for $\wt(M)+\wt(N)$. 
By the definition of $\circ,$ if $e_\ii (M \circ N) \neq 0$, then $\ii$ is a shuffle of some $\ii_1$ and $\ii_2$ such that $e_{\ii_1} M \neq 0$ and $e_{\ii_2} N \neq 0$. Thus, if $\ii'$ is a prefix of $\ii$, then $\ii'$ is a shuffle of a prefix $\ii'_1$ of $\ii_1$ and $\ii_2'$ of $\ii_2$. But then $c(\ii_1'), c(\ii_2') \leq 0$, so $c(\ii')\leq0$. This holds for all $\ii$ such that $e_\ii (M \circ N) \neq 0$ and all prefixes, so $M \circ N \in \C$ by definition.    
\end{proof}

\begin{lemma}\label{independent}
For any 
 $\a\in\D_F$ the modules $L(\a)$ and $\D(\a)$ depend only on the face, not on the choice of compatible convex order.
\end{lemma}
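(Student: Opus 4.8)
The plan is to show that both $L(\alpha)$ and $\Delta(\alpha)$, defined using the fixed compatible order $\prec$, actually only depend on the face $F(\alpha)$, by comparing with any other compatible order $\prec'$. The point is that the cuspidal category $\C_{F(\alpha)}$ — the full subcategory of $R$-modules $M$ such that $\Res_{\lambda,\mu}M\neq 0$ forces $\lambda\in\spann_{\R_{\geq 0}}(F(\alpha)^-\cup F(\alpha))$ and $\mu\in\spann_{\R_{\geq 0}}(F(\alpha)^+\cup F(\alpha))$ — is manifestly defined in terms of the face alone, and not in terms of any order refining it. So the subcategory $\C_{F(\alpha)}$ does not change when we replace $\prec$ by $\prec'$. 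What I must check is that the distinguished objects $L(\alpha)$ and $\Delta(\alpha)$ inside this fixed category are also order-independent.

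First I would handle $L(\alpha)$. By the Corollary to Theorem \ref{classification}, for the convex total order $\prec$ there is a unique self-dual simple cuspidal $R(\alpha)$-module $L(\alpha)$; uniqueness and self-duality are intrinsic properties of the category of cuspidal $R(\alpha)$-modules together with the duality functor $\dual$. Since the notion ``$M$ is cuspidal for $F(\alpha)$'' at weight $\alpha$ just says $\Res_{\lambda,\mu}M\neq 0$ with $\lambda+\mu=\alpha$ forces $\lambda$ or $\mu$ to be $0$ (as $\alpha$ is a simple root of $\Phi_{F(\alpha)}$, it cannot be a nontrivial sum of roots in $F(\alpha)$, and no proper sub-sum can land in the required spans unless trivial) — a condition depending only on $F(\alpha)$ — the category of cuspidal $R(\alpha)$-modules is the same for $\prec$ and $\prec'$, and hence so is its unique self-dual simple object. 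I would want to spell out this last parenthetical: that for a positive \emph{real} root $\alpha\in\Delta_F$, $F(\alpha)$-cuspidality at weight $\alpha$ is equivalent to the condition appearing in Lemma \ref{lemma:other-cuspidal}, and in particular is order-independent.

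Next, $\Delta(\alpha)$ is defined as the projective cover of $L(\alpha)$ in $\C_{F(\alpha)}$. Having established that $\C_{F(\alpha)}$ and the object $L(\alpha)$ are both order-independent, the projective cover is characterized uniquely up to isomorphism by a universal property internal to the abelian category $\C_{F(\alpha)}$ — provided projective covers exist there, which is guaranteed since they do for finitely generated modules over the (finite-dimensional in each degree, graded-Noetherian) algebra $R(\alpha)$ and $\C_{F(\alpha)}$ is a Serre subcategory. Hence $\Delta(\alpha)$ depends only on the face. One subtlety I should address: I must make sure the cuspidal category $\C_{F(\alpha)}$ really is closed under subobjects and quotients inside $R(\alpha)$-mod (so that projective covers within it make sense and are genuinely intrinsic); this is immediate from the definition of cuspidality since $\Res_{\lambda,\mu}$ is exact and the conditions on $\lambda,\mu$ are preserved under passing to sub- and quotient modules.

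\textbf{Main obstacle.} The only real content — and the step I'd be most careful about — is verifying that the defining condition of $F(\alpha)$-cuspidality at the single weight $\alpha$, where $\alpha\in\Delta_F$ is a chosen simple root of $\Phi_F$, makes no reference to the order beyond the face: i.e. that the ``cuspidal'' and ``semi-cuspidal'' notions coincide at this weight and are captured purely by $(F^+,F,F^-)=F(\alpha)$. Everything else is a formal consequence of the uniqueness statements in Theorem \ref{classification} and its Corollary together with the uniqueness of projective covers in an abelian category. I do not expect any genuine difficulty, which is consistent with this being a lemma whose proof is short.
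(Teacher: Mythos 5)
\textbf{There is a genuine gap, and it sits exactly at the step you single out as ``the only real content.''} The modules $L(\alpha)$ and $\Delta(\alpha)$ are defined through the face $F(\alpha)$ whose parts $F(\alpha)^{+}=\{\beta\mid\beta\succ\alpha\}$ and $F(\alpha)^{-}=\{\beta\mid\beta\prec\alpha\}$ are determined by the chosen compatible order $\prec$, not by $F$ alone: two orders compatible with $F$ may distribute the roots of $F\setminus\{\alpha\}$ (and, in affine type, the imaginary roots in $F$) differently between $F(\alpha)^{+}$ and $F(\alpha)^{-}$ --- indeed Lemma \ref{lem:order-exists} lets one prescribe an arbitrary convex order on $\Phi_F^+$. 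So your opening claim that $\C_{F(\alpha)}$ ``is manifestly defined in terms of the face alone'' begs the question: it is defined in terms of $F(\alpha)$, which a priori varies with $\prec$, and showing that the resulting $L(\alpha)$ and $\Delta(\alpha)$ nevertheless agree is the entire content of the lemma. Moreover the justification you offer is false: cuspidality of a weight-$\alpha$ module does not force all proper restrictions to vanish; it only constrains on which side of $F(\alpha)$ the weights $\lambda,\mu$ lie. No nonzero module of weight $\alpha$ with $\text{ht}(\alpha)\geq 2$ has all proper restrictions zero (some $e_{\bf i}M\neq 0$, hence $\Res_{i_1,\alpha-i_1}M\neq 0$); concretely, in \S\ref{sec:example} the simple cuspidal $L(\gamma)$ for $\gamma=\alpha_0+\alpha_2\in\Delta_F$ has character $[02]$, so $\Res_{\alpha_0,\alpha_2}L(\gamma)\neq 0$. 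Under your characterization the cuspidal category at weight $\gamma$ would be zero, and in any case the genuine condition visibly refers to $F(\alpha)^{\pm}$, i.e.\ to the order.

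What is needed (and what the paper actually proves) is a comparison of the two orders: let $L(\alpha)$ be the cuspidal simple for $\prec$ and take its cuspidal decomposition $\hd(L_1'\circ\cdots\circ L_k')$ with respect to a second compatible order $\prec'$ as in Theorem \ref{classification}. Cuspidality of $L(\alpha)$ with respect to $\prec$, combined with the compatibility of both orders with $F$, forces $\wt(L_1')$ and $\wt(L_k')$, hence all $\wt(L_i')$, to be positive multiples of roots lying in $F$; since $\alpha\in\Delta_F$ cannot be written as a nontrivial sum of such elements, the decomposition has a single factor, so $L(\alpha)$ is itself $\prec'$-cuspidal and coincides with $L_{\prec'}(\alpha)$ by the uniqueness of the self-dual simple cuspidal. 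Only after this identification does the rest of your argument go through: the weight-$\alpha$ cuspidal categories for $\prec$ and $\prec'$ then coincide (each being the subcategory of modules all of whose composition factors are shifts of the common $L(\alpha)$), and $\Delta(\alpha)$, being the projective cover of $L(\alpha)$ there, is independent of the order. As written, your proposal assumes the conclusion at the one point where an argument is required, so it does not establish the lemma.
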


\begin{proof}
Fix convex orders $\prec, \prec'$ compatible with $F$. Let $L(\a)$ be the simple cuspidal for $\a$ with respect to $\prec$. Consider the cuspidal decomposition 
$$L(\a) = \hd( L_1' \circ \cdots \circ L_k')$$
from Theorem \ref{classification} with respect to $\prec'$. If $\wt(L_1')$ is not in $F$, then 
$$\Res_{\wt(L'_1), \a-\wt(L_1')} L(\a) \neq 0$$ contradicts the cuspidality of $L(\a)$ with respect to $\prec$, and similarly for $L_n'$. Thus, for all $k$, $\wt(L_k')=s_k \b_k$ for $\b_k \in F$. But 
$\a= \sum s_k \b_k$ contradicts $\a \in \Delta_F$ unless the cuspidal decomposition is only one step. Thus $L(\a)$ is cuspidal for $\prec'$. Since $L(\a)$ is the unique simple of weight $\alpha$ in $\mathcal{C}$ defined using either $\prec$ or $\prec'$, it follows that $\Delta(\alpha)$ also does not depend on this choice.  
%
%
\end{proof}

\begin{lemma} \label{lem:cuspidal-filtration}  
Let $\beta_1, \ldots, \beta_n, \gamma_1, \ldots \gamma_n \in \Delta_F$ with $\sum \b_i=\sum \ga_j$. Let $M_1, \ldots, M_n$ be face-cuspidal modules with $\wt(M_i)=\beta_i$. Consider
\[ \Res_{\gamma_1, \ldots \gamma_n} (M_1 \circ \cdots \circ M_n).\]
The non-zero layers in the Mackey filtration from Theorem \ref{mackey} are exactly those corresponding to permutations of the factors (i.e. where for some permutation $\sigma$, $\eta_{ij} =0$ unless $j=\sigma(i)$).
The sub-quotients are of the form $  M_{\sigma^{-1}(1)} \otimes \cdots \otimes M_{\sigma^{-1}(n)},$ with a shift by the degree of $\sigma$ acting on the idempotent $e_{\beta_1 \cdots \beta_n}.$ 
In particular, the restriction is non-zero if and only if $\{ \beta_1, \ldots, \beta_n\} = \{ \gamma_1, \ldots, \gamma_n\}$ as multisets.
\end{lemma}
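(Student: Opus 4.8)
The plan is to apply the Mackey filtration (Theorem \ref{mackey}) to $\Res_{\gamma_1,\ldots,\gamma_n}(M_1 \circ \cdots \circ M_n)$ and show that every layer indexed by a tuple $(\eta_{ij})$ that is \emph{not} supported on a permutation matrix vanishes. Recall that the layer attached to $(\eta_{ij})$ has the form $\Ind_\eta^\gamma \circ \tau \circ \Res_\eta^\beta(M_1 \otimes \cdots \otimes M_n)$, where $\Res_\eta^\beta$ applies $\Res_{\eta_{i\bullet}}$ to each $M_i$. So the first step is: if some $\eta_{ij} \neq 0$ with a second nonzero entry $\eta_{ik}$ ($k \neq j$) in the same row $i$, then in particular $\Res_{\eta_{i1},\ldots,\eta_{in}} M_i \neq 0$, so grouping the first $j-1$ blocks together we get $\Res_{\lambda,\mu} M_i \neq 0$ for a nontrivial decomposition $\eta_{i\bullet}$ with $\mu \neq 0$ and $\lambda \neq 0$.

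\emph{Key step: forcing each $\eta_{i\bullet}$ to be concentrated in one spot.} I would fix $c$ as in Lemma \ref{lem:co-hyper} for $N = \text{ht}(\sum \beta_i)$, chosen (using Lemma \ref{lemma:other-cuspidal}) so that cuspidality of $M_i$ means $\Res_{\lambda,\mu} M_i \neq 0 \Rightarrow c(\lambda) \le 0$. Now $M_i$ is cuspidal of weight $\beta_i \in \Delta_F$, hence $c(\beta_i) = 0$. If $\Res_{\lambda, \beta_i - \lambda} M_i \neq 0$ then $c(\lambda) \le 0$; but applying the same inequality to the module $\Res_{\lambda,\beta_i-\lambda}M_i$ (or rather to $M_i$ with the roles reversed, via the right-adjoint/coinduction bound or simply because $\Res_{\beta_i-\lambda,\lambda}$ of the relevant induced piece is nonzero — concretely: the dual of $M_i$ is also cuspidal of weight $\beta_i$, and $\Res_{\beta_i - \lambda, \lambda}(M_i\dual) \cong (\Res_{\lambda, \beta_i - \lambda} M_i)\dual \ne 0$, forcing $c(\beta_i - \lambda) \le 0$) we get $c(\lambda) \ge 0$. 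Hence $c(\lambda) = 0$, so $\lambda$ is a nonnegative combination of roots in $F$. Iterating, every $\Res$ of $M_i$ into more than one block decomposes $\beta_i$ as a sum of elements of $\spann_{\R_{\ge 0}} F$; but $\beta_i \in \Delta_F$ is indecomposable in $F$, contradiction unless the block is a single nonzero piece. Therefore each row $\eta_{i\bullet}$ has exactly one nonzero entry, namely $\eta_{i\sigma(i)} = \beta_i$ for some function $\sigma$. Since the column sums $\mu_j = \gamma_j \ne 0$ are also nonzero, $\sigma$ must be a bijection.

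\emph{Identifying the subquotients.} For such $\eta$, $\Res_\eta^\beta(M_1 \otimes \cdots \otimes M_n) = M_1 \otimes \cdots \otimes M_n$ (no splitting), $\tau$ permutes the factors by $\sigma$ with a degree shift equal to the degree of $\sigma$ acting on $e_{\beta_1 \cdots \beta_n}$, and $\Ind_\eta^\gamma$ is trivial since each column also has a single block; what remains is $M_{\sigma^{-1}(1)} \otimes \cdots \otimes M_{\sigma^{-1}(n)}$ with that shift, and this also forces $\gamma_j = \beta_{\sigma^{-1}(j)}$, i.e. the layer is zero unless $\{\beta_i\} = \{\gamma_j\}$ as multisets. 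This gives the final ``in particular'' statement.

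\emph{Main obstacle.} The one genuinely delicate point is the symmetric cuspidality bound $c(\lambda) \le 0$ \emph{and} $c(\beta_i - \lambda) \le 0$: the definition of cuspidal only gives the ``left'' inequality directly. I expect to handle this exactly as sketched — via $M_i\dual$ (still cuspidal of the same weight by the remarks in \S\ref{ss:cuspidal}, since duality preserves the cuspidal category) together with $\Res_{\lambda',\lambda}(M\dual) \cong (\Res_{\lambda,\lambda'}M)\dual$ — but it is worth stating carefully, since everything downstream (including the ``exactly those'' and ``if and only if'' claims) rests on pinning $c(\lambda)=0$ and invoking indecomposability of elements of $\Delta_F$ inside $\spann_{\R_{\ge 0}}F$.
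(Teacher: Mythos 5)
Your key step rests on a claim that is false: a nonzero restriction $\Res_{\lambda,\beta_i-\lambda}M_i\neq 0$ does \emph{not} force $c(\lambda)=0$. The definition of $F$-cuspidality does constrain both factors, but both constraints point the same way: $\lambda\in\spann_{\R_{\geq 0}}(F^-\cup F)$ gives $c(\lambda)\leq 0$, while $\beta_i-\lambda\in\spann_{\R_{\geq 0}}(F^+\cup F)$ gives $c(\beta_i-\lambda)\geq 0$, which is again $c(\lambda)\leq 0$; nothing forces $c(\lambda)\geq 0$. Concretely, in type $A_2$ with the convex order $\a_1\prec\a_1+\a_2\prec\a_2$ and $F=\{\a_1+\a_2\}$, the cuspidal simple $L(\a_1+\a_2)$ has character $[12]$, so $\Res_{\a_1,\a_2}L(\a_1+\a_2)\neq 0$ even though $c(\a_1)<0$; this module satisfies exactly the hypotheses of the lemma. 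Your proposed repair via duality fails on two counts: restriction commutes with $\circledast$ \emph{without} reversing the tensor factors (it is induction whose dual reverses factors, as in \eqref{eq:dualind}), so dualizing only reproduces $c(\lambda)\leq 0$; and the modules to which the lemma is actually applied, the root modules $\D(\underline i)$, are infinite dimensional, so $\circledast$ is not even defined for them. Hence your conclusion that each row $\eta_{i\bullet}$ is concentrated in a single column cannot be extracted from the rows alone.

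The missing idea is to play the rows against the columns. The paper takes the first column $k$ for which $\ga_k=\sum_i\eta_{ik}$ is not a single full $\beta_i$; minimality of $k$ guarantees that each nonzero $\eta_{ik}$ occurs as a ``prefix'' of $\beta_i$, so that $\Res_{\eta_{ik},\b_i-\eta_{ik}}M_i\neq 0$ and cuspidality gives $c(\eta_{ik})\leq 0$. Since $\ga_k\in\D_F$ cannot be decomposed nontrivially inside $\spann_{\R_{\geq 0}}F$, at least one $\eta_{ik}$ lies outside that span and therefore satisfies $c(\eta_{ik})<0$ strictly; summing over the column yields $c(\ga_k)<0$, contradicting $\ga_k\in F$. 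The remainder of your write-up (identification of the permutation layers, the degree shift, and the multiset condition, all of which do use the indecomposability of the $\ga_j$ in $\D_F$) is fine once this step is replaced.
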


\begin{proof}
It is clear that there is a subquotient $\Res_{\gamma_1, \ldots \gamma_n} (M_1 \circ \cdots \circ M_n)$ in the Mackey filtration corresponding to any permutation $\sigma \in S_n$ such that $\ga_{\sigma(i)}=\beta_i$ for all $i$. 
Suppose there is a non-zero subquotient that does not correspond to such a permutation. Let $k$ be the first index such that $\ga_k$ is a nontrivial sum $\ga_k=\sum_i \eta_{ik}$. For each $i$ such that $\eta_{ik}\neq 0$, the minimality of $k$ implies that $\Res_{\eta_{ik},\b_i-\eta_{ik}} M_i\neq 0$. The face-cuspidality of $M_i$ implies that $c(\eta_{ik})\leq 0$, 
where $c$ is a functional compatible with $F$ as in Lemma \ref{lem:co-hyper} for $N= \sum \b_i$.
Since $\ga_k$ is a simple root in the face $F$, $\eta_{ik}$ does not lie in $F$ and hence, since $\eta_{ik}$ is a prefix of $\beta_i$, $c(\eta_{ik})<0$. Since $\ga_k=\sum_i \eta_{ik}$, convexity implies that $c(\ga_k)<0$, a contradiction.
%
%
%
\end{proof}

\begin{lemma}\label{resm} Fix  $\underline i_1,\ldots,\underline i_n \in \D_F$ and $M \in \C$. Every simple subquotient of $\Res_{\underline i_1,\ldots,\underline i_n}M$ is of the form $L(\underline i_1)\otimes \cdots\otimes L(\underline i_n)$ up to a grading shift.
\end{lemma}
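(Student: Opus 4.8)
The plan is to induct on $n$, using the Mackey filtration to peel off one strand at a time and reduce to a rank-one statement about the face roots. First I would handle the case $n=1$: a simple $R(\underline i_1)$-module is a simple module for the KLR algebra $R(\Phi)$ of weight equal to the simple root $\underline i_1 \in \Delta_F$. Since $\underline i_1$ is a positive real root of $\Phi$ (it lies in $F \subset \Phi^+$), by the corollary after Theorem \ref{classification} there is a \emph{unique} self-dual simple cuspidal $R(\underline i_1)$-module, and by Theorem \ref{classification} every simple of that weight is cuspidal (a one-step cuspidal decomposition, since the weight is a single root). Hence every simple $R(\underline i_1)$-module is $L(\underline i_1)$ up to grading shift. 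One should also note that $M \in \C$ does not affect this: $\Res_{\underline i_1} M$ is just $e_{\underline i_1} M$ viewed as an $R(\underline i_1)$-module, and whatever its simple subquotients are, they are simple $R(\underline i_1)$-modules.

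For the inductive step, consider a simple subquotient $S$ of $\Res_{\underline i_1,\ldots,\underline i_n} M$. Factoring the iterated restriction as $\Res_{\underline i_1, \, \underline i_2 + \cdots + \underline i_n} \circ \,\Res'$ (or building it up the other way), and using that restriction is exact, any simple subquotient $S = S_1 \otimes S'$ arises as a tensor product of a simple subquotient $S_1$ of some $R(\underline i_1)$-module and a simple subquotient $S'$ of an $R(\underline i_2 + \cdots + \underline i_n)$-module. The $n=1$ case gives $S_1 \cong q^a L(\underline i_1)$ for some shift $a$. To apply the inductive hypothesis to $S'$ I need to know that $S'$ is a subquotient of $\Res_{\underline i_2,\ldots,\underline i_n} N$ for some $N \in \C$. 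Here I would argue that the relevant $R(\underline i_2 + \cdots + \underline i_n)$-module — namely an appropriate $\Res_{\underline i_1, \underline i_2 + \cdots + \underline i_n} M$-component — is itself face-cuspidal: if $\Res_{\lambda,\mu}$ of it is nonzero, then $\Res_{\underline i_1 + \lambda, \mu} M \neq 0$ (resp. $\Res_{\lambda, \mu}$ after noting $\underline i_1 \in F \subset \spann_{\R_{\geq 0}}(F^- \cup F)$), and cuspidality of $M$ together with $\underline i_1 \in F$ forces the required containments. Then the inductive hypothesis gives $S' \cong q^b\, L(\underline i_2) \otimes \cdots \otimes L(\underline i_n)$, and hence $S \cong q^{a+b}\, L(\underline i_1) \otimes \cdots \otimes L(\underline i_n)$, completing the induction.

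An alternative and perhaps cleaner route, which I would use if the bookkeeping above gets awkward, is to combine Lemma \ref{lem:cuspidal-filtration} with the classification. Every module $M \in \C$ has a finite filtration whose subquotients are simple cuspidal modules, each of which (by Theorem \ref{classification} applied in $\C$) is a head of an induction product $L(\beta_1) \circ \cdots \circ L(\beta_m)$ with all $\beta_j \in \Delta_F$ — in fact, since restriction is exact, it suffices to prove the claim for $M$ of the form $L(\beta_1) \circ \cdots \circ L(\beta_m)$. Then $\Res_{\underline i_1,\ldots,\underline i_n}(L(\beta_1) \circ \cdots \circ L(\beta_m))$ is analyzed directly by the Mackey filtration: Lemma \ref{lem:cuspidal-filtration} (with the roles of the $\gamma$'s played by $\underline i_1, \ldots, \underline i_n$, after first checking $\{\beta_j\} = \{\underline i_k\}$ as multisets, which must hold or the restriction vanishes) shows every layer is a permutation of $L(\beta_1) \otimes \cdots \otimes L(\beta_m) = L(\underline i_1) \otimes \cdots \otimes L(\underline i_n)$ up to a degree shift, and each such tensor product is already simple, so it \emph{is} the list of simple subquotients.

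I expect the main obstacle to be the first approach's lemma that the intermediate restriction components of $M$ remain face-cuspidal — one must be careful that a prefix condition on sequences for $M$ really does descend to the truncated module, which is exactly the kind of "prefix of a prefix is a prefix" argument used in the proof of Lemma \ref{lem:closedcirc}. Using instead the reduction to $M = L(\beta_1) \circ \cdots \circ L(\beta_m)$ via the cuspidal classification sidesteps this entirely, at the cost of invoking Lemma \ref{lem:cuspidal-filtration}; this is the version I would write up.
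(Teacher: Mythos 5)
Both of your routes have a genuine gap, and the second one (the one you say you would write up) rests on a claim that Theorem \ref{classification} does not provide. In your first approach the base case is false as stated: it is not true that every simple $R(\underline i_1)$-module is $L(\underline i_1)$ when $\underline i_1\in\D_F$ fails to be a simple root of $\Phi$ (which is the typical situation for a face). For example, for $\alpha=\alpha_1+\alpha_2$ in type $A_2$ the corresponding weight space of $U^+$ is two-dimensional, so $R(\alpha)$ has two simples: the cuspidal $L(\alpha)$ and $\hd(L(\alpha_1)\circ L(\alpha_2))$. A weight being a single root does not force its cuspidal decomposition to be one step, so you cannot discard the cuspidality of $M$ --- that hypothesis is the entire content of the lemma. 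In your second approach, the reduction to $M=L(\beta_1)\circ\cdots\circ L(\beta_m)$ with all $\beta_j\in\D_F$ is not ``Theorem \ref{classification} applied in $\C$'': that theorem decomposes a simple of $\C$ into cuspidals for the \emph{ambient} convex (pre)order, whose weights are only guaranteed to be multiples of roots of $F$ (cf.\ the proof of Lemma \ref{independent}), not elements of $\D_F$; for instance $L(\gamma)$ with $\gamma\in F\setminus\D_F$ is its own one-step decomposition. The statement you actually need is the face crystal structure \cite[Corollary 3.29]{tingleywebster}, a substantially heavier input that is only available for $\Phi$ at worst affine and that this paper invokes only later (Theorem \ref{projgenerator}), downstream of the present lemma. (There is also the minor point that $M$ need not have finite length, so ``finite filtration by simples'' requires an appeal to the local finiteness of graded weight spaces.)

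The paper's proof is direct and avoids both issues. Given a simple subquotient $M_1\otimes\cdots\otimes M_n$ of the restriction, pick the minimal $j$ with $M_j\not\cong L(\underline i_j)$ and take the cuspidal decomposition $M_j=\hd(N_1\circ\cdots\circ N_k)$ of Theorem \ref{classification} for the ambient order. Since $\underline i_j\in\D_F$ cannot be written as a sum of other roots of $F$, the leading factor must satisfy $c(\wt(N_1))>0$ for a functional $c$ as in Lemma \ref{lem:co-hyper}; then $e_{\lambda,\wt(M)-\lambda}M\neq 0$ for $\lambda=\underline i_1+\cdots+\underline i_{j-1}+\wt(N_1)$ with $c(\lambda)>0$, contradicting the cuspidality of $M$ via Lemma \ref{lemma:other-cuspidal}. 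This is exactly the repair your first approach needs --- showing that cuspidality of $M$ forces each tensor factor to be cuspidal --- carried out factor by factor rather than by induction on $n$.
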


\begin{proof} 
By Lemma \ref{lem:co-hyper}, we can choose a linear functional $c$ such that $F \subset c^{-1}(0)$ and, for any root $\gamma$ of height at most $\text{ht}(\wt(M))$, $\gamma \in F^-$ if and only if $c(\beta)<0$, and $\beta \in F^+$ if and only if $c(\beta)>0$. 

Let 
 $M_1 \otimes \cdots \otimes M_n,$ be a simple subquotient of $\Res_{\underline i_1,\ldots,\underline i_n}M$. Assume that, for some $j$, $M_j$ is not isomorphic to $L(\underline i_j)$, and take $j$ minimal for this to occur. Let $M_j = \hd (N_1\circ \ldots \circ N_k)$ be the decomposition of $M_j$ as in Theorem \ref{classification} for the corresponding convex preorder, and let $\lambda= \underline i_1+ \cdots + \underline i_{j-1} + \wt(N_1)$. 
Since $
\underline i_j$ is simple for $\Phi_F$,
$\wt(N_1)$ cannot be a multiple of a root in $F$, so $c(N_1) > c(M_j)=0$. But then 
$$e_{\lambda, \wt(M)-\lambda} M \neq 0 \qquad 
\text{ and } 
\qquad c(\lambda)= 0 + \cdots+0+c(\wt(N_1))>0,$$
By Lemma \ref{lemma:other-cuspidal}, this contradicts the assumption that $M$ was cuspidal.
\end{proof}

\subsection{The projective $P$} \label{ss:defP}

\begin{theorem}\label{projective}
Let $\underline i_1,\ldots,\underline i_n\in \Delta_F$. Then $\D(\underline i_1)\circ\cdots\circ \D(\underline i_n)$ is projective in $\C$. 
\end{theorem}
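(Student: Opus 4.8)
The plan is to prove projectivity by an adjunction argument, reducing to the vanishing of $\Ext^1$ against arbitrary objects of $\C$ and then to a computation with restrictions. Since $\D(\underline i_j)$ is projective in the rank-one cuspidal category $\C_{F(\underline i_j)}$, the tensor product $\D(\underline i_1)\otimes\cdots\otimes\D(\underline i_n)$ is projective in $R(\underline i_1)\otimes\cdots\otimes R(\underline i_n)\mods$ restricted to the appropriate cuspidal subcategory; the issue is transporting this through the induction functor $\circ$ while staying inside $\C$.

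First I would fix $M\in\C$ and compute, using the Ext-adjunction \eqref{extadjunction},
\[
\Ext^1_{R}(\D(\underline i_1)\circ\cdots\circ\D(\underline i_n),\,M)\;\cong\;\Ext^1_{\otimes_j R(\underline i_j)}(\D(\underline i_1)\otimes\cdots\otimes\D(\underline i_n),\,\Res_{\underline i_1,\ldots,\underline i_n}M).
\]
By Lemma \ref{resm}, every simple subquotient of $\Res_{\underline i_1,\ldots,\underline i_n}M$ is of the form $L(\underline i_1)\otimes\cdots\otimes L(\underline i_n)$ up to grading shift; in particular $\Res_{\underline i_1,\ldots,\underline i_n}M$ lies in the Serre subcategory generated by these simples, where each $\D(\underline i_j)$ is the projective cover of $L(\underline i_j)$. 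Thus $\D(\underline i_1)\otimes\cdots\otimes\D(\underline i_n)$ is projective relative to that subcategory, and the $\Ext^1$ on the right vanishes. Since $M\in\C$ was arbitrary and $\D(\underline i_1)\circ\cdots\circ\D(\underline i_n)\in\C$ by Lemma \ref{lem:closedcirc}, this shows it is projective in $\C$. I would also note one needs finite length / existence of projective covers in the relevant category so that ``projective = $\Ext^1$ vanishes against all objects'' is legitimate; the finite generation hypothesis on modules handles this.

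The step I expect to be the main obstacle is making precise that the tensor product of projective covers $\D(\underline i_j)$ is genuinely projective in the category in which $\Res_{\underline i_1,\ldots,\underline i_n}M$ lives. One has to know that $\Res_{\underline i_1,\ldots,\underline i_n}M$ is not merely built from the simples $L(\underline i_1)\otimes\cdots\otimes L(\underline i_n)$ as subquotients but actually lies in a category where $\D(\underline i_j)$ remains projective as a $R(\underline i_j)$-module — i.e. that $\D(\underline i_j)$ is projective in the full subcategory of $R(\underline i_j)$-modules all of whose simple subquotients are $L(\underline i_j)$, which is exactly the definition of $\D(\underline i_j)$ as projective cover in $\C_{F(\underline i_j)}$ once one checks that category coincides with this Serre subcategory for a rank-one face. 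A secondary subtlety is the grading shifts appearing in Lemma \ref{resm}, but since $q$ is an auto-equivalence it does not affect projectivity; one simply keeps track of it formally. So the proof reduces to: (i) invoke Lemma \ref{resm} to control $\Res_{\underline i_1,\ldots,\underline i_n}M$; (ii) invoke that $\D(\underline i_j)$ is a projective cover in the rank-one cuspidal category to get relative projectivity of the tensor product; (iii) apply \eqref{extadjunction}; (iv) conclude via Lemma \ref{lem:closedcirc}.
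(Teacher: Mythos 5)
Your proposal is correct and follows essentially the same route as the paper: membership in $\C$ via Lemma \ref{lem:closedcirc}, the adjunction \eqref{extadjunction}, Lemma \ref{resm} to control $\Res_{\underline i_1,\ldots,\underline i_n}M$, and then the vanishing of $\Ext^1$ coming from each $\D(\underline i_k)$ being the projective cover of $L(\underline i_k)$ in its rank-one cuspidal category. The only difference is cosmetic: the paper phrases the last step as the single computation $\Ext^1(\D(\underline i_1)\otimes\cdots\otimes\D(\underline i_n),L(\underline i_1)\otimes\cdots\otimes L(\underline i_n))=0$ plus d\'evissage, while you package it as relative projectivity in the Serre subcategory generated by these simples.
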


\begin{proof} 
By Lemma \ref{lem:closedcirc}, $\D(\underline i_1)\circ\cdots\circ \D(\underline i_n) \in \C$. To show projectivity, it is equivalent to show that \[
\Ext^1(\D(\underline i_1)\circ \cdots \circ \D(\underline i_n),M)=0
\] for all $M \in \C$.
By the adjunction \eqref{extadjunction}, this is equivalent to showing 
\[
\Ext^1(\D(\underline i_1)\otimes \cdots \otimes \D(\underline i_n),\Res_{\underline i_1,\ldots,\underline i_n}M)=0.
\]
By Lemma \ref{resm}, it suffices to prove 
\[
\Ext^1(\D(\underline i_1)\otimes \cdots \otimes \D(\underline i_n),L(\underline i_1)\otimes \cdots\otimes L(\underline i_n))=0.
\]
Since $\Ext^1(\D(\underline i_k),L(\underline i_k))=0$, this is true.
\end{proof}

\begin{definition}
Let $P$ be the direct sum of all modules of the form $\D(\underline i_1)\circ \cdots\circ\D(\underline i_n)$ for $\underline i_1,\ldots,\underline i_n\in\D_F$. 
The indexing set for these direct summands is $ \coprod_\nu \Seq_F(\nu)$ where 
\[
\Seq_F(\nu) = \{ (\underline i_1, \ldots, \underline i_n) : \underline i_k \in \Delta_F, \underline i_1+\cdots +\underline i_n=\nu \}.
\]
For $\underline{\ii}=(\underline i_1,\ldots,\underline i_n)\in \Seq_F(\nu)$, we write $\D(\underline{\ii})$ for $\D(\underline i_1)\circ\cdots\circ\D(\underline i_n)$.
\end{definition}

\begin{theorem}\label{projgenerator} 
If $\Phi$ is of finite or affine type and $\Phi_F$ is of finite type, then $P$, together with its grading shifts, is a projective generator of $\C$.
\end{theorem}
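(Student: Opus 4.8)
The plan is to show that every simple object of $\C$ is a quotient of some $\D(\underline{\ii})$, and then upgrade this to the statement that $P$ is a projective generator. Since the $\D(\underline{\ii})$ are already known to be projective in $\C$ (Theorem \ref{projective}) and $P$ is their direct sum, the only thing to check is that every simple $L$ in $\C$ admits a nonzero map from some summand of $P$, equivalently a surjection $\D(\underline{\ii})\twoheadrightarrow L$ for suitable $\underline{\ii}\in\Seq_F(\wt(L))$.

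First I would reduce to simples. Since $\Phi_F$ is of finite type, the simple objects of $\C$ are classified by Theorem \ref{th:tingleywebster}: they biject (up to grading shift) with $B(\infty)$ for $\Phi_F$, and by Theorem \ref{classification} applied inside $\C$ (with the convex order $\prec_F$ on $\Phi_F^+$ obtained from Lemma \ref{lem:order-exists}), each simple $L\in\C$ is the unique simple quotient of some $L(\underline\b_1)\circ\cdots\circ L(\underline\b_h)$ with $\underline\b_1\succeq_F\cdots\succeq_F\underline\b_h$, where each $\underline\b_m\in\Phi_F^+$ is a positive real root of $\Phi_F$. Because $\Phi_F$ is finite, every such $\underline\b_m$ is a nonnegative integer combination $\underline\b_m=\sum_k c_{m,k}\,\underline i_k$ of the simple roots $\D_F$. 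Crucially, $L(\underline\b_m)$ — the simple cuspidal $R(\underline\b_m)$-module for the order on $\Phi$ — is itself an object of $\C$ (its weight lies in $F$), so it has a cuspidal decomposition inside $\C$, but more directly: each simple in $\C$ of weight a multiple-of-simple-root $\underline i_k$ is $L(\underline i_k)$, and I want to produce a surjection onto $L(\underline\b_m)$ from a product of the $\D(\underline i_k)$'s.

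The key step is therefore: for each $\underline\b\in\Phi_F^+$ there is a sequence $(\underline j_1,\dots,\underline j_r)\in\Seq_F(\underline\b)$ with a surjection $\D(\underline j_1)\circ\cdots\circ\D(\underline j_r)\twoheadrightarrow L(\underline\b)$. To see this, note $L(\underline\b)$ as an object of $\C$ has, by Lemma \ref{resm}, the property that $\Res_{\underline\ii}L(\underline\b)\neq 0$ for some $\underline\ii=(\underline i_1,\dots,\underline i_r)\in\Seq_F(\underline\b)$ (indeed take any $\underline\ii$ refining the coloring of $L(\underline\b)$), and that restriction has all simple subquotients of the form $L(\underline i_1)\otimes\cdots\otimes L(\underline i_r)$. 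Pick such an $\underline\ii$ so that $L(\underline i_1)\otimes\cdots\otimes L(\underline i_r)$ is a \emph{quotient} of $\Res_{\underline\ii}L(\underline\b)$ (e.g. take $\underline\ii$ realizing a string of crystal operators $\tilde f$ reaching $L(\underline\b)$ from the highest weight, which by Theorem \ref{th:tingleywebster} is built from the $f_{\underline i}$; this makes the relevant head of the restriction the desired tensor product). Then by the coinduction adjunction \eqref{otheradjunction}, $\Hom(L(\underline\b), q^{?}\,L(\underline i_1)\circ\cdots\circ L(\underline i_r))\neq 0$ — and dually, using \eqref{eq:dualind} and self-duality of simples, $L(\underline\b)$ is a quotient of a grading shift of $L(\underline i_r)\circ\cdots\circ L(\underline i_1)$. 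Finally each $\D(\underline i_k)\twoheadrightarrow L(\underline i_k)$ by definition of the root module, and since $\circ$ is right exact, composing gives the surjection $\D(\underline i_r)\circ\cdots\circ\D(\underline i_1)\twoheadrightarrow L(\underline\b)$. Stringing these together over $\underline\b_1,\dots,\underline\b_h$ and using right-exactness of $\circ$ once more yields a surjection $\D(\underline{\ii}\,{}^{(1)})\circ\cdots\circ\D(\underline{\ii}\,{}^{(h)})\twoheadrightarrow L(\underline\b_1)\circ\cdots\circ L(\underline\b_h)\twoheadrightarrow L$, where the concatenation of the $\underline{\ii}\,{}^{(m)}$ lies in $\Seq_F(\wt(L))$, hence indexes a summand of $P$.

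To conclude: $P$ and its grading shifts form a projective generator because (i) $P$ is projective in $\C$ by Theorem \ref{projective} and it is a direct sum of the $\D(\underline\ii)$, (ii) every simple of $\C$ is a quotient of a grading shift of a summand of $P$ by the previous paragraph, so every object of $\C$ (being finitely generated, hence of finite length, by the weight grading) is a quotient of a finite direct sum of grading shifts of $P$ via a standard induction on length. The main obstacle I anticipate is the precise claim that one can choose the restriction sequence $\underline\ii$ so that $L(\underline i_1)\otimes\cdots\otimes L(\underline i_r)$ is a \emph{quotient} (not merely a subquotient) of $\Res_{\underline\ii}L(\underline\b)$; this is where the finite-type hypothesis on $\Phi_F$ is really used (it guarantees $\underline\b$ is a finite positive sum of simple roots and that the crystal $B(\infty)$ for $\Phi_F$ governs the structure, so a suitable sequence of lowering operators, read off from Theorem \ref{th:tingleywebster}, exhibits the desired head). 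The rest is formal manipulation with the adjunctions \eqref{extadjunction}, \eqref{otheradjunction}, duality \eqref{eq:dualind}, and right-exactness of induction.
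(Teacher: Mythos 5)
Your proposal is correct and rests on the same crux as the paper's proof: the fact (from the finite-type face crystal structure of \cite{tingleywebster}, specifically Corollary 3.29 there) that every nontrivial simple in $\C$ is the head of $X\circ L(\underline i)$ for some $\underline i\in\Delta_F$ and simple $X\in\C$, followed by induction and right-exactness of $\circ$; the paper simply cites this and inducts directly, skipping your intermediate cuspidal decomposition of $L$ within the face and the restriction/coinduction/duality detour used to handle each $L(\underline\b)$, both of which are dispensable once one has the crystal-operator statement. Two small imprecisions in your write-up are harmless but worth noting: nonvanishing of $\Res_{\underline\ii}L(\underline\b)$ does not follow from ``any $\underline\ii$ refining the coloring'' but exactly from the crystal structure you invoke afterwards, and objects of $\C$ are not of finite length (e.g.\ $\D(\underline i)$ is free over $k[x_{\underline i}]$), so the final upgrade from simples to all finitely generated objects should go through finite-dimensionality of heads and graded Nakayama rather than an induction on length.
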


\begin{proof}
By Theorem \ref{projective}, $\D(\underline i_1)\circ\cdots\circ \D(\underline i_n)$ is projective. It suffices to show that every simple module $L$ in $\C$ is a quotient of a module of the form $\D(\underline i_1)\circ\cdots\circ\D(\underline i_n)$. 
By \cite[Corollary 3.29]{tingleywebster}, every non-trivial simple in $\C$ is of the form
$ \hd (L(\underline i) \circ X)$
for some $\underline i \in \Delta_F$ and some simple $X$ in $\C$, so by induction is a quotient of a module of the form $\D(\underline i_1)\circ\cdots\circ\D(\underline i_n)$. 
\end{proof}

\begin{remark}
A counterexample to Theorem \ref{projgenerator} when $\Phi$ and $\Phi_F$ are both of affine type is given in \S\ref{sec:example2}. A counterexample when $\Phi$ is of hyperbolic type and $\Phi_F$ is finite type is provided in \cite[\S 3.7]{tingleywebster}. There $\Phi_F$ is of type $\mathfrak{sl}_2$, but there are also imaginary cuspidal modules, contradicting the statement. 
\end{remark}

Fix $\underline \ii =(\underline i_1, \cdots, \underline i_n), \underline \jj = (\uj_1, \cdots, \uj_n) \in \Seq_F(\nu)$. Let $_{\underline \jj} S_{\underline \ii}$ be the subset of the symmetric group $S_n$ consisting of permutations taking $\underline \ii$ to $\underline \jj$. For each $w\in {_{\underline \jj} S_{\underline \ii}}$, pick a reduced decomposition $w=s_{i_1}\cdots s_{i_n}$. Set
\[
 \tau_w(\underline \ii) = \tau_{i_1}(s_{i_2}\cdots s_{i_n}\underline \ii)\cdots \tau_{i_n}(\underline \ii).
\]
Define $\deg_{\underline \ii}(w)$ to be $\deg \tau_w(\underline \ii)$, which clearly does not depend on the reduced decomposition.

\begin{lemma}\label{restrictdelta}  Fix $\underline \ii, \underline \jj \in \Seq_F(\nu)$. Then
$\displaystyle 
  \Res_{\underline \jj} \D({\underline \ii})\cong\bigoplus_{w\in {{}_{\underline \jj} S_{\underline \ii}}}q^{\deg _\ii(w)} \D(\uj_1)\otimes\cdots\otimes \D(\uj_n).
$
\end{lemma}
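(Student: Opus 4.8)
The plan is to compute the restriction $\Res_{\underline{\jj}}\D(\underline{\ii})$ by iterating the Mackey filtration (Theorem~\ref{mackey}) applied to the induced module $\D(\underline i_1)\circ\cdots\circ\D(\underline i_n)$, and then to argue that the filtration splits. First I would observe that, since each $\D(\underline i_k)$ is a module of weight $\underline i_k$ and each $\uj_\ell$ is a simple root in $\Delta_F$, Lemma~\ref{lem:cuspidal-filtration} applies verbatim: the only non-zero layers in the Mackey filtration of $\Res_{\uj_1,\ldots,\uj_n}(\D(\underline i_1)\circ\cdots\circ\D(\underline i_n))$ are those indexed by permutations $\sigma\in S_n$ with $\uj_{\sigma(i)}=\underline i_i$ for all $i$, i.e.\ by elements of ${}_{\underline\jj}S_{\underline\ii}$, and the corresponding subquotient is $\D(\underline i_{\sigma^{-1}(1)})\otimes\cdots\otimes\D(\underline i_{\sigma^{-1}(n)})=\D(\uj_1)\otimes\cdots\otimes\D(\uj_n)$, shifted by the degree of $\sigma$ acting on the idempotent $e_{\underline i_1\cdots\underline i_n}$; this degree shift is exactly $\deg_{\underline\ii}(w)$ for the associated $w\in{}_{\underline\jj}S_{\underline\ii}$. (One has to be slightly careful matching the indexing convention in Lemma~\ref{lem:cuspidal-filtration} with the $\tau_w$ convention used to define $\deg_{\underline\ii}(w)$, but these agree by the standard bookkeeping of how $\tau_\sigma$ acts on idempotents.)

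Second, I would upgrade the filtration statement to a direct sum decomposition. The cleanest route is to use that $\D(\uj_1)\otimes\cdots\otimes\D(\uj_n)$ is a projective object in $R(\uj_1)\otimes\cdots\otimes R(\uj_n)\mods$ — indeed $\D(\uj_\ell)$ is by definition the projective cover of $L(\uj_\ell)$ in $\C_{F(\uj_\ell)}$, but more to the point here, since each $\uj_\ell$ is a simple root, $R(\uj_\ell)$-mod is particularly simple and $\D(\uj_\ell)$ is a projective $R(\uj_\ell)$-module (it is $\Delta(\underline{i})$, the standard projective cover, and $R(\alpha)$ for a real root $\alpha$ has the standard module as a projective-injective object, or one can invoke that in the face cuspidal category the root module is projective). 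Once every subquotient of the filtration is a projective module, and they are all (up to grading shift) copies of the same module, the filtration of $\Res_{\underline\jj}\D(\underline\ii)$ splits: any extension of projectives splits, so by induction on the length of the filtration the whole thing is a direct sum. This yields precisely
\[
\Res_{\underline\jj}\D(\underline\ii)\cong\bigoplus_{w\in{}_{\underline\jj}S_{\underline\ii}}q^{\deg_{\underline\ii}(w)}\,\D(\uj_1)\otimes\cdots\otimes\D(\uj_n).
\]

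The main obstacle I anticipate is the splitting step — more precisely, pinning down exactly why the layers are projective as $R(\uj_1)\otimes\cdots\otimes R(\uj_n)$-modules and organizing the induction so that the grading shifts land where claimed. If one does not want to appeal to projectivity, an alternative is a character/dimension count: Lemma~\ref{lem:cuspidal-filtration} already gives the composition series, so $\ch(\Res_{\underline\jj}\D(\underline\ii))=\sum_{w}q^{\deg_{\underline\ii}(w)}\ch(\D(\uj_1)\otimes\cdots\otimes\D(\uj_n))$; then one would need an independent argument (e.g.\ adjunction $\Hom(\D(\underline\ii),\D(\underline\ii)\circ\text{something})$ computations, or exactness of $\Res$ combined with the fact that $\D(\uj_1)\otimes\cdots\otimes\D(\uj_n)$ has no self-extensions) to conclude the extensions are trivial. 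I would present the projectivity argument as the primary one, since it is the shortest, and remark that the $\tau_w$ degree bookkeeping is routine given the conventions already set up before the statement.
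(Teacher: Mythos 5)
Your first step coincides with the paper's: Lemma~\ref{lem:cuspidal-filtration} gives a filtration of $\Res_{\underline \jj}\D(\underline\ii)$ whose nonzero layers are indexed by ${}_{\underline\jj}S_{\underline\ii}$ with layers $q^{\deg_{\underline\ii}(w)}\D(\uj_1)\otimes\cdots\otimes\D(\uj_n)$, and the degree bookkeeping is as you say. The problem is your splitting step. Your primary argument rests on the claim that $\D(\uj_\ell)$ is projective as an $R(\uj_\ell)$-module ``since each $\uj_\ell$ is a simple root.'' But $\uj_\ell\in\D_F$ is only a simple root of the face root system $\Phi_F$; as a root of $\Phi$ it is typically a non-simple positive real root of arbitrary height, and the root module $\D(\uj_\ell)$ is by definition the projective cover of $L(\uj_\ell)$ in the cuspidal category $\C_{F(\uj_\ell)}$, not in $R(\uj_\ell)\mbox{-mod}$. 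It is generally not projective there: in the example of \S\ref{sec:example}, $\underline 1=\a_0+\a_2$ and $\D(\underline 1)$ (with character supported on $[02]$ only) is a proper quotient of the indecomposable projective $R(\a_0+\a_2)e_{02}$. So ``all layers are projective, hence the filtration splits'' does not go through.

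What actually suffices — and is the paper's one-line argument — is that $\Ext^1(\D(\uj_\ell),\D(\uj_\ell))=0$: since every layer of the filtration is a grading shift of the single module $\D(\uj_1)\otimes\cdots\otimes\D(\uj_n)$, vanishing of its self-extensions (which follows by a K\"unneth argument from the factors) forces the filtration to split. The self-extension vanishing for each $\D(\uj_\ell)$ is exactly what one extracts from the fact you mention only in passing, namely projectivity of $\D(\uj_\ell)$ in the cuspidal category $\C_{F(\uj_\ell)}$: that category is closed under extensions, so any self-extension of $\D(\uj_\ell)$ in $R(\uj_\ell)\mbox{-mod}$ already lives in $\C_{F(\uj_\ell)}$ and therefore splits. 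You flagged ``no self-extensions'' only as an undeveloped fallback inside your alternative character-counting route; it is in fact the whole argument, and your primary route should be replaced by it.
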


\begin{proof}
By Lemma \ref{lem:cuspidal-filtration} there is a filtration of $\Res_{\underline \jj} \D({\underline \ii})$ with these subquotients. 
This filtration splits since $\Ext^1(\D(\uj_i),\D(\uj_i))=0$. 
\end{proof}

\subsection{Generators of $\End(P)$} \label{ss:gens}
Write $P=\oplus_\nu P_\nu$ where $P_\nu$ is a $R(\nu)$-module. 
Given a sequence $\underline \ii=(\underline i_1,\ldots,\underline i_n)$ of elements in $\D_F$, let
\[
 e_{\underline \ii}  \in \End(P)
\]
be the projection onto the summand $\D(\underline i_1)\circ\cdots\circ\D(\underline i_n)$.

For $m,n \in {\Bbb N}$, let $w[m,n]$ be the element of the symmetric group $S_{m+n}$ given by
\[
 w[m,n](i)=\begin{cases}
            i+n \quad\text{if} \quad \!\!\! i\leq m \\
            i-m\quad \text{otherwise}.
           \end{cases}
\]
There is a unique reduced expression $s_{i_1} \cdots s_{i_l}$ for $w[m,n]$ up to two-term braid moves (since it has a unique descent). Thus we can unambiguously define an element $\psi_{ w[m,n]}=\psi_{i_1}\cdots \psi_{i_l} \in R$.

For each positive real root $\a$, fix a nonzero vector $v_\a \in \D(\a)$ of minimal degree.

\begin{lemma} \label{lem:tau}
Fix $\underline i , \underline k \in\D_F$. 
There is a unique homomorphism
 \[
  \tau\map{q^{-\underline i \cdot\underline k}\D(\underline i)\circ\D(\underline k)}{\D(\underline k)\circ\D(\underline i)}
 \]
such that $\tau (1 \otimes (v_{\underline i} \otimes v_{\underline k})) = \psi_{w[\text{ht}(\underline i),\text{ht}(\underline k)]} \otimes (v_{\underline k}  \otimes v_{\underline i})$.
\end{lemma}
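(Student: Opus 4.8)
The plan is to construct $\tau$ by adjunction and then verify the asserted normalization. First I would observe that, by the adjunction $\Hom(\D(\underline i)\circ\D(\underline k),N)\cong\Hom(\D(\underline i)\otimes\D(\underline k),\Res_{\underline i,\underline k}N)$ applied to $N=\D(\underline k)\circ\D(\underline i)$, a homomorphism out of $q^{-\underline i\cdot\underline k}\D(\underline i)\circ\D(\underline k)$ is the same data as a degree-zero map $q^{-\underline i\cdot\underline k}\D(\underline i)\otimes\D(\underline k)\to\Res_{\underline i,\underline k}(\D(\underline k)\circ\D(\underline i))$. Here is where Lemma \ref{restrictdelta} does the work: since $\underline i,\underline k\in\D_F$, the set ${}_{(\underline i,\underline k)}S_{(\underline k,\underline i)}$ consists of just the single transposition $w[\text{ht}(\underline k),\text{ht}(\underline i)]$ when $\underline i\neq\underline k$, together with the identity when $\underline i=\underline k$; in all cases $\Res_{\underline i,\underline k}(\D(\underline k)\circ\D(\underline i))$ decomposes, as a sum of tensor products $\D(\underline i)\otimes\D(\underline k)$, with exactly one summand of the right weight in each graded component, and the grading shift on the transposition summand is $\deg_{(\underline k,\underline i)}(w)=-\underline i\cdot\underline k$ (a rank-two computation of the degree of the flip $\psi_{w}$ on $e_{\underline k\,\underline i}$). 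Thus $\Hom\big(q^{-\underline i\cdot\underline k}\D(\underline i)\otimes\D(\underline k),\Res_{\underline i,\underline k}(\D(\underline k)\circ\D(\underline i))\big)$ is one-dimensional in degree zero, which gives both existence and uniqueness up to scalar.

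Next I would pin down the scalar using the explicit generator. On the right-hand side $\D(\underline k)\circ\D(\underline i)=R(\nu)e_{\underline k\,\underline i}\otimes_{R(\underline k)\otimes R(\underline i)}(\D(\underline k)\otimes\D(\underline i))$, the element $\psi_{w[\text{ht}(\underline i),\text{ht}(\underline k)]}\otimes(v_{\underline k}\otimes v_{\underline i})$ is a genuine element, and one checks it lies in the appropriate idempotent space $e_{\underline i\,\underline k}(\D(\underline k)\circ\D(\underline i))$, hence corresponds under adjunction to an element of $\Res_{\underline i,\underline k}(\D(\underline k)\circ\D(\underline i))$ generating the transposition summand. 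Since $v_{\underline i}\otimes v_{\underline k}$ generates the cyclic module $q^{-\underline i\cdot\underline k}\D(\underline i)\otimes\D(\underline k)$ (as $v_{\underline\gamma}$ is a minimal-degree vector and, being the head of a projective cover, $\D(\underline\gamma)$ is generated by $v_{\underline\gamma}$), specifying the image of $1\otimes(v_{\underline i}\otimes v_{\underline k})$ determines the map completely — provided such a map exists, which the adjunction argument of the previous paragraph guarantees. Comparing degrees: $v_{\underline i}\otimes v_{\underline k}$ has some degree $d$, the shift puts it in degree $d-\underline i\cdot\underline k$ on the source, and $\psi_{w}\otimes(v_{\underline k}\otimes v_{\underline i})$ has degree $d+\deg\psi_{w}=d+(-\underline i\cdot\underline k)$; the degrees match, so the prescribed assignment is consistent with a degree-zero homomorphism, and uniqueness forces it to be exactly the $\tau$ produced above, up to rescaling which we then fix by this very formula.

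The step I expect to be the main obstacle is verifying that the prescribed image actually lies in the correct summand of the Mackey filtration and is not killed by passing to a subquotient — equivalently, that $\psi_{w}\otimes(v_{\underline k}\otimes v_{\underline i})$ is nonzero in $e_{\underline i\,\underline k}(\D(\underline k)\circ\D(\underline i))$ and generates the transposition layer rather than landing in some proper sub. This is exactly a rank-$(\text{ht}(\underline i)+\text{ht}(\underline k))$ shuffle computation: one uses that $\Res_{\underline i,\underline k}$ of the induced module is computed by the Mackey theorem, that $\D(\underline i)\otimes\D(\underline k)$ has no self-extensions (so the filtration in Lemma \ref{restrictdelta} splits), and that the ``leading term'' $\psi_{w}\otimes(v_{\underline k}\otimes v_{\underline i})$ maps isomorphically onto the degree-shifted copy $q^{-\underline i\cdot\underline k}\D(\underline i)\otimes\D(\underline k)$ under the identification of that split summand — this is essentially the standard fact that induction/restriction of cuspidals behaves like a ``free'' shuffle, already packaged in Lemmas \ref{lem:cuspidal-filtration} and \ref{restrictdelta}. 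Once that identification is in hand, the homomorphism is forced and the normalization is immediate.
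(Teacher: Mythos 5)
Your treatment of the case $\underline i\neq\underline k$ is essentially the paper's own argument: apply the induction--restriction adjunction, use the cuspidal Mackey analysis (Lemma \ref{lem:cuspidal-filtration}) to see that $\Res_{\underline i,\underline k}(\D(\underline k)\circ\D(\underline i))\cong q^{-\underline i\cdot\underline k}\D(\underline i)\otimes\D(\underline k)$ is a single layer, and trace the identity back; uniqueness is then just the fact that $1\otimes(v_{\underline i}\otimes v_{\underline k})$ generates the induced module. Note that you do not actually need one-dimensionality of the degree-zero Hom space for this, which is fortunate, because at this point in the paper that one-dimensionality is not available: it would require knowing $\End(\D(\underline i))$ vanishes in negative degrees with scalars in degree zero, i.e.\ essentially Theorem \ref{lem:dot}, which appears only later and under Assumption \ref{assumptionB}.

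The genuine gap is the case $\underline i=\underline k$, which the paper does not prove by this method but imports from \cite[Lemma 3.6]{bkm} (remarking that the proof there works beyond finite type). When the two roots coincide, $\Res_{\underline i,\underline i}(\D(\underline i)\circ\D(\underline i))$ has two Mackey layers, $\D(\underline i)\otimes\D(\underline i)$ (a submodule) and $q^{-\underline i\cdot\underline i}\D(\underline i)\otimes\D(\underline i)$ (a quotient), and they have the \emph{same} weight; your claim that there is ``exactly one summand of the right weight in each graded component'' fails here, so the asserted one-dimensionality of the degree-zero Hom space does not follow from Lemmas \ref{lem:cuspidal-filtration} and \ref{restrictdelta} alone --- it needs non-negativity of the grading on $\End(\D(\underline i))$, which is not yet established. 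Moreover, existence with the prescribed normalization requires (i) splitting off the transposition layer, which uses $\Ext^1(\D(\underline i),\D(\underline i))=0$ as in Lemma \ref{restrictdelta}, and (ii) checking that the value of the resulting map on $1\otimes(v_{\underline i}\otimes v_{\underline i})$ is exactly $\psi_{w[\mathrm{ht}(\underline i),\mathrm{ht}(\underline i)]}\otimes(v_{\underline i}\otimes v_{\underline i})$ with no correction term from the lower layer (here minimality of $\deg v_{\underline i}$ is what saves you: any correction would sit in degree $2\deg v_{\underline i}-\underline i\cdot\underline i$, below the bottom of $\D(\underline i)\otimes\D(\underline i)$). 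This is precisely the step you flag as ``the main obstacle'' and leave as a gesture, but it is where the real content lies; you should either carry out this leading-term argument for the equal-root case or cite \cite[Lemma 3.6]{bkm} as the paper does.
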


\begin{proof}
If $\underline i=\underline k$, this is \cite[Lemma 3.6]{bkm} (there it is assumed that $\Phi$ is finite type, but the same proof works in general). Now assume that $\underline i\neq\underline k$. By adjunction,
 \[
  \Hom(q^{-\underline i \cdot\underline k }\D(\underline i)\circ\D(\underline k),\D(\underline k)\circ\D(\underline i))\cong 
  \Hom( q^{-\underline i \cdot\underline k}\D(\underline i)\otimes \D(\underline k),\Res_{\underline i,\underline k}\D(\underline k)\circ \D(\underline i)).
 \]
 By Lemma \ref{lem:cuspidal-filtration}, the Mackey filtration of $\Res_{\underline i,\underline k}\D(\underline k)\circ \D(\underline i)$ from Theorem \ref{mackey} has a unique nonzero layer, resulting in an isomorphism
\[
 \Res_{\underline i,\underline k}\D(\underline k)\circ \D(\underline i)\cong q^{-\underline i\cdot\underline k}\D(\underline i)\otimes\D(\underline k).
\]
Thus
 \begin{equation*}
  \Hom(q^{-\underline i \cdot\underline k } \D(\underline i)\circ\D(\underline k),\D(\underline k)\circ\D(\underline i))\cong \Hom(q^{-\underline i \cdot\underline k } \D(\underline i)\otimes \D(\underline k),q^{-\underline i\cdot\underline k}\D(\underline i)\otimes\D(\underline k)).
 \end{equation*}
 Tracing the identity map through the isomorphisms gives the desired $\tau$. 
 Uniqueness follows because $1 \otimes (v_{\underline i} \otimes v_{\underline k})$ generates $\D(\underline i)\circ \D(\underline k)$.
\end{proof}

For a sequence $\underline {\bf i}$ of elements in $\Delta_F$, Define
\[
 \tau_k(\underline \ii)\in \End(P).
\]
to act by $\id\circ\cdots\circ\id\circ\tau\circ\id\circ\cdots\circ\id$ on $\D(\underline i_1)\circ\cdots\circ \D(\underline i_n)$,
where the $\tau$ is the homomorphism from Lemma \ref{lem:tau} acting in the $k$-th and $(1+k)$-th place.

For $\alpha \in \Phi$ consider the restriction $R^{\text{sup}(\alpha)}$ of $R$ to the sub-Dynkin diagram where $\alpha$ is supported. From now on we make the following assumption about faces $F$.

\begin{assumption} \label{assumptionB} For every $\alpha \in \Delta_F$, $R^{\text{sup}(\alpha)}$ satisfies Assumption \ref{assumptionA}. 
\end{assumption}

The following is \cite[Theorem 3.3(4)]{bkm} if  $R^{\text{sup}(\alpha)}$ is of finite type, and \cite[Theorem 18.3]{mcn3} if it is of geometric symmetric affine type over a field of characteristic 0, together with \cite[Lemma 3.9]{bkm}.
\begin{theorem} \label{lem:dot}
For each $\a \in \Delta_F$, $\End(\D(\a))\cong k[x_\a]$, where $x_\a$ has degree $\a \cdot \a$, and $\D(\a)$ is a free $ k[x_\a]$-module. 
Furthermore, there is a unique such isomorphism satisfying the following relations in $\End(\D(\a)\circ\D(\a))$:
 $\tau x_2 = x_1 \tau+1$ and $x_2\tau=\tau x_1+1$, where $x_1$ and $x_2$ are the endomorphisms $x_\alpha\circ \id$ and $\id\circ x_\alpha$ respectively.
\qed
\end{theorem}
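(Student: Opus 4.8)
The plan is to reduce the statement to the already-understood structure theory of root modules over finite-type and symmetric affine-type KLR algebras, applied to the sub-KLR algebra $R^{\text{sup}(\alpha)}$ on the support of $\alpha$, and then to quote the cited results.

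First I would observe that $R(\alpha)$ coincides with $R^{\text{sup}(\alpha)}(\alpha)$ as a graded algebra, since no color outside $\text{sup}(\alpha)$ occurs in $\Seq(\alpha)$; hence a weight-$\alpha$ module for $R$ is literally a weight-$\alpha$ module for $R^{\text{sup}(\alpha)}$, and conversely. Next I would check that, under this identification, $L(\alpha)$ and $\D(\alpha)$ are respectively the simple cuspidal module for the root $\alpha$ and its projective cover, computed for $R^{\text{sup}(\alpha)}$ relative to a convex order on $\Phi^+_{\text{sup}(\alpha)}$ restricting $\prec$ (which one can manufacture from the functionals of Lemma \ref{lem:co-hyper}, cf.\ Lemma \ref{lem:order-exists}). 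The reason is that cuspidality of a weight-$\alpha$ module constrains only the restrictions $\Res_{\lambda,\mu}$ with $\lambda+\mu=\alpha$, which involve only colors of $\text{sup}(\alpha)$ and roots supported there, so the cuspidal category at weight $\alpha$ is one and the same abelian category whether it is formed inside the category of $R$-modules or inside that of $R^{\text{sup}(\alpha)}$-modules; uniqueness of the self-dual simple cuspidal then forces the two incarnations of $L(\alpha)$ — and hence of $\D(\alpha)$ — to agree, and this $\D(\alpha)$ is the root module to which the cited theorems apply.

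With this reduction in place, the existence of an isomorphism $\End(\D(\alpha))\cong k[x_\alpha]$ with $x_\alpha$ homogeneous of degree $\alpha\cdot\alpha$, together with the freeness of $\D(\alpha)$ over $k[x_\alpha]$, is exactly \cite[Theorem 3.3(4)]{bkm} when $R^{\text{sup}(\alpha)}$ is of finite type, and \cite[Theorem 18.3]{mcn3} when it is of geometric symmetric affine type over a field of characteristic $0$; by Assumption \ref{assumptionB} one of these two cases always occurs. The uniqueness of $x_\alpha$ subject to the relations $\tau x_2 = x_1\tau+1$ and $x_2\tau=\tau x_1+1$ in $\End(\D(\alpha)\circ\D(\alpha))$ is then \cite[Lemma 3.9]{bkm}, with $\tau$ the intertwiner of Lemma \ref{lem:tau}.

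The step I expect to need the most care is a reconciliation of conventions rather than a real obstacle: one must confirm that the $\tau$ appearing in \cite[Lemma 3.9]{bkm} is the same map as the $\tau$ of Lemma \ref{lem:tau} — both are characterized by $1\otimes(v_\alpha\otimes v_\alpha)\mapsto\psi_{w[\text{ht}(\alpha),\text{ht}(\alpha)]}\otimes(v_\alpha\otimes v_\alpha)$, so this comes down to matching the chosen minimal-degree vectors $v_\alpha$ — and that the grading normalization (degree $\alpha\cdot\alpha$) and the sign of the $1$ in the $\tau$-relations used in \cite{bkm} and \cite{mcn3} are those used here. The only genuinely non-formal ingredient is the affine case, which rests on the geometric realization and the characteristic-$0$ hypothesis of \cite{mcn3}.
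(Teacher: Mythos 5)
Your proposal is correct and follows the same route as the paper, which proves this theorem purely by citation: \cite[Theorem 3.3(4)]{bkm} in the finite-type case, \cite[Theorem 18.3]{mcn3} in the geometric symmetric affine case, and \cite[Lemma 3.9]{bkm} for the uniqueness of the normalization via the $\tau$-relations. The extra bookkeeping you supply (identifying the weight-$\alpha$ cuspidal category for $R$ with that for $R^{\text{sup}(\alpha)}$ so that Assumption \ref{assumptionB} licenses the citations) is exactly the implicit reduction the paper relies on.
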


%
%
%
%

\begin{definition} \label{def:gens} $ x_k(\ii)  \in \End(P)$ is the element which acts 
 by $\id\circ\cdots\circ\id\circ x \circ\id\cdots\circ\id$ on $\D(\underline i_1)\circ\cdots\circ \D(\underline i_n)$ and by zero on all other summands, where $x$ is the element from Lemma \ref{lem:dot} acting on $\Delta(\underline i_k)$.
\end{definition}

\begin{theorem}\label{basisthm} 
 Let ${\underline \ii},\underline \jj\in \Seq_F(\nu)$. Then 
 \[
  \{ \tau_w(\underline \ii) x_1(\underline \ii)^{a_1}\cdots x_{n}(\underline \ii)^{a_{n}}\mid w\in {_{\underline \jj} S_{\underline \ii}}, a_1,\ldots,a_{n}\in \N\}
 \] 
is a basis of $\Hom(\D(\underline \jj),\D(\underline \ii))$.
\end{theorem}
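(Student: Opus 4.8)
The plan is to prove that the proposed set spans $\Hom(\D(\underline\jj),\D(\underline\ii))$ and is linearly independent, by comparing graded dimensions. First I would establish the spanning statement. The key observation is that $\Res_{\underline\jj}\D(\underline\ii)$ decomposes as in Lemma \ref{restrictdelta}, so by adjunction $\Hom(\D(\underline\jj),\D(\underline\ii))\cong\Hom(\D(\uj_1)\otimes\cdots\otimes\D(\uj_n),\Res_{\underline\jj}\D(\underline\ii))$, and using $\End(\D(\underline i_k))\cong k[x_{\underline i_k}]$ (Theorem \ref{lem:dot}) together with the fact that each $\D(\alpha)$ is a free $k[x_\alpha]$-module of rank one on its minimal-degree generator, one computes the graded dimension of $\Hom(\D(\underline\jj),\D(\underline\ii))$ to be exactly $\sum_{w\in{}_{\underline\jj}S_{\underline\ii}}q^{\deg_{\underline\ii}(w)}/\prod_k(1-q^{\underline i_k\cdot\underline i_k})$. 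This is precisely the graded cardinality of the proposed basis set, since $x_k(\underline\ii)$ has degree $\underline i_k\cdot\underline i_k$ and $\tau_w(\underline\ii)$ has degree $\deg_{\underline\ii}(w)$ (the $x$'s and the single $\tau$-word commute appropriately up to lower-order $x$-terms by the relations in Theorem \ref{lem:dot}, so monomials $\tau_w x_1^{a_1}\cdots x_n^{a_n}$ already exhaust all degrees with the right multiplicities). Hence it suffices to prove \emph{either} spanning \emph{or} linear independence; I would aim for linear independence and deduce spanning from the dimension count, or vice versa, whichever is cleaner.

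For linear independence, the strategy is to apply the relevant endomorphisms to the distinguished generator and track where they send it under the iterated restriction. Concretely, fix the cyclic generator $v_{\underline\ii}=1\otimes(v_{\underline i_1}\otimes\cdots\otimes v_{\underline i_n})$ of $\D(\underline\ii)$; a homomorphism is determined by the image of this vector. Applying $\tau_w(\underline\ii)$ to $v_{\underline\ii}$ produces, by Lemma \ref{lem:tau} and the inductive definition of $\tau_w$, an element of the form $\psi_{(\text{word for }w)}\otimes(\text{permuted }v\text{'s})$ in $\D(\underline\jj)$, landing in the summand of $\Res_{\underline\jj}\D(\underline\ii)$ indexed by $w$ under Lemma \ref{restrictdelta}; applying the $x_k(\underline\ii)^{a_k}$ afterwards multiplies within the $k[x]$-factors. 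Different $w$ land in different summands of the restriction (or more carefully, the leading term with respect to a suitable filtration on the Mackey filtration does), and within a fixed summand the $x$-monomials are independent because each $\D(\alpha)$ is \emph{free} over $k[x_\alpha]$. So after restricting to $\underline\jj$ and projecting, a nontrivial linear combination $\sum c_{w,\mathbf a}\tau_w(\underline\ii)x^{\mathbf a}$ acting on $v_{\underline\ii}$ is nonzero.

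The main obstacle I anticipate is making the ``different $w$ land in different layers'' argument rigorous: the Mackey filtration of $\Res_{\underline\jj}(\D(\underline i_1)\circ\cdots)$ only splits \emph{as a direct sum} by Lemma \ref{restrictdelta}, but the splitting is not canonical, and $\tau_w(v_{\underline\ii})$ need not land cleanly in one summand — it lands in one layer of the filtration with possibly nonzero contributions lower down. The fix is to order the elements of ${}_{\underline\jj}S_{\underline\ii}$ compatibly with the filtration (e.g.\ by length or by a Bruhat-type order coming from the Mackey combinatorics), so that $\tau_w(v_{\underline\ii})$ has nonzero image in the associated graded piece for $w$ and zero image in all strictly higher pieces; then a standard triangularity argument gives independence. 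A secondary technical point is confirming that $\tau_w(\underline\ii)$ is independent of the reduced word for $w$ — this follows from the braid relations for the $\psi$'s in $R$ together with Lemma \ref{lem:tau}, exactly as the degree $\deg_{\underline\ii}(w)$ was already argued to be well-defined. Once both the dimension count and the triangularity are in place, the theorem follows.
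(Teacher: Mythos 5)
Your proposal is correct and follows essentially the same route as the paper: the paper also proves linear independence by applying the proposed elements to the generator $v_{\underline i_1}\otimes\cdots\otimes v_{\underline i_n}$ (using freeness of each $\D(\underline i_k)$ over $k[x_k]$ and Lemma \ref{lem:tau}), and then concludes spanning from exactly the graded dimension count you describe, via the adjunction and Lemma \ref{restrictdelta}. Your triangularity discussion is a reasonable fleshing-out of the paper's one-line independence claim, and note that the well-definedness of $\tau_w(\underline\ii)$ is a non-issue since the paper fixes a reduced decomposition for each $w$ (only the degree is claimed to be independent of that choice).
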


\begin{proof}
By Theorem \ref{lem:dot} each $\Delta(\underline i_k)$ is a free module for $k[x_k]$. Using this and Lemma \ref{lem:tau}, 
the endomorphisms in the statement produce linearly independent vectors when applied to the element $v_{\underline i_1}\otimes\cdots\otimes v_{\underline i_{n}}$, where as above $v_{\underline i}$ is a chosen vector in $\Delta(\underline i)$ of minimal degree. Hence they are linearly independent. 

By adjunction and Lemma \ref{restrictdelta},
 \begin{align*}
  \Hom(\D(\underline \jj),\D(\underline \ii))&\cong \Hom(\D(\uj_1)\otimes\cdots\otimes \D(\uj_n),\Res_{\underline \jj}\D(\underline \ii)) \\
  &\cong \bigoplus_{w\in {_{\underline \jj} S_{\underline \ii}}}q^{\deg \tau_w(\underline \ii)} \End(\D(\uj_1)\otimes\cdots\otimes \D(\uj_n)) \\
  &\cong \bigoplus_{w\in {_{\underline \jj} S_{\underline \ii}}}q^{\deg \tau_w(\underline \ii)} \bigotimes_{i=1}^n\End(\D(\uj_i)).
 \end{align*} 
Therefore
\[
 \dim_q\Hom(\D(\underline \jj),\D(\underline \ii)) 
 = \sum_{w\in_{\underline \jj} S_{\underline \ii}}  q^{\deg \tau_w(\underline \ii)} \prod_{i=1}^n(1-q^{\uj_i\cdot\uj_i})\inv.
\]
This dimension count shows that our linearly independent set is a basis.
\end{proof}

\begin{corollary}\label{kxy} If $\underline i$ and $\underline k$ are distinct roots in $\D_F$, then 
 \[
  \End(\D(\underline i)\circ \D(\underline k))\cong k[x_{\underline i},x_{\underline k}].
 \] \qed
\end{corollary}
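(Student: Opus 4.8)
The plan is to read this off directly from Theorem \ref{basisthm} applied with $\nu=\underline i+\underline k$ and $\underline \ii=\underline \jj=(\underline i,\underline k)$, so that $\Hom(\D(\underline \jj),\D(\underline \ii))=\End(\D(\underline i)\circ\D(\underline k))$. The first step is the observation that, because $\underline i\neq\underline k$, the set ${}_{\underline \jj}S_{\underline \ii}$ of permutations in $S_2$ carrying $(\underline i,\underline k)$ to $(\underline i,\underline k)$ consists of the identity alone: the nontrivial transposition sends $(\underline i,\underline k)$ to $(\underline k,\underline i)$, which is a different sequence precisely because the two roots differ. (Here one uses that $\D(\underline i)$ and $\D(\underline k)$ are genuinely inequivalent summands of $P$, which is immediate from $\wt\D(\underline i)=\underline i\neq\underline k=\wt\D(\underline k)$.)

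Given this, Theorem \ref{basisthm} says that $\{x_1(\underline \ii)^{a_1}x_2(\underline \ii)^{a_2}\mid a_1,a_2\in\N\}$ is a $k$-basis of $\End(\D(\underline i)\circ\D(\underline k))$. It remains to identify this with $k[x_{\underline i},x_{\underline k}]$. By Definition \ref{def:gens}, $x_1(\underline \ii)$ acts as $x_{\underline i}\circ\id$ and $x_2(\underline \ii)$ as $\id\circ x_{\underline k}$ on $\D(\underline i)\circ\D(\underline k)$; since these act on distinct induction factors they commute, so sending $x_{\underline i}\mapsto x_1(\underline \ii)$ and $x_{\underline k}\mapsto x_2(\underline \ii)$ defines an algebra homomorphism $k[x_{\underline i},x_{\underline k}]\to\End(\D(\underline i)\circ\D(\underline k))$. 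The monomials $x_{\underline i}^{a_1}x_{\underline k}^{a_2}$ map to the basis elements $x_1(\underline \ii)^{a_1}x_2(\underline \ii)^{a_2}$, so this homomorphism is a bijection on bases, hence an isomorphism; it is moreover graded, with $x_{\underline i}$ in degree $\underline i\cdot\underline i$ and $x_{\underline k}$ in degree $\underline k\cdot\underline k$, matching Theorem \ref{lem:dot}.

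There is essentially no obstacle here: the content is entirely in Theorem \ref{basisthm} and the commuting of $x_1(\underline \ii)$ with $x_2(\underline \ii)$, and the only point requiring a word is that distinctness of $\underline i$ and $\underline k$ collapses the indexing set of permutations to a singleton. (By contrast, in the equal case $\underline i=\underline k$ the transposition does contribute, which is why that case — handled separately via \cite{bkm} and Theorem \ref{lem:dot} — yields a nil-Hecke-type algebra rather than a polynomial ring.)
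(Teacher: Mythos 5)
Your proof is correct and is exactly the argument the paper intends: the corollary is stated with an immediate \qed because it follows directly from Theorem \ref{basisthm}, with the distinctness of $\underline i$ and $\underline k$ reducing the indexing set ${}_{\underline \jj}S_{\underline \ii}$ to the identity and leaving only the commuting polynomial generators $x_1(\underline\ii)$, $x_2(\underline\ii)$.
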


\subsection{Relations in $\End(P)$}
For distinct $\underline i,\uj\in\D_F$, define $Q_{\underline i \uj}(u,v)\in k[u,v]$ by
\begin{equation}\label{def:q}
\tau_1(\uj \underline i) \tau_1(\underline i \uj) = Q_{\underline i \uj} (x_1(\underline i \uj),x_2(\underline i \uj)).
\end{equation}
These exist by Corollary \ref{kxy} and are homogeneous of degree $-2 \, \underline i\cdot \uj$ by Theorem \ref{lem:dot}. For $\underline i=\uj$, define $Q_{\underline i \underline i}=0$. Then $\tau_1(\underline i \underline i) \tau_1(\underline i \underline i) = Q_{\underline i\underline i} (x_1(\underline i \underline i),x_2(\underline i \underline i))=0$, since by Theorem \ref{basisthm} the degree $-2 \underline i \cdot \underline i$ Hom space is zero.

\begin{lemma}\label{easyrelation} 
 If $\underline i\neq \uj$, then $x_1\tau_1(\underline i \uj)=\tau_1(\underline i \uj)x_2$ and $x_2\tau_1(\underline i \uj)=\tau_1(\underline i \uj)x_1$.
\end{lemma}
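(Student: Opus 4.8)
The plan is to prove the two relations $x_1\tau_1(\underline i\uj)=\tau_1(\underline i\uj)x_2$ and $x_2\tau_1(\underline i\uj)=\tau_1(\underline i\uj)x_1$ by comparing degrees against the basis of $\Hom(\D(\uj\underline i),\D(\underline i\uj))$ provided by Theorem~\ref{basisthm}. Recall $\tau_1(\underline i\uj)$ maps $q^{-\underline i\cdot\uj}\D(\underline i)\circ\D(\uj)$ to $\D(\uj)\circ\D(\underline i)$ and has degree equal to $\deg\tau_{w}$ for the single element $w=w[\text{ht}(\underline i),\text{ht}(\uj)]$ in ${}_{\uj\underline i}S_{\underline i\uj}$ (there is only one permutation sending $(\underline i,\uj)$ to $(\uj,\underline i)$ since $\underline i\ne\uj$). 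The elements $x_1$ and $x_2$ are the generators of $\End(\D(\underline i)\circ\D(\uj))\cong k[x_{\underline i},x_{\uj}]$ and of $\End(\D(\uj)\circ\D(\underline i))\cong k[x_{\uj},x_{\underline i}]$ respectively, with $x_1$ acting in the first slot and $x_2$ in the second.

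First I would observe that, by Theorem~\ref{basisthm} applied to the pair $\underline\ii=(\underline i,\uj)$, $\underline\jj=(\uj,\underline i)$, the space $\Hom(\D(\uj\underline i),\D(\underline i\uj))$ has a basis $\{\tau_w(\underline i\uj)\,x_1^a x_2^b\mid a,b\in\N\}$ where $w$ is the unique element of ${}_{\uj\underline i}S_{\underline i\uj}$. Now consider the element $x_1(\underline i\uj)\circ\tau_1(\underline i\uj)$ — i.e. post-compose $\tau$ with the dot acting on the first factor $\D(\underline i)$ of the target $\D(\uj)\circ\D(\underline i)$; wait — more carefully, $x_1$ on the target $\D(\uj)\circ\D(\underline i)$ is the dot on $\D(\uj)$, while the dot on $\D(\underline i)$ in the target is $x_2$. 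So I would show that $x_1\tau_1(\underline i\uj)$ and $\tau_1(\underline i\uj)x_2$ lie in the same one-dimensional graded piece of $\Hom(\D(\uj\underline i),\D(\underline i\uj))$: both have degree $\deg\tau_w(\underline i\uj)+\underline i\cdot\underline i$ (since in the source $x_2$ is the dot on $\D(\uj)$ — no: $x_2$ on the source $\D(\underline i)\circ\D(\uj)$ is the dot on $\D(\uj)$...). I need to pin down which dot is which: the relation asserts that moving the $\D(\underline i)$-dot past $\tau$ keeps it a $\D(\underline i)$-dot, and likewise for $\D(\uj)$. So $x_1$ on the target is the $\D(\uj)$-dot and $x_2$ on the source is the $\D(\uj)$-dot; hence both sides have degree $\deg\tau_w(\underline i\uj)+\uj\cdot\uj$, landing in the same one-dimensional space, so $x_1\tau_1(\underline i\uj)=\lambda\,\tau_1(\underline i\uj)x_2$ for some $\lambda\in k$.

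To compute $\lambda$, I would evaluate both sides on the cyclic generator $1\otimes(v_{\underline i}\otimes v_{\uj})$ of $\D(\underline i)\circ\D(\uj)$ and use the defining formula from Lemma~\ref{lem:tau}, namely $\tau(1\otimes(v_{\underline i}\otimes v_{\uj}))=\psi_{w[\text{ht}(\underline i),\text{ht}(\uj)]}\otimes(v_{\uj}\otimes v_{\underline i})$, together with the KLR relations of Definition~\ref{def:KLR}. The point is that $x_{\uj}$ acts on $\D(\uj)$ by multiplication by $y_1$ in the leading slot of $\D(\uj)$, and dragging the $\psi$'s of $w[\text{ht}(\underline i),\text{ht}(\uj)]$ across this dot is governed purely by the commuting relation $(\psi_k y_l-y_{s_k(l)}\psi_k)e_{\underline i}=0$ whenever the two strands have different colors — and here the strands involved have colors coming from $\underline i$ versus $\uj$, which are \emph{distinct} roots, so no error terms appear. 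This forces $\lambda=1$, giving the first identity; the second follows symmetrically by instead applying the adjoint/opposite argument or by reading the computation from the other side. The main obstacle is the bookkeeping of exactly which strand a given $y$ sits on as one slides it through the long permutation $w[\text{ht}(\underline i),\text{ht}(\uj)]$, and verifying that every crossing it passes joins a strand colored in $\text{sup}(\underline i)$ to one colored in $\text{sup}(\uj)$ — but since $\underline i\ne\uj$ there is genuinely no correction term, so once the indexing is set up the computation is immediate.
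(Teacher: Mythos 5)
Your overall skeleton---constrain the commutator using the basis of Theorem~\ref{basisthm} and then evaluate on the cyclic generator $1\otimes(v_{\underline i}\otimes v_{\uj})$ via Lemma~\ref{lem:tau}---matches the paper's, but two of your key steps fail. First, the graded piece of the Hom-space in degree $\deg\tau_w(\underline i\uj)+\uj\cdot\uj$ is \emph{not} one-dimensional in general: whenever $\underline i\cdot\underline i=\uj\cdot\uj$ (which always holds in symmetric type, since $\underline i,\uj$ are real roots) it contains both $\tau_w x_1$ and $\tau_w x_2$. So the degree count only gives $x_1\tau_1-\tau_1x_2=\tau_w(Ax_1+Bx_2)$ for unknown scalars $A,B$, not $x_1\tau_1=\lambda\,\tau_1x_2$. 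Second, and more seriously, your evaluation of the scalar rests on two unjustified premises: (a) that $x_{\uj}$ acts on $v_{\uj}$ by the single dot $y_1$, and (b) that every crossing in $\psi_{w[\text{ht}(\underline i),\text{ht}(\uj)]}$ joins strands of different colors because $\underline i\neq\uj$. For (a): $\underline i,\uj\in\D_F$ are in general non-simple roots of $\Phi$, and $x_{\uj}$ is the abstract generator of $\End(\D(\uj))\cong k[x_{\uj}]$ supplied by Theorem~\ref{lem:dot}; all one knows is that $x_{\uj}v_{\uj}=av_{\uj}$ for \emph{some} $a\in R$, which need not be a single dot on a prescribed strand. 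For (b): distinctness of the roots does not imply disjointness of their supports in $I$ (e.g.\ $\alpha_1+\alpha_2$ and $\alpha_2+\alpha_3$ span a face of type $A_1\times A_1$ inside $A_3$), so same-colored crossings do occur and the dot-crossing relation produces genuine correction terms.

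The paper's proof is built precisely to absorb these corrections. It writes the value of $x_1\tau_1-\tau_1x_2$ on the generator as $\bigl(\psi_{w}(a\otimes 1)-(1\otimes a)\psi_{w}\bigr)(v_{\uj}\otimes v_{\underline i})$ for the unknown element $a$, and observes that after straightening into the standard $\psi_{w'}P$ form every surviving term has $w'$ not $\geq w[\text{ht}(\uj),\text{ht}(\underline i)]$ in Bruhat order; on the other hand $\tau_w(Ax_1+Bx_2)$ applied to the generator does contribute such a leading term unless $A=B=0$. To repair your argument you need this Bruhat-order leading-term comparison (or an equivalent device) in place of the color-disjointness claim; as written, the computation of your $\lambda$ does not go through.
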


\begin{proof} 
The module $\D(\uj)$ is generated by $v_{\uj}$ so there exists $a \in R$ such that $xv_{\uj}=av_{\uj}$. We now compute 
\[
\begin{aligned}
 (x_1\tau_1 - \tau_1 x_2)(v_{\underline i} \otimes v_{\uj}) &=  x_1 \psi_{w[\text{ht}(\uj) ,\text{ht}(\underline i)]} v_{\uj} \otimes v_{\underline i} -
 \tau_1 (1 \otimes a) v_{\underline i} \otimes v_{\uj}
 \\
 &=  \psi_{w[\text{ht}(\uj) ,\text{ht}(\underline i)]} x_1 v_{\uj} \otimes v_{\underline i} -
(1 \otimes a)   \tau_1 v_{\underline i} \otimes v_{\uj}
\\
&= (\psi_{w[\text{ht}(\uj),\text{ht}(\underline i)]}(a\otimes 1)-(1\otimes a)\psi_{w[\text{ht}(\uj),\text{ht}(\underline i)]})v_{\uj}\otimes v_{\underline i}.
 \end{aligned}
\]
Apply the straightening relations in the KLR algebra to put $\psi_{w[\text{ht}(\uj),\text{ht}(\underline i)]}(a\otimes 1)-(1\otimes a)\psi_{w[\text{ht}(\uj),\text{ht}(\underline i)]}$ in the standard form as a sum of elements of the form $\psi_w P$ where $w$ is a permutation and $P$ is a polynomial in the $y_i$. Each term that appears has $w$ not greater than or equal to $w[\text{ht}(\uj),\text{ht}(\underline i)]$ in Bruhat order.

On the other hand $x_1\tau_1 - \tau_1 x_2$ is a module homomorphism, so Theorem \ref{basisthm} shows that 
$$x_1\tau_1 - \tau_1 x_2 =  \psi_{w[\text{ht}(\uj),\text{ht}(\underline i)]} (Ax_1+Bx_2)( v_{\uj}\otimes v_{\underline i})$$
%
for some $A,B \in k$, giving a contradiction unless $A=B=0$.
\end{proof}

\begin{lemma}
$Q_{\underline i \uj}(u,v)=Q_{\uj \underline i}(v,u)$.
\end{lemma}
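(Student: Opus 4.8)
The plan is to compare the two homomorphisms $\tau_1(\uj\,\underline i)\tau_1(\underline i\,\uj)$ and $\tau_1(\underline i\,\uj)\tau_1(\uj\,\underline i)$ by exploiting the symmetry inherent in Lemma~\ref{lem:tau}. Recall that $Q_{\underline i\uj}(u,v)$ is, by \eqref{def:q}, the element of $\End(\D(\underline i)\circ\D(\uj))\cong k[x_{\underline i},x_{\uj}]$ representing $\tau_1(\uj\,\underline i)\tau_1(\underline i\,\uj)$, while $Q_{\uj\underline i}(u,v)$ represents $\tau_1(\underline i\,\uj)\tau_1(\uj\,\underline i)$ as an element of $\End(\D(\uj)\circ\D(\underline i))\cong k[x_{\uj},x_{\underline i}]$. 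So the content of the lemma is that applying $\tau_1$ twice, going $\underline i\uj\to\uj\underline i\to\underline i\uj$, and then composing with $\tau_1(\underline i\uj)$ on the other side, intertwines these two polynomial descriptions with $u$ and $v$ swapped.

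First I would evaluate both sides on the canonical generating vector. For $\End(\D(\underline i)\circ\D(\uj))$ the generator is (a grading shift of) $1\otimes(v_{\underline i}\otimes v_{\uj})$, and $\tau_1(\underline i\,\uj)$ sends it to $\psi_{w[\text{ht}(\underline i),\text{ht}(\uj)]}\otimes(v_{\uj}\otimes v_{\underline i})$ by definition; applying $\tau_1(\uj\,\underline i)$ to that sends it to $\psi_{w[\text{ht}(\uj),\text{ht}(\underline i)]}\psi_{w[\text{ht}(\underline i),\text{ht}(\uj)]}\otimes(v_{\underline i}\otimes v_{\uj})$. The key computation is then identifying this element of $R$ acting on $\D(\underline i)\circ\D(\uj)$: the product $\psi_{w[\text{ht}(\uj),\text{ht}(\underline i)]}\psi_{w[\text{ht}(\underline i),\text{ht}(\uj)]}$ straightens, via the KLR relations, to a polynomial in the $y$'s on the relevant strands (this is exactly the $2$-strand-block analogue of the usual $\psi^2 = Q$ relation), and after applying the isomorphism $\End(\D(\underline i))\cong k[x_{\underline i}]$ and likewise for $\uj$, this polynomial is precisely $Q_{ij}(y_{\text{block }\underline i},y_{\text{block }\uj})$ built from the original KLR datum, evaluated at $x_{\underline i},x_{\uj}$. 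Doing the identical computation for the other composition gives the same underlying $\psi$-product but with the roles of $\underline i$ and $\uj$ (hence $u$ and $v$) interchanged, so one reads off $Q_{\uj\underline i}(v,u)=Q_{\underline i\uj}(u,v)$. Alternatively, and more cleanly, one can avoid explicit straightening by invoking the anti-automorphism $\dagger$ of $R$ (which fixes all generators), or the symmetry $Q_{ij}(u,v)=Q_{ji}(v,u)$ from \ref{eq:Qcond3} of the original algebra together with the fact established in Lemma~\ref{lem:tau} that $\tau$ is characterized by its value on the generating vector in terms of $\psi_{w[m,n]}$.

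The cleanest route, which I would actually write up, is: observe that the two modules $\D(\underline i)\circ\D(\uj)$ and $\D(\uj)\circ\D(\underline i)$ are swapped by the functor $M\mapsto M\dual$ up to the grading shift in \eqref{eq:dualind}, and that under this duality the map $\tau_1(\underline i\,\uj)$ is taken to (a scalar multiple of) $\tau_1(\uj\,\underline i)$ because $\dagger$ fixes $\psi_{w[m,n]}$ and $\tau$ is pinned down by Lemma~\ref{lem:tau}. Composing, $\tau_1(\uj\,\underline i)\tau_1(\underline i\,\uj)$ is dual to $\tau_1(\underline i\,\uj)\tau_1(\uj\,\underline i)$; on the endomorphism side $k[x_{\underline i},x_{\uj}]$ the anti-automorphism $\dagger$ fixes $x_{\underline i}$ and $x_{\uj}$ (each $x_\a$ is $\dagger$-fixed by Theorem~\ref{lem:dot} and the analogous rank-one statement), but the tensor-factor swap interchanges $x_1=x_{\underline i}\circ\id$ with $x_2=\id\circ x_{\underline i}$, i.e. exchanges the two variables, yielding exactly $Q_{\underline i\uj}(u,v)=Q_{\uj\underline i}(v,u)$.

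The main obstacle I anticipate is bookkeeping the grading shifts and making the duality/swap argument precise enough that the scalar relating $\tau_1(\underline i\,\uj)\dual$ and $\tau_1(\uj\,\underline i)$ is genuinely $1$ (or at worst a unit that cancels when we compose the two $\tau$'s in both orders) — this requires checking that both $\tau$'s are normalized the same way on their respective generating vectors, which is where Lemma~\ref{lem:tau}'s uniqueness clause does the work. If one prefers to sidestep duality, the obstacle instead becomes the explicit straightening of $\psi_{w[\text{ht}(\uj),\text{ht}(\underline i)]}\psi_{w[\text{ht}(\underline i),\text{ht}(\uj)]}$ into $y$-polynomials across the $\D(\underline i)$ and $\D(\uj)$ blocks and verifying it reduces to $Q_{ij}$ of the original algebra; that is a finite but somewhat involved diagrammatic calculation, which the duality argument avoids entirely.
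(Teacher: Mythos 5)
Your preferred route has a genuine gap: the modules $\D(\underline i)$ and $\D(\uj)$ are \emph{not} self-dual under $\dual$, so the duality functor does not swap $\D(\underline i)\circ\D(\uj)$ and $\D(\uj)\circ\D(\underline i)$. Each $\D(\a)$ is an infinite-dimensional projective cover (a free $k[x_\a]$-module by Theorem \ref{lem:dot}); its graded dual is the corresponding injective/costandard object, not $\D(\a)$ again. (Already for a simple root $i$ of $\Phi$ with the trivial face, $\D(i)=k[y_1]$ and its graded dual is supported in non-positive degrees.) So \eqref{eq:dualind} sends $\D(\underline i)\circ\D(\uj)$ to a shift of $\D(\uj)\dual\circ\D(\underline i)\dual$, which lives in a different category of modules, and the claim that $\tau_1(\underline i\,\uj)$ is carried to a multiple of $\tau_1(\uj\,\underline i)$ has no meaning as stated. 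Your fallback route (straightening $\psi_{w[n,m]}\psi_{w[m,n]}$) is not doomed in principle, but the straightened polynomial is a product $\prod_{a,b}Q_{i_a j_b}(y_a,y_b)$ over all pairs of strands from the two blocks, not ``precisely $Q_{ij}$'' of the original datum, and converting the action of individual $y_a$'s into the normalized generators $x_{\underline i}, x_{\uj}$ is itself delicate (compare the computation in Lemma \ref{lem:upgeom}, where $y_1$ acts as a nontrivial scalar multiple of $x_{\underline 1}$). That is exactly the bookkeeping the paper is structured to avoid.

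The paper's actual proof is a short algebraic trick you did not propose: compute the triple composite $\tau_1(\underline i\,\uj)\tau_1(\uj\,\underline i)\tau_1(\underline i\,\uj)$ in two ways. Bracketing the right-hand pair and using \eqref{def:q} gives $\tau_1(\underline i\,\uj)\,Q_{\underline i\uj}(x_1,x_2)$; bracketing the left-hand pair gives $Q_{\uj\underline i}(x_1,x_2)\,\tau_1(\underline i\,\uj)$, and Lemma \ref{easyrelation} pushes the polynomial past $\tau_1$ at the cost of exchanging $x_1$ and $x_2$, yielding $\tau_1(\underline i\,\uj)\,Q_{\uj\underline i}(x_2,x_1)$. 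Since by Theorem \ref{basisthm} the elements $\tau_1(\underline i\,\uj)\,x_1^{a}x_2^{b}$ are linearly independent in $\Hom(\D(\underline i)\circ\D(\uj),\D(\uj)\circ\D(\underline i))$, one can cancel $\tau_1(\underline i\,\uj)$ and conclude $Q_{\underline i\uj}(u,v)=Q_{\uj\underline i}(v,u)$. If you want to salvage your write-up, this is the mechanism to use; the duality argument should be dropped.
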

\begin{proof} 
By the definition of $ Q_{\underline i \uj}$, 
$$\tau_1(\underline i \uj)\tau_1(\uj \underline i) \tau_1(\underline i\uj)=\tau_1(\underline i \uj)Q_{\underline i\uj}(x_1,x_2).$$
 On the other hand, using the definition of $Q_{ \uj \underline i}$ and the relation in Lemma \ref{easyrelation},
 \begin{align*}
  \tau_1(\underline i \uj)\tau_1(\uj\underline i) \tau_1(\underline i\uj)&= Q_{\uj\underline i}(x_1,x_2)\tau_1(\underline i\uj) \\
  &= \tau_1(\underline i\uj)Q_{\uj\underline i}(x_2,x_1).
 \end{align*}
The result follows by using the basis of 
$\Hom( \D(\underline i)\circ \D(\uj),  \D(\uj)\circ \D(\underline i))$ from Theorem \ref{basisthm}.
\end{proof}

\begin{lemma} \label{lem:ispklr}
The following relations hold: 
\[
\tau_1(\uj\underline k\underline i)\tau_2(\uj\underline i\underline k)\tau_1(\underline i\uj\underline k)-\tau_2(\underline k\underline i\uj)\tau_1(\underline i\underline k\uj)\tau_2(\underline i\uj\underline k)=\begin{cases}
                                                                     \frac{Q_{\underline i\uj}(x_1,x_2)-Q_{\underline i\uj}(x_3,x_2)}{x_1-x_3} \quad & \text{if} \ \underline i=\underline k \\
                                                                     0 &\text{otherwise.}
                                                                    \end{cases}
\]
\end{lemma}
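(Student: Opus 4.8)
The plan is to reduce the identity in $\End(\D(\underline i)\circ\D(\uj)\circ\D(\underline k))$ (respectively the three-fold product indexed by $(\uj,\underline k,\underline i)$, etc.) to the known KLR braid relation by restriction. First I would observe that by Theorem \ref{basisthm} the relevant $\Hom$ space is a free module over the polynomial ring $\End\bigl(\D(\underline i)\circ\D(\uj)\circ\D(\underline k)\bigr)\cong k[x_1,x_2,x_3]$ (using Corollary \ref{kxy} and the basis theorem), with basis the elements $\tau_w$ for $w$ in the appropriate double coset set; in particular, once we know both sides lie in the correct $\Hom$ space, the identity is determined by its value on the generating vector $v_{\underline i}\otimes v_{\uj}\otimes v_{\underline k}$, or equivalently by applying $\Res$ and using the faithfulness that Lemma \ref{restrictdelta} and Theorem \ref{basisthm} provide.

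The main step is the case $\underline i=\underline k$. Here I would restrict everything to the sub-KLR algebra $R^{\mathrm{sup}}$ on the union of the supports of $\underline i$ and $\uj$, and use the very definition of $\tau$ from Lemma \ref{lem:tau}: $\tau$ sends the cyclic generator to $\psi_{w[\cdot,\cdot]}$ applied to the permuted generator. Thus both sides of the claimed relation, evaluated on $v_{\underline i}\otimes v_{\uj}\otimes v_{\underline i}$, become explicit elements of $R$ obtained by stacking the crossing diagrams $\psi_{w[m,n]}$, and their difference is computed by the KLR braid relation \eqref{eq:KLR} (the $\psi_{k+1}\psi_k\psi_{k+1}-\psi_k\psi_{k+1}\psi_k$ relation) applied to the strands of $\D(\underline i)\circ\D(\uj)\circ\D(\underline i)$, together with the relation $\psi^2 = Q$. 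The key compatibility is that the polynomial $Q_{\underline i\uj}$ defined in \eqref{def:q} via $\tau_1\tau_1$ matches, under the identification of Theorem \ref{lem:dot}, the $Q_{i_k,i_{k+1}}$ appearing in the original KLR relations restricted to these strands — this is exactly the content of how $Q_{\underline i\uj}$ was defined, so $\tau_1\tau_1 = Q_{\underline i\uj}(x_1,x_2)$ plays the role of $\psi^2 = Q$, and the three-term expression collapses to the divided difference $\frac{Q_{\underline i\uj}(x_1,x_2)-Q_{\underline i\uj}(x_3,x_2)}{x_1-x_3}$ just as in \eqref{eq:KLR}. One must check that the ``lower order'' terms produced by straightening (those involving $\psi_w$ with $w$ strictly below the long element in Bruhat order) vanish when applied to the generating vectors, by a degree/Bruhat argument exactly parallel to the proof of Lemma \ref{easyrelation}.

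For the case $\underline i\neq\underline k$, the right-hand side is $0$, so I must show the left-hand side vanishes. Here I would argue that the difference is a homomorphism $\D(\uj\underline i\underline k)\to\D(\underline k\uj\underline i)$ (or the appropriate target), and by Lemma \ref{restrictdelta} and Theorem \ref{basisthm} such a homomorphism is determined by its behavior on the cyclic vector; applying the same diagrammatic straightening in the KLR algebra, the braid relation \eqref{eq:KLR} now contributes $0$ because $i_k\neq i_{k+2}$ for the relevant strands, and again the lower-Bruhat terms vanish by the degree argument. The expected main obstacle is the bookkeeping in the $\underline i=\underline k$ case: one is juggling three blocks of strands of various heights, several instances of $w[m,n]$, and the need to commute the ``dot'' endomorphisms $x_\alpha$ past crossings using the relations $\tau x_2 = x_1\tau + 1$ from Theorem \ref{lem:dot} and Lemma \ref{easyrelation}; making sure the $+1$ correction terms assemble precisely into the divided-difference operator, rather than merely up to lower order, is the delicate point. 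I would handle this by pushing all of the computation into $\End(\D(\underline i)\circ\D(\underline i))$-type rank-one and rank-two calculations via the Mackey filtration (Lemma \ref{lem:cuspidal-filtration}), where Theorem \ref{lem:dot} gives the relations on the nose.
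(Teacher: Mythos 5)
Your reduction of the zero cases to Theorem \ref{basisthm} (the difference must be a combination of $\tau_w x^a$ with $w$ strictly below the longest block permutation in Bruhat order, which is then killed by permutation-type and degree constraints) is essentially the paper's argument, although your stated reason ``the braid relation contributes $0$ because $i_k\neq i_{k+2}$'' is about single strands and is not the operative point: the blocks $\D(\underline i),\D(\uj),\D(\underline k)$ may well contain strands of equal colours even when the roots are distinct, and the vanishing comes from the basis theorem, not from the single-strand braid relation.

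The genuine gap is in the main case $\underline i=\underline k$. Your ``key compatibility'' --- that $Q_{\underline i\uj}$ as defined in \eqref{def:q} ``matches the $Q_{i_k,i_{k+1}}$ appearing in the original KLR relations restricted to these strands'' and that the three-term expression therefore ``collapses to the divided difference just as in \eqref{eq:KLR}'' --- is not the content of the definition and is not justified. The polynomial $Q_{\underline i\uj}$ is defined abstractly by $\tau_1(\uj\,\underline i)\tau_1(\underline i\,\uj)=Q_{\underline i\uj}(x_1,x_2)$ inside $\End(\D(\underline i)\circ\D(\uj))\cong k[x_1,x_2]$ (Corollary \ref{kxy}); it is not a restriction of any single $Q_{ij}$ of $R$ (see the explicit quadratic computed in \S\ref{sec:example}), and the block elements $\psi_{w[m,n]}$ and $x_{\underline i}$ are not single crossings and dots, so the cubic relation in \eqref{eq:KLR} does not directly produce the divided difference in the block variables: a direct diagrammatic expansion generates a large collection of lower-Bruhat terms with dots spread over many strands, and showing that these assemble into exactly $\frac{Q_{\underline i\uj}(x_1,x_2)-Q_{\underline i\uj}(x_3,x_2)}{x_1-x_3}$ is precisely the hard content of the lemma, which your plan assumes rather than proves. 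The paper avoids this computation: having shown (by the Bruhat and degree constraints plus Theorem \ref{basisthm}) that $\tau_1\tau_2\tau_1-\tau_2\tau_1\tau_2=P(x_1,x_2,x_3)$ for some polynomial $P$, it multiplies on the left by $\tau_1$, uses the already-proven cases, the relation $\tau_1^2=Q_{\uj\,\underline i}(x_1,x_2)$ and Lemma \ref{easyrelation} to obtain the equation $Q_{\uj\,\underline i}(x_1,x_2)\tau_2\tau_1-\tau_2\tau_1 Q_{\uj\,\underline i}(x_2,x_3)=\tau_1 P$, observes via Theorem \ref{basisthm} that this determines $P$ uniquely, and then notes that the divided difference satisfies the same formal identity because it does in a standard KLR algebra. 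If you want to keep a direct computational route you would need, at a minimum, an explicit description of how $x_{\underline i}$ and $x_{\underline k}$ act in terms of dots on the cyclic generators and a genuine straightening argument controlling all lower terms; otherwise you should adopt the indirect determination of $P$.
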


\begin{proof}
Apply $\tau_1\tau_2\tau_1-\tau_2\tau_1\tau_2$ to $v_{\underline i }\otimes v_{\uj}\otimes v_{\underline k}$.
By the straightening relations in the KLR algebra, only terms of the form $T \psi_w ( v_{\underline k}\otimes v_{\uj}\otimes v_{\underline i} )$ can appear, where $T$ is a polynomial in the $y_i$ and $w$ is a permutation strictly less than $w_1 w_2 w_1$ in Bruhat order. Here $w_i$ is the permutation that acts as $w[\text{ht}(\underline i_k ),\text{ht}(\underline i_{{k+1}})]$ on the $i, i+1$ factors, and trivially on the other factor. 

On the other hand $\tau_1\tau_2\tau_1-\tau_2\tau_1\tau_2$ is a module homomorphism, so Theorem \ref{basisthm} severely restricts its possibilities: By the observations of the previous paragraph, this difference must be a composition of at most two $\tau_i$ followed by some $x_i$. If $\underline i$, $\uj$ and $\underline k$ are all distinct, this difference must be zero. If $\underline i=\uj\neq \underline k$, the only terms which perform the right permutation on the factors are of the form $\tau_1\tau_2$ composed with some $x_i$, but these are all of higher degree, so the difference again is zero. The $\underline i\neq \uj=\underline k$ case is similar. 
 If $\underline i=\uj=\underline k$, there are no possibilities of the right degree (i.e. $-3 i \cdot i$), so the difference is zero. 
 
It remains to consider $\underline i=\underline k\neq \uj$.
Considerations as above show that $\tau_1\tau_2\tau_1-\tau_2\tau_1\tau_2=P(x_1,x_2,x_3)$ for some polynomial $P$. Multiplying on the left by $\tau_1$ gives
 \[
  \tau_1^2\tau_2\tau_1-\tau_1\tau_2\tau_1\tau_2=\tau_1 P.
 \]
Apply the case of this Lemma which we have already proven to get
\[
 \tau_1^2\tau_2\tau_1-\tau_2\tau_1\tau_2\tau_2=\tau_1 P.
\]
Simplifying the $\tau_i^2$ terms gives
\begin{equation} \label{eq:stt}
 Q_{\uj \underline k} (x_1,x_2)\tau_2\tau_1-\tau_2\tau_1 Q_{\uj \underline k}(x_2,x_3)=\tau_1 P.
\end{equation}
Applying the relations from Lemma \ref{easyrelation} (pushing a dot past a crossing in diagrammatic notation) and using 
 Theorem \ref{basisthm}, \eqref{eq:stt} uniquely determines $P$. But the stated $P$ does satisfy the equation, since this is true in the standard KLR algebra. 
\end{proof}

\begin{proposition} \label{prop:is-pKLR}
$\End(P)$ is a pseudo-KLR algebra.
\end{proposition}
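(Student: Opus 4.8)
The plan is to assemble the pieces already established in \S\ref{ss:gens} and the preceding lemmas and verify that the resulting structure matches Definition \ref{def:KLR} with only the nonvanishing condition on the leading coefficients $t_{\underline i\underline j}$ possibly failing. Concretely, I would package the data as follows: the idempotents $e_{\underline\ii}$ for $\underline\ii\in\Seq_F(\nu)$ (which are orthogonal and sum to the identity of $\End(P_\nu)$ by construction of $P$), the elements $x_k(\underline\ii)$ from Definition \ref{def:gens}, and the elements $\tau_k(\underline\ii)$ built from Lemma \ref{lem:tau}. The "colours" are indexed by $\Delta_F$, the simple roots of $\Phi_F$, and the bilinear form is the restriction of $\,\cdot\,$ to $\N\Delta_F$, with symmetrizers inherited from $\Phi$; note $\underline i\cdot\underline j=\underline j\cdot\underline i$ already holds. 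The polynomials $Q_{\underline i\underline j}$ are those defined in \eqref{def:q}, and we have already recorded that they are homogeneous of the correct degree $-2\,\underline i\cdot\underline j$, satisfy $Q_{\underline i\underline i}=0$, and satisfy $Q_{\underline i\underline j}(u,v)=Q_{\underline j\underline i}(v,u)$. Thus conditions \ref{eq:Qcond1} and \ref{eq:Qcond3} hold, as does the homogeneity half of \ref{eq:Qcond2}, and \textbf{exactly} the $t_{\underline i\underline j}\neq0$ half of \ref{eq:Qcond2} is dropped — which is precisely the definition of a pseudo-KLR algebra.

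Next I would run through the defining relations \eqref{eq:KLR} one at a time and cite where each has been verified. The idempotent relations and the commutation of the $y$'s with each other and with the idempotents are immediate from Definition \ref{def:gens} (the $x_k(\underline\ii)$ act on disjoint summands and, within a summand, on distinct tensor factors which commute). The relation $\psi_l e_{\underline\ii}=e_{s_l\underline\ii}\psi_l$ is built into the definition of $\tau_k$ via Lemma \ref{lem:tau}. The commuting relation $\psi_k\psi_l=\psi_l\psi_k$ for $|k-l|>1$ is clear since the two $\tau$'s act on disjoint consecutive pairs of tensor factors. The quadratic relation $\psi_k^2 e_{\underline\ii}=Q_{i_k,i_{k+1}}(y_k,y_{k+1})e_{\underline\ii}$ is \eqref{def:q} together with the $\underline i=\underline j$ case recorded immediately after it. The Leibniz-type relation $(\psi_k y_l-y_{s_k(l)}\psi_k)e_{\underline\ii}$ splits into two cases: for $\underline i_k\neq\underline i_{k+1}$ it is Lemma \ref{easyrelation} (giving the "$0$" case — note there is no "$\pm e_{\underline\ii}$" case here because distinct simple roots of $\Phi_F$ are never equal, so the only diagonal contributions come from within a single $\D(\underline i_k)$), while the within-a-single-colour behaviour $\tau x_2=x_1\tau+1$, $x_2\tau=\tau x_1+1$ is exactly the content of Theorem \ref{lem:dot}. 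Finally the braid relation is Lemma \ref{lem:ispklr}.

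One subtlety to address carefully is the role of the chosen reduced decompositions. In Definition \ref{def:KLR} the elements $\psi_\sigma$ depend a priori on a choice of reduced word, and the relations are stated so as to make this well-defined up to the listed ambiguities; here $\tau_w(\underline\ii)$ was defined via a chosen reduced decomposition and Theorem \ref{basisthm} already guarantees $\{\tau_w(\underline\ii)x_1(\underline\ii)^{a_1}\cdots x_n(\underline\ii)^{a_n}\}$ is a basis of $\Hom(\D(\underline\jj),\D(\underline\ii))$ regardless of the choices, with the right graded dimension. So I would remark that the generators $e_{\underline\ii}$, $x_k$, $\tau_k$ together with the relations just verified are enough to conclude: there is a surjective homomorphism from the pseudo-KLR algebra $R_F$ (for $\Phi_F$, with parameters $Q_{\underline i\underline j}$) onto $\End(P)$ sending generators to generators, and it is injective because it carries the standard pseudo-KLR basis to the basis of Theorem \ref{basisthm} (the graded-dimension formula for $\Hom$ spaces matches the pseudo-KLR basis count, since that count does not use $t_{\underline i\underline j}\neq0$). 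The main obstacle — really the only place where there is genuine content rather than bookkeeping — is making sure the braid relation of Lemma \ref{lem:ispklr} is stated with the $x_i$'s sitting in the tensor slots that correspond to the KLR convention (i.e. that "$\frac{Q_{\underline i\underline j}(x_1,x_2)-Q_{\underline i\underline j}(x_3,x_2)}{x_1-x_3}$" matches "$\frac{Q_{i_k,i_{k+1}}(y_k,y_{k+1})-Q_{i_k,i_{k+1}}(y_{k+2},y_{k+1})}{y_k-y_{k+2}}$" after relabelling), and that the degree-shift conventions in the iterated induction $\D(\underline\ii)$ are compatible with the grading on $R_F$ — both of which are routine once set up but easy to get sign- or index-wrong.

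\begin{proof}
We exhibit generators and relations. For each $\nu\in\N\Delta_F$ and $\underline\ii\in\Seq_F(\nu)$ we have the idempotent $e_{\underline\ii}\in\End(P_\nu)$; these are orthogonal and sum to $1$ by the definition of $P$. We have the elements $x_k(\underline\ii)$ of Definition \ref{def:gens} and $\tau_k(\underline\ii)$ defined after Lemma \ref{lem:tau}. Take the index set $I$ for the would-be pseudo-KLR algebra to be $\Delta_F$, with bilinear form the restriction of $\,\cdot\,$ and symmetrizing factors inherited from $\Phi$; the polynomials are the $Q_{\underline i\underline j}$ of \eqref{def:q}, which are homogeneous of degree $-2\,\underline i\cdot\underline j$ (Theorem \ref{lem:dot}), satisfy $Q_{\underline i\underline i}=0$ by definition, and satisfy $Q_{\underline i\underline j}(u,v)=Q_{\underline j\underline i}(v,u)$. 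Hence conditions \ref{eq:Qcond1}, \ref{eq:Qcond3} and the homogeneity half of \ref{eq:Qcond2} hold; only the nonvanishing of the leading coefficients is not asserted, so we are in the pseudo-KLR setting.

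We check the relations \eqref{eq:KLR}. The idempotent relations, $y_ky_l=y_ly_k$ and $y_ke_{\underline\ii}=e_{\underline\ii}y_k$ are immediate from Definition \ref{def:gens}, since distinct $x_k(\underline\ii)$ act on distinct (hence commuting) tensor factors of the summand $\D(\underline\ii)$ and by zero elsewhere. The relation $\psi_le_{\underline\ii}=e_{s_l\underline\ii}\psi_l$ and $\psi_k\psi_l=\psi_l\psi_k$ for $|k-l|>1$ follow from the construction of $\tau_k$ from Lemma \ref{lem:tau}, acting on a single adjacent pair of factors. The quadratic relation $\psi_k^2e_{\underline\ii}=Q_{i_k,i_{k+1}}(y_k,y_{k+1})e_{\underline\ii}$ is \eqref{def:q} together with the case $\underline i=\underline j$ noted immediately after it. For the relation $(\psi_ky_l-y_{s_k(l)}\psi_k)e_{\underline\ii}$: when $i_k\neq i_{k+1}$ (which in particular covers all cases with $i_k\neq i_{k+1}$ as elements of $\Delta_F$) it is Lemma \ref{easyrelation}, giving the ``otherwise'' ($0$) case; and the remaining behaviour when $i_k=i_{k+1}$, namely $\tau x_2=x_1\tau+1$ and $x_2\tau=\tau x_1+1$, is Theorem \ref{lem:dot}. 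The braid relation is Lemma \ref{lem:ispklr}.

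Thus there is an algebra homomorphism $\Theta$ from the pseudo-KLR algebra $R_F$ associated to $(\Delta_F,\,Q_{\underline i\underline j})$ to $\End(P)$ sending the standard generators to $e_{\underline\ii}$, $x_k(\underline\ii)$, $\tau_k(\underline\ii)$. It is surjective onto $\End(P)=\bigoplus_{\underline\ii,\underline\jj}\Hom(\D(\underline\jj),\D(\underline\ii))$ because these elements generate each $\Hom$-space by Theorem \ref{basisthm}. For injectivity, note that $\Theta$ carries the standard pseudo-KLR basis (which exists by the same argument as in \cite{khovanovlauda}, this part not requiring $t_{\underline i\underline j}\neq0$) of $e_{\underline\ii}R_Fe_{\underline\jj}$, namely $\{\tau_w(\underline\ii)x_1(\underline\ii)^{a_1}\cdots x_n(\underline\ii)^{a_n}\}$, to the basis of $\Hom(\D(\underline\jj),\D(\underline\ii))$ given by Theorem \ref{basisthm}. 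Hence $\Theta$ is an isomorphism and $\End(P)$ is a pseudo-KLR algebra.
\end{proof}
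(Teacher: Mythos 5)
Your proposal is correct and follows essentially the same route as the paper: construct the homomorphism from the pseudo-KLR algebra for $(\Phi_F, Q_{\underline i\underline j})$ to $\End(P)$ using the relations verified in the preceding lemmas, get surjectivity from Theorem \ref{basisthm}, and get injectivity by comparing the straightened spanning set of the pseudo-KLR algebra with the basis of Theorem \ref{basisthm}. The paper's version is just a terser statement of the same argument.
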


\begin{remark}
We will prove that $\End(P)$ is actually a KLR algebra in Theorem \ref{4.12}.
\end{remark}

\begin{proof}
 Let $A$ be the pseudo-KLR algebra defined using the root system $\Phi_F$ and the polynomials $Q_{i'j'}(u,v)$ defined in (\ref{def:q}) for $i',j' \in \Delta_F$. The above results show that there is a homomorphism from $A$ to $\End(P)$. This is surjective by Theorem \ref{basisthm}. To show it is an isomorphism, it suffices to show that $\dimq A \leq \dimq \End(P)$. The latter dimension is known by Theorem \ref{basisthm}. The former dimension is bounded above by this since we can use the usual straightening relations to put each element of $A$ in a standard form.
\end{proof}

\begin{lemma}\label{numberofquotients}
 Let $A$ be a $\Z$-graded algebra with $\dim A_n$ finite for all $n$ and zero for sufficiently negative $n$.
 Let $P$ be a finitely generated projective $A$-module and let $B=\End_A(P)$. Then the number of irreducible modules for $B$ 
 is equal to the number of irreducible quotients of $P$, where both counts are taken up to grading shift and isomorphism.
\end{lemma}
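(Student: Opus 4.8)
The plan is to realise $B$ as an idempotent truncation of a ring Morita equivalent to $A$, and then quote the standard behaviour of simple modules under Morita equivalence and under idempotent truncation. None of the steps is hard; the only thing needing a little care is the bookkeeping of grading shifts.

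First I would use that $P$, being finitely generated and projective, is a direct summand of a finite sum $Q:=\bigoplus_{k=1}^n q^{a_k}A$ of grading shifts of $A$. Let $e\in\widetilde{A}:=\End_A(Q)$ be the idempotent given by projection onto the summand $P$; then $e\widetilde{A}e\cong\End_A(P)=B$. The graded ring $\widetilde{A}$ is a generalized matrix ring over $A$ (its $(i,j)$ entry is $\Hom_A(q^{a_j}A,q^{a_i}A)\cong q^{a_i-a_j}A$), hence is graded Morita equivalent to $A$; concretely $F:=\Hom_A(Q,-)$ is an equivalence from the category of graded $A$-modules to that of graded $\widetilde{A}$-modules, compatible with the grading shift functor. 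The hypotheses on $A$ (finite-dimensional graded pieces, bounded below) are inherited by $\widetilde{A}$ and by $B$, so simple modules exist and the counts are sensible. In particular $F$ takes simple modules to simple modules and induces a bijection, up to grading shift and isomorphism, between the simple $A$-modules and the simple $\widetilde{A}$-modules.

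Next I would invoke the standard idempotent-truncation statement in graded form: for a graded ring $\Lambda$ (with our finiteness conditions) and a homogeneous idempotent $e$, the functor $N\mapsto eN$ sends each simple graded $\Lambda$-module either to $0$ or to a simple graded $e\Lambda e$-module, and $N\mapsto eN$ induces a bijection, up to grading shift and isomorphism, between $\{\text{simple graded }\Lambda\text{-modules }N:eN\neq 0\}$ and $\{\text{simple graded }e\Lambda e\text{-modules}\}$. The routine verifications behind this are that $e\Lambda e\cdot ex=eN$ for any nonzero homogeneous $ex\in eN$ (so $eN$ is simple), and that $\hd(\Lambda e\otimes_{e\Lambda e}T)$ recovers the unique simple with truncation $T$ (so the map is bijective). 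Applied with $\Lambda=\widetilde{A}$, this identifies the simple $B$-modules, up to shift, with the simple $\widetilde{A}$-modules $N$ with $eN\neq 0$, and then via $F^{-1}$ with the simple $A$-modules $S$ with $eF(S)\neq 0$.

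Finally I would observe that under $F(S)=\Hom_A(Q,S)$ the truncation $eF(S)$ is precisely the summand $\Hom_A(P,S)$. For $S$ simple this is nonzero exactly when there is a nonzero $A$-module map $P\to S$, which, $S$ being simple, is automatically surjective; so $eF(S)\neq 0$ if and only if $S$ is an irreducible quotient of $P$. Composing the three bijections yields a bijection, up to grading shift and isomorphism, between the irreducible $B$-modules and the irreducible quotients of $P$, which is the claim. The step I expect to require the most care is not any single calculation but the consistent tracking of grading shifts through the Morita equivalence and the truncation functor, together with checking that ``simple or zero'' and the truncation bijection genuinely hold in the graded category; all of this is standard and I do not anticipate a real obstacle.
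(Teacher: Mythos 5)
Your proof is correct, but it takes a genuinely different route from the paper. The paper argues directly on $\End(P)$: it writes $P=\oplus_i P_i^{\oplus d_i}$ with the $P_i$ indecomposable (graded Krull--Schmidt), observes that the radical of $\End(P)$ contains all $\Hom(P_i,P_j)$ for $i\neq j$ together with the radicals of the $\End(P_i^{\oplus d_i})$, and identifies the maximal semisimple quotient as $\oplus_i \End(\hd(P_i)^{\oplus d_i})$, a sum of matrix algebras indexed by the irreducible quotients $\hd(P_i)$ of $P$. You instead present $P$ as a graded direct summand of $Q=\oplus_k q^{a_k}A$, use the graded Morita equivalence $A\sim\End_A(Q)$, and then quote (and correctly justify: every proper submodule of $\Lambda e\otimes_{e\Lambda e}T$ is killed by $e$, so the head is simple) the idempotent-truncation bijection for simples, finishing with $e\,\Hom_A(Q,S)=\Hom_A(P,S)\neq 0$ iff $S$ is a quotient of $P$. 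Your approach avoids Krull--Schmidt and the structure theory of indecomposable projectives over Laurentian graded algebras, at the cost of invoking graded Morita theory and more shift bookkeeping; the paper's is shorter and more self-contained given those standard facts. One small point to tidy: with the usual composition conventions $\Hom_A(Q,-)$ is naturally a module over $\End_A(Q)^{\op}$ rather than $\End_A(Q)$, so your bijection is a priori with simple $B^{\op}$-modules; this is harmless for the count because under the stated hypotheses graded simples are finite dimensional (indeed concentrated in a bounded range of degrees), so duality matches simples of $B$ and $B^{\op}$ --- and the paper itself passes to $\End(P)^{\op}$ when defining $R_F$ --- but it is worth saying explicitly.
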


\begin{proof}
 Write $P=\oplus_i P_i^{\oplus d_i}$ where the $P_i$ are pairwise non-isomorphic indecomposable projectives and the $d_i$ are positive integers. Then $\End(P)=\oplus_{i,j} \Hom(P_i^{d_i},P_j^{d_j})$. The radical of $\End(P)$ is generated by the radicals of each $\End(P_i^{d_i})$ as well as all of $\Hom(P_i,P_j)$ for $i\neq j$. The maximal semisimple quotient of $\End(P)$ is $\oplus_i \End(\hd(P_i)^{\oplus d_i})$, which is a direct sum of matrix algebras. Therefore the number of irreducible representations of $\End(P)$ is equal to the number of irreducible quotients of $P$.
\end{proof}

\begin{lemma}\label{nonzerocoeff} For all $\underline i \neq \uj \in \Delta_F$, 
the coefficient of $u^{-a_{\underline i \uj}}$ in $Q_{\underline i \uj}(u,v)$ is nonzero. Here $a_{\underline i, \uj}$ is the entry in the Cartan matrix for $\Phi_F$. 
\end{lemma}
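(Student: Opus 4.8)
The plan is to argue by contradiction: suppose $t_{\underline i\uj}=0$ for some $\underline i\neq\uj$ in $\Delta_F$, and show that the algebra $\End(P_\nu)$ would then have strictly more irreducible modules (up to grading shift) than the category $\C$ has simple objects of weight $\nu$, where $\nu=(1-a_{\underline i\uj})\,\underline i+\uj$ is the weight occurring in Theorem \ref{pseudo}.

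For the lower bound, recall from Proposition \ref{prop:is-pKLR} that $\End(P)\cong A$, the pseudo-KLR algebra attached to $\Phi_F$ with the polynomials $Q_{\underline i\uj}$ of \eqref{def:q}; this isomorphism respects the $\N\Delta_F$-grading, so $\End(P_\nu)\cong A(\nu)$. Since our assumption says precisely that the coefficient $t_{\underline i\uj}$ of $A$ vanishes, Theorem \ref{pseudo} applies to $A$ and shows that $A(\nu)$, hence $\End(P_\nu)$, has at least $2-a_{\underline i\uj}$ irreducible modules up to grading shift.

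For the upper bound, the module $P_\nu$ is projective in $\C$ (Theorem \ref{projective}), so Lemma \ref{numberofquotients} — applied in $\C$, with $P_\nu$ in the role of a projective module — identifies the number of simple $\End(P_\nu)$-modules with the number of simple quotients of $P_\nu$; since $\C$ is closed under quotients, these are precisely certain simple objects of $\C$ of weight $\nu$. It therefore suffices to bound the number of simple objects of $\C$ of weight $\nu$, up to grading shift, by $1-a_{\underline i\uj}$. When $\Phi_F$ is of finite type, Theorem \ref{th:tingleywebster} (or Theorem \ref{thm:lvcrystal}) identifies the self-dual simple objects of $\C$ of weight $\nu$ with the weight-$\nu$ elements of $B(\infty)$ for $\Phi_F$, of which there are $\dim(U^+_{\Phi_F})_\nu$; as $\nu$ is a non-negative integer combination of $\underline i$ and $\uj$ and $U^+_{\Phi_F}$ is presented by the Serre relations, this dimension equals $(2-a_{\underline i\uj})-1=1-a_{\underline i\uj}$ — there are $2-a_{\underline i\uj}$ monomials of weight $\nu$ in $e_{\underline i},e_{\uj}$, subject to the single Serre relation $(\operatorname{ad}e_{\underline i})^{1-a_{\underline i\uj}}(e_{\uj})=0$ living in that weight. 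Since $2-a_{\underline i\uj}>1-a_{\underline i\uj}$, this is the desired contradiction, establishing the lemma in finite type.

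The step I expect to require genuine care is extending the count of simple objects of $\C$ of weight $\nu$ to the case where $\Phi_F$ has an affine component containing both $\underline i$ and $\uj$, so that $\nu$ carries a nonzero imaginary weight relative to $\Phi_F$. There Theorem \ref{th:tingleywebster} describes $\underline\B_F$ only as a union of infinitely many copies of $B(\infty)$ for $\Phi_F$, and one must check that only finitely many of those copies meet the fixed (small) weight $\nu$, and that together they contribute exactly $1-a_{\underline i\uj}$ simples. This rests on a description of the imaginary cuspidal modules of the relevant rank-two affine subalgebra, which under Assumptions \ref{assumptionA} and \ref{assumptionB} is of type $\widehat{\mathfrak{sl}}_2$ in characteristic $0$ and carries a single imaginary cuspidal at its null root; the remaining steps are routine bookkeeping.
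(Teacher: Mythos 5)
Your finite-type-face half is sound and is essentially the paper's argument: the lower bound comes from Theorem \ref{pseudo} applied to the pseudo-KLR algebra $\End(P_\nu)$ together with Lemma \ref{numberofquotients}, and the upper bound comes from the crystal count $1-a_{\underline i\uj}$ of weight-$\nu$ elements of $B(\infty)$. The gap is in the affine case, and it is not ``routine bookkeeping'': your proposed upper bound is false there. When $\underline i+\uj$ is imaginary one has $a_{\underline i\uj}=-2$ and $\nu=3\underline i+\uj=2\underline i+\delta'$, and the classification of Theorem \ref{classification}, applied inside the face, gives face-cuspidal simples of weight $\nu$ of the forms $\hd(L(\uj)\circ L(\underline i)^{(3)})$, $\hd(L(\delta'+\underline i)\circ L(\underline i))$, and $\hd(L\circ L(2\underline i))$ for \emph{each} imaginary cuspidal simple $L$ of weight $\delta'$ in $\C$. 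The number of such $L$ is not $1$: it is governed by the chamber coweights of the ambient algebra $R$, not of the rank-two face algebra (e.g.\ it is $2$ for the $\widehat{\mathfrak{sl}}_2$-type face inside $\widehat{\mathfrak{sl}}_3$ of \S\ref{sec:example} and \S\ref{sec:example2}). So $\C$ already has at least $4=2-a_{\underline i\uj}$ simples of weight $\nu$, and counting all simples of $\C$ at weight $\nu$ yields no contradiction. Your claim of ``a single imaginary cuspidal at its null root'' conflates the cuspidal category of a standalone rank-two KLR algebra (or of $R_F$, which would in any case be circular here, since the present lemma is what shows $R_F$ is a genuine KLR algebra) with the face-cuspidal subcategory of $R$-mod; these differ in affine type, which is exactly why Theorem \ref{projgenerator} fails for affine faces.

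The paper's proof closes this gap by bounding not the simples of $\C$ at weight $\nu$ but the simple \emph{quotients of $P_\nu$}, and by a finer argument: if $P_\nu$ had at least four simple quotients, at least two would be of the form $\hd(L\circ L(2\underline i))$ with $L$ imaginary cuspidal of weight $\delta'$; projectivity of $P_\nu$ plus the restriction--coinduction adjunction \eqref{otheradjunction} and the Mackey analysis of $\Res_{\underline i,\underline i,\delta'}P_\nu$ then force a nonzero map $\D(\underline i)\circ\D(\uj)\to L$ for each such $L$, while $\End(\D(\underline i)\circ\D(\uj))\cong k[x,y]$ (Corollary \ref{kxy}) together with Lemma \ref{numberofquotients} shows $\D(\underline i)\circ\D(\uj)$ has a unique simple quotient --- a contradiction. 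Some argument of this kind, distinguishing which simples can actually occur as quotients of $P_\nu$, is what your proposal is missing; without it the affine case does not go through.
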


\begin{proof}
Let $P_{(1-a_{\underline i \uj})\underline i+\uj}$ be the component of $P$ of weight $(1-a_{\underline i \uj})\underline i+\uj$.
By Proposition \ref{prop:is-pKLR}, $\End P_{(1-a_{\underline i \uj})\underline i+\uj}$ is a pseudo-KLR algebra. 
Suppose instead that the coefficient of $u^{-a_{\underline i \uj}}$ in $Q_{\underline i \uj}(u,v)$ was zero. 
By Theorem \ref{pseudo} and Lemma \ref{numberofquotients}, $P_{(1-a_{\underline i \uj})\underline i+\uj}$ must have at least $2-a_{\underline i \uj}$ pairwise non-isomorphic irreducible quotients.

Pick a convex order $\prec$ on $\Phi^+$ such that $\underline i$ and $\uj$ span a compatible face. If the face is of finite type then by Theorem \ref{th:tingleywebster} the number of cuspidal modules for this face is the number of elements of the corresponding rank two crystal, which is $1-a_{\underline i \uj}$, so, this is a contradiction.

Since $\Phi$ is either of finite or symmetric affine type, it remains to consider the case where $\underline i$ and $\uj$ span a root system of type $\widehat{\mathfrak{sl}}_2$. Then the minimal imaginary root is $\d=\underline i+\uj$, and $P_{3\underline i+\uj}$ has at least four pairwise non-isomorphic irreducible quotients. Without loss of generality, $\uj \succ \underline i$. In terms of the classification in Theorem \ref{classification}, at least two of them are of the form $\hd (L \circ L(2 \underline i))$ where $L$ is a cuspidal $R(\d)$-module.
 
Since $P_{3\underline i+\uj}$ is projective, there is a nonzero morphism
\[
 P_{3\underline i+\uj} \to L\circ L(2 \underline i) \to L\circ L( \underline i)  \circ L( \underline i).
\]
The restriction-coinduction adjunction \eqref{otheradjunction} gives a corresponding nonzero morphism
\[
 \Res_{\underline i,\underline i,\d} P_{3\underline i+\uj} \to L(\underline i)\otimes L(\underline i)\otimes L.
\]
The same argument as in the proof of Lemma \ref{lem:cuspidal-filtration} shows that $\Res_{\underline i,\underline i,\d} P_{3\underline i+\uj}$ is a direct sum of modules of the form 
\[
 \D(\underline i)\otimes \D(\underline i) \otimes (\D(\underline i)\circ \D(\uj))\ \mbox{  or  }\  \D(\underline i)\otimes \D(\underline i) \otimes (\D(\uj)\circ \D(\underline i)).
\]
By the cuspidality of $L$, there is no non-zero morphism from $\D(\uj)\circ \D(\underline i)$ to $L$. Therefore there must be a nonzero morphism from $\D(\underline i)\circ \D(\uj)$ to $L$.

By Corollary \ref{kxy} the algebra $\End(\D(\underline i)\circ \D(\uj))$ is isomorphic to $k[x,y]$, which has a unique irreducible quotient, so by Lemma \ref{numberofquotients}, $\D(\underline i)\circ \D(\uj)$ has a unique irreducible quotient. But we have two non-isomorphic modules $L$ which are both quotients of $\D(\underline i)\circ \D(\uj)$, so this is a contradiction.
\end{proof}

\begin{theorem}\label{4.12}
The algebra $\End(P)$ is a KLR algebra for the root system $\Phi_F$.
\end{theorem}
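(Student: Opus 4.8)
The plan is to deduce this from the preceding results with almost no new work. By Proposition \ref{prop:is-pKLR}, $\End(P)$ is the pseudo-KLR algebra $A$ built from the root system $\Phi_F$ and the polynomials $Q_{\underline i\uj}(u,v)$ defined in \eqref{def:q}. To upgrade ``pseudo-KLR'' to ``KLR'' we only need to verify the three conditions \ref{eq:Qcond1}--\ref{eq:Qcond3} on these $Q_{\underline i\uj}$, the only nontrivial one being the nonvanishing of the leading coefficient $t_{\underline i\uj}$ of $u^{-a_{\underline i\uj}}$ in \ref{eq:Qcond2}.

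\begin{proof}
By Proposition \ref{prop:is-pKLR}, $\End(P)$ is isomorphic to the pseudo-KLR algebra associated to the root system $\Phi_F$ with parameters $Q_{\underline i\uj}$ given by \eqref{def:q}. We check that these parameters satisfy Conditions \ref{eq:Qcond1}--\ref{eq:Qcond3} of Definition \ref{def:KLR}. Condition \ref{eq:Qcond1}, namely $Q_{\underline i\underline i}=0$, holds by our definition of $Q_{\underline i\underline i}$. Condition \ref{eq:Qcond3}, $Q_{\underline i\uj}(u,v)=Q_{\uj\underline i}(v,u)$, is exactly the Lemma proved just after \eqref{def:q}. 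For Condition \ref{eq:Qcond2}: we observed just after \eqref{def:q} that $Q_{\underline i\uj}$ is homogeneous of degree $-2\,\underline i\cdot\uj = -2d_{\underline i}a_{\underline i\uj}$, where the grading on $k[u,v]$ gives $u$ degree $\underline i\cdot\underline i = 2 d_{\underline i}$ and $v$ degree $\uj\cdot\uj = 2 d_{\uj}$ (here $d_{\underline i}$ are the symmetrizing factors for $\Phi_F$). Finally, Lemma \ref{nonzerocoeff} shows that the coefficient $t_{\underline i\uj}$ of $u^{-a_{\underline i\uj}}$ in $Q_{\underline i\uj}(u,v)$ is nonzero; applying this also to the pair $(\uj,\underline i)$ and using Condition \ref{eq:Qcond3} gives that the coefficient $t_{\uj\underline i}$ of $v^{-a_{\uj\underline i}}$ is nonzero as well. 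Thus all of \ref{eq:Qcond1}--\ref{eq:Qcond3} hold, so $\End(P)$ is a genuine KLR algebra for $\Phi_F$.
\end{proof}

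The main (indeed the only) obstacle is already isolated and dispatched in Lemma \ref{nonzerocoeff}; everything here is bookkeeping, matching the degree conventions for the symmetrizing factors of $\Phi_F$ against those built into Definition \ref{def:KLR} and invoking the symmetry relation $Q_{\underline i\uj}(u,v)=Q_{\uj\underline i}(v,u)$ to get the second nonvanishing statement in \ref{eq:Qcond2} for free.
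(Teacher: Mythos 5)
Your proof is correct and follows exactly the paper's route: the paper also deduces Theorem \ref{4.12} immediately from Proposition \ref{prop:is-pKLR} together with Lemma \ref{nonzerocoeff}, with the degree and symmetry conditions \ref{eq:Qcond1}, \ref{eq:Qcond3} having already been established in the lemmas surrounding \eqref{def:q}. Your extra bookkeeping (degrees of $u,v$ and using $Q_{\underline i\uj}(u,v)=Q_{\uj\underline i}(v,u)$ to get the nonvanishing of $t_{\uj\underline i}$) is exactly what the paper leaves implicit, and it is carried out correctly.
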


\begin{proof}
This is immediate from Proposition \ref{prop:is-pKLR} and Lemma \ref{nonzerocoeff}.
\end{proof}

\section{Face functors} 

\subsection{Definition}  \label{sec:ff}
Fix a face $F$. Let $R_F$ be the graded algebra $\End(P)^{\op}$, where $P$ is the projective object in $\C_F$ from \S\ref{ss:defP}.
Theorem \ref{4.12} shows that $R_F^\op$ is a KLR algebra for the root system $\Phi_F$. The existence of the anti-automorphism $\dagger$ (see \S\ref{ss:KLR}) implies that $R_F$ is an isomorphic KLR algebra.

\begin{definition} 
 The face functor is the functor $\F : X \rightarrow P \otimes_{R_F} X$
from $R_F$-mod to $\C_F$.
\end{definition}

The image of $\F$ lies in $\C_F$ since $\F$ sends a free rank one $R_F$-module to $P$, $\F$ is right exact and $\C_F$ is closed under taking quotients.

\begin{remark}
In finite type, this face functor is an example of the functors constructed by Kashiwara and Park \cite{kashiwarapark} in terms of duality data (see \cite[Proposition 3.5]{kashiwarapark}). In affine type, we expect that this is also the case, but we do not have a proof. 
\end{remark}

\subsection{Categorical properties}

\begin{theorem} \label{th:feq}
If $\Phi$ is of finite or affine type and $F$ is of finite type, then $\F$ is an equivalence of categories. 
\end{theorem}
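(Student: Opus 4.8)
The plan is to invoke the standard Morita-theoretic criterion: a right-exact functor of the form $P\otimes_{R_F}(-)$ is an equivalence precisely when $P$ is a progenerator of $\mathcal{C}_F$ and $R_F=\End(P)^{\op}$. First I would note that under the hypotheses ($\Phi$ finite or affine, $\Phi_F$ finite type) Theorem~\ref{projgenerator} gives that $P$, together with its grading shifts, is a projective generator of $\mathcal{C}_F$, and Theorem~\ref{4.12} identifies $R_F$ with $\End(P)^{\op}$ as a KLR algebra for $\Phi_F$. So the content is assembling these into the equivalence.

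The key steps, in order: (1) Recall that $\mathcal{F}(R_F)=P\otimes_{R_F}R_F\cong P$, and more generally $\mathcal{F}$ sends a free module $R_F^{\oplus m}$ to $P^{\oplus m}$, so $\mathcal{F}$ takes finitely generated projective $R_F$-modules to direct summands of sums of grading shifts of $P$, i.e. to projectives in $\mathcal{C}_F$. (2) Define the candidate quasi-inverse $\mathcal{G}=\Hom_{\mathcal{C}_F}(P,-)$ from $\mathcal{C}_F$ to $R_F$-mod (the $\End(P)$-action on $\Hom(P,-)$ giving, after the $\op$ and $\dagger$ identifications, an honest $R_F$-module). (3) Check the adjunction: $\mathcal{F}$ is left adjoint to $\mathcal{G}$, with unit $\eta_X:X\to\mathcal{G}\mathcal{F}(X)$ and counit $\varepsilon_M:\mathcal{F}\mathcal{G}(M)\to M$. (4) Show $\eta$ is an isomorphism: it is an isomorphism when $X=R_F$ by Theorem~\ref{4.12} (since $\mathcal{G}\mathcal{F}(R_F)=\Hom(P,P)=\End(P)\cong R_F$ as $R_F$-modules), hence for all free, hence—since both $\mathcal{F}$ and $\mathcal{G}$ are additive and every f.g. $R_F$-module has a presentation by frees and $\mathcal{F}$ is right exact while $\mathcal{G}$ is left exact—for all finitely generated $X$ by a five-lemma / finite-presentation argument. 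Here one uses that $R_F$ is (by Theorem~\ref{4.12}) a KLR algebra, hence Noetherian with f.g. projective covers, so finite presentations exist. (5) Show $\varepsilon$ is an isomorphism: since $P$ is a projective generator, every $M\in\mathcal{C}_F$ admits a presentation $P^{\oplus b}\to P^{\oplus a}\to M\to 0$ by (grading shifts of) $P$; $\varepsilon$ is an isomorphism on such $P^{\oplus a}$, and right-exactness of $\mathcal{F}\mathcal{G}$ together with the five lemma propagates this to all $M$. The generator property is exactly where Theorem~\ref{projgenerator} (and thus the finite-type hypothesis on $\Phi_F$) is essential: without it $P$ fails to surject onto every simple and $\varepsilon$ is not surjective.

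A point that needs care is the grading: $\mathcal{C}_F$ and $R_F$-mod are categories of graded modules, so "progenerator" must mean that $P$ together with all its grading shifts generates, and the Morita equivalence must be formulated grading-equivariantly. This is handled by working throughout with the graded $\Hom$ $\bigoplus_n\Hom(\ ,\ )_n$ and noting all the isomorphisms above ($\mathcal{G}\mathcal{F}(R_F)\cong R_F$, the adjunction, the five-lemma steps) are grading-homogeneous; Theorem~\ref{4.12} and Theorem~\ref{basisthm} already provide the graded identification $\End(P)\cong R_F$. The other mild subtlety is finite generation: one must know f.g. modules over a KLR algebra for $\Phi_F$ have finite presentations, which holds because such algebras are Noetherian (each $R_F(\nu)$ is finite rank over its center in the relevant weight spaces, or directly: the basis of Theorem~\ref{basisthm} shows $R_F(\nu)$ is a finitely generated module over a polynomial subalgebra).

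The main obstacle is verifying that the counit $\varepsilon_M:\mathcal{F}\mathcal{G}(M)\to M$ is an isomorphism for all $M\in\mathcal{C}_F$, not merely surjective. Surjectivity follows from $P$ being a generator, but injectivity requires controlling $\mathcal{F}\mathcal{G}$ on kernels; the clean way is the five-lemma argument using a two-step projective presentation of $M$ by grading shifts of $P$ and the fact that $\mathcal{F}$ is right exact, reducing everything to the already-established case $M=P$. All the hard structural input—that $P$ is genuinely projective (Theorem~\ref{projective}), that it generates (Theorem~\ref{projgenerator}), and that $\End(P)$ is exactly the KLR algebra $R_F$ (Theorem~\ref{4.12})—has been done; what remains is the formal Morita bookkeeping, done in the graded setting.
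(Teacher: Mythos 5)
Your proposal is correct and is exactly the paper's argument: the paper's proof simply cites Theorem \ref{projgenerator} (that $P$ is a projective generator) and treats the rest as standard graded Morita theory, which is precisely the bookkeeping you have written out (note that your step (4), the unit being an isomorphism, is the paper's separate Lemma \ref{fullyfaithful}, valid for all faces). No gaps; the extra detail you supply is the content the paper leaves implicit.
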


\begin{proof}
 This is immediate from Theorem \ref{projgenerator} which shows that $P$ is a projective generator.
\end{proof}

\begin{lemma}\label{fullyfaithful}
 For any face $F$, the face functor $\F$ is fully faithful.
\end{lemma}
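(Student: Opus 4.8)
The plan is to show that $\F$ is fully faithful by exhibiting, for any two $R_F$-modules $X$ and $Y$, a natural isomorphism $\Hom_{R_F}(X,Y)\cong\Hom_{\C_F}(\F X,\F Y)$. The standard strategy here is the following: since $\F = P\otimes_{R_F}(-)$ and $P$ is a projective object of $\C_F$ (Theorem \ref{projective}), the functor is right exact, and $R_F=\End(P)^{\op}$ by construction, so on the free module $R_F$ itself we have $\F(R_F)=P$ and $\End_{\C_F}(\F R_F)=\End_{\C_F}(P)\cong\End_{R_F}(R_F)$. The key point is that this identification is compatible with the $R_F$-action, i.e. $\F$ induces an isomorphism $\Hom_{R_F}(R_F,R_F)\to\Hom_{\C_F}(P,P)$; this holds essentially by definition of $R_F$ as $\End(P)^{\op}$ together with the anti-automorphism $\dagger$ discussed in \S\ref{ss:ff}.

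First I would check the claim on finitely generated free modules: for any $m,n$, $\F(R_F^{\,m})=P^{\,m}$ and the natural map $\Hom_{R_F}(R_F^{\,m},R_F^{\,n})\to\Hom_{\C_F}(P^{\,m},P^{\,n})$ is an isomorphism, since both sides are $m\times n$ matrices over $\End_{R_F}(R_F)\cong\End_{\C_F}(P)$ and the map is entrywise the isomorphism above. (One must be slightly careful with grading shifts, but since all Hom-spaces are graded and $\F$ commutes with the shift functor $q$, this causes no difficulty.) Next, given an arbitrary finitely generated $R_F$-module $X$, choose a presentation by free modules $R_F^{\,m}\to R_F^{\,n}\to X\to 0$. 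Since $\F$ is right exact, applying it yields an exact sequence $P^{\,m}\to P^{\,n}\to\F X\to 0$ in $\C_F$. Now fix another module $Y$ and apply the left-exact functor $\Hom_{\C_F}(-,\F Y)$; since $P$ is projective in $\C_F$, $\Hom_{\C_F}(P^{\,\bullet},\F Y)$ computes correctly, and we get a commuting ladder comparing the two sequences $0\to\Hom_{R_F}(X,Y)\to\Hom_{R_F}(R_F^{\,n},Y)\to\Hom_{R_F}(R_F^{\,m},Y)$ and $0\to\Hom_{\C_F}(\F X,\F Y)\to\Hom_{\C_F}(P^{\,n},\F Y)\to\Hom_{\C_F}(P^{\,m},\F Y)$. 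To make the rightmost two vertical arrows isomorphisms I would first establish the isomorphism $\Hom_{R_F}(R_F^{\,n},Z)\cong\Hom_{\C_F}(P^{\,n},\F Z)$ naturally in an arbitrary module $Z$ — this is just adjunction, $\Hom_{\C_F}(P\otimes_{R_F}R_F^{\,n},\F Z)\cong\Hom_{R_F}(R_F^{\,n},\Hom_{\C_F}(P,\F Z))$, combined with the identification $\Hom_{\C_F}(P,\F Z)\cong Z$ which follows because $\Hom_{\C_F}(P,P\otimes_{R_F}(-))$ is a right exact functor agreeing with the identity on free modules. Then the five lemma (really, a diagram chase on the three-term exact ladder) gives that the left vertical arrow $\Hom_{R_F}(X,Y)\to\Hom_{\C_F}(\F X,\F Y)$ is an isomorphism.

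The main obstacle I anticipate is verifying cleanly that $\Hom_{\C_F}(P,\F Z)\cong Z$ naturally in $Z$ — that is, that the unit of the adjunction $(\F,\Hom_{\C_F}(P,-))$ is an isomorphism. On free modules this is the defining property $\End_{\C_F}(P)^{\op}\cong R_F$; to bootstrap to all $Z$ one presents $Z$ by frees and uses that both $Z\mapsto Z$ and $Z\mapsto\Hom_{\C_F}(P,\F Z)$ are right exact (the latter because $\F$ is right exact and $\Hom_{\C_F}(P,-)$ is exact on $\C_F$ as $P$ is projective) and agree on frees, hence agree everywhere by a standard comparison of presentations. Once this is in hand the rest is formal homological algebra. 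I would also note that full faithfulness does \emph{not} require $P$ to be a generator, which is exactly why this lemma holds for arbitrary faces even though Theorem \ref{th:feq} (equivalence) does not.
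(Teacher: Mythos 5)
Your proposal is correct and follows essentially the same route as the paper: both arguments reduce to showing the unit of the adjunction $(\F,\Hom_{\C_F}(P,-))$ is a natural isomorphism, which holds on free modules because $R_F=\End(P)^{\op}$ by construction, and extends to all modules via free presentations using that $\F$ is right exact and $\Hom_{\C_F}(P,-)$ is exact since $P$ is projective in $\C_F$. The only difference is that you spell out the standard categorical criterion (left adjoint fully faithful iff unit is an isomorphism) via an explicit five-lemma ladder, whereas the paper simply cites it.
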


\begin{proof}
We use the criterion that a left adjoint is fully faithful if and only if the unit of adjunction is a natural isomorphism. The functor $\F$ is left adjoint to $\Hom(P,-)$. The unit of this adjunction is an isomorphism on free modules by the construction of the face KLR algebra as an endomorphism algebra. The general case follows by considering a free resolution since tensoring is right exact and the Hom functor is exact since $P$ is projective.
\end{proof}

\begin{lemma}\label{commutewithind}
For all  $R_F$ modules $A$ and $B$ there is a natural isomorphism
\[
 \G(A)\circ\G(B)\cong \G(A\circ B).
\]
\end{lemma}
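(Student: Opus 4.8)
The statement to prove is that the face functor $\F$ commutes with induction: $\F(A)\circ\F(B)\cong\F(A\circ B)$, naturally in the $R_F$-modules $A$ and $B$. The natural strategy is to reduce to the case of free modules and then invoke right-exactness and the five lemma. First I would check the claim when $A$ and $B$ are free of rank one, i.e. $A=R_F(\mu)$ and $B=R_F(\nu)$ (or direct sums of grading shifts of such). In that case $\F(A)=P_\mu$, $\F(B)=P_\nu$, and $\F(A\circ B)=\F(\Ind_{\mu,\nu}(R_F(\mu)\otimes R_F(\nu)))=\F(R_F(\mu+\nu)e_{\mu\nu})$. One must identify $P_\mu\circ P_\nu$ with the summand $P_{\mu+\nu}e_{\mu\nu}$ of $P_{\mu+\nu}$: recall $P_{\mu+\nu}=\bigoplus_{\underline{\mathbf i}\in\Seq_F(\mu+\nu)}\D(\underline{\mathbf i})$, and $P_\mu\circ P_\nu=\bigoplus \D(\underline{\mathbf i})\circ\D(\underline{\mathbf j})$ over $\underline{\mathbf i}\in\Seq_F(\mu)$, $\underline{\mathbf j}\in\Seq_F(\nu)$; concatenation $(\underline{\mathbf i},\underline{\mathbf j})$ identifies $\D(\underline{\mathbf i})\circ\D(\underline{\mathbf j})=\D(\underline{\mathbf i}\,\underline{\mathbf j})$, and $e_{\mu\nu}\in R_F(\mu+\nu)=\End(P_{\mu+\nu})^{\op}$ is precisely the projection onto the sum of those summands $\D(\underline{\mathbf k})$ with $\underline{\mathbf k}$ a concatenation of a length-$\operatorname{ht}(\mu)$ prefix summing to $\mu$ and a suffix summing to $\nu$. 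So on free modules the isomorphism is essentially a tautology, built from associativity of $\circ$ and the construction of $R_F$ as an endomorphism algebra respecting concatenation; one should check it is natural in maps between free modules, which again comes down to the fact that $\End(P)$ was built compatibly with the inclusions $\iota$.

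Next I would bootstrap to general modules. Choose free presentations $F_1^A\to F_0^A\to A\to 0$ and $F_1^B\to F_0^B\to B\to 0$. Tensoring over $R_F$ and applying $\circ$ (which is exact in each variable, being an induction functor, hence given by tensoring with a projective bimodule) one gets a presentation of both $\F(A)\circ\F(B)$ and $\F(A\circ B)$ coming from the free-module case: indeed $A\circ B$ has a presentation whose terms are among $F_i^A\circ F_j^B$, and $\F$ is right exact, so $\F(A\circ B)$ is the cokernel of $\F(F_1^A\circ F_0^B\oplus F_0^A\circ F_1^B)\to\F(F_0^A\circ F_0^B)$; likewise $\F(A)\circ\F(B)$ is the cokernel of the corresponding map on $\F(F_i^A)\circ\F(F_j^B)$. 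The free-module isomorphism, being natural, gives a commuting ladder between these two cokernel diagrams, so the induced map on cokernels is an isomorphism. Naturality of the resulting isomorphism in $A$ and $B$ follows from naturality of the free-module isomorphism together with standard arguments (independence of the chosen presentations up to the usual homotopy).

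The main obstacle I expect is the bookkeeping in the free-module case: one must be careful that the identification $P_\mu\circ P_\nu\cong P_{\mu+\nu}e_{\mu\nu}$ is compatible not just as $R$-modules but as $R_F$-modules, i.e. that the left $R_F$-action coming from $\End(P_\mu\circ P_\nu)$-type endomorphisms matches the $e_{\mu\nu}R_F e_{\mu\nu}$-action, and that the $\tau$-generators and $x$-generators of $R_F$ act the way Lemma \ref{lem:tau} and Definition \ref{def:gens} prescribe under this identification — essentially re-checking that the isomorphism $R_F^{\op}=\End(P)$ respects $\iota$, which is built into the definitions (the $\tau_k$ and $x_k$ were defined to act place-wise via $\circ$). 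Once that compatibility is pinned down, the rest is the formal right-exact-functor argument above, which I would present concisely rather than in full detail.
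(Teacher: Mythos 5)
Your proposal is correct and is essentially the paper's argument: the paper phrases it as an identification of the two defining bimodules $P_{\la+\mu}\otimes_{R_F(\la+\mu)}R_F(\la+\mu)e^F_{\la\mu}$ and $R(\la+\mu)e_{\la\mu}\otimes_{R(\la)\otimes R(\mu)}(P_\la\otimes P_\mu)$, both canonically isomorphic to $\bigoplus\D(\underline i_1)\circ\cdots\circ\D(\underline i_n)$, which is exactly your free-rank-one computation, and your presentation/right-exactness bootstrap is the standard unpacking of that bimodule comparison. You also correctly isolate the one point the paper leaves implicit, namely compatibility of the right $R_F(\la)\otimes R_F(\mu)$-actions under the identification.
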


\begin{proof}
Write $R_F(\la)$ for $\End(P_\la)^{\op}$. Let 
$e^F_{\la\mu}$ the image of the unit under the inclusion $R_F(\la)\otimes R_F(\mu)\to R_F(\la+\mu)$. The two functors $A\otimes B\mapsto \F(A\circ B)$ and $A\otimes B\mapsto \F(A)\circ \F(B)$ from $R_F(\la)\otimes R_F(\mu)\mods$ to $R(\la+\mu)\mods$ are given by tensoring with the bimodules
\[
 P_{\la+\mu} \bigotimes_{R_F(\la+\mu)} R_F(\la+\mu)e^F_{\la\mu}
\]
and
\[
 R(\la+\mu)e_{\la\mu} \bigotimes_{R(\la)\otimes R(\mu)} (P_\la\otimes P_\mu)
\]
respectively.
These are both canonically isomorphic to the direct sum over all
$\underline i_1,\ldots,\underline i_n\in\D_F$ with $\underline i_1+\cdots+\underline i_l=\la$ and $\underline i_{l+1}+\cdots+\underline i_n=\mu$ of 
$$\D(\underline i_1)\circ\cdots \circ \D(\underline i_n).$$
Therefore the functors are equivalent.
\end{proof}

\subsection{Compatibility with standard modules}\label{sub:standard}
Results in this section hold under assumption \ref{assumptionA}, which gives us access to the theory of standard modules introduced in \cite{bkm} and \cite{mcn3}. These depend on the convex order $\prec$ and are built out of root modules. The root modules corresponding to real roots have already been introduced, these are the modules $\D(\a)$. For the indivisible imaginary root $\d$, we will call the modules denoted $\D(\w)$ in \cite{mcn3} root modules. These are the projective modules in the category of cuspidal $R(\d)$-modules. 

Standard modules are naturally indexed by root partitions. A root partition is a sequence $\la=(\a_1^{n_1},\cdots,\a_l^{n_l})$ where $\a_1\succ \cdots \succ \a_l$ are indivisible roots, each $n_i$ is a positive integer unless $\a_i=\delta$, in which case it is a collection of partitions. To each term $\a_i^{n_i}$ a standard module $\D(\a_i)^{(n_i)}$ is constructed. 
If $\a_i$ is real then $\D(\a_i)^{\circ n_i}$ is a direct sum of $n_i!$ copies of the module $\D(\a_i)^{(n_i)}$ with grading shifts.
If $\a_i$ is imaginary then $\D(\a_i)^{(n_i)}$ is a summand of a product of certain modules $\D(\w)$ of weight $\delta$ in $\C_F$; see \cite{mcn3} for the details (where this module is denoted $\D(\underline{\la})$). The standard module is then defined to be the indecomposable module
\[
 \D(\la)=\D(\a_1)^{(n_1)}\circ\cdots\circ \D(\a_l)^{(n_l)}.
\]
In \cite{bkm} and \cite{mcn3} homological properties of these modules are developed which justify the name standard in the setting of affine quasi-hereditary algebras.

\begin{definition}
Fix a convex order $\prec$ and a root $\alpha$. A minimal pair for $\a$ is an ordered pair of roots $(\b,\ga)$ such that $\a=\b+\ga$, $\ga\prec\b$, and there is no pair of roots $(\b',\ga')$ satisfying $\a=\b'+\ga'$ and $\ga\prec\ga'\prec\b'\prec\b$.
\end{definition}

\begin{lemma} \label{lem:set}
Fix a convex order $\prec$. Let $\D$ be a root module with $\wt(\D)=\a$ and let $(\b,\ga)$ be a minimal pair for $\a$. 
Then there exist root modules $\D_\b$ and $\D_\ga$ with $\wt(\D_\b)=\b$ and $\wt(\D_\ga)=\ga$ such that there is a short exact sequence
\[
  0\to q^{-\b\cdot\ga} \D_\b \circ \D_\ga \xrightarrow{f_{\b\ga}} \D_\ga\circ\D_\b\to \D^{\oplus m}\to 0
 \] for some nonzero $m\in\N[q,q\inv]$. Furthermore the homomorphism $f_{\b\ga}$ spans the degree zero part of $\Hom(q^{-\b\cdot\ga} \D_\b \circ \D_\ga,\D_\ga\circ\D_\b)$.
 
 Conversely suppose that $\D_\b$ and $\D_\ga$ are root modules with $\wt(\D_\b)=\b$ and $\wt(\D_\ga)=\ga$. Then the degree zero part of $\Hom(q^{-\b\cdot\ga} \D_\b \circ \D_\ga,\D_\ga\circ\D_\b)$ is one dimensional. If $f$ is any nonzero degree zero homomorphism, then $f$ is injective and $\coker{f}$ is a direct sum of root modules. 
\end{lemma}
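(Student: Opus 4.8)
The plan is to bootstrap from the minimal-pair short exact sequences for standard modules in \cite{bkm} (finite type) and \cite{mcn3} (symmetric affine type), which are available under Assumption \ref{assumptionA}, and then to pin down the relevant Hom space by a Mackey-filtration computation. Throughout, ``root module'' means $\D(\a)$ when $\a$ is real and one of the modules $\D(\w)$ of \cite{mcn3} when $\a=\d$; the argument splits according to which of $\a,\b,\ga$ are real, these case distinctions being routine.

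For the existence of the short exact sequence: if $\b$ and $\ga$ are both real roots, the sequence is the minimal-pair short exact sequence of \cite{bkm} in finite type, and its proof — running the Mackey filtration of Theorem \ref{mackey} against the cuspidality of $\D(\b),\D(\ga)$ and the minimality of the pair — carries over to the geometric symmetric affine case using the corresponding results of \cite{mcn3}. When one of $\b,\ga$ equals $\d$ (which forces affine type) I would instead quote the analogous short exact sequences between the relevant root modules $\D(\w)$ proved in \cite{mcn3}. In every case the modules $\D_\b,\D_\ga$ produced are those appearing in the cited statement and $m$ is the stated graded multiplicity; that $f_{\b\ga}$ spans the degree-zero Hom space will follow from the computation below.

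For the converse, I would first prove one-dimensionality of the degree-zero Hom space for \emph{any} root modules $\D_\b,\D_\ga$. By the $i=0$ case of the adjunction \eqref{extadjunction},
\[
\Hom\big(q^{-\b\cdot\ga}\D_\b\circ\D_\ga,\ \D_\ga\circ\D_\b\big)\cong\Hom\big(q^{-\b\cdot\ga}\D_\b\otimes\D_\ga,\ \Res_{\b,\ga}(\D_\ga\circ\D_\b)\big),
\]
and I would analyze the right-hand side via the Mackey filtration. Each root module is cuspidal for a suitable face (for real $\ga$ this is the defining property of $\D(\ga)$, independent of the order by Lemma \ref{independent}; for $\d$ the $\D(\w)$ are cuspidal $R(\d)$-modules), and combined with the minimality of $(\b,\ga)$ this forces every Mackey layer except the ``swap'' layer to contribute only in strictly positive degree. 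The swap layer is $q^{-\b\cdot\ga}\D_\b\otimes\D_\ga$, and since $\End(\D_\b)\cong k[x_\b]$ and $\End(\D_\ga)\cong k[x_\ga]$ by Theorem \ref{lem:dot} (and its imaginary analogue in \cite{mcn3}), the degree-zero endomorphisms of $\D_\b\otimes\D_\ga$ are just scalars. Hence the degree-zero Hom space is one-dimensional, and $f_{\b\ga}$ spans it.

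Finally I would show that any nonzero degree-zero $f$ is injective with $\coker f$ a direct sum of root modules. If $\b,\ga$ are real this is immediate: there is a unique root module of each weight, so $f$ is a nonzero scalar times $f_{\b\ga}$, which has cokernel $\D^{\oplus m}$. When $\d$ occurs I would reduce to this case via the identifications among weight-$\d$ root modules in \cite{mcn3}, or argue directly: by the convex-order theory of \cite{bkm,mcn3}, $q^{-\b\cdot\ga}\D_\b\circ\D_\ga$ has a simple quotient occurring with multiplicity one in both $\D_\b\circ\D_\ga$ and $\D_\ga\circ\D_\b$, which no nonzero map can annihilate, so $\ker f=0$; then $\coker f$ is a quotient of $\D_\ga\circ\D_\b$ with every composition factor equal to $L(\a)$ up to shift, hence lies in $\C_{F(\a)}$ where $\D$ is the projective cover of $L(\a)$, and a graded-character count forces $\coker f\cong\D^{\oplus m}$. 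The step I expect to be the main obstacle is this uniform handling of the imaginary root $\d$: matching the abstract weight-$\d$ root modules with the specific modules $\D(\w)$ of \cite{mcn3}, and transporting the minimal-pair and Mackey-filtration arguments to that setting.
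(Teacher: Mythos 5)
Your proposal is correct and matches the paper's approach: the paper simply cites \cite[Theorem 4.10]{bkm} for finite type and \cite[Lemma 16.1]{mcn3} (real roots) and \cite[Theorem 17.1]{mcn3} (imaginary roots) for symmetric affine type, noting that the one-dimensionality of the degree-zero Hom space ``is clear from the proofs'' of those results. Your Mackey-filtration argument for that last point is exactly the content the paper is alluding to, so you have just unpacked the same proof rather than found a different one.
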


\begin{remark}
The notation e.g. $\Delta_\b,$ as opposed to $\Delta(\b)$, is because, if $\b=\delta$, then $\D_{\delta}$ can be any of the modules $\D(\omega)$ of weight $\delta$. 
\end{remark}

\begin{proof}
 In finite type, this is \cite[Theorem 4.10]{bkm}, and in symmetric affine type it is \cite[Lemma 16.1]{mcn3} for real roots and \cite[Theorem 17.1]{mcn3} for imaginary roots. The statement about the space of degree zero homomorphisms 
being one-dimensional is not explicitly mentioned, but is clear from the proofs.
\end{proof}

Given a face $F$ and a convex order $\prec$ compatible with $F$, we naturally get a convex order $\prec_F$ on the face root system $\Phi_F$. Thus we can talk about root modules for both $R$ and $R_F$.

\begin{lemma} \label{lem:roottoroot}
The face functor $\G$ sends root modules to root modules.
\end{lemma}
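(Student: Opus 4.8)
The plan is to handle the real roots and the imaginary root $\delta$ separately, in both cases by exploiting the short exact sequences of Lemma \ref{lem:set} together with the fact (Lemma \ref{commutewithind}) that $\G$ commutes with induction. First I would dispose of the simple roots of $\Phi_F$: by the very construction of $R_F$ as $\End(P)^{\op}$, the functor $\G$ sends the projective cover of the $\underline i$-th vertex simple of $R_F$ (i.e. the free module $R_F e^F_{\underline i}$ of weight $\underline i \in \Delta_F$) to the summand $\D(\underline i)$ of $P$. Since $\D(\underline i)$ is by definition a root module for $R$ (it is the projective cover of $L(\underline i)$ in $\C_{F(\underline i)}$, and $\C_{F(\underline i)}$ contains $\C_F$-objects because the convex order is compatible), the base case holds.

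For a general indivisible real root $\alpha$ of $\Phi_F$ I would induct on $\mathrm{ht}(\alpha)$ using a minimal pair $(\beta,\gamma)$ for $\alpha$ with respect to $\prec_F$. By induction $\G(\D_\beta^{R_F})$ and $\G(\D_\gamma^{R_F})$ are root modules for $R$ of weights $\beta,\gamma$; note these weights are genuine roots of $\Phi$ since $\Delta_F \subset \Phi^+$ and roots of $\Phi_F$ are roots of $\Phi$, and $(\beta,\gamma)$ remains a minimal pair for $\alpha$ in $\Phi$ because $\prec_F$ is the restriction of $\prec$ and the relevant intermediate roots (if any new ones of $\Phi$ appear) still cannot be inserted — this compatibility point needs a short check via Lemma \ref{lem:co-hyper}. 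Applying the right-exact functor $\G$ to the short exact sequence from Lemma \ref{lem:set} for $R_F$, and using Lemma \ref{commutewithind} to identify $\G(\D_\beta \circ \D_\gamma) \cong \G(\D_\beta)\circ\G(\D_\gamma)$ and likewise for $\gamma\circ\beta$, together with the fact that $\G$ is fully faithful (Lemma \ref{fullyfaithful}) so that the nonzero degree-zero map $f_{\beta\gamma}$ goes to a nonzero degree-zero map, I get that $\G(\D(\alpha)^{R_F})^{\oplus m}$ is the cokernel of a nonzero degree-zero homomorphism $q^{-\beta\cdot\gamma}\G(\D_\beta)\circ\G(\D_\gamma) \to \G(\D_\gamma)\circ\G(\D_\beta)$. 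By the converse half of Lemma \ref{lem:set} this cokernel is a direct sum of root modules for $R$; and since $\G(\D(\alpha)^{R_F})$ is indecomposable (being the image under a fully faithful functor of the indecomposable $\D(\alpha)^{R_F}$, whose endomorphism ring $k[x_\alpha]$ is local), all its summands — hence $\G(\D(\alpha)^{R_F})$ itself up to a grading shift — must be a single root module of weight $\alpha$.

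For the imaginary root $\delta$ of $\Phi_F$ (which occurs only when some affine component appears in $\Phi_F$), the argument is the same in spirit: the root modules $\D(\omega)$ of weight $\delta$ for $R_F$ are by definition the indecomposable projectives in the cuspidal category of $R_F(\delta)$, i.e. the indecomposable summands of $P^{R_F}_\delta$, and $\G$ carries them to indecomposable summands of $\G(P^{R_F}_\delta)$. One then identifies $\G(P^{R_F}_\delta)$, via Lemma \ref{commutewithind} and Lemma \ref{restrictdelta}, with the sum of the $\D(\underline i)\circ\D(\underline j)$ over sequences of simple roots of $\Phi_F$ summing to $\delta$; this module lies in $\C_F$ and its weight $\delta$ (as an element of $\Phi$, an imaginary root) means its indecomposable summands are precisely root modules $\D(\omega)$ for $R$ — here I would invoke the description of weight-$\delta$ projectives in $\C_F$ from \cite{mcn3} referenced in \S\ref{sub:standard}. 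I expect the main obstacle to be the $\delta$ case: one must be careful that a vertex $\underline i\in\Delta_F$ spanning an affine subsystem still satisfies $R^{\mathrm{sup}(\underline i)}$ of the allowed type (this is exactly Assumption \ref{assumptionB}), and that matching up "root module of $R_F$ of weight $\delta$" with "root module of $R$ of weight $\delta$" is legitimate even though the imaginary root space of $\Phi$ may decompose in $\Phi_F$ (cf. the Remark after Lemma \ref{lem:order-exists}); ruling out that $\G(\D(\omega)^{R_F})$ could be a mixed sum rather than a single $\D(\omega)^R$ will again use indecomposability of the source plus full faithfulness of $\G$.
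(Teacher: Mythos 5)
Your treatment of the real roots is essentially the paper's proof: induct on height via a minimal pair $(\b,\ga)$, apply the right-exact $\G$ to the short exact sequence of Lemma \ref{lem:set}, identify the outer terms using Lemma \ref{commutewithind}, invoke the converse half of Lemma \ref{lem:set} to see the cokernel is a sum of root modules, and finish with indecomposability (Krull--Schmidt). Your two added remarks are both worthwhile: the observation that faithfulness of $\G$ forces the induced degree-zero map to remain nonzero is a step the paper leaves implicit but genuinely needs (otherwise the cokernel would be $\G(\D_\ga)\circ\G(\D_\b)$ itself), and the point that $(\b,\ga)$ must remain a minimal pair in $\Phi$ is a real compatibility check the paper glosses over; it follows because $\prec$ is compatible with $F$, so any intermediate decomposition $\a=\b'+\ga'$ with $\ga\prec\ga'\prec\b'\prec\b$ forces $\b',\ga'$ to lie in $F$.

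Where you diverge is the imaginary root, and there your argument has a genuine gap. The paper does not split into cases: Lemma \ref{lem:set} is stated for all indivisible roots, including $\d$ (citing \cite[Theorem 17.1]{mcn3} for the imaginary case, with $\D_\d$ being any of the $\D(\w)$), so the identical minimal-pair induction applies verbatim. Your alternative --- identifying $\G(P^{R_F}_{\d})$ with the weight-$\d$ component $P_\d$ of $P$ and asserting that its indecomposable summands ``are precisely the root modules $\D(\w)$'' --- is false as stated. The module $P_\d$ is projective in $\C_F$, which in weight $\d$ is strictly larger than the category of cuspidal $R(\d)$-modules whenever $F$ contains real roots; its summands are modules of the form $\D(\underline i)\circ\D(\underline j)$, not the $\D(\w)$. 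Indeed, in the $\widehat{\mathfrak{sl}}_2$-face example of \S\ref{sec:example2} the root module $\D(\w_2)$ arises as the \emph{cokernel} of $q^2\D(1')\circ\D(0')\to\D(0')\circ\D(1')$, not as a direct summand of either product. The fix is simply to drop the separate argument and run the same minimal-pair induction for $\d$ as for the real roots.
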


\begin{proof}
Let $\D$ be a root module for $R_F$ and let $\a=\wt(\D)$.
Proceed by induction on the height of $\a$, the case $\wt(\D)\in\D_F$ being trivial. If $\alpha \not\in\D_F$ there is a minimal pair $(\b,\ga)$ for $\a$, and by Lemma \ref{lem:set} there is a short exact sequence 
 \[
  0\to q^{-\b\cdot\ga} \D_\b \circ \D_\ga \to \D_\ga\circ\D_\b \to \D^{\oplus m}\to 0
 \]
 where $\D_\b$ and $\D_\ga$ are root modules with $\wt(\D_\b)=\b$ and $\wt(\D_\ga)=\ga$.

By Lemma \ref{commutewithind} and the fact that $\G$ is right exact, the following is exact:
\[
 q^{-\b\cdot\ga}\G(\D_\b)\circ\G(\D_\ga)\to \G(\D_\ga)\circ\G(\D_\b)\to \G(\D)^{\oplus m}\to 0.
\]
By the inductive hypothesis, $\G(\D_\b)$ and $\G(\D_\ga)$ are root modules. Since $(\b,\ga)$ is a minimal pair, Lemma \ref{lem:set} implies that $\G(\D)^{\oplus m}$ is a direct sum of root modules. 
Since root modules are indecomposable, the Krull-Schmidt theorem implies that $\G(\D)$ is a root module, as required. 
\end{proof}

\begin{proposition}\label{prop:stdtostd}
 The face functor $\G$ takes standard modules to standard modules.
\end{proposition}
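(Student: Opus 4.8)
The plan is to reduce the statement about standard modules to the statement about root modules already established in Lemma~\ref{lem:roottoroot}, using the fact that the face functor commutes with induction (Lemma~\ref{commutewithind}) and that standard modules are built from root modules by induction. Recall that a standard module for $R_F$ associated to a root partition $\underline\la = (\a_1^{n_1},\dots,\a_l^{n_l})$ (with the $\a_i \in \Phi_F^+$ indivisible, ordered by $\prec_F$) is of the form $\D(\a_1)^{(n_1)}\circ\cdots\circ\D(\a_l)^{(n_l)}$, where each $\D(\a_i)^{(n_i)}$ is a distinguished indecomposable summand of a product of root modules of weight $\a_i$. Under the bijection $\Phi_F \hookrightarrow \Phi$ given by sending $\Delta_F$ to the indicated indivisible roots, a convex order $\prec_F$ compatible with the face $F$ extends to $\prec$, so root partitions for $R_F$ correspond to root partitions for $R$ supported on $F$; the claim will be that $\G$ sends the $R_F$-standard module for $\underline\la$ to the $R$-standard module for the corresponding root partition.

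First I would handle the real-root case. If $\a_i$ is a real root of $\Phi_F$, then $\D(\a_i)^{\circ n_i}$ is a direct sum of $n_i!$ grading-shifted copies of $\D(\a_i)^{(n_i)}$; by Lemma~\ref{commutewithind}, $\G(\D(\a_i)^{\circ n_i}) \cong \G(\D(\a_i))^{\circ n_i}$, and by Lemma~\ref{lem:roottoroot}, $\G(\D(\a_i))$ is a root module for $R$ of weight $\a_i$ (viewed in $\Phi$), which by Lemma~\ref{independent} is $\D(\a_i)$ itself. Hence $\G(\D(\a_i))^{\circ n_i}$ is again $n_i!$ grading-shifted copies of the $R$-standard module $\D(\a_i)^{(n_i)}$; by Krull--Schmidt, $\G$ must send $\D(\a_i)^{(n_i)}$ to $\D(\a_i)^{(n_i)}$, since the indecomposable summands on both sides are determined up to isomorphism and grading shift, and the multiplicities match. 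For the imaginary case $\a_i = \delta$, the module $\D(\a_i)^{(n_i)}$ is a summand of a product of root modules $\D(\w)$ of weight $\delta$ in $\C_F$; again $\G$ commutes with these products by Lemma~\ref{commutewithind}, and $\G$ sends each $\D(\w)$ to a root module of weight $\delta$ by Lemma~\ref{lem:roottoroot}, so $\G$ sends a product of such to a product of such. One then identifies which summand $\D(\la)$ corresponds to $\D(\underline\la)$; here I would invoke the characterization of the imaginary standard modules from \cite{mcn3} (e.g.\ via their characters, or via the combinatorics of the zig-zag algebra / the crystal), together with the fact (to be recorded) that $\G$ preserves characters appropriately, or that it is fully faithful (Lemma~\ref{fullyfaithful}) so it reflects the indecomposable direct-summand decomposition faithfully.

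Having treated each factor, I would assemble the full standard module: $\G(\D(\underline\la)) = \G(\D(\a_1)^{(n_1)}\circ\cdots\circ\D(\a_l)^{(n_l)}) \cong \G(\D(\a_1)^{(n_1)})\circ\cdots\circ\G(\D(\a_l)^{(n_l)})$ by iterating Lemma~\ref{commutewithind}, and this is $\D(\a_1)^{(n_1)}\circ\cdots\circ\D(\a_l)^{(n_l)} = \D(\la)$ by the factor-by-factor identifications above. One must check that the ordering $\a_1 \succ \cdots \succ \a_l$ in $\Phi_F$ is compatible with the ordering in $\Phi$ under the chosen compatible convex order, which is exactly the content of ``$\prec_F$ is the restriction of $\prec$''; and that the resulting induction product is indeed the $R$-standard module, i.e.\ that the root partition of $\Phi_F$ maps to a legitimate root partition of $\Phi$ supported in $F$ — this uses that $\Phi_F$ sits inside $\Phi$ as a ``facial'' subsystem, so the convexity constraints defining root partitions transfer.

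The main obstacle I anticipate is the imaginary-root case: unlike real roots, the passage from ``a product of root modules $\D(\w)$ of weight $\delta$'' to ``the specific indecomposable summand $\D(\underline\la)$'' is not a bare Krull--Schmidt argument, because there are several non-isomorphic root modules of weight $\delta$ and the combinatorics of which tensor products decompose how is precisely the delicate part of \cite{mcn3}. I would handle this by appealing to the detailed description of imaginary cuspidal modules in \cite{mcn3} and to the results later in the present paper that reduce the imaginary cuspidal category to the $\widehat{\mathfrak{sl}}_3$ case, or alternatively by a character/Grothendieck-group computation showing $\G$ induces the expected map on Grothendieck groups (so that the image of an indecomposable standard has the right class, and indecomposability is preserved since $\G$ is fully faithful). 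Everything else — the real-root factors, the assembly via Lemma~\ref{commutewithind}, the order-compatibility — is routine given the earlier results.
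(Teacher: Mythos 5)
Your proposal is correct and follows essentially the same route as the paper: the paper's (much terser) proof likewise combines Lemma~\ref{lem:roottoroot} (root modules to root modules), Lemma~\ref{commutewithind} ($\G$ commutes with induction), and full faithfulness of $\F$ (so indecomposables go to indecomposables and direct-sum decompositions are respected) to conclude. The extra care you take over which indecomposable summand is hit, especially in the imaginary case, is not needed for the statement as phrased, since it only asserts that the image is \emph{a} standard module.
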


\begin{proof}
Standard modules are built from root modules from a process of inducing and taking direct summands. By Lemma~\ref{lem:roottoroot}, $\G$ takes root modules to root modules. The functor $\G$ commutes with induction by Lemma \ref{commutewithind}. Since $\F$ is fully faithful it takes indecomposables to indecomposables, and clearly commutes with taking direct sums. \end{proof}

\subsection{Compatibility with nesting}

\begin{proposition} \label{prop:comp} 
Let $E \subset F$ be nested faces, both satisfying Assumption \ref{assumptionB}. Assume further that $E$ satisfies Assumption \ref{assumptionB} with respect to the KLR algebra $R_F$. 
Then $(R_F)_E$ is isomorphic to $R_E$. Furthermore,
$\F_E$ and $ \F_F \circ  \F^F_E $ are naturally isomorphic, where $\F^F_E: R_E\text{-mod} \rightarrow R_F\text{-mod}$ is the face functor for $E$ considered as a face of $\Phi_F$.  
\end{proposition}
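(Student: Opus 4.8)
The plan is to transport the two defining projective modules through $\F_F$ and then invoke full faithfulness. Let $P$ denote the projective object of $\C_F$ used to define $R_F=\End(P)^\op$, let $P_E$ denote the analogous projective object of $\C_E$ (so $R_E=\End(P_E)^\op$), and let $Q$ denote the analogous projective object of the category of $E$-cuspidal $R_F$-modules (so $(R_F)_E=\End_{R_F}(Q)^\op$). Nestedness means $F^+\subseteq E^+$ and $F^-\subseteq E^-$, hence $E\cup E^-=\Phi^+\setminus E^+\subseteq\Phi^+\setminus F^+=F\cup F^-$ and likewise $E\cup E^+\subseteq F\cup F^+$, so $\spann_{\R_{\ge0}}(E^-\cup E)\subseteq\spann_{\R_{\ge0}}(F^-\cup F)$ and similarly on the positive side; thus $\C_E\subseteq\C_F$. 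Moreover $E$ restricts to a face of $\Phi_F$ (the face referred to in the statement), and by the analysis of face root systems in \cite[\S3.2]{tingleywebster} the corresponding root system $(\Phi_F)_E$ is canonically identified with $\Phi_E$ via a bijection $\Delta_E^{\Phi_F}\leftrightarrow\Delta_E$ compatible with the inclusion of weight lattices $\N\Delta_F\hookrightarrow\N I$. In particular the index sets $\Seq_E$ computed inside $\Phi_F$ and inside $\Phi$ coincide, so $Q=\bigoplus_{\underline\jj}\D^{R_F}(\underline j_1)\circ\cdots\circ\D^{R_F}(\underline j_m)$ and $P_E=\bigoplus_{\underline\jj}\D(\underline j_1)\circ\cdots\circ\D(\underline j_m)$ run over the same set of sequences $\underline\jj=(\underline j_1,\dots,\underline j_m)$ of elements of $\Delta_E$, where $\D^{R_F}(\underline j)$ denotes the root module for $R_F$ attached to $\underline j$.

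Next I would compute $\F_F(Q)$. By Lemma~\ref{commutewithind}, $\F_F$ commutes with $\circ$, so $\F_F(Q)\cong\bigoplus_{\underline\jj}\F_F(\D^{R_F}(\underline j_1))\circ\cdots\circ\F_F(\D^{R_F}(\underline j_m))$. By Lemma~\ref{lem:roottoroot} each $\F_F(\D^{R_F}(\underline j_k))$ is a root module for $R$; since $\F_F$ multiplies $\N\Delta_F$-weights according to the inclusion $\N\Delta_F\hookrightarrow\N I$ above, its weight is the real root $\underline j_k\in\Delta_E\subseteq\Phi^+$. A root module attached to a real root is, up to isomorphism, the module $\D(\underline j_k)$ (the grading being pinned by the normalization of its self-dual head built into the definition of root modules), so $\F_F(\D^{R_F}(\underline j_k))\cong\D(\underline j_k)$ and therefore $\F_F(Q)\cong P_E$ as left $R$-modules, summand by summand.

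Finally, since $\F_F$ is fully faithful (Lemma~\ref{fullyfaithful}) it induces a graded algebra isomorphism $\End_{R_F}(Q)\cong\End_R(\F_F Q)\cong\End_R(P_E)$; passing to opposite algebras gives $(R_F)_E\cong R_E$. Full faithfulness also promotes $\F_F(Q)\cong P_E$ to an isomorphism of $(R,R_E)$-bimodules, since the right action of $R_E\cong(R_F)_E=\End_{R_F}(Q)^\op$ on $\F_F(Q)=P\otimes_{R_F}Q$ factors through $\End_R(\F_F Q)^\op$ precisely because $\F_F$ is fully faithful, and this matches the natural right action on $P_E$. Then associativity of the tensor product gives, for any $R_E$-module $X$, natural isomorphisms
\[
 \F_F\bigl(\F^F_E(X)\bigr)=P\otimes_{R_F}\bigl(Q\otimes_{R_E}X\bigr)\;\cong\;\bigl(P\otimes_{R_F}Q\bigr)\otimes_{R_E}X=\F_F(Q)\otimes_{R_E}X\;\cong\;P_E\otimes_{R_E}X=\F_E(X),
\]
which is the required natural isomorphism $\F_F\circ\F^F_E\cong\F_E$. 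I expect the main obstacle to lie entirely in the first paragraph: checking that the restriction of $E$ to $\Phi_F$ is again a face, that its root system is $\Phi_E$ with the $\Delta$-sets matching compatibly with $\N\Delta_F\hookrightarrow\N I$ (so the two families of projectives are indexed in the same way and identified summand by summand), and that $\F_F$ carries the normalized real root module $\D^{R_F}(\underline j)$ to $\D(\underline j)$ with no spurious grading shift. All of the homological input used afterwards — that $\F_F$ commutes with $\circ$, sends root modules to root modules, and is fully faithful — is already available from \S\ref{sec:ff}.
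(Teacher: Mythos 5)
Your proposal is correct and follows essentially the same route as the paper: show $\F_F$ carries the projective $P^F_E$ defining $\F^F_E$ to $P_E$ summand by summand via Lemma~\ref{commutewithind} and Lemma~\ref{lem:roottoroot}, deduce $(R_F)_E\cong R_E$ from full faithfulness (Lemma~\ref{fullyfaithful}), and obtain the natural isomorphism of functors from the resulting $(R,R_E)$-bimodule isomorphism and associativity of the tensor product. The only point the paper makes explicit that you leave implicit is that Lemma~\ref{lem:roottoroot} applies here because, for each $i\in\Delta_E$, Assumption~\ref{assumptionB} guarantees $R^{\text{sup}(i)}$ satisfies Assumption~\ref{assumptionA}.
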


\begin{proof}
Let $P_F$, $P_E$ and $P^F_E$ be the modules used to define the functors
$\F_F$ and $ \F_E$ and $\F^F_E$ respectively. Then
\begin{align*}
P_E=\bigoplus_{\ii\in \Seq_{E}} \D(i_1)\circ\cdots \circ \D(i_n) \quad \text{and} \quad
P^F_E=\bigoplus_{\ii\in\Seq_{E}} \D_F(i_1)\circ\cdots \circ \D_F(i_n),
\end{align*}
where $\D(\a)$ and $\D_F(\a)$ refer to the root modules for the KLR algebras $R$ and $R_F$ respectively.
For any $i \in \Delta_E$, $R^{\text{sup}(i)}$ satisfies Assumption \ref{assumptionA}, so,
by Lemma \ref{lem:roottoroot}, $\F_F(\Delta_F(i))= \Delta(i)$. By Lemma \ref{commutewithind} we see $\F_F(P_E^F)=P_E$, so by Lemma \ref{fullyfaithful} $\End(P_E^F)\cong\End(P_E)$, and hence $(R_F)_E \simeq R_E$ by definition. 

Since $\F_F(P_E^F)=P_E$ and $R_{E}\cong \End(P_E)^{\op}\cong\End(P^F_E)^{\op}$, we obtain an isomorphism of $(R,R_{E})$-bimodules
\[
 P^F_E \otimes_{R_F} P_F \cong P_E.
\]
This completes the proof.
\end{proof}

\subsection{Compatibility with crystal operators}\label{sub:crystal}
In this section we work under assumption \ref{assumptionA}.
Recall $\underline \B_F$ from Definition \ref{def:bbc}.
Let $\B_F$ be the set of self-dual simple modules for the KLR algebra $R_F$.
Recall also the crystal operators on $\underline \B_F$  and $\B_F$ from \S\ref{ss:cryst-def}. Theorem \ref{th:feq} immediately implies that, in finite type, the face functor gives a bijection $\B_F \mapsto \underline \B_F$, and this intertwines the crystal operators for $\B_F$ and the face crystal operators on $\underline \B_F$. We now prove a weaker version of this that holds in affine type (see Corollary \ref{cor:fcrystal}).

\begin{lemma}\label{simplehead}
If $L$ is simple then $\G(L)$ has simple head.
\end{lemma}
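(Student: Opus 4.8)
The plan is to exploit the fully faithfulness of $\G$ (Lemma \ref{fullyfaithful}) together with the adjunction between $\G$ and $\Hom(P,-)$. Since $\G(L) = P \otimes_{R_F} L$ and $P$ is projective in $\C_F$, the functor $\Hom(P,-)$ is exact, and the unit of adjunction $L \to \Hom(P, \G(L))$ is an isomorphism by fully faithfulness. First I would observe that a nonzero quotient $\G(L) \twoheadrightarrow Q$ with $Q \in \C_F$ induces, by applying the exact functor $\Hom(P,-)$, a surjection $\Hom(P,\G(L)) \twoheadrightarrow \Hom(P,Q)$; under the unit isomorphism this is a surjection $L \twoheadrightarrow \Hom(P,Q)$. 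Provided $\Hom(P,Q) \neq 0$, simplicity of $L$ forces $\Hom(P,Q) \cong L$, and then the counit $\G(L) = \G\,\Hom(P,\G(L)) \to \G\,\Hom(P,Q) \to Q$ shows the quotient map factors (up to the canonical maps) through a unique maximal one, which is the statement that $\G(L)$ has simple head.

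More carefully, the cleaner route is: let $N \subset \G(L)$ be the intersection of all maximal submodules (the radical), and show $\G(L)/N$ is simple. Since $\G(L)$ lies in $\C_F$ and $\C_F$ is closed under quotients (so every simple quotient of $\G(L)$ is in $\C_F$), it suffices to show $\G(L)$ has a unique simple quotient in $\C_F$. If $S$ is any simple quotient of $\G(L)$, apply $\Hom(P,-)$: exactness gives a surjection $L = \Hom(P,\G(L)) \twoheadrightarrow \Hom(P,S)$. The key point is that $\Hom(P,S) \neq 0$ for $S \in \C_F$ simple: indeed $P$ is a progenerator-like object in the sense that every simple in $\C_F$ is a quotient of some $\D(\underline\ii)$ — wait, this needs the finite-type hypothesis. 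So instead I would argue directly: $S$ being a nonzero quotient of $\G(L) = P\otimes_{R_F} L$ means $P \otimes_{R_F} L \to S$ is nonzero, hence $\Hom_R(P\otimes_{R_F}L, S) \neq 0$, hence by the tensor-Hom adjunction $\Hom_{R_F}(L, \Hom_R(P,S)) \neq 0$, so $\Hom_R(P,S) \neq 0$ and the natural map $L \to \Hom_R(P,S)$ coming from adjunction is nonzero, hence (L simple) injective. But it need not be surjective in general; however $\Hom_R(P,S)$ is a module over $R_F = \End(P)^{\op}$, and I claim it is simple or at least has $L$ as its unique simple submodule — actually what I want is that distinct simple quotients $S, S'$ of $\G(L)$ give the same submodule of $\Hom_R(P,S\oplus S')$.

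The slickest formulation: the functors $\G = P\otimes_{R_F}-$ and $G = \Hom_R(P,-)$ are adjoint, $G$ is exact, the unit $\mathrm{id} \Rightarrow GF$ is a natural isomorphism (Lemma \ref{fullyfaithful}). If $\G(L)$ had two distinct simple quotients $S_1, S_2$, then $\G(L)$ surjects onto $S_1 \oplus S_2$ or at least onto a module with two non-isomorphic simple quotients; applying $G$ and using exactness and the unit isomorphism, $L$ surjects onto $G(S_1\oplus S_2)$, and since $G$ is faithful on the image of $\G$ and $S_1 \not\cong S_2$ are quotients, $G(S_1)$ and $G(S_2)$ are nonzero non-isomorphic quotients of the simple $L$ — contradiction, as $L$ has only one simple quotient (itself). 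I expect the main obstacle to be verifying carefully that $G$ separates the two simple quotients, i.e. that $G(S_1) \not\cong G(S_2)$ when $S_1 \not\cong S_2$; this should follow because $\G G(S_i) \to S_i$ (the counit) is surjective — since $S_i$ is a quotient of $\G(L)$ and the unit iso lets us lift — so $G(S_1) \cong G(S_2)$ would force $S_1 \cong \G G(S_1) / (\ker) $, making $S_1, S_2$ both the simple head of $\G G(S_1)$, hence isomorphic. I would write this up as the two-to-four line argument once the adjunction bookkeeping is pinned down.
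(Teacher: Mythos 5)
Your argument is correct, but it is a genuinely different proof from the one in the paper. The paper deduces the lemma from the standard-module machinery: every simple $R_F$-module is the head of a standard module, $\G$ is right exact and sends standard modules to standard modules (Proposition \ref{prop:stdtostd}), and standard modules have simple heads; this route leans on \cite{bkm} and \cite{mcn3} and hence on Assumption \ref{assumptionA}, under which that subsection operates. Your route uses only the adjunction between $\G=P\otimes_{R_F}-$ and $\Hom(P,-)$, the exactness of $\Hom(P,-)$ on $\C_F$ (projectivity of $P$, Theorem \ref{projective}), and the unit isomorphism (Lemma \ref{fullyfaithful}); it is more self-contained and does not need standard modules at all. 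Your third paragraph is the right argument, but you end it worrying about the wrong point: you do not need $\Hom(P,S_1)\not\cong\Hom(P,S_2)$. The clean finish is this: if $\hd\G(L)$ were not simple there would be two distinct maximal submodules, hence a surjection $\G(L)\twoheadrightarrow S_1\oplus S_2$ with $S_1,S_2$ simple (possibly isomorphic); its kernel lies in $\C_F$, so exactness of $\Hom(P,-)$ plus the unit isomorphism gives a surjection $L\twoheadrightarrow \Hom(P,S_1)\oplus\Hom(P,S_2)$, and each summand is nonzero because the nonzero map $\G(L)\to S_i$ corresponds under adjunction to a nonzero map $L\to\Hom(P,S_i)$. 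A simple module cannot surject onto a direct sum of two nonzero modules, so this is already a contradiction, with no case analysis on whether the $\Hom(P,S_i)$ are isomorphic or simple. I would delete your first two paragraphs (the second in particular talks itself into a non-problem about surjectivity, which is exactly what projectivity of $P$ provides) and write up only the streamlined version of the third.
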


\begin{proof}
Every simple module $L$ is the head of a standard module $\D$. As $\G$ is right exact, $\G(L)$ is a quotient of $\G(\D)$. But $\G(\D)$ is standard by Proposition \ref{prop:stdtostd}, so has a simple head. By Lemma \ref{fullyfaithful}, $\G(L)\neq 0$, so this completes the proof. 
\end{proof}

\begin{proposition} \label{prop:gta}
Let $ \Delta'_F$ be the set of simple roots of $\Phi_F$ thought of as its own root system, not as a sub-root system of $\Delta$. For all simple modules $L$ of $R_F$ and $i' \in \Delta'_F$,
 \[
  \hd (\F(\hd (L \circ L(i'))))\cong \hd (\hd\F(L) \circ  \F(L(i'))),
 \] up to a grading shift, and this is a nonzero simple module.
\end{proposition}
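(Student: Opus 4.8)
The plan is to analyze both sides by relating the head of an induction product $L \circ L(i')$ to the restriction functor, and then transport this through $\F$ using the structural results already established. First I would recall that for a simple module $L$ over $R_F$ with $\epsilon := \epsilon_{i'}(L)$, the head $\hd(L \circ L(i'))$ is characterized (up to grading shift) by the property that it is the unique self-dual simple $L''$ with $\epsilon_{i'}(L'') = \epsilon + 1$ and $\Res_{\wt(L''){-}(\epsilon+1)i',\,(\epsilon+1)i'} L'' \simeq L' \otimes L((\epsilon+1)i')$ for the appropriate self-dual $L'$, as in Remark \ref{rem:gs1} applied to $R_F$. The module $\hd(L \circ L(i'))$ is nonzero and simple, being the crystal operator $f_{i'}$ applied to $L$ in $\B_F \cong B(\infty)$ by Theorem \ref{thm:lvcrystal}.

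Next I would use Lemma \ref{simplehead}, which tells us $\F(\hd(L \circ L(i')))$ has a simple head, and Lemma \ref{fullyfaithful}, which tells us it is nonzero; so the left-hand side is a nonzero simple module. Similarly $\hd \F(L)$ is a nonzero simple module in $\C_F$ (again by Lemmas \ref{simplehead} and \ref{fullyfaithful}), and $\F(L(i')) = \D(i')$ by Lemma \ref{lem:roottoroot} — indeed $L(i')$ is its own root module for $R_F$ in weight $i' \in \Delta'_F$, so $\F(L(i')) = \D(i')$ (here we use Assumption \ref{assumptionB}, so that $R^{\text{sup}(i')}$ satisfies Assumption \ref{assumptionA}; note that if $i'$ is imaginary the root module in question is the relevant $\D(\omega)$, but as $i' \in \Delta'_F$ and $\Phi_F$ has $i'$ as a simple root, the naive real-root case is what occurs). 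Since inducing and taking heads, $\hd(\hd\F(L) \circ \D(i'))$ is again a nonzero simple module by the classification Theorem \ref{classification} — the product of a cuspidal simple with $L(i')$ has a unique simple quotient.

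Having established that both sides are nonzero simple modules, the proof reduces to identifying them. The plan is to use the commutation of $\F$ with induction (Lemma \ref{commutewithind}): there is a natural surjection $L \circ L(i') \twoheadrightarrow \hd(L \circ L(i'))$, hence after applying $\F$ and using right-exactness a surjection
\[
 \F(L) \circ \D(i') \cong \F(L \circ L(i')) \twoheadrightarrow \F(\hd(L \circ L(i'))).
\]
Taking heads gives a surjection $\hd(\F(L) \circ \D(i')) \twoheadrightarrow \hd\F(\hd(L \circ L(i')))$; since the left side equals $\hd(\F(L) \circ \F(L(i')))$ and the target is simple and nonzero, this map is an isomorphism. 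Finally, since $\hd\F(L)$ is a quotient of $\F(L)$, we get a surjection $\F(L) \circ \D(i') \twoheadrightarrow \hd\F(L) \circ \D(i')$ and hence a surjection $\hd(\F(L) \circ \D(i')) \twoheadrightarrow \hd(\hd\F(L) \circ \D(i'))$ of simple modules, which must be an isomorphism. Chaining these identifications yields the claim.

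The main obstacle I anticipate is keeping track of the grading shifts and verifying that each of the surjections of simple modules is genuinely nonzero (so that it is an isomorphism rather than the zero map). The nonvanishing of $\hd\F(L) \circ \D(i') \to $ its own head is automatic, but the subtle point is that the composite $\F(L)\circ \D(i') \to \hd\F(L)\circ \D(i') \to \hd(\hd\F(L)\circ\D(i'))$ is nonzero, which follows because $\F(L)\circ\D(i') \to \hd\F(L)\circ\D(i')$ is surjective and $\hd\F(L)\circ\D(i')$ has nonzero head. One should also double-check that the two simple heads obtained — $\hd\F(\hd(L\circ L(i')))$ and $\hd(\hd\F(L)\circ\D(i'))$ — are literally identified by the chain of maps above and not merely abstractly isomorphic, but this is handled by the functoriality of $\hd$ applied to the canonical surjections. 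The grading-shift bookkeeping, while tedious, is controlled by Remark \ref{rem:gs1} and the degree-shift conventions in Lemma \ref{lem:tau} and Definition \ref{def:IR}.
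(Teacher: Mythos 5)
There is a genuine gap at the heart of your argument. What you actually establish is that both $\hd(\F(\hd(L\circ L(i'))))$ and $\hd(\hd\F(L)\circ\F(L(i')))$ are simple quotients of $\F(L\circ L(i'))\cong\F(L)\circ\F(L(i'))$; but the step where you declare the surjection $\hd(\F(L)\circ\F(L(i')))\twoheadrightarrow \hd\F(\hd(L\circ L(i')))$ to be an isomorphism ``since the target is simple and nonzero'' is invalid. You have not shown that $\hd(\F(L)\circ\F(L(i')))$ is simple ($\F(L)$ is in general far from simple), and a surjection from a semisimple module onto a simple module need not be injective; asserting it is, and then reusing that assertion to compare with the second head, is circular. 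Two simple quotients of the same module need not be isomorphic, and identifying these two is precisely the content of the proposition. The paper closes exactly this gap with an extra argument you are missing: choose a convex order on $\Phi_F$ in which $i'$ is minimal (and a compatible order on $\Phi$); then $L\circ L(i')$ is a quotient of $\D(L)\circ\D(i')$, which by the construction of standard modules is a direct sum of grading shifts of the single standard module $\D(f_{i'}(L))$; since $\F$ takes standard modules to standard modules (Proposition \ref{prop:stdtostd}) and standard modules have simple head, $\F(L\circ L(i'))$ admits only one isomorphism class of simple quotient up to grading shift, and only then do the two simple quotients coincide.

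Two further inaccuracies, less central but worth fixing. First, $\F(L(i'))$ is not $\D(i')$: the simple $L(i')$ is not its own root module over $R_F$ (the root module of weight $i'$ is the projective $R_F(i')$, whose image under $\F$ is $\D(i')$ by Lemma \ref{lem:roottoroot}); rather $\F(L(i'))\cong\D(i')\otimes_{k[x]}k$, the simple cuspidal module of the corresponding weight in $\Delta_F$. Second, the simplicity of $\hd(\hd\F(L)\circ\F(L(i')))$ does not follow from Theorem \ref{classification}: that theorem requires the weights of the factors to lie in strictly decreasing equivalence classes, whereas here both $\hd\F(L)$ and $\F(L(i'))$ are cuspidal for the same face. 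The correct tool, used in the paper, is \cite[Proposition 3.22]{tingleywebster}, i.e.\ the face crystal operator, applied because $\F(L(i'))$ is a simple cuspidal module.
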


\begin{remark}
 We expect that the grading shift in Proposition \ref{prop:gta} is unnecessary.
\end{remark}

\begin{proof}
Since $\F$ is right exact, $\F(L\circ L(i'))$ surjects onto $\F(\hd (L \circ L(i')))$ and hence onto $\hd (\G(\hd (L \circ L(i'))))$. This is simple by Lemma \ref{simplehead}, since by \cite[Lemma 3.9]{khovanovlauda} $\hd (L \circ L(i'))$ is simple.
 
 On the other hand, by Lemma \ref{commutewithind} we have $\G(L\circ L(i'))\cong \G(L)\circ \G(L(i'))$. This surjects onto $\hd \G(L)\circ \G(L(i'))$ and hence onto $\hd (\hd \G(L) \circ \G(L(i')))$. This last is simple by \cite[Proposition 3.22]{tingleywebster} since $\G$ sends $L(i')$ to a simple cuspidal module. Thus both sides of the equation are simple quotients of $\F(L\circ L(i'))$. 
 
Choose a convex order on $\Phi_F$ with $i'$ minimal, and a compatible convex order on $\Phi$.  
 The module $L\circ L(i')$ is a quotient of $\D(L) \circ \D(i')$, and it is immediate from the definition of standard modules (see \cite[\S24]{mcn3}) that this last is isomorphic to a direct sum of copies of $\D({f}_{i'}(L))$. 
Here $\D(L)$, $\D( f_{i'}(L))$ and $\D(i')$ are the standard modules corresponding to $L$, $ f_{i'}( L)$ and $L(i')$. Since $\G$ sends standard modules to standard modules, we see that $\G(L\circ L(i'))$ can only have one isomorphism class of simple quotient up to a grading shift, which completes the proof. 
\end{proof}

\begin{corollary} \label{cor:fcrystal} 
The injection $\B_F \rightarrow \underline{\B}_F$ defined by $L \mapsto \hd \F(L)$ intertwines the crystal operators for $\B_F$ and the face crystal operators on $\underline \B_F$ from \cite{tingleywebster}. \qed
\end{corollary}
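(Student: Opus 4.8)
The plan is to read the statement off Proposition~\ref{prop:gta}, which already records how $\F$ interacts with the operation $L\mapsto\hd(L\circ L(i'))$ out of which both crystal structures are built. Write $\Psi\colon\B_F\to\underline\B_F$ for the map $L\mapsto\hd\F(L)$. I would first check that this is well defined: by Lemma~\ref{simplehead}, $\hd\F(L)$ is a simple object of $\C_F$ (since $\F$ lands in $\C_F$, which is closed under quotients), hence, after the canonical self-dualizing grading shift, an element of $\underline\B_F$. Injectivity follows from Proposition~\ref{prop:stdtostd} together with Lemma~\ref{fullyfaithful}: if $L$ is simple with standard cover $\D(L)$, then $\F(\D(L))$ is a standard $R$-module with head $\hd\F(L)$; since non-isomorphic standard modules have non-isomorphic simple heads, $\hd\F(L)\cong\hd\F(L')$ forces $\F(\D(L))\cong\F(\D(L'))$, and a fully faithful functor reflects isomorphisms, so $\D(L)\cong\D(L')$ and hence $L\cong L'$.

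Next I would recall the two operator formulas. Since $R_F$ is a KLR algebra for $\Phi_F$ by Theorem~\ref{4.12}, Theorem~\ref{thm:lvcrystal} applies to it, giving $f_{i'}(L)=q_{i'}^{\epsilon_{i'}(L)}\hd(L\circ L(i'))$ for $i'\in\Delta'_F$; thus, as an element of $\B_F$ (i.e. up to grading shift and then self-dualized), $f_{i'}(L)$ is represented by $\hd(L\circ L(i'))$. Likewise, by Theorem~\ref{th:tingleywebster} the face crystal operator on $\underline\B_F$ is $f_{\underline i}(M)=q_{\underline i}^{\epsilon_{\underline i}(M)}\hd(M\circ L(\underline i))$, represented by $\hd(M\circ L(\underline i))$. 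I would also use the identification $\F(L(i'))\cong L(\underline i)$, the unique simple cuspidal $R$-module of weight $\underline i\in\Delta_F$ --- this is exactly the fact invoked in the proof of Proposition~\ref{prop:gta}, and also follows from Lemma~\ref{lem:roottoroot} by taking heads of $\F(\D_F(i'))\cong\D(\underline i)$. With these in hand, Proposition~\ref{prop:gta} reads
\[
\hd\F\bigl(\hd(L\circ L(i'))\bigr)\;\cong\;\hd\bigl(\hd\F(L)\circ L(\underline i)\bigr)
\]
up to a grading shift. By the two formulas just quoted, the left-hand side represents $\Psi(f_{i'}L)$ and the right-hand side represents $f_{\underline i}(\Psi L)$, so $\Psi(f_{i'}L)=f_{\underline i}(\Psi L)$ as elements of $\underline\B_F$; the grading-shift ambiguity is harmless, since every simple is self-dual up to a shift and $\F$ commutes with shifts, so passing to self-dual representatives removes it. The matching statement for the raising operators is then automatic: in the target crystal $\underline\B_F$, which is a union of copies of $B(\infty)$ by Theorem~\ref{th:tingleywebster}, the raising operator is the partial inverse of $f_{\underline i}$, and $\Psi$ is injective, so $\Psi$ intertwines the raising operators wherever they are defined; that $\Psi$ preserves weights is clear from the construction of $P$, and hence of $\F$, out of the modules $\D(\underline i)$.

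Granting Proposition~\ref{prop:gta}, the argument is therefore just a comparison of three formulas. I expect the only step needing genuine care to be the verification that $L\mapsto\hd\F(L)$ is an injection into $\underline\B_F$, which is why I would route it through the faithfulness of $\hd$ on standard modules as above, together with the routine bookkeeping of grading shifts --- handled uniformly by treating crystal elements as simples up to grading shift and self-dualizing at the end, exactly as in Theorem~\ref{thm:lvcrystal} and Proposition~\ref{prop:gta}.
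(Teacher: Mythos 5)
Your proposal is correct and is essentially the paper's own argument: the corollary is stated with a \qed precisely because it is read off directly from Proposition \ref{prop:gta} together with the operator formulas in Theorems \ref{thm:lvcrystal} and \ref{th:tingleywebster}, exactly as you do. The extra details you supply (well-definedness via Lemma \ref{simplehead}, injectivity via standard modules and full faithfulness, the identification $\F(L(i'))\cong L(\underline i)$) are the same facts the paper relies on implicitly, so there is no substantive difference in route.
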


\begin{remark} If $F$ is of finite type then, by
\cite[Theorem 4.4(ii)(a)]{kashiwarapark}, $\G$ takes a simple module either to a simple module or zero (their construction is more general, and by Lemma \ref{fullyfaithful} the latter is not possible in our case). This is not true in affine type, as we discuss in \S\ref{sec:example2}. Thus taking the head is necessary in Corollary \ref{cor:fcrystal}.

\end{remark}

\subsection{Example} \label{sec:example}

Let $\Phi$ be a root system of type $A_2^{(1)}$ with simple roots $\a_0$, $\a_1$ and $\a_2$. Fix the polynomials defining the KLR algebra to be $Q_{i,i+1}(u,v)=s_{i}u+t_{i}v$ where all indices are read modulo 3. Let $\pi$ be the standard projection from affine roots to finite type roots which sends $\delta$ to zero. Consider
$$\begin{aligned}
 F^+&=  \pi^{-1} \{  \a_2, \alpha_1+\alpha_2\}, \\
 F&=  \pi^{-1} \{ \alpha_1, -\alpha_1, 0\}, \\
 F^-&=  \pi^{-1} \{ -\alpha_2, -\alpha_1-\alpha_2 \}. 
\end{aligned} $$
Then $(F^+,F,F^-)$ is a face according to Definition \ref{facedefinition}. Notice that $F= \{ m \delta, m \delta \pm \alpha_1 \}$, so $\Phi_F$ is of type $\widehat{\mathfrak{sl}}_2$ with simple roots $\b=\a_1$ and $\ga=\a_0+\a_2$. The support of both these roots is finite type, so Assumption \ref{assumptionB} holds, regardless of where the original KLR algebra was of geometric type, and the face functor is defined. 
 
The cuspidal $R(\ga)$-module has character $[02]$.
  Let $v_{\ga}$ be a lowest degree element of $\D(\ga)$. The endomorphism $x_\ga$ of $\D(\ga)$ (normalized as in Theorem \ref{lem:dot}) satisfies
 \[
  x_\ga v_{\ga} =-s_2\inv y_1 v_{\ga} = t_2\inv y_2 v_{\ga}.
 \]
Explicit computation shows that 
 \[
  Q_{\b\ga}(u,v)= s_1t_0 u^2 +(t_0t_1t_2-s_0s_1s_2)uv-s_0s_2t_1t_2 v^2.
 \]
Note that
\begin{itemize}
 \item The coefficient of $uv$ is zero if and only if $s_0s_1s_2=t_0t_1t_2$.
 \item The discriminant of $Q$ is zero if and only if $s_0s_1s_2+t_0t_1t_2=0$.
\end{itemize}
These observations imply that
\begin{itemize}
 \item Examples 3.3 and 3.4 of \cite{kashiwara} are related by a face functor.
 \item By Lemma \ref{lem:geom-characterization}, $R$ is of geometric type if and only if $R_F$ is.
\end{itemize}

\section{Imaginary cuspidal representations and affine faces} \label{sec:imcat}

For this section $\Phi$ is of affine type and, unless otherwise stated, the KLR algebra $R$ is of geometric type. Fix a convex order $\prec$ on $\Phi^+$. Let $\d$ be the minimal imaginary root. We study the category of cuspidal $R(\d)$-modules. 
In particular, we show that the endomorphism algebra of a projective generator of the category of cuspidal $R(\d)$-modules is isomorphic to $k[z]\otimes Z$ where $Z$ is the zigzag algebra from \cite{HK} corresponding to the underlying finite type Dynkin diagram. We show this by using face functors to reduce to the $ \widehat{\mathfrak{sl}}_3$ case.
We also show that, if $R$ is of geometric type, so is any face KLR algebra $R_F$ (see Theorem \ref{th:geom-to-geom}).

Let $S(\delta)$ be the quotient of $R(\d)$ by the two sided ideal generated by all $e_{\bf i}$ where $\ii$ has a proper prefix $\ii'$ with $\wt(\ii')$ a sum of roots $\succ \delta$. Then the category of $S(\delta)$-modules is equivalent to the category of cuspidal $R(\delta)$-modules, so this agrees with the algebra $S(\d)$ from \cite[\S 12]{mcn3}. By \cite[\S 17]{mcn3}, the simple $S(\d)$-modules are naturally parametrized by a set $\Omega$ of chamber coweights for an underlying finite type Cartan matrix. For each $\w\in\Omega$, let $L(\w)$ be the self-dual irreducible module parametrized by $\w$ and let $\D(\w)$ be its projective cover in the category of cuspidal $R(\d)$-modules. Additionally, to each $\w\in \Omega$, consider the positive real roots $(\w_-, \w_+)$, defined in \cite[\S 12]{mcn3} (see also \cite[\S3.4]{tingleywebster}, where $\w_-$ is called $\beta_0$) which have the following properties:
\begin{itemize}

\item $\w_- \in F^-$ and $\w_+ \in F^+$, 

\item $L(\w) = \hd (L(w_-) \circ L({w_+}))$, \text{ and}

\item $\{ \pi(\w_+) : \w \in \Omega\}$ is a positive system for the underlying finite type root system, where $\pi$ is the standard projection from \S\ref{sec:example}.
\end{itemize}

For each positive real root $\a$ let $E_\a$ and $E_\a^*$ be the PBW and dual PBW basis vectors for the PBW basis associated to $\prec$, as defined in \cite[\S 9]{mcn3} (see also \cite{BCP, BN}). 
When the Grothendieck group of $R$-modules is identified with $U_q^+(\mathfrak{g})$ we have \cite[Theorems 9.1 and 18.2]{mcn3} which say that $[L(\a)]=E_\a^*$ and $[\D(\a)]=E_\a$.

\subsection{The Cartan matrix for $S(\d)$}

The following is immediate from \cite[Proposition 3.31 and Lemma 3.44]{tingleywebster}, but for completeness we provide an alternative proof. 

\begin{lemma} \label{lem:xw-}
For all $x \neq \w$ in $\Omega$, we have $L(x)\circ L(\w_-)\cong L(\w_-)\circ L(x)$.
\end{lemma}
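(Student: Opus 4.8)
The plan is to reduce the stated isomorphism to the single assertion that $M:=L(x)\circ L(\omega_-)$ is simple. First note that $\delta\cdot\omega_-=0$, since $\delta$ is the null root; hence \eqref{eq:dualind}, together with the self-duality of the simple modules $L(x)$ and $L(\omega_-)$, gives an honest isomorphism $M\dual\cong L(\omega_-)\circ L(x)$, with no grading shift. So once $M$ is known to be simple, $M\cong M\dual$ up to a grading shift $q^{c}$; restricting to $(\delta,\omega_-)$, using that $\Res$ commutes with $\dual$ and (by the next step) that $\Res_{\delta,\omega_-}M=L(x)\otimes L(\omega_-)$ is self-dual, forces $c=0$, and the lemma follows.

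Next I would compute $\Res_{\delta,\omega_-}M$. In the Mackey filtration of Theorem \ref{mackey}, a non-straight layer would require a non-zero intermediate weight $\zeta$ that is simultaneously a suffix-weight of the $\delta$-cuspidal module $L(x)$ --- whence $\zeta\in\spann_{\R_{\geq0}}(F^+\cup F)$ for the face of $\prec$ at $\delta$ --- and a prefix-weight of $L(\omega_-)$ --- whence $\zeta\in\spann_{\R_{\geq0}}\{\beta\preceq\omega_-\}$, which is contained in $\spann_{\R_{\geq0}}F^-$ since $\omega_-\prec\delta$. Part (2) of Definition \ref{facedefinition}, applied with its ``$x$'' equal to $0$, then forces $\zeta=0$, so $\Res_{\delta,\omega_-}M=L(x)\otimes L(\omega_-)$; the same argument gives $\Res_{\delta,\omega_-}(L(\omega_-)\circ L(x))=L(x)\otimes L(\omega_-)$ as well. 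By Frobenius reciprocity the first identity shows $\End(M)=k$ in degree $0$, so $M$ is indecomposable; and $\hd M$ is simple by Theorem \ref{classification}. Consequently $M$ is simple if and only if $\Hom(\hd M,M)\neq 0$: if this Hom is non-zero, then composing $M\twoheadrightarrow\hd M$ with a non-zero map $\hd M\to M$ produces a non-invertible non-zero endomorphism unless $M=\hd M$, contradicting $\End(M)=k$.

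So the whole matter comes down to showing $\Hom(\hd M,M)\neq 0$, equivalently --- via $M\cong\Coind_{\omega_-,\delta}(L(\omega_-)\otimes L(x))$, which holds because $\delta\cdot\omega_-=0$ --- that $L(\omega_-)\otimes L(x)$ is a quotient of $\Res_{\omega_-,\delta}(\hd M)$. Here $\hd M$ is the simple module with cuspidal data $(L(x),L(\omega_-))$. By Theorem \ref{classification} every composition factor of $M$ has cuspidal data dominated by that of $M$, and using the convexity fact that $\delta$ is never a non-negative combination of positive roots all $\prec\delta$, one finds that $\hd M$ is the only composition factor of $M$ with $\Res_{\delta,\omega_-}\neq 0$; the danger is that the socle of $M$ is a ``smaller'' simple, for instance the $F(\eta)$-cuspidal module $L(\eta)$, where $\eta:=\delta+\omega_-$ is a positive real root lying between $\omega_-$ and $\delta$ in $\prec$. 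Ruling this out is exactly where the hypothesis $x\neq\omega$ enters: by the analysis of imaginary cuspidal $R(\delta)$-modules and of the roots $(\omega_-,\omega_+)$ in \cite[\S17]{mcn3} (equivalently \cite[\S3.4]{tingleywebster}), the real root $\omega_-$ is attached to the single element $\omega$ of $\Omega$, and one sees from the minimal-pair short exact sequences of Lemma \ref{lem:set} at $\eta$ (with the imaginary slot $\D_\delta$ played by $\D(\omega)$) together with the uniqueness of cuspidal decompositions that these smaller simples arise only inside $L(\omega)\circ L(\omega_-)$, not inside $L(x)\circ L(\omega_-)$ when $x\neq\omega$. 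Hence $M$ is simple, and this re-proves \cite[Proposition 3.31 and Lemma 3.44]{tingleywebster}.

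The main obstacle is precisely this last identification. The reduction in the first two paragraphs is formal, using only the Mackey filtration, the face axiom, and duality; but distinguishing the exceptional element $\omega$ from an arbitrary $x\in\Omega$ genuinely requires the structure theory of imaginary cuspidal representations from \cite{mcn3} --- in particular the identifications $[\D(\alpha)]=E_\alpha$, $[L(\alpha)]=E_\alpha^*$ and the Levendorskii--Soibelman relations controlling products of PBW vectors --- which is the very information that \cite[Proposition 3.31 and Lemma 3.44]{tingleywebster} packages.
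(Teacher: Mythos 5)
Your first two paragraphs are sound, and they track the paper's own strategy: the paper likewise starts from the short exact sequence $0\to X\to L(x)\circ L(\w_-)\to L(x,\w_-)\to 0$ of Theorem \ref{classification} and reduces everything to showing $X=0$, having identified (as you do) that the only composition factors that could occur in $X$ are grading shifts of $L(\d+\w_-)$, because $\pi(\w_-)$ is a simple root of the positive system $\{\pi(\w_+):\w\in\Omega\}$. The problem is your third paragraph, which is where the whole content of the lemma lives. The claim that no shift of $L(\d+\w_-)$ occurs in $L(x)\circ L(\w_-)$ when $x\neq\w$ is asserted, not proved: you say it follows from ``the minimal-pair short exact sequences of Lemma \ref{lem:set} at $\eta$ together with the uniqueness of cuspidal decompositions,'' but Lemma \ref{lem:set} is a statement about the projective root modules $\D$, and passing from the multiplicity of $\D(\d+\w_-)$ in the cokernel of $\D(x)\circ\D(\w_-)\to\D(\w_-)\circ\D(x)$ to the multiplicity of $L(\d+\w_-)$ in $L(x)\circ L(\w_-)$ is not formal. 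Note also that your structural observations ($\End(M)=k$, $M$ indecomposable, $\hd M$ simple) cannot suffice on their own: a non-split extension of two non-isomorphic simples satisfies all of them, which is exactly what happens for $x=\w$ by Lemma \ref{lem:M21.7}.

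For comparison, the paper closes this gap with a degree argument resting on two inputs: (i) by \cite[Lemma 7.5 and Theorem 9.1]{mcn3}, $[X]\in q\N[q]E^*_{\d+\w_-}$, i.e.\ every copy of $L(\d+\w_-)$ in $X$ carries a strictly positive grading shift; and (ii) by the adjunction \eqref{otheradjunction} together with the multiplicity computation in the proof of \cite[Lemma 21.8]{mcn3}, the space $\Hom(L(\d+\w_-),L(x)\circ L(\w_-))$ is concentrated in degree zero. A nonzero $X$ would contain a simple submodule $q^nL(\d+\w_-)$ with $n>0$, contradicting (ii); hence $X=0$, and the isomorphism with $L(\w_-)\circ L(x)$ follows by dualizing. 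Your write-up needs a concrete substitute for this pair of facts before it constitutes a proof; as it stands, the decisive step is a restatement of the goal.
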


\begin{proof}
By Theorem \ref{classification} there is a short exact sequence
\[
0 \to X \to L(x)\circ L(\w_-) \to L(x,\w_-)\to 0,
\]
where $L(x,\w_-)$ is the irreducible module associated to the root partition $(x,\w_-)$ and, by \cite[Theorem 10.1(3)]{mcn3}, $X$ is a successive extension of grading shifts of modules whose cuspidal decomposition only involves roots between $\w_-$ and $\delta$. Since $\pi(\w_-)$ is a simple root for the positive system  $\{ \pi(\w_+) : \w \in \Omega\}$, there is no way to write $\delta+\w_-$ as a non-trivial sum of roots $\prec \delta$, so $X$ is in fact a successive extension of grading shifts of $L(\delta+\w_-)$. By \cite[Lemma 7.5]{mcn3} and \cite[Theorem 9.1]{mcn3}, we have $[X]\in q\N[q]E_{\d+\w_-}^*$ in the Grothendieck group of $R$-modules, identified  with $U_q^+(\mathfrak{g})$.

By (\ref{otheradjunction}), 
\[
\Hom(L(\d+\w_-),L(x)\circ L(\w_-)) \cong \Hom(\Res_{\w_-,\d}L(\d+\w_-),L(\w_-)\otimes L(x)).
\]
The proof of \cite[Lemma 21.8]{mcn3} shows that $\Res_{\w_-,\d}L(\d+\w_-)$ has $L(\w_-)\otimes L(x)$ appearing with graded multiplicity 1 or 0. Therefore the only possible homomorphisms between $L(\d+\w_-)$ and $L(x)\circ L(\w_-)$ are in degree zero. Combined with $[X]\in q\N[q]E_{\d+\w_-}^*$, the only option is $X=0$.

Therefore $L(x)\circ L(\w_-)\cong L(x,\w_-)$. Taking duals gives $ L(\w_-)\circ L(x) \cong L(x,\w_-)$, and the Lemma follows. 
\end{proof}

\begin{lemma}\cite[Lemma 21.7]{mcn3} \label{lem:M21.7}
 For any $\w \in \Omega$ there is a short exact sequence
 \[
  0\to qL(\d+\w_-)\to L(\w)\circ L(\w_-)\to L(\w,\w_-)\to 0.
 \]
\end{lemma}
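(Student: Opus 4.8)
The plan is to prove the short exact sequence
\[
0 \to q L(\d + \w_-) \to L(\w)\circ L(\w_-) \to L(\w, \w_-) \to 0
\]
by combining the classification of simples (Theorem \ref{classification}), a careful analysis of the cuspidal decomposition of the kernel, and a dimension count for the relevant Hom space, in direct parallel with the proof of Lemma \ref{lem:xw-}. First I would invoke Theorem \ref{classification} to get a short exact sequence $0 \to X \to L(\w)\circ L(\w_-) \to L(\w,\w_-) \to 0$, where $L(\w,\w_-)$ is the simple associated to the root partition $(\w,\w_-)$ (note $\w = \delta$ in the convex order and $\w_- \prec \delta$, so this ordering is the one required by Theorem \ref{classification}). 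By \cite[Theorem 10.1(3)]{mcn3}, $X$ is built from grading shifts of simples whose cuspidal decomposition only involves roots strictly between $\w_-$ and $\delta$ in the convex order.

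Next I would pin down exactly which such simples can occur. The weight of $X$ is $\delta + \w_-$. Since $\pi(\w_-)$ is a simple root for the positive system $\{\pi(\w_+) : \w \in \Omega\}$, there is no way to write $\delta + \w_-$ as a nontrivial sum of roots all lying strictly between $\w_-$ and $\delta$ other than the trivial decomposition corresponding to $L(\delta + \w_-)$ itself; hence $X$ is a successive extension of grading shifts of $L(\delta + \w_-)$. I would then use \cite[Lemma 7.5]{mcn3} together with \cite[Theorem 9.1]{mcn3} (so that $[L(\alpha)] = E_\alpha^*$) to conclude $[X] \in \N[q, q\inv] E_{\delta + \w_-}^*$ in the Grothendieck group identified with $U_q^+(\mathfrak{g})$. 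To then show $X$ is exactly $q L(\delta + \w_-)$ (a single copy, in degree one), I would compute $\Hom(L(\delta + \w_-), L(\w)\circ L(\w_-))$. By the restriction-coinduction adjunction \eqref{otheradjunction},
\[
\Hom(L(\d + \w_-), L(\w)\circ L(\w_-)) \cong \Hom(\Res_{\w_-, \d} L(\d + \w_-), L(\w_-)\otimes L(\w)),
\]
and the computation in the proof of \cite[Lemma 21.8]{mcn3} shows that $L(\w_-)\otimes L(\w)$ appears in $\Res_{\w_-,\d} L(\delta + \w_-)$ with graded multiplicity in $\{0,1\}$; I would check that in fact it appears, with the grading shift being exactly $q$. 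Dually, $\Hom(L(\w)\circ L(\w_-), L(\delta + \w_-))$ is controlled similarly. This forces the socle of $X$ to be a single $qL(\delta + \w_-)$, and combined with $[X] \in \N[q,q\inv] E_{\delta + \w_-}^*$ and the fact that $E_{\delta + \w_-}^* = [L(\delta + \w_-)]$ is a basis vector, we get $X \cong q L(\delta + \w_-)$ exactly.

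The main obstacle is the precise bookkeeping of the grading shift: showing the copy of $L(\delta + \w_-)$ sits in degree one rather than some other degree, and that it is a \emph{single} copy rather than a self-extension or a sum of shifts. The Grothendieck group argument alone gives $[X] \in \N[q,q\inv]E_{\delta+\w_-}^*$; upgrading this to $[X] = q E_{\delta + \w_-}^*$ requires the Hom-space computation to rule out degree-zero and higher-degree contributions, which is where the multiplicity-one statement extracted from the proof of \cite[Lemma 21.8]{mcn3} does the work. An alternative, possibly cleaner, route is to recognize that \cite[Lemma 21.7]{mcn3} is precisely this statement and simply cite it — but since the excerpt asks for an alternative proof in the spirit of the proof given for Lemma \ref{lem:xw-}, I would present the argument above, which reuses essentially the same machinery (\cite[Theorem 10.1(3), Lemma 7.5, Lemma 21.8]{mcn3}, the adjunction \eqref{otheradjunction}, and the identification of $[X]$ in $U_q^+(\mathfrak{g})$).
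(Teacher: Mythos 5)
First, a point of comparison: the paper does not prove this lemma at all --- it is quoted verbatim from \cite[Lemma 21.7]{mcn3} --- so your proposal is being measured against that external proof rather than anything internal. Your strategy of re-running the proof of Lemma \ref{lem:xw-} with $x=\w$ and tracking the extra composition factor is reasonable, and the first half is sound: Theorem \ref{classification} gives $0\to X\to L(\w)\circ L(\w_-)\to L(\w,\w_-)\to 0$, and the argument via \cite[Theorem 10.1(3)]{mcn3} together with the simplicity of $\pi(\w_-)$ in the positive system shows that $X$ is a successive extension of grading shifts of $L(\d+\w_-)$, with $[X]\in q\N[q]E^*_{\d+\w_-}$ (you only claim membership in $\N[q,q\inv]E^*_{\d+\w_-}$, but the stronger positivity is what \cite[Lemma 7.5]{mcn3} gives and is what you need to exclude degree-zero and negative-degree factors).

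The gap is in the final step. The adjunction computation of $\Hom(L(\d+\w_-),L(\w)\circ L(\w_-))\cong\Hom(\Res_{\w_-,\d}L(\d+\w_-),L(\w_-)\otimes L(\w))$ controls only the \emph{socle} of $X$: showing this Hom space is one-dimensional and concentrated in degree one proves $\operatorname{soc}X\cong qL(\d+\w_-)$, but it does not prove $X\cong qL(\d+\w_-)$. A module all of whose composition factors are shifts of a single simple $L$ can perfectly well have simple socle and length greater than one (a nonsplit extension of $q^{3}L$ by $qL$, say), and graded self-extensions $\Ext^1(L,q^kL)$ with $k\neq 0$ are generically nonzero for KLR simples, so this possibility cannot be dismissed abstractly. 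To conclude you must determine the full graded multiplicity $f(q)$ with $[X]=f(q)E^*_{\d+\w_-}$, not just the socle; for instance, compute $[\Res_{\w_-,\d}(L(\w)\circ L(\w_-)):L(\w_-)\otimes L(\w)]$ explicitly via the Mackey filtration (Theorem \ref{mackey}) and compare it with $f(q)\cdot[\Res_{\w_-,\d}L(\d+\w_-):L(\w_-)\otimes L(\w)]$ plus the contribution of $L(\w,\w_-)$, which pins down $f(q)=q$. As written, the assertion that the socle computation ``forces'' $X\cong qL(\d+\w_-)$ exactly does not follow from what precedes it.
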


By Lemmas \ref{lem:M21.7} and \ref{lem:xw-}, 
\begin{equation*}
[L(x)]E_{\w_-}^*=\begin{cases} [L(\w,\w_-)]+q E^*_{\d+\w_-} &\text{if } x=\w \\
E^*_{\w_-}[L(x)] &\text{otherwise.}
\end{cases}
\end{equation*}
In the first case, apply the bar involution and use (\ref{eq:dualind}) to obtain
\[
 E_{\w_-}^*[L(\w)]=[L(\w,\w_-)]+q\inv E^*_{\d+\w_-}.
\]
Thus we have
the commutator formula
\begin{equation}\label{commutator1}
[E_{\w_-}^*,[L(x)]]=\begin{cases} (q^{-1}-q)E^*_{\d+\w_-} &\text{if } x=\w \\
0 &\text{otherwise.}
\end{cases}
\end{equation}

\begin{lemma}
 Let $x$ and $\w$ be two chamber coweights. Then
\begin{equation}\label{commutator2}
[E_{\w_-},[\D(x)]]=\begin{cases}
(q+q\inv)E_{\d+\w_-} &\text{if $x=\w$}\\
0  &\text{if $x$ and $\w$ are orthogonal} \\
E_{\d+\w_-} &\text{otherwise.}
\end{cases}
\end{equation}
\end{lemma}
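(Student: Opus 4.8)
The plan is to compute everything in the Grothendieck group of $R$-modules, identified with $U_q^+(\mathfrak g)$, in which $[\D(\a)]=E_\a$ for positive real roots $\a$, $[L(y)]=E^*_y$ for $y\in\Omega$, the PBW and dual PBW bases are dual for Lusztig's bilinear form $(\,\cdot\,,\,\cdot\,)$, and restriction categorifies the twisted coproduct. First I would show the commutator is forced to be a scalar multiple of $E_{\d+\w_-}$. Since $\d\succ\w_-$, the product $\D(x)\circ\D(\w_-)$ is a standard module, so $[\D(x)][E_{\w_-}]=[\D(x)\circ\D(\w_-)]$ is a single standard class, whereas $\D(\w_-)\circ\D(x)$ is ``out of order.'' Under Assumption~\ref{assumptionA} the ambient category is standardly stratified \cite{bkm,mcn3}, so $\D(\w_-)\circ\D(x)$ carries a filtration by standard modules; a count of root partitions of weight $\w_-+\d$ — run exactly as in the proof of Lemma~\ref{lem:xw-}, using that $\pi(\w_-)$ is a simple root for the positive system $\{\pi(\w_+):\w\in\Omega\}$ — shows the only standards that can occur are $\D(x)\circ\D(\w_-)$ and the root module $\D(\d+\w_-)$. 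Comparing with $[\D(x)\circ\D(\w_-)]$ and subtracting gives
\[
[E_{\w_-},[\D(x)]]=m_x(q)\,E_{\d+\w_-},\qquad m_x(q)\in\N[q,q^{-1}].
\]
(Alternatively, a minimal pair for $\d+\w_-$ fed into Lemma~\ref{lem:set} produces the relevant short exact sequences directly, in the spirit of the derivation of \eqref{commutator1}.)

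It then remains to read off $m_x(q)$, the coefficient of $E_{\d+\w_-}$ in $[\D(\w_-)\circ\D(x)]$. Pairing with $[L(\d+\w_-)]=E^*_{\d+\w_-}$ and using the compatibility of $(\,\cdot\,,\,\cdot\,)$ with induction and restriction, together with $(E_{\w_-},E^*_{\w_-})=(E_x,E^*_x)=1$, this coefficient equals the graded composition multiplicity of $L(\w_-)\otimes L(x)$ in $\Res_{\w_-,\d}L(\d+\w_-)$. For $x\neq\w$ the proof of \cite[Lemma~21.8]{mcn3} shows this multiplicity is $0$ or $1$, and I would determine which by showing it is nonzero exactly when $x$ and $\w$ are not orthogonal. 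For $x=\w$, Lemma~\ref{lem:M21.7} (or the corresponding restriction computation in \cite{mcn3}) identifies it with $q+q^{-1}$. Together these give the three cases. As a cross-check, and in keeping with the method of \S\ref{sec:imcat}, one could instead pick a compatible convex order and a face $F$ with $\Phi_F$ of type $\widehat{\mathfrak{sl}}_2$ (when $x=\w$), $\widehat{\mathfrak{sl}}_3$ (when $x\neq\w$ are non-orthogonal), or two copies of $\widehat{\mathfrak{sl}}_2$ sharing $\d$ (when $x$ and $\w$ are orthogonal), with $\w_-$, $x$, $\w$ and $\d+\w_-$ all supported on $\Phi_F$: since the face functor takes standard and root modules to standard and root modules (Proposition~\ref{prop:stdtostd}, Lemma~\ref{lem:roottoroot}), commutes with $\circ$ (Lemma~\ref{commutewithind}) and is fully faithful (Lemma~\ref{fullyfaithful}), it induces an injective, structure-preserving map on Grothendieck groups, reducing the identity to explicit computations in $U_q^+(\widehat{\mathfrak{sl}}_2)$ and $U_q^+(\widehat{\mathfrak{sl}}_3)$.

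I expect the main obstacle to be this last point: extracting the exact graded multiplicities — in particular producing the factor $q+q^{-1}$ on the diagonal and the orthogonal-versus-non-orthogonal dichotomy off the diagonal — while correctly tracking grading shifts (the analogue of the $q^{\pm1}$ corrections appearing in \eqref{commutator1}). The combinatorial input to the first step, namely that no standard module other than $\D(\d+\w_-)$ and $\D(x)\circ\D(\w_-)$ can occur in the $\D$-filtration of $\D(\w_-)\circ\D(x)$, is routine but also needs to be checked with care.
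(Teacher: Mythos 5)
Your skeleton is essentially the paper's: the short exact sequence $0\to \D(x)\circ\D(\w_-)\to\D(\w_-)\circ\D(x)\to C\to 0$ with $C$ a sum of $f(q)$ copies of $\D(\d+\w_-)$ (this is \cite[Lemma 16.1]{mcn3}, i.e.\ the minimal-pair statement of Lemma \ref{lem:set}) gives $[E_{\w_-},[\D(x)]]=f(q)E_{\d+\w_-}$ with $f\in\N[q,q^{-1}]$, and the pairing $f(q)=\langle E_{\w_-}\otimes E_x,[\Res_{\w_-,\d}L(\d+\w_-)]\rangle$ identifies $f$ with the graded multiplicity of $L(\w_-)\otimes L(x)$ in $\Res_{\w_-,\d}L(\d+\w_-)$. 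But the two points you defer -- deciding when that multiplicity vanishes, and pinning down the power of $q$ when it does not -- are exactly the content of the lemma, and you give no mechanism for either. The paper detects orthogonality by specializing to $q=1$: there $E_{\w_-}\mapsto e_{\w_-}$, $E_{\d+\w_-}\mapsto e_{\w_-}\otimes t$, and, crucially, $[\D(x)]\mapsto h_x\otimes t$ by \cite[Corollary 17.2]{mcn3}, so $[e_{\w_-},h_x\otimes t]=\langle x,\w_-\rangle\,e_{\w_-}\otimes t$ forces $f(1)=\langle x,\w_-\rangle\in\{0,1\}$; positivity of $f$ then gives $f=0$ in the orthogonal case and $f=q^n$ otherwise. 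The exponent $n=0$ is then forced because $L(\d+\w_-)$ is self-dual and restriction commutes with duality, so $f(q)=f(q^{-1})$. Without some substitute for these two arguments (the Lie-algebra specialization and the bar-invariance of the restriction) your proof does not close.

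Two smaller issues. First, your proposed cross-check via face functors is circular in this paper's logical order: knowing that an affine face algebra $R_F$ is again of geometric type (Theorem \ref{th:geom-to-geom}), which you would need to control the $\widehat{\mathfrak{sl}}_3$ computation, is itself deduced from Theorem \ref{enddw}, hence from Corollary \ref{cor:cartanmatrix}, hence from this very lemma. Second, Lemma \ref{lem:M21.7} concerns $L(\w)\circ L(\w_-)$, i.e.\ dual PBW classes, and yields \eqref{commutator1} with coefficient $q^{-1}-q$; converting that into the diagonal entry $q+q^{-1}$ of \eqref{commutator2} requires the transition matrix between $[L(\w)]$ and $[\D(\w)]$, which is the Cartan matrix of Proposition \ref{prop:cartanmatrix} -- again downstream of this lemma. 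The paper instead obtains the $x=\w$ case directly as the decategorification of the $n=0$ case of \cite[Lemma 21.8]{mcn3}.
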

\begin{proof}
 The ideas of this proof are in \cite[Lemma 21.8]{mcn3}. Indeed, when $x=\w$, this lemma is the decategorification of the $n=0$ case of \cite[Lemma 21.8]{mcn3}.
 
 Now suppose that $x\neq \w$. By \cite[Lemma 16.1]{mcn3}, there is a short exact sequence
 \[
  0\to \D(x)\circ \D(\w_-)\to \D(\w_-)\circ \D(x) \to C\to 0
 \]
 where $C$ is a direct sum of $f(q)$ copies of $\D(\d+\w_-)$ for some $f(q)\in\N[q,q\inv]$. Passing to the Grothendieck group, this decategorifies to
 \begin{equation}\label{eq:c1}
  [E_{\w_-},[\D(x)]]=f(q)E_{\d+\w_-}.
 \end{equation}
 
 When specialized to $q=1$, $E_{\w_-}$ and $E_{\d+\w_-}$ become the root vectors $e_{w_-}$ and $e_{w_-}\otimes t$ respectively in the Lie algebra and $[\D(x)]$ becomes $h_x\otimes t$ by \cite[Corollary 17.2]{mcn3}. There is
 \begin{equation}\label{eq:c2}
  [e_{\w_-},h_x\otimes t]= \langle x,\w_-\rangle e_{\w_-} \otimes t.
 \end{equation}
 
 Since $f(q)\in\N[q,q\inv]$, equations (\ref{eq:c1}) and (\ref{eq:c2}) complete the proof when $x$ and $\w$ are orthogonal. In the one remaining case, they imply that $f(q)=q^n$ for some $n\in \Z$. Now compute
 \begin{align*}
f(q)&= \langle [C],[L(\d+\w_-)]\rangle \\
&= \langle E_{\w_-}E_x-E_x E_{\w_-},[L(\d+\w_-)] \rangle \\
&= \langle E_{\w_-}E_x,[L(\d+\w_-)] \rangle \\
&=\langle E_{\w_-}\otimes E_x, [\Res_{\w_-,\d}L(\d+\w_-)] \rangle,
\end{align*}
where the third equality holds by the cuspidality of $L(\d+\w_-)$.
By \cite[Lemma 12.3]{mcn3}, $\Res_{\w_-,\d}L(\d+\w_-)$ is a successive self-extension of grading shifts of simples of the form $L(\w_-)\otimes L(y)$ for $y\in\Omega$. The above equations imply that $f(q)$ is the multiplicity of $L(\w_-)\otimes L(x)$ in the Jordan-Holder filtration of $\Res_{\w_-,\d}L(\d+\w_-)$. Since $L(\d+\w_-)$ is self-dual and restriction commutes with duality, we obtain $f(q)=f(q\inv)$. Since $f(q)=q^n$, this forces $n=0$, completing the proof.
\end{proof}

\begin{proposition}\label{prop:cartanmatrix}
 Fix $x, \w$ in $\Omega$. The multiplicity of $L(x)$ in $\D(\w)$ is 
 \[
[\D(\w):L(x)]=\begin{cases}
\frac{1+q^2}{1-q^2} &\text{if $x=\w$}\\
0  &\text{if $x$ and $\w$ are orthogonal} \\
\frac{q}{1-q^2} &\text{otherwise.}
\end{cases}
\]
\end{proposition}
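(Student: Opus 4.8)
The plan is to work in the Grothendieck group of $R$-modules, identified with $U_q^+(\mathfrak g)$, and to read off the multiplicities $[\D(\w):L(x)]$ directly from the commutator formulas (\ref{commutator1}) and (\ref{commutator2}). The only structural input I need beyond those two formulas is how a root module for a real root compares to its head in $U_q^+(\mathfrak g)$.

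First I would normalise (\ref{commutator2}). Since $\mathfrak g$ is of symmetric affine type every real root $\a$ satisfies $\a\cdot\a=2$, and combining $[\D(\a)]=E_\a$ and $[L(\a)]=E_\a^*$ from \cite[Theorems 9.1 and 18.2]{mcn3} with the standard relation $E_\a^*=(1-q^2)E_\a$ between the PBW and dual PBW vectors attached to a single real root (equivalently: $\D(\a)$ is free of rank one over $\End(\D(\a))\cong k[x_\a]$ with $\deg x_\a=2$) gives $[\D(\a)]=\tfrac{1}{1-q^2}[L(\a)]$. Applying this to $\a=\w_-$ and to $\a=\d+\w_-$ in (\ref{commutator2}), the factors of $(1-q^2)$ cancel and (\ref{commutator2}) becomes, entirely in terms of dual PBW vectors,
\[
[E^*_{\w_-},[\D(x)]]=\begin{cases}(q+q^{-1})\,E^*_{\d+\w_-}&\text{if }x=\w,\\ 0&\text{if $x$ and $\w$ are orthogonal,}\\ E^*_{\d+\w_-}&\text{otherwise.}\end{cases}
\]

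Next, $\D(x)$ lies in the category of cuspidal $R(\d)$-modules, whose simple objects are the $L(y)$ for $y\in\Omega$; so in the (completed, but well behaved because all modules are graded and locally finite-dimensional) Grothendieck group $[\D(x)]=\sum_{y\in\Omega}[\D(x):L(y)]\,[L(y)]$ with $[\D(x):L(y)]\in\N[[q]]$. Applying $[E^*_{\w_-},-]$ and invoking (\ref{commutator1}), only the term $y=\w$ survives, so $[E^*_{\w_-},[\D(x)]]=[\D(x):L(\w)]\,(q^{-1}-q)\,E^*_{\d+\w_-}$. Comparing with the reformulated (\ref{commutator2}) and cancelling the nonzero vector $E^*_{\d+\w_-}$ shows $[\D(x):L(\w)]\,(q^{-1}-q)$ equals $q+q^{-1}$, $0$, or $1$ according as $x=\w$, $x$ and $\w$ are orthogonal, or neither; since $q^{-1}-q=(1-q^2)/q$ and $q+q^{-1}=(1+q^2)/q$ this gives $[\D(x):L(\w)]$ equal to $\tfrac{1+q^2}{1-q^2}$, $0$, or $\tfrac{q}{1-q^2}$. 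Relabelling (orthogonality being a symmetric condition) yields the stated formula for $[\D(\w):L(x)]$.

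The manipulation is otherwise formal. The two points I expect to need care are: justifying that the decategorification identity for $[\D(x)]$ is legitimate in the completed Grothendieck group, which is handled exactly as in \cite{mcn3}; and pinning down the scalar in $E_\a^*=(1-q^2)E_\a$, for which one can either cite \cite[\S 9]{mcn3} or check it in rank one ($[\D(\alpha_i)]=[k[y_1]]=(1-q^2)^{-1}[L(\alpha_i)]$) and then propagate to every real root using that all real roots of a symmetric affine root system have square length two. As a consistency check one sees that the resulting rational functions indeed lie in $\N[[q]]$.
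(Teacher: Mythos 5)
Your argument is correct and is essentially the paper's own proof: both expand $[\D(y)]=\sum_x p_{y,x}[L(x)]$, apply the commutator with $E_{\w_-}^*$, use (\ref{commutator1}) to isolate $p_{y,\w}(q^{-1}-q)E^*_{\d+\w_-}$, and then compare with (\ref{commutator2}) via $E_\a^*=(1-q^{\a\cdot\a})E_\a$ (which is $(1-q^2)E_\a$ here). The only difference is cosmetic --- you convert (\ref{commutator2}) to dual PBW vectors before comparing rather than after --- and your extra remarks on the completed Grothendieck group and the rank-one normalisation are sound.
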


\begin{proof}
 Write $[\D(y)]=\sum_x p_{y,x}[L(x)]$. By (\ref{commutator1}), 
  \[
  (q\inv-q)p_{y,\w}E_{\d+\w_-}^* = [E_{\w_-}^*,[\D(y)]].
 \]
The formula (\ref{commutator2}) along with $E_\a^*=(1-q^{\a\cdot\a})E_\a$ completes the proof.
\end{proof}

\begin{corollary}\label{cor:cartanmatrix}
 Suppose $x,\w\in\Omega$. Then 
 \[
\dim_q\Hom(\D(\w),\D(x))=\begin{cases}
\frac{1+q^2}{1-q^2} &\text{if $x=\w$}\\
0  &\text{if $x$ and $\w$ are orthogonal} \\
\frac{q}{1-q^2} &\text{otherwise.}
\end{cases}
\]
 \end{corollary}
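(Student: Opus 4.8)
The plan is to deduce this from Proposition \ref{prop:cartanmatrix} by a standard argument relating multiplicities in a projective cover to the graded dimension of Hom spaces between projective covers. Concretely, for a graded algebra $S$ whose category of finite-dimensional graded modules is well-behaved (finitely many simples up to shift, finite-dimensional graded pieces, bounded below), if $P(y)$ denotes the projective cover of the self-dual simple $L(y)$, then
\[
 \dim_q \Hom(P(\w),P(x)) = \sum_{z} [P(\w):L(z)]\, [P(x):L(z)]\dual,
\]
or more precisely $\dim_q\Hom(P(\w),P(x))$ records the graded multiplicity of $L(\w)$ inside $P(x)$, suitably dualized; since here $L(\w)$ and $L(x)$ are self-dual this should simplify to $\dim_q\Hom(\D(\w),\D(x)) = [\D(x):L(\w)]$ up to the bar involution $q\mapsto q\inv$. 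First I would set $S = S(\delta)$, recalling from the preceding discussion that the category of cuspidal $R(\delta)$-modules is equivalent to $S(\delta)$-mod, that the $\D(\w)$ for $\w\in\Omega$ are exactly the indecomposable projective covers of the self-dual simples $L(\w)$, and that $S(\delta)$ has the finiteness properties needed (finitely many simples indexed by $\Omega$, and it is a quotient of $R(\delta)$ which is Noetherian with finite-dimensional graded pieces bounded below).

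The key steps, in order: (i) record the general fact that for $P$ a finitely generated projective with simple head $L(\w)$ and $M$ any finite-length module, $\dim_q\Hom(P,M)$ equals the graded composition multiplicity $[M:L(\w)]$ (this uses that $\Hom(P,-)$ is exact, kills every simple except $L(\w)$, and on $L(\w)$ gives a one-dimensional space in degree $0$ by self-duality of $L(\w)$ and the graded-Schur lemma); (ii) apply this with $P = \D(\w)$ and $M = \D(x)$ to get $\dim_q\Hom(\D(\w),\D(x)) = [\D(x):L(\w)]$; (iii) invoke Proposition \ref{prop:cartanmatrix}, which computes $[\D(x):L(\w)]$, noting that the stated formulas are symmetric in $x$ and $\w$ and invariant under $q\mapsto q\inv$ (each of $\tfrac{1+q^2}{1-q^2}$, $0$, $\tfrac{q}{1-q^2}$... — here one must be slightly careful, since $\tfrac{q}{1-q^2}$ is \emph{not} bar-invariant, so the correct reading is that $\dim_q\Hom(\D(\w),\D(x))$ is the bar-conjugate of $[\D(x):L(\w)]$, and one checks the orientation of grading shifts so that the answer comes out as stated). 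So I would phrase step (i) carefully enough to track whether a $q$ or $q\inv$ appears, most likely concluding $\dim_q\Hom(\D(\w),\D(x)) = \overline{[\D(x):L(\w)]}$ and then observing that replacing $q$ by $q\inv$ in $\tfrac{q}{1-q^2}$ gives $\tfrac{q\inv}{1-q^{-2}} = \tfrac{-q}{1-q^2}$, which signals that the precise statement needs $[\D(x):L(\w)]$ read with the grading conventions of \cite{mcn3}; I would reconcile this using that $\D(\w)$ is a free module over its endomorphism ring $\End(\D(\w))$, whose graded dimension $\tfrac{1+q^2}{1-q^2}$ is bar-invariant, pinning down the diagonal term and hence the normalization.

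The main obstacle I anticipate is precisely this bookkeeping of grading shifts and the direction of the bar involution in the off-diagonal case $x\neq\w$ non-orthogonal: getting $\tfrac{q}{1-q^2}$ rather than $\tfrac{q\inv}{1-q^2}$ or $\tfrac{q}{1-q^{-2}}$ out of the multiplicity formula requires care about whether one uses $\Hom(\D(\w),\D(x))$ or $\Hom(\D(x),\D(\w))$ and about the conventions by which $[\D(x):L(\w)]$ was computed in Proposition \ref{prop:cartanmatrix}. A clean way around this is to argue the symmetry $\dim_q\Hom(\D(\w),\D(x)) = \dim_q\Hom(\D(x),\D(\w))$ directly — for instance via the anti-automorphism $\dagger$ of $R(\delta)$, which descends to $S(\delta)$ and fixes the self-dual $L(\w)$, hence sends $\D(\w)$ to $\D(\w)\dual$ and induces an isomorphism $\Hom(\D(\w),\D(x)) \cong \Hom(\D(x),\D(\w))$ of graded vector spaces — together with self-duality of $\Hom$ spaces between projective covers of self-dual simples, forcing the answer to be bar-invariant and thus matching Proposition \ref{prop:cartanmatrix} on the nose. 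With that symmetry in hand the corollary is immediate.
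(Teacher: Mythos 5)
Your proposal is correct and is exactly the argument the paper leaves implicit: the corollary is immediate from Proposition \ref{prop:cartanmatrix} via the standard fact that $\dim_q\Hom(\D(\w),M)=[M:L(\w)]$ for the projective cover $\D(\w)$ of the self-dual simple $L(\w)$, together with the symmetry of the stated formulas in $x$ and $\w$. Your worry about a bar involution dissolves under the paper's convention that $\Hom(U,V)_n$ consists of homogeneous maps $q^nU\to V$ (so a factor $q^mL(\w)$ in $\D(x)$ contributes exactly $q^m$, not $q^{-m}$), and the identity holds on the nose with no conjugation.
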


\subsection{Faces in geometric type}

The following is immediate from  \cite[Lemma 15.1]{mcn3}. 

\begin{lemma} \label{lem:ssi} Assume $R$ has the standard choice of parameters from \cite{mcn3}. 
Let $S'(\d)$ be the subalgebra of $S(\d)$ generated by the $e_\ii$, $y_j-y_{j+1}$ and $\psi_k$. Then
$S(\d)\cong k[z]\otimes S'(\d)$ where $z$ has degree $2$.  \qed
\end{lemma}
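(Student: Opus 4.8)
The plan is to identify the distinguished degree-$2$ central element $z$ of $S(\d)$ supplied by \cite[Lemma 15.1]{mcn3} --- concretely one may take $z=\sum_{j=1}^{\text{ht}(\d)}y_j$ --- and then show that the multiplication map
\[
\mu\colon k[z]\otimes_k S'(\d)\longrightarrow S(\d),\qquad f\otimes a\longmapsto f(z)\,a,
\]
is an isomorphism of graded algebras. The first thing to check is that this $z$ is genuinely central: it commutes with every $e_\ii$ and every $y_j$ for obvious reasons, and the two KLR relations $(\psi_k y_k-y_{k+1}\psi_k)e_\ii$ and $(\psi_k y_{k+1}-y_k\psi_k)e_\ii$ sum to zero in all cases, while $\psi_k$ commutes with $y_l$ for $l\neq k,k+1$; hence $\psi_k z=z\psi_k$. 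Centrality passes to the quotient $S(\d)$, so $\mu$ is a well-defined homomorphism of graded algebras, with $z$ placed in degree $2$.

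Next I would establish surjectivity of $\mu$. The generators $e_\ii$ and $\psi_k$ of $S(\d)$ lie in $S'(\d)$ by definition, as does each difference $y_j-y_{j+1}$, so $y_j-y_1=-\sum_{l=1}^{j-1}(y_l-y_{l+1})\in S'(\d)$. Rearranging $z=\text{ht}(\d)\,y_1+\sum_j(y_j-y_1)$ and using that $\text{ht}(\d)$ is invertible in $k$ in the characteristic-zero setting of this section, one solves for $y_1$, hence for every $y_j$, in terms of $z$ and elements of $S'(\d)$. Thus every KLR generator of $S(\d)$ lies in the image of $\mu$, so $\mu$ is onto.

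What remains is injectivity, and this is where the real content lies: it is exactly the substance of \cite[Lemma 15.1]{mcn3}, which (for the standard parameters) shows $S(\d)$ is free over the central polynomial subalgebra $k[z]$ and provides a $k[z]$-basis whose $k$-span is carried onto $S'(\d)$ by $\mu$; equivalently it gives the graded-dimension identity $\dim_q S(\d)=(1-q^2)^{-1}\dim_q S'(\d)$ together with the fact that $z$ is a non-zero-divisor. Granted this, $\mu$ is a surjection of graded vector spaces with the same graded dimension in each degree, hence an isomorphism. I expect the only real obstacle to be the cosmetic one of matching the generators-and-relations description of $S'(\d)$ with the $k[z]$-module complement produced by \cite[Lemma 15.1]{mcn3} --- that is, confirming the ``reduced'' algebra appearing there is precisely the subalgebra generated by the $e_\ii$, the $y_j-y_{j+1}$, and the $\psi_k$; once that identification is in place, everything else is routine bookkeeping.
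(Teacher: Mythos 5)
Your proposal is correct and follows the same route as the paper, which gives no argument beyond declaring the lemma ``immediate from [mcn3, Lemma 15.1]''; you simply make explicit the routine steps (centrality of $z=\sum_j y_j$, recovery of the $y_j$ from $z$ and the differences $y_j-y_{j+1}$ using that $\mathrm{ht}(\delta)$ is invertible in characteristic zero) while correctly locating the substantive content --- freeness of $S(\delta)$ over $k[z]$ and the identification of the complement --- in the same cited lemma.
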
 

\begin{theorem}\label{enddw}
 Let $\w$ be a chamber coweight. Then $\End(\D(\w))\cong k[z,\epsilon]/(\epsilon^2)$ where $z$ and $\epsilon$ are in degree 2.
\end{theorem}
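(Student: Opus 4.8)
The plan is to combine the multiplicity count already available from Corollary \ref{cor:cartanmatrix} with the factorization of Lemma \ref{lem:ssi}. First, taking $x = \w$ in Corollary \ref{cor:cartanmatrix} gives $\dim_q \End(\D(\w)) = (1+q^2)/(1-q^2) = 1 + 2q^2 + 2q^4 + \cdots$. This already pins down the graded dimension of the algebra we want to identify: it has a one-dimensional degree-zero part (so $\D(\w)$ is indecomposable, as it must be, being a projective cover), and a two-dimensional part in each positive even degree. I would note that this matches $\dim_q(k[z,\epsilon]/(\epsilon^2)) = (1+q^2)/(1-q^2)$ with $z, \epsilon$ in degree $2$, so the content of the theorem is to produce the actual algebra structure, not just the dimension.

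Next I would use Lemma \ref{lem:ssi}: since $S(\d) \cong k[z] \otimes S'(\d)$ with $z$ central of degree $2$, and $\D(\w)$ is a module obtained by base change (it is $k[z] \otimes \D'(\w)$ for the corresponding projective cover $\D'(\w)$ of $L(\w)$ over $S'(\d)$, because $k[z]$ is central and $L(\w)$ is annihilated by $z$ up to the splitting — more precisely $\D(\w)$ is the projective cover in the cuspidal category and the $k[z]$-factor simply tensors through), we get $\End_{S(\d)}(\D(\w)) \cong k[z] \otimes \End_{S'(\d)}(\D'(\w))$. So it suffices to show $\End_{S'(\d)}(\D'(\w)) \cong k[\epsilon]/(\epsilon^2)$ with $\epsilon$ in degree $2$. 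From the graded dimension computation, $\End_{S'(\d)}(\D'(\w))$ has graded dimension $1 + q^2$: it is two-dimensional, spanned by the identity in degree $0$ and one element $\epsilon$ in degree $2$. Since any degree-$2$ endomorphism squares to a degree-$4$ endomorphism and the degree-$4$ part is zero, $\epsilon^2 = 0$ automatically, so the algebra is forced to be $k[\epsilon]/(\epsilon^2)$.

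The one genuine point requiring care — and the step I expect to be the main obstacle — is justifying that the degree-$2$ endomorphism space of $\D(\w)$ is not entirely accounted for by multiplication by $z$; equivalently, that $\End_{S'(\d)}(\D'(\w))$ really is $2$-dimensional rather than the ``$1+q^2$'' of Corollary \ref{cor:cartanmatrix} being realized purely inside $k[z]$. This is where the splitting $S(\d) \cong k[z] \otimes S'(\d)$ does the work: because $z$ generates a polynomial subalgebra acting freely and the tensor decomposition is as graded algebras, multiplication by $z$ accounts for exactly the summand $k[z] \cdot \mathrm{id}$, which has graded dimension $1/(1-q^2)$; subtracting from $(1+q^2)/(1-q^2)$ leaves $q^2/(1-q^2) = q^2 + q^4 + \cdots$ worth of ``new'' endomorphisms, and tensoring out $k[z]$ again shows $\End_{S'(\d)}(\D'(\w))$ is spanned by $\mathrm{id}$ and a single nilpotent $\epsilon$ of degree $2$. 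Assembling: $\End(\D(\w)) \cong k[z] \otimes k[\epsilon]/(\epsilon^2) \cong k[z,\epsilon]/(\epsilon^2)$ with both generators in degree $2$, which is the claim.
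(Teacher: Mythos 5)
Your proposal is correct and follows essentially the same route as the paper: split $\End_{S(\d)}(\D(\w))$ as $k[z]\otimes\End_{S'(\d)}(\D(\w)/z\D(\w))$ using Lemma \ref{lem:ssi} together with projectivity of $\D(\w)$, then use the graded dimension $(1+q^2)/(1-q^2)$ from Corollary \ref{cor:cartanmatrix} to see the second factor is spanned by the identity and one degree-$2$ element, which squares to zero for degree reasons. Your extra discussion of why the degree-$2$ endomorphisms are not exhausted by multiplication by $z$ is just an expanded version of the same tensor-decomposition argument.
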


\begin{proof} We can assume $R$ has the standard choice of parameters from \cite{mcn3}. Then
\begin{align*}
  \End_{R(\d)} \D(\w) & \cong \End_{S(\d)} \D(\w) \\
  &\cong k[z]\otimes \End_{S'(\d)} (\D(\w)/z\D(\w)) \\
  &\cong k[z]\otimes k[\epsilon]/(\epsilon^2).
 \end{align*}
The second isomorphism holds by Lemma \ref{lem:ssi} and the fact that $\D(\w)$ is a projective $S(\d)$-module. The third holds by Corollary \ref{cor:cartanmatrix}.
\end{proof}

\begin{lemma} \label{lem:sl2geom}
If $R$ is a geometric KLR algebra of symmetric affine type and $F$ is a face with root system $\Phi_F$ of type $\widehat{\mathfrak{sl}}_2$, then $R_F$ is of geometric type.
\end{lemma}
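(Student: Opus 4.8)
The plan is to reduce to the rank-one characterization of geometric type in Lemma \ref{lem:geom-characterization}\eqref{ar3}, which says that a KLR algebra of type $\widehat{\mathfrak{sl}}_2$ is of geometric type if and only if the quadratic $Q_{01}(u,v)$ has discriminant zero. Since $R_F$ is a KLR algebra for $\Phi_F$ (by Theorem \ref{4.12}) and $\Phi_F$ is of type $\widehat{\mathfrak{sl}}_2$ with simple roots $\b,\ga\in\D_F$, it suffices to show that the polynomial $Q_{\b\ga}(u,v)$ defined in \eqref{def:q} has discriminant zero. By the defining relation $\tau_1(\ga\b)\tau_1(\b\ga)=Q_{\b\ga}(x_1(\b\ga),x_2(\b\ga))$ this polynomial governs $\End(\D(\b)\circ\D(\ga))$, so the claim should be read off from structural information about cuspidal $R(\d')$-modules, where $\d'=\b+\ga$ is the minimal imaginary root of $\Phi_F$.

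**Key steps.** First I would invoke the results of \S\ref{sec:imcat} \emph{for the face root system $\Phi_F$ itself}: since $\Phi_F$ is affine of type $\widehat{\mathfrak{sl}}_2$, the underlying finite-type diagram is $A_1$, so $\Omega$ consists of a single chamber coweight $\w$. By Corollary \ref{cor:cartanmatrix} (applied to $R_F$), $\dim_q\End_{R_F(\d')}\D_F(\w)=\frac{1+q^2}{1-q^2}$, and by Theorem \ref{enddw}, $\End_{R_F}(\D_F(\w))\cong k[z,\epsilon]/(\epsilon^2)$. The key point is that this computation uses only that $R_F$ is a geometric-type KLR algebra of symmetric affine type, which is not yet known — so instead I would turn the argument around. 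The face functor $\F_F$ is fully faithful (Lemma \ref{fullyfaithful}) and takes root modules to root modules (Lemma \ref{lem:roottoroot}), hence $\F_F(\D_F(\w))$ is a root module of weight $\d'$ in $\C_F$. By the classification of imaginary root modules in \S\ref{sec:imcat} applied to the ambient $R$ (which \emph{is} geometric), $\End_R(\F_F(\D_F(\w)))\cong k[z,\epsilon]/(\epsilon^2)$ via Theorem \ref{enddw}. Full faithfulness transports this to $\End_{R_F}(\D_F(\w))\cong k[z,\epsilon]/(\epsilon^2)$.

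**Extracting the discriminant.** Now I would combine this with the explicit description. In the $\widehat{\mathfrak{sl}}_2$ KLR algebra $R_F$, the imaginary root module $\D_F(\w)$ sits inside $\D(\b)\circ\D(\ga)$ (or a twist thereof), and its endomorphism algebra is controlled by $Q_{\b\ga}$; concretely, writing $Q_{\b\ga}(u,v)=au^2+buv+cv^2$ with $a,c\ne 0$ (the extreme coefficients are nonzero by Lemma \ref{nonzerocoeff}, since $a_{\b\ga}=-1$ for $\widehat{\mathfrak{sl}}_2$), the relation $\tau\psi$-calculus shows $\End(\D_F(\w))$ acquires a nilpotent of square zero precisely when the discriminant $b^2-4ac$ vanishes, and is instead $k[z][\epsilon]/(\epsilon^2-\text{(unit)})\cong k[z]\oplus k[z]$ (two simple quotients) otherwise. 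Since we have shown $\End_{R_F}(\D_F(\w))\cong k[z,\epsilon]/(\epsilon^2)$ has a \emph{unique} simple quotient, the discriminant must be zero. Then Lemma \ref{lem:geom-characterization}\eqref{ar3} gives that $R_F$ is of geometric type.

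**Main obstacle.** The delicate point is making precise the link between $\mathrm{disc}(Q_{\b\ga})$ and the ring structure of the imaginary root-module endomorphism algebra — i.e. the rank-computation showing that nonzero discriminant forces two non-isomorphic simple quotients (equivalently, $\End$ has a semisimple quotient of rank $2$), contradicting $k[z,\epsilon]/(\epsilon^2)$. This is essentially the $\widehat{\mathfrak{sl}}_2$ analogue of the argument in Lemma \ref{nonzerocoeff}, and I expect it to require either a direct computation in $\End(\D(\b)\circ\D(\ga))\cong k[x_\b,x_\ga]$ using the relations $\tau_1(\ga\b)\tau_1(\b\ga)=Q_{\b\ga}$, $x_1\tau_1(\b\ga)=\tau_1(\b\ga)x_2$ from Lemma \ref{easyrelation}, or an appeal to the explicit normal form of imaginary cuspidals in \cite{mcn3}; everything else is a routine assembly of cited results.
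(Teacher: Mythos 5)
Your overall strategy coincides with the paper's: push the imaginary root module of $R_F$ through the fully faithful face functor, identify its endomorphism ring inside $R$ via Theorem \ref{enddw}, transport back, and conclude that the defining quadratic has discriminant zero via Lemma \ref{lem:geom-characterization}\eqref{ar3}. But there is a circularity you did not catch. You correctly reroute around applying Theorem \ref{enddw} (or Corollary \ref{cor:cartanmatrix}) to $R_F$ itself, yet you then cite Lemma \ref{lem:roottoroot} as a black box to conclude that $\F(\D_F(\w))$ is a root module of $R$. For the imaginary root of $R_F$ the proof of Lemma \ref{lem:roottoroot} begins by invoking Lemma \ref{lem:set} \emph{for $R_F$}, to present $\D_F(\w)$ as the cokernel of a degree-zero map $q^{2}\D(1')\circ\D(0')\to\D(0')\circ\D(1')$; for imaginary roots Lemma \ref{lem:set} is \cite[Theorem 17.1]{mcn3}, which lives in \S\ref{sub:standard} under Assumption \ref{assumptionA} and therefore presupposes that $R_F$ is geometric --- exactly what is to be proved. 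The paper breaks the circle by a direct rank-two computation inside an \emph{arbitrary} $\widehat{\mathfrak{sl}}_2$ KLR algebra: it verifies by hand both that $\End(\D)\cong k[u,v]/Q_{0'1'}(u,v)$ and that the short exact sequence $0\to q^2\D(1')\circ\D(0')\to\D(0')\circ\D(1')\to\D\to 0$ holds, and only then runs \emph{the argument of} Lemma \ref{lem:roottoroot} (not the lemma itself) to see that $\F(\D)$ is a root module of $R$. So the ``direct computation'' you flag as the main obstacle is needed for two purposes, not one, and the second is what makes the argument non-circular.

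A smaller point: your claim that nonzero discriminant forces two non-isomorphic simple quotients (a semisimple quotient of rank $2$) is not correct in the graded setting. If $Q_{0'1'}$ has distinct linear factors then $\End(\D)\cong k[u,v]/(uv)$, which is graded-local with unique graded simple quotient $k$, just like $k[z,\e]/(\e^2)$; indeed both have Hilbert series $\frac{1+q^2}{1-q^2}$. The feature that actually distinguishes them --- and which you also mention in passing --- is reducedness: $k[u,v]/Q_{0'1'}$ contains a nonzero square-zero element of degree $2$ if and only if $Q_{0'1'}$ is a perfect square, i.e.\ has discriminant zero, and that is what the isomorphism with $k[z,\e]/(\e^2)$ coming from Theorem \ref{enddw} and full faithfulness detects.
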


\begin{proof}
Consider the face functor $\F: R_F\mods \to R\mods.$
Let $Q_{0'1'}(u,v)$ be the quadratic polynomial defining the KLR algebra $R_{F}$. Use the convex order where $1'\succ 0'$, and let $\D$ be the unique indecomposable projective in the category of cuspidal $R_{F}(\d)$ modules. Direct computation shows $\End(\D)\cong k[u,v]/Q_{0'1'}(u,v)$ and that there is a short exact sequence 
$$ 0 \rightarrow q^2 \Delta(1') \circ \Delta(0') \rightarrow  \Delta(0') \circ \Delta(1') \rightarrow \Delta \rightarrow 0.$$
The argument in the proof of Lemma \ref{lem:roottoroot} then shows that $\F (\D)$ is a root module for $R$ so, by Theorem \ref{enddw}, $\End(\F (\D))\cong k[z,\e]/\e^2$. Since face functors are fully faithful, 
$$k[u,v]/Q_{0'1'}(u,v) \cong k[z,\e]/\e^2,$$
which implies that $Q_{0'1'}(u,v)$ has discriminant zero. Then $R_{F}$ is of geometric type by Lemma \ref{lem:geom-characterization}. 
\end{proof}

Any face of type $\widehat{\mathfrak{sl}}_2$ faces inside $\widehat{\mathfrak{sl}}_n$ satisfies Assumption \ref{assumptionB}, so the face functor is defined, regardless of whether the original KLR algebra was of geometric type.

\begin{lemma} \label{lem:upgeom}
Let $R$ be a KLR algebra of type $\widehat{\mathfrak{sl}}_n$. Let $F$ be the face of type $\widehat{\mathfrak{sl}}_2$ defined by
$$\begin{aligned}
 F^+&=  \pi^{-1} \{  \Phi_0^+ \backslash \{\alpha_1 \} \}, \\
 F&=  \pi^{-1} \{ \alpha_1, -\alpha_1, 0\}, \\
 F^-&=  \pi^{-1} \{ \Phi_0^- \backslash \{-\alpha_1 \} \},
\end{aligned} $$
where $\Phi_0$ is the underlying finite type root system, and $\pi$ is the standard projection sending $\delta $ to 0. If $R_F$ is of geometric type, then $R$ is also of geometric type. 
\end{lemma}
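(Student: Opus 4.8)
The plan is to induct on $n$, reducing to the $\widehat{\mathfrak{sl}}_3$ case, which is exactly the last bullet point of \S\ref{sec:example}. Set $\b=\a_1$ and $\ga=\d-\a_1=\a_0+\a_2+\cdots+\a_{n-1}$, so $\Delta_F=\{\b,\ga\}$ and, by Theorem~\ref{4.12}, $R_F$ is a genuine KLR algebra of type $\widehat{\mathfrak{sl}}_2$ with defining quadratic $Q_{\b\ga}$. By Lemma~\ref{lem:geom-characterization}, $R$ is of geometric type precisely when $\prod_i s_i/\prod_i t_i=(-1)^n$, while $R_F$ is of geometric type precisely when $\operatorname{disc}Q_{\b\ga}=0$; for $n=3$ the equivalence of these two conditions is established in \S\ref{sec:example}. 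For $n\ge 4$ I would not compute $Q_{\b\ga}$ directly (that requires understanding the height $n-1$ root module $\Delta(\ga)$) but instead factor the reduction through an intermediate face.

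Concretely, put $\th=\a_{n-1}+\a_0$ and let $G$ be the face of $\Phi$ with $\Delta_G=\{\a_1,\dots,\a_{n-2},\th\}$; then $\Phi_G$ is of type $\widehat{\mathfrak{sl}}_{n-1}$ with the same indivisible imaginary root $\d$, the face $G$ exists by the collapsing construction of \cite[\S3.2]{tingleywebster}, and every element of $\Delta_G$ has finite-type support, so $G$ satisfies Assumption~\ref{assumptionB}. A direct check shows $F\subseteq G$ and that $F$, regarded as a face of $\Phi_G=\widehat{\mathfrak{sl}}_{n-1}$, is again the ``$\a_1$-face'' of Lemma~\ref{lem:upgeom} (its face root system is of type $\widehat{\mathfrak{sl}}_2$ with simple roots $\a_1$ and $\d-\a_1$); in particular Assumption~\ref{assumptionB} holds for $F$ relative to $R_G$, so Proposition~\ref{prop:comp} gives $R_F\cong (R_G)_F$. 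By the inductive hypothesis applied to the $\widehat{\mathfrak{sl}}_{n-1}$ KLR algebra $R_G$, the hypothesis ``$R_F$ of geometric type'' forces ``$R_G$ of geometric type''. It therefore remains only to prove the single step: if $G$ merges two adjacent vertices of $\widehat{\mathfrak{sl}}_n$ as above and $R_G$ is of geometric type, then $R$ is of geometric type.

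To prove that step I would compute the defining polynomials of the $\widehat{\mathfrak{sl}}_{n-1}$ algebra $R_G$ in terms of the $s_i,t_i$. For the $n-3$ edges joining consecutive $\a_i$ with $1\le i\le n-2$, the root modules $\Delta(\a_i)=k[x_{\a_i}]$ are one-dimensional and the maps $\tau$ of Lemma~\ref{lem:tau} are literally the KLR crossings $\psi$, so $\tau_1\tau_1=\psi^2=Q_{i,i+1}(y_1,y_2)$ and the $R_G$-polynomial for that edge equals the original $Q_{i,i+1}=s_iu+t_iv$; these edges contribute $\prod_{i=1}^{n-3}s_i/t_i$ to the quantity $\prod_{\mathrm{edges}}(s/t)$ of Lemma~\ref{lem:geom-characterization}. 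For the two remaining edges $\a_{n-2}$--$\th$ and $\th$--$\a_1$ I would use that $\Delta(\th)$ is free of rank one over $k[x_\th]$, supported on the single word $[n-1,0]$, with $x_\th$ acting as an explicit scalar times a dot on an end strand (cf.\ \cite[Theorem 4.10]{bkm} and the computation of $x_\ga$ in \S\ref{sec:example}). A short bracket computation --- the colour-$(n-2)$, resp.\ colour-$1$, strand sliding past the two strands of $\Delta(\th)$ and back, interacting nontrivially only with the colour-$(n-1)$, resp.\ colour-$0$, strand while the other crossing contributes a nonzero constant --- evaluates $Q_{\a_{n-2}\th}$ and $Q_{\a_1\th}$, and reading off their coefficients one finds the product of the two new edge-ratios equals $-(s_{n-2}/t_{n-2})(s_{n-1}/t_{n-1})(s_0/t_0)$. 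Hence the invariant of $R_G$ is $-\prod_i s_i/t_i$, so by Lemma~\ref{lem:geom-characterization} $R_G$ is of geometric type iff $-\prod_i s_i/t_i=(-1)^{n-1}$, i.e.\ iff $\prod_i s_i/t_i=(-1)^n$, i.e.\ iff $R$ is of geometric type. This gives the single step (in fact as an equivalence) and closes the induction.

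The hard part is the sign in that last bracket computation: one must keep track of which strand each dot produced by a relation $\psi^2=Q_{\cdot\cdot}(y,y')$ lands on, of the scalars relating those dots to $x_\th$, and of the orientations, so that the product of the two new edge-ratios comes out to be exactly $-(s_{n-2}/t_{n-2})(s_{n-1}/t_{n-1})(s_0/t_0)$ and not its negative. This sign is precisely what makes the geometricity parity $(-1)^{n-1}$ of the rank-$(n-1)$ algebra $R_G$ agree with the parity $(-1)^n$ of $R$, so it is essential and easy to mishandle; but the computation itself is just the $n=3$ case of \S\ref{sec:example} performed with two additional inert strands. A fully direct alternative --- computing $Q_{\b\ga}$ for general $n$ at once, whose discriminant should equal a nonzero multiple of $\bigl(\prod_i t_i+(-1)^{n-1}\prod_i s_i\bigr)^2$ --- also works but involves the longer and messier height $n-1$ calculation with $\Delta(\ga)$.
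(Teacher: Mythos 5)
Your strategy is genuinely different from the paper's, and the comparison is instructive. The paper proves the lemma by one direct computation valid for all $n$ simultaneously: writing $\underline 0=\alpha_1$ and $\underline 1=\delta-\alpha_1$, cuspidality forces every $\psi_{n-j}$ to kill $\D(\underline 1)$, so the relations $\psi_{n-j}^2e_\ii=Q_{j+1,j}(y_{n-j},y_{n-j+1})e_\ii$ make all the dots on $\D(\underline 1)$ proportional to one another; consequently $\tau_1(\underline 1\,\underline 0)\tau_1(\underline 0\,\underline 1)=(\psi_1\cdots\psi_{n-1})(\psi_{n-1}\cdots\psi_1)$ telescopes to $Q_{1n}(y_1,y_2)Q_{12}(y_1,y_n)$ up to nonzero constants from the non-adjacent colours, and $Q_{\underline 0\,\underline 1}$ factors as a product of two explicit linear forms whose discriminant vanishes if and only if $s_1\cdots s_n=(-1)^nt_1\cdots t_n$. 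So the ``longer and messier height $n-1$ calculation'' you set aside is in fact the shortest path: the height-$(n-1)$ root module is no harder to handle than the height-$2$ one. Your alternative --- induction through the intermediate face $G$ with $\Delta_G=\{\alpha_1,\dots,\alpha_{n-2},\alpha_{n-1}+\alpha_0\}$, Proposition~\ref{prop:comp}, and the base case from \S\ref{sec:example} --- is structurally sound: $G$ is cut out by the linear functional with $c(\alpha_{n-1})=1$, $c(\alpha_0)=-1$, $c(\alpha_i)=0$ otherwise, the various instances of Assumption~\ref{assumptionB} do hold, $F$ viewed inside $\Phi_G$ is again an $\alpha_1$-face, and the $n=3$ equivalence really is the content of the last bullet of \S\ref{sec:example}. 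What this buys is a conceptual reduction to rank $3$; what it costs is an extra layer of machinery plus a new computation in $R_G$.

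The gap is in that new computation. The entire content of the lemma has been pushed into your assertion that the two new edge polynomials $Q_{\alpha_{n-2},\theta}$ and $Q_{\alpha_1,\theta}$ have edge-ratios whose product is $-(s_{n-2}/t_{n-2})(s_{n-1}/t_{n-1})(s_0/t_0)$, and that computation is not carried out. Note that this claimed value is exactly the one forced by consistency with the statement being proved (it is what makes the parity $(-1)^{n-1}$ for $R_G$ match the parity $(-1)^n$ for $R$), so writing it down is reverse-engineering rather than evidence; the sign, which you yourself flag as essential and easy to mishandle, is precisely what must be verified. Moreover, each of the two required evaluations is a three-strand calculation with a nontrivially acting $x_\theta$, i.e.\ of the same type and difficulty as the single calculation the paper performs directly. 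Until that step is done, the proof is incomplete; once it is done, you will have reproduced the paper's computation in a slightly disguised form, twice.
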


\begin{proof} 
We can take  $I={\Bbb Z}/ n {\Bbb Z}$. Let the polynomials defining $R$ be $Q_{i, i+1}(u,v) = s_i u+t_i v$ for some $s_i, t_i\in k^\times$. 

The simple roots of $\Phi^+_F$ are $\underline 0 = 1$ and $\underline 1 = 2+ \cdots +n$. The corresponding simple cuspidal modules are both one dimensional, with characters $[1]$ and $[n \cdots 32]$ respectively. Let $\ii=(n,\ldots,3,2)$.
By cuspidality, for each $1 < j < n$, $\psi_{n-j}$ acts by zero on $\D(\underline 1)$, so $ \D(\underline 1)$ is a cyclic module over $k[y_1, \ldots, y_{n-1}]$. Since 
$$\psi_{n-j}^2e_\ii=Q_{j+1,j}(y_{n-j},y_{n-j+1})e_\ii=(t_{j}y_{n-j}+s_{j}y_{n-j+1})e_\ii,$$
the elements $t_{j}y_{n-j}$ and $-s_jy_{n-j+1}$ act the same on $\D(\underline 1)$.
Since  $\D(\underline 1)$ is infinite dimensional and finitely generated, some $y_j$ must act non-nilpotently, and the above relation implies all do. Thus
\begin{equation} \label{eq:ty}
 (t_2t_3\cdots t_{n-1})y_1,\quad (-1)^n (s_2s_3\cdots s_{n-1})y_{n-1}\quad \mbox{and}\quad \la x_{\underline{1}}
\end{equation} act the same on $\D(\underline 1)$ for some $\la\in k^\times$, where $x_{\underline 1}$ is as in Theorem \ref{lem:dot}. 

By the definitions of $Q_{\underline 0, \underline 1}$ and $\tau$ and the fact that $\tau$ is a homomorphism, for any $v\in \D(\underline 0)$ and $w\in \D(\underline 1)$,
\begin{align*}
 Q_{\underline 0, \underline 1}(x_{\underline 0}, x_{\underline 1})(v\otimes w) &= \tau_1(\underline 1, \underline 0) \tau_1(\underline 0, \underline 1) (v\otimes w) \\
 &= \tau_1(\underline 1, \underline 0) (\psi_1\psi_2\cdots \psi_{n-1})(w\otimes v) \\
 &= (\psi_1\psi_2\cdots \psi_{n-1}) \tau_1(\underline 1, \underline 0)(w\otimes v) \\
 &= (\psi_1\psi_2\cdots \psi_{n-1})( \psi_{n-1}\psi_{n-2}\cdots \psi_1)(v\otimes w).
 \end{align*}
Each $\psi_k$ acts on strands of colors $1$ and $n-k+1$, so the KLR algebra relations give 
 \begin{align*}
  Q_{\underline 0, \underline 1}(x_{\underline 0}, x_{\underline 1})(v\otimes w) &= Q_{1n}(y_1,y_2)Q_{12}(y_1,y_n) (v\otimes w) \\
 &= (t_ny_1+s_ny_2)(s_1y_1+t_1y_n)(v\otimes w).
\end{align*}
By definition $x_{\underline 0}$ acts on $\Delta(\underline 0)$ as $y_1$, and $x_{\underline 1}$ acts on $\Delta(\underline 1)$ as in \eqref{eq:ty} (with a shift in index of the $y_i$ due to the tensor factor $\D(\underline 0)$ on the left), so, for some $\lambda \in k^\times$, 
\[
 Q_{\underline 0, \underline 1}(u,v)=\left( t_n u + \frac{\la s_n}{t_2t_3\cdots t_{n-1}}v\right)\left(s_1 u + \frac{\la t_1}{(-1)^n s_2s_3\cdots s_{n-1}}v\right).
\]
This has discriminant zero if and only if
$s_1 s_2 \cdots s_{n}= (-1)^{n} t_1 t_2 \cdots t_{n}$. The lemma follows from  Lemma \ref{lem:geom-characterization}, which characterizes geometric type KLR algebras.
\end{proof}

\begin{theorem} \label{th:geom-to-geom}
Suppose $\Phi$ is symmetric and of either finite or affine type. Fix a KLR algebra $R$ of geometric type for $\Phi$, and, if $\Phi$ is affine, assume $R$ is defined over a field of characteristic $0$. Then, for any face $F$, the KLR algebra $R_F$ is also of geometric type.
\end{theorem}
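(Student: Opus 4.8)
The plan is to reduce, in two stages, to the cases already settled: Lemma \ref{lem:sl2geom} (geometricity is inherited by a face of type $\widehat{\mathfrak{sl}}_2$) and Lemma \ref{lem:upgeom} (geometricity of a type $\widehat{\mathfrak{sl}}_n$ KLR algebra is detected by a single distinguished $\widehat{\mathfrak{sl}}_2$ face). The starting observation is that being of geometric type is a condition on each connected component of the Dynkin diagram separately: the proof of Lemma \ref{lem:geom-characterization} shows that the $(k^\times)^{I\times I}$-action decomposes over $I\times I$, and for $i,j$ in distinct components $Q_{ij}$ is a nonzero scalar by \ref{eq:Qcond2}, which can be rescaled to its standard value without disturbing the within-component parameters. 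In particular every KLR algebra whose Dynkin diagram is a forest is of geometric type.

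First I would dispose of the finite case: when $\Phi$ is finite, $\Phi_F$ is a product of finite root systems, so its Dynkin diagram is a forest (every finite-type Dynkin diagram being a tree), and $R_F$ is of geometric type by the previous paragraph. So assume from now on that $\Phi$ is symmetric affine, $\charp k=0$, and $R$ is of geometric type. By the component-wise principle it suffices to show that the restriction of $R_F$ to each connected component $\Psi$ of the Dynkin diagram of $\Phi_F$ is of geometric type. Since $\Phi_F$ is a product of finite and affine root systems, $\Psi$ is either a tree --- of type $ADE$, $D^{(1)}$ or $E^{(1)}$, and then we are done by Lemma \ref{lem:geom-characterization} --- or $\Psi=\widehat{\mathfrak{sl}}_m$ for some $m\ge 2$, with simple roots $\gamma_1,\dots,\gamma_m\in\Delta_F$.

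For such a $\Psi$ I would pick a face $E$ of $\Phi$ with $E\subset F$ and $\Phi_E=\Psi$, i.e.\ $\Delta_E=\{\gamma_1,\dots,\gamma_m\}$; such a face exists and can be chosen compatible with $\prec$ by Lemma \ref{lem:order-exists}, taking its middle to be the positive roots of $\Psi$ together with all imaginary roots of $\Phi$ (these are all proportional since $\Phi$ is affine, and already lie in $\Psi$). Applying Proposition \ref{prop:comp} to the nested faces $E\subset F$ gives $(R_F)_E\cong R_E$; and since $\Delta_E$ consists of honest simple roots of $\Phi_F$, the root modules over $R_F$ of weights $\gamma_1,\dots,\gamma_m$ are one-dimensional, the map $\tau$ of Lemma \ref{lem:tau} collapses to a single $\psi$-crossing, and unwinding Definition \ref{def:gens} together with \eqref{def:q} identifies $(R_F)_E$ with the restriction of $R_F$ to $\Psi$. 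Hence it is enough to show that $R_E$ --- a face KLR algebra of the symmetric affine, geometric, characteristic $0$ algebra $R$, of type $\widehat{\mathfrak{sl}}_m$ --- is of geometric type. If $m=2$ this is exactly Lemma \ref{lem:sl2geom}. If $m\ge 3$, Lemma \ref{lem:upgeom} applied to $R_E$ reduces this to showing that the face KLR algebra $(R_E)_G$ for the distinguished $\widehat{\mathfrak{sl}}_2$ face $G$ of $\widehat{\mathfrak{sl}}_m$ is of geometric type; iterating Proposition \ref{prop:comp} (first for $R_F$, then for $R$) along $G\subset E\subset F$ identifies $(R_E)_G$ with $R_{\widetilde G}$ for a face $\widetilde G$ of $\Phi$ with $\Phi_{\widetilde G}=\widehat{\mathfrak{sl}}_2$, and Lemma \ref{lem:sl2geom} finishes the job.

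The main obstacle is not any single computation but the face bookkeeping: constructing the auxiliary faces $E$ and $\widetilde G$ of $\Phi$ with prescribed face root systems, nested inside $F$ and compatible with $\prec$; checking the needed instances of Assumption \ref{assumptionB} (with respect to $R$, to $R_F$, and to $R_E$) so that all the face KLR algebras and face functors involved are defined; and verifying the hypotheses of Proposition \ref{prop:comp} at each application. One delicate point hidden here is that $\Phi_F$ may have several affine components all sharing the single imaginary root of $\Phi$, which is why the middle of $E$ must be described with the imaginary roots adjoined rather than as the positive roots of $\Psi$ alone. The one genuinely computational ingredient, the identification $(R_F)_E\cong R_F|_\Psi$ when $\Delta_E$ is a set of honest simple roots, is a routine unwinding of the definitions of the generators of $\End(P)$.
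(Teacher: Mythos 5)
Your proposal is correct and follows essentially the same route as the paper: reduce to the irreducible $\widehat{\mathfrak{sl}}_n$ components of $\Phi_F$ (the tree components being automatic by Lemma \ref{lem:geom-characterization}), pass to the distinguished $\widehat{\mathfrak{sl}}_2$ sub-face, use Proposition \ref{prop:comp} to identify the composite of face functors with a one-step face functor, apply Lemma \ref{lem:sl2geom} to get geometricity at the $\widehat{\mathfrak{sl}}_2$ level, and propagate it back up with Lemma \ref{lem:upgeom}. The only difference is presentational: the paper compresses your explicit intermediate face $E$ with $\Phi_E=\Psi$ into the phrase ``without loss of generality $F$ is irreducible'' and applies Lemma \ref{lem:upgeom} directly to $R_F$.
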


\begin{proof} 
Without loss of generality, $F$ is irreducible. 
The statement is trivial unless $R_F$ is of type $\widehat{\mathfrak{sl}}_n$ for some $n$, since otherwise by Lemma \ref{lem:geom-characterization} all KLR algebras are geometric. So assume $R_F$ is type $\widehat{\mathfrak{sl}}_n$. 
Consider the standard face $E$ of type $\widehat{\mathfrak{sl}}_2$ inside $F$ (i.e. the sub-face as defined in Lemma \ref{lem:upgeom}) and the face functors
\[
 R_E  \mods\to  R_F  \mods \to R\mods.
\]
These are well defined since the two simple roots of $E$ both have support in a finite type sub-Dynkin-diagram of $\Phi_F$. 
By Proposition \ref{prop:comp} this composition of face functors agrees with the one-step face functor, so by Lemma \ref{lem:sl2geom}, $R_{E}$ is of geometric type. But then, by Lemma \ref{lem:upgeom}, $R_F$ is also of geometric type.
\end{proof}

\subsection{The zigzag algebra}
Let $\Gamma$ be a connected graph (the most important case for us is an ADE Dynkin diagram). The zigzag algebra $Z_\Ga$ associated to $\Ga$, as defined in \cite{HK}, is a graded algebra with basis elements $e_i$ for each vertex $i$, $h_{ij}$ for each ordered pair $i,j$ of vertices with an edge from $i$ to $j$, and $w_i$ for each vertex $i$. The elements $e_i$ are in degree zero, $h_{ij}$ are in degree one and $w_i$ are in degree two. Multiplication is given by
\begin{align*}
 e_ie_j&=\delta_{ij}e_j \\
 e_i h_{jk}&=\d_{ij}h_{jk} \\
 h_{ij}e_k&=\d_{jk}h_{ij} \\
 e_iw_j&=w_je_i=\d_{ij}w_i \\
 h_{ij}h_{kl}&=\d_{jk}\d_{il}w_i,
\end{align*}
and all other products of basis elements are zero.

\begin{example} \label{ex:zzA1}
If $\Gamma$ is a single vertex then $Z_\Gamma = \spann \{ e, w \} \simeq k[w]/w^2,$ where $w$ has degree two. If $\Ga$ has no isolated points then $Z_\Gamma$ is generated by the idempotents $e_i$ and the degree 1 elements $h_{ij}$.
\end{example}

\begin{lemma}\label{lem:sdelta}
Let $R$ be a geometric KLR algebra for $\widehat{\mathfrak{sl}}_3$. Then $S(\d)$ is isomorphic to the tensor product of $k[z]$ and the $\mathfrak{sl}_3$-zigzag algebra, where $z$ has degree two.
\end{lemma}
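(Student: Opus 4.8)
The plan is to compute $S(\delta)$ for $\widehat{\mathfrak{sl}}_3$ directly, using the explicit description of the cuspidal category that the earlier sections provide, and to match it against $k[z] \otimes Z_{A_2}$ where $Z_{A_2}$ is the zigzag algebra on the $A_2$ Dynkin diagram. First I would recall that the category of cuspidal $R(\delta)$-modules is equivalent to $S(\delta)\text{-mod}$, and that $S(\delta)$ has a projective generator $P_\delta = \bigoplus_{\w \in \Omega} \D(\w)$ (up to grading shifts and multiplicities), where $\Omega$ is the set of chamber coweights for the underlying finite type Cartan matrix — here $A_2$, so $|\Omega| = 3$. Thus $S(\delta)$ is Morita equivalent to $\End(P_\delta)$, and in fact, since each $\D(\w)$ is the projective cover of the simple $L(\w)$ and these are pairwise non-isomorphic with one-dimensional head, $S(\delta)$ is itself isomorphic to $\End\!\big(\bigoplus_{\w\in\Omega}\D(\w)\big)^{\op}$ as a basic algebra. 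So it suffices to identify this endomorphism algebra.

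Next I would assemble the endomorphism algebra from the pieces already established. By Lemma~\ref{lem:ssi} we may split off a polynomial generator: $S(\d)\cong k[z]\otimes S'(\d)$ with $z$ in degree $2$, so it is enough to show $S'(\d)$ is the $\mathfrak{sl}_3$-zigzag algebra, equivalently that $\End_{S'(\d)}\!\big(\bigoplus_\w \D(\w)/z\D(\w)\big)$ is $Z_{A_2}$. The diagonal blocks are controlled by Theorem~\ref{enddw}: $\End(\D(\w))\cong k[z,\epsilon]/(\epsilon^2)$, so after killing $z$ each $\End(\D(\w)/z\D(\w))\cong k[\epsilon]/(\epsilon^2)$, matching the local piece $e_\w Z_{A_2} e_\w = \spann\{e_\w, w_\w\}$ of the zigzag algebra. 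The off-diagonal graded dimensions come from Corollary~\ref{cor:cartanmatrix}: $\dim_q \Hom(\D(\w),\D(x))$ is $0$ when $\w,x$ are orthogonal chamber coweights and $q/(1-q^2)$ otherwise; dividing out the $k[z]$ factor (i.e.\ multiplying by $(1-q^2)$) leaves graded dimension $q$ for each non-orthogonal ordered pair and $0$ for orthogonal pairs. For $A_2$, the three chamber coweights have the property that each pair is non-orthogonal exactly when... this is precisely the adjacency pattern of the $A_2$ zigzag algebra's degree-one part (two vertices joined by an edge, the third... here I would pin down the combinatorics: in $A_2$ the chamber coweights/weights form a pattern where the relevant "non-orthogonality" graph is the path, so the $h_{ij}$'s appear for exactly the adjacent pairs). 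So the graded dimensions of all Hom spaces match those of $Z_{A_2}$ exactly.

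To upgrade the dimension match to an algebra isomorphism, I would exhibit explicit generators of $\End\!\big(\bigoplus_\w \D(\w)/z\D(\w)\big)$: the idempotents $e_\w$, and for each non-orthogonal pair a degree-one map $h_{\w x}: \D(x)/z\D(x) \to \D(\w)/z\D(\w)$ spanning the one-dimensional degree-one Hom space. Since there is a unique degree-zero map between $\D(x)$ and $\D(\w)$ only when they are isomorphic (they are not, for $x\neq \w$), the degree-one maps are the lowest-degree nonzero maps. I would then check the zigzag relations: $h_{\w x} h_{x \w'}$ has degree two, so lies in the span of $w_\w$ (if $\w = \w'$) or is zero (if $\w \neq \w'$, using that $\dim\Hom(\D(\w'),\D(\w))_2 = 0$ when they are distinct and... actually the degree-2 part between distinct non-orthogonal $\D$'s: from $q/(1-q^2) = q + q^3 + \cdots$ there is no $q^2$ term, so it is zero); and $h_{\w x} h_{x \w} = c\, w_\w$ for some scalar $c$, which must be nonzero because composing with one more $h$ and using associativity, or because $\D(x)/z\D(x)$ generates faithfully — more simply, the total graded dimension forces the multiplication $\Hom_1 \otimes \Hom_1 \to \Hom_2$ on the $(\w,\w)$ block to be nonzero since otherwise the subalgebra generated by the $e$'s and $h$'s would be too small, contradicting that $\bigoplus\D(\w)/z\D(\w)$ is a faithful module. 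Rescaling the $h_{ij}$ normalizes $c = 1$, giving the isomorphism with $Z_{A_2}$, and hence $S(\d)\cong k[z]\otimes Z_{A_2}$.

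The main obstacle I anticipate is the last step: verifying that the composite $h_{\w x}h_{x\w}$ is a \emph{nonzero} multiple of $w_\w$ rather than zero. The graded-dimension bookkeeping alone does not force this — one genuinely needs a faithfulness or non-degeneracy input. I would address it by using that $S'(\d)$ (equivalently $\End$ of the projective generator) acts faithfully on the direct sum of the $\D(\w)/z\D(\w)$, together with the known structure of $\D(\w)/z\D(\w)$ as having simple head $L(\w)$ and a specific radical layer structure read off from Proposition~\ref{prop:cartanmatrix} (its composition multiplicities are $[\D(\w)/z\D(\w):L(x)]$ equal to the $q$-free part, i.e.\ $1$ for $x=\w$ in degrees $0$ and $2$, and a single copy of $L(x)$ in degree $1$ for each non-orthogonal $x\neq\w$). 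Tracking how $h$ maps the head of one $\D/z\D$ into the degree-one layer of another, the composite $h\circ h$ must hit the degree-two socle copy of $L(\w)$ nontrivially, which is exactly the nonzero $w_\w$. This is essentially the argument of \cite[\S21]{mcn3} specialized to $\widehat{\mathfrak{sl}}_3$, and everything else is either cited verbatim from the earlier results or is routine graded-dimension matching.
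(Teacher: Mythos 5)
Your overall framework (reduce to $\End(\bigoplus_{\w\in\Omega}\D(\w))$, split off $k[z]$ via Lemma \ref{lem:ssi}, read off graded Hom dimensions from Theorem \ref{enddw} and Corollary \ref{cor:cartanmatrix}, then produce idempotents and degree-one maps $h_{\w x}$) is fine, but the step you yourself single out --- that $h_{\w x}h_{x\w}$ is a \emph{nonzero} multiple of the degree-two endomorphism $\epsilon_\w$ --- is a genuine gap, and neither of your proposed fixes can close it. Faithfulness gives nothing: every graded algebra acts faithfully on the direct sum of its indecomposable projectives, and there is no a priori reason $S'(\d)$ is generated in degrees $0$ and $1$. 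Nor does tracking composition factors help: the ``degenerate'' graded algebra on two vertices with arrows $h_{12},h_{21}$ in degree one, loops $w_1,w_2$ in degree two, and all products of radical elements equal to zero has exactly the same graded Cartan matrix, the same graded Hom dimensions between projectives, and the same graded composition multiplicities in each projective ($L(\w)$, $L(x)$, $L(\w)$ in degrees $0,1,2$), yet in it every composite of degree-one maps vanishes; whether the degree-two copy of $L(\w)$ lies in the image of the radical squared is precisely the statement to be proved, so it cannot be ``read off'' from Proposition \ref{prop:cartanmatrix}. Any correct argument must at some point use the KLR relations, and in particular the geometric choice of parameters, which your proposal never invokes --- note that in the parallel $\widehat{\mathfrak{sl}}_2$ situation (Lemma \ref{lem:sl2geom}) the analogous endomorphism algebra genuinely depends on the discriminant of $Q$, i.e.\ on geometricity, so an argument blind to the parameters is suspect.

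The paper closes this gap by direct computation: geometricity allows one to take $Q_{i,i+1}(u,v)=u-v$; up to a diagram automorphism the two cuspidal simples of $R(\d)$ have one-dimensional characters $[012]$ and $[021]$; the Cartan matrix then forces $\{e_{012},\psi_2e_{012},(y_2-y_3)e_{012},e_{021},\psi_2e_{021},(y_2-y_3)e_{021}\}$ to be a basis of $S'(\d)$, and the crucial zigzag relation is literally the KLR relation $\psi_2^2e_{012}=Q_{12}(y_2,y_3)e_{012}=(y_2-y_3)e_{012}$, which is visibly nonzero. You should also correct a smaller slip that feeds your hesitation about the ``non-orthogonality graph'': for $\widehat{\mathfrak{sl}}_3$ the underlying finite diagram is $A_2$, so $|\Omega|=2$ (the two cuspidal simples above), not $3$; with three vertices you would be comparing against the wrong zigzag algebra.
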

 
\begin{proof}
 We can take $I=\Z/3,$ and the defining polynomials for $R$ to be $Q_{i,i+1}(u,v)=u-v$. 
By applying an outer automorphism of the Dynkin diagram of $\widehat{\mathfrak{sl}}_3$ we can assume without loss of generality that the two cuspidal simple modules of $R(\d)$ have characters $[012]$ and $[021]$ or $[120]$ and $[210]$. We consider only the first case, since the second follows by a symmetric argument. 

Consider $S'(\d)$ from Lemma \ref{lem:ssi}. 
The irreducible representations are one dimensional, so the Cartan matrix computed in Proposition \ref{prop:cartanmatrix} tells us its dimension. Specifically $e_{012}S'(\d)e_{012}$ has dimension $1+q^2$ and $e_{012}S'(\d)e_{021}$ has dimension $q$.
It is now straightforward to see that $\{ e_{012}, \psi_2 e_{012}, (y_2-y_3)e_{012}, e_{021}, \psi_2 e_{021}, (y_2-y_3)e_{021}\}$ is a basis of $S'(\d)$, and that this is isomorphic to the $A_2$ zigzag algebra.
\end{proof}

\begin{theorem} \label{th:zigzag}
 Consider a geometric KLR algebra of symmetric affine type. Let $\d$ be the smallest imaginary root and fix a convex order $\prec$. The category of cuspidal $R(\d)$-modules is equivalent to the category of modules over the algebra $k[z]\otimes Z$ where $Z$ is the zigzag algebra corresponding to the underlying finite type Dynkin diagram.
\end{theorem}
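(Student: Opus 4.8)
The plan is to reduce the general symmetric affine case to the $\widehat{\mathfrak{sl}}_3$ case already settled in Lemma \ref{lem:sdelta}, using the nesting of face functors. Recall from the start of Section \ref{sec:imcat} that the category of cuspidal $R(\d)$-modules is equivalent to $S(\d)$-mod, and that $\End_{S(\d)}(\bigoplus_{\w\in\Omega}\D(\w))^{\op}$ is Morita equivalent to $S(\d)$. Morally, we want to identify this endomorphism algebra with $k[z]\otimes Z_\Ga$ for $\Ga$ the underlying finite-type Dynkin diagram.

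First I would pin down the structure coming from Lemma \ref{lem:ssi}: after passing to the standard parameters, $S(\d)\cong k[z]\otimes S'(\d)$ with $z$ of degree $2$, so it suffices to show the projective generator endomorphism algebra of $S'(\d)$-mod is the zigzag algebra $Z_\Ga$, or equivalently that the basic algebra of $S(\d)$ is $k[z]\otimes Z_\Ga$. The idempotents $e_i$ of $Z_\Ga$ correspond to the simples $L(\w)$, $\w\in\Omega$, which are indexed by chamber coweights of the underlying finite-type Cartan matrix; two such $\w,x$ are adjacent in $\Ga$ exactly when they are non-orthogonal. By Corollary \ref{cor:cartanmatrix}, $\dim_q\Hom(\D(\w),\D(x))$ is $\tfrac{1+q^2}{1-q^2}$ when $x=\w$, zero when $\w,x$ are orthogonal, and $\tfrac{q}{1-q^2}$ otherwise. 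Factoring out the $k[z]$ (which contributes the $\tfrac{1}{1-q^2}$), the reduced Hom-spaces have graded dimension $1+q^2$ on the diagonal, $0$ for orthogonal pairs, and $q$ for adjacent pairs — exactly the graded dimensions of $e_iZ_\Ga e_j$. So the algebras have the same underlying graded vector space; the content is matching the multiplication.

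To get the multiplication I would use face functors to reduce to rank considerations. For each pair $\w\neq x$ of non-orthogonal chamber coweights, choose a convex order and a face $F$ of type $\widehat{\mathfrak{sl}}_3$ whose simple roots, together with the relevant structure, "see" the subdiagram of $\Ga$ on the vertices $\{\w,x\}$ plus their common neighbours — concretely, a face $F$ with $\Phi_F$ of type $\widehat{\mathfrak{sl}}_3$ such that $\D(\w),\D(x)$ and the relevant $\D(\omega)$'s arise as $\F_F$ of the root modules of weight $\d$ for $R_F$. Since $\F_F$ is fully faithful (Lemma \ref{fullyfaithful}), commutes with induction (Lemma \ref{commutewithind}), and sends root modules to root modules (Lemma \ref{lem:roottoroot}), the endomorphism algebra computation inside $\C_F$ transports the $\widehat{\mathfrak{sl}}_3$ answer from Lemma \ref{lem:sdelta} — namely $k[z]\otimes Z_{A_2}$ — into $\C$. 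The products $h_{\w x}h_{x\w}=w_\w$ and the vanishing of $h_{\w x}h_{x y}$ for $y\neq\w$ are all relations supported on at most a rank-$2$ (i.e. $A_2$ face, $\widehat{\mathfrak{sl}}_3$ ambient) piece, so each is verified by applying the appropriate face functor. Assembling these local identifications, together with the $k[z]$ factor from Lemma \ref{lem:ssi}, gives an algebra isomorphism $S(\d)\cong k[z]\otimes Z_\Ga$ up to Morita equivalence, hence an equivalence of module categories.

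The main obstacle I expect is the global assembly: verifying that the locally-defined isomorphisms (one per pair of adjacent chamber coweights) are mutually compatible and glue to a single algebra isomorphism, rather than just matching on each rank-$2$ sub-piece separately. This requires a careful choice of normalizations of the generators $h_{\w x}$ — fixing lowest-degree vectors $v_\w\in\D(\w)$ compatibly across all the faces — and checking that the zigzag relation $h_{\w x}h_{x y}=\delta_{\w y}w_\w$ does not pick up inconsistent scalars when $\w,x,y$ range over a path in $\Ga$ longer than an edge. Because $\Ga$ is a tree (an ADE Dynkin diagram), there are no cycles to obstruct this gluing, and the degree-two elements $w_\w$ are pinned down up to scalar by $\End(\D(\w))\cong k[z,\e]/(\e^2)$ from Theorem \ref{enddw}; so the compatibility should reduce to a finite, tree-shaped rescaling argument. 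Once the gluing is in place, the theorem follows.
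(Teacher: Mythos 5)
Your proposal follows essentially the same route as the paper: Morita reduction to $\End\bigl(\bigoplus_{\w\in\Omega}\D(\w)\bigr)$, the graded dimension count from Corollary \ref{cor:cartanmatrix} and Theorem \ref{enddw} (which already forces all products $h_{\w x}h_{x y}$ with $y\neq\w$ to vanish), and a fully faithful face functor from an $\widehat{\mathfrak{sl}}_3$ face containing the relevant pair of adjacent chamber coweights, so that Lemma \ref{lem:sdelta} yields $h_{\w x}h_{x \w}$ is a nonzero multiple of $\epsilon_\w$, followed by a rescaling over the tree. The only details the paper supplies that you gloss over are the justification for switching to a convex order compatible with the chosen face (the cuspidal category of $R(\d)$ depends only on the induced positive system, cited from \cite{mcn3}) and the fact that $R_F$ is again of geometric type (Theorem \ref{th:geom-to-geom}) so Lemma \ref{lem:sdelta} applies; moreover, since each $h_{\w x}$ is globally unique up to scalar by the dimension count, the gluing compatibility you worry about reduces to exactly the single rescaling step you describe.
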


\begin{proof} 
The direct sum of the modules $\D(\w)$ and their grading shifts is a projective generator of the category of $S(\d)$-modules, so by Morita theory it suffices to show
 \[
  \End\left(\bigoplus_{\w\in\Omega} \D(\w)\right)\cong Z\otimes k[z].
 \]
 For the case of $\widehat{\mathfrak{sl}}_2$ this is immediate from Lemma \ref{lem:ssi}. 
 
Otherwise, Corollary \ref{cor:cartanmatrix} shows the endomorphism algebra has the right dimension. 
Let $x$ and $y$ be two elements of $\Omega$ connected by an edge. By Corollary \ref{cor:cartanmatrix} there is a unique up to scalar nonzero degree 1 morphism $h_{xy}\map{\D(x)}{\D(y)}$. By Theorem \ref{enddw}, for all $x\in \Omega$ there is unique up to scalar nonzero degree 2 morphism $\epsilon_x\map{\D(x)}{\D(x)}$ which squares to zero. If we can show that $h_{xy}\circ h_{yx}$ is a nonzero multiple of $\epsilon_x$, then we can rescale the $h_{xy}$ to ensure that this scalar is one, which will complete the proof.

Choosing a linear function $c$ such that $c(x_+)= c(y_+)=c(\delta)=0$, and $c(\alpha)>0$ for all $\alpha$ such that $p(\alpha)$ is another simple root in the positive system, the construction in \S\ref{ss:co} gives a convex pre-order such that the span of $x_\pm$, $y_\pm$ and $\d$ is a face $F$, and by \cite[Lemma 1.14]{tingleywebster} this can be refined to a convex total order with the same positive system $p(\Phi_\prec^+)$. 
By \cite[Theorem 13.1 and Theorem 17.3]{mcn3}, the category of $S(\d)$-modules only depends on the positive system $p(\Phi_\prec^+)$, so without loss of generality we can use this order. 

Consider the face functor
\[
 R_F\mods \to R\mods.
 \]
This is fully faithful and sends standard modules to standard modules, so, to prove that $h_{xy}\circ h_{yx}$ is a nonzero multiple of $\epsilon_x$, it suffices to prove this for $R_F$. But $R_F$ is of type $\widehat{\mathfrak{sl}}_3$ and by Theorem \ref{th:geom-to-geom} is of geometric type, so this is immediate from 
Lemma \ref{lem:sdelta}.
\end{proof}

\begin{remark}
Kleshchev and Muth \cite{KM} have independently proven Theorem \ref{th:zigzag} for balanced convex orders. Their methods rely on case by case computations which we have avoided. They also discuss analogues for  $R(n\d)$ with $n >1$, where they show that the category of cuspidal modules is equivalent to the category of modules for an ``affine zig-zag algebra." In \cite{CL} the affine zig-zag algebra (with some minor sign differences) appears in the categorification of the Heisenberg algebra (as $\text{End}_{{\mathcal{H}}^\Gamma}(P^n)$). Together, these results could perhaps be interpreted as a type of imaginary face functor, with the Heisenberg algebra playing the role of the Kac-Moody (or Borcherds) algebra of the face.
\end{remark}

\subsection{Example} \label{sec:example2} Continue the example from \S\ref{sec:example}, which considers a face $F$ of type $\widehat{\mathfrak{sl}}_2$ inside $\widehat{\mathfrak{sl}}_3$. Let $0',1'$, $\delta'$ denote the simple roots and the minimal imaginary root for $\widehat{\mathfrak{sl}}_2$. Choose the convex order with $0'\prec 1'$  and let $\Delta(\delta')$ be the projective cover of $L(\delta')$ in the cuspidal category. Then $L(\delta')$ is one dimensional. There is only one indecomposable projective, so the graded dimension of 
$\Delta(\delta')$ agrees with that of $S(\delta'),$
which by Lemma \ref{lem:ssi} is $\frac{1+q^2}{1-q^2}$. Thus there is an exact sequence 
\begin{equation} \label{eq:ashortexact}
2 q^2 \Delta(\delta') \rightarrow \Delta(\delta') \rightarrow L(\delta') \rightarrow 0.
\end{equation}

Choose a convex order on $\Phi^+$ compatible with the $\widehat{\mathfrak{sl}}_2$ convex order on $F$. The corresponding chamber coweights are the fundamental coweights $\omega_1$ and $\omega_2$.
 Let $\F$ be the face functor associated to $F$ which sends $\D(0')$ to $\D(\a_2+\a_0)$, so that $\F(\D(\delta'))= \D(\omega_2)$. Since $\F$ is right exact, Lemma \ref{lem:roottoroot} implies that \eqref{eq:ashortexact} is sent to an exact sequence
$$2 q^2 \Delta(\omega_2) \rightarrow  \Delta(\omega_2) \rightarrow \F(L(\delta')) \rightarrow 0.$$

 By Theorem \ref{th:zigzag} (or Lemma \ref{lem:sdelta} in this case) the category of cuspidal $R(\delta)$ modules is equivalent to the category of $Z_\Gamma \otimes k[z]$ modules. The zig-zag algebra $Z_\Gamma = \text{span} \{ e_1,e_2, h_{12}, h_{21}, w_1, w_2 \} $ has two simples $S_1,S_2$, and the corresponding projective covers are of the form $Z_\Gamma e_1$ and $Z_\Gamma e_2$. Then $h_{12} e_2$ corresponds to a copy of $q S_1$ in $Z_\Gamma e_2$. Under the equivalence of categories, $Z_\Gamma e_2$ is sent to $\Delta(\omega_2)$ and $S_1$ to $L(\omega_1)$, so we see that $\Delta(\omega_2)$ has a copy of $L(\omega_1)$ in degree $1$. This cannot be in the image of $2 q^2 \Delta(\omega_2)$, so it survives in $ \F(L(\delta'))$, and hence $\F(L(\delta'))$ is not simple. 

Here $\G$ is not an equivalence of categories and Theorem \ref{projgenerator} does not hold as stated for faces not of finite type.

\end{document}